\newcommand{\eps}{\varepsilon}
\numberwithin{equation}{section}
\newcommand{\wtilde}{\widetilde}
\newtheorem{theorem}{Theorem}[section]
\newtheorem{lemma}[theorem]{Lemma}
\newtheorem{proposition}[theorem]{Proposition}
\newtheorem{corollary}[theorem]{Corollary}
\newtheorem{definition}[theorem]{Definition}
\newtheorem{example}[theorem]{Example}
\newtheorem{remark}[theorem]{Remark}
\newcommand{\embed}{\hookrightarrow}
\def\Ls{\mathcal{L}}
\def\A{\mathcal{A}}
\def\Fn{\mathcal{F}}
\def\En{\mathscr{E}}
\def\bA{\mathscr{A}}
\def\a{\mathrm{a}}
\def\C{\mathcal{C}}
\def\Bb{\mathbb{B}}
\def\0{\boldsymbol{0}}
\def\Y{\mathbb{Y}}
\def\N{\mathbb{N}}
\def\bM{\mathcal{M}}
\def\U{\mathcal{U}}
\def\bO{\mathcal{O}}
\def\Gn{\mathfrak{G}}
\def\Vn{\mathcal{V}}
\def\Hn{\mathcal{H}}
\def\Wn{\mathcal{W}}
\def\Xn{\mathcal{X}}
\def\gn{\mathfrak{g}}
\def\Jn{\mathcal{J}_{\mu}}
\def\Bb{\mathfrak{B}}
\newcommand{\norm}[1]{\left\Vert#1\right\Vert}
\newcommand{\abs}[1]{\left\vert#1\right\vert}
\newcommand{\R}{\mathbb{R}}
\newcommand{\loc}{\mathop{\mathrm{loc}}}
\newcommand{\Vs}{\mathcal{V}}
\renewcommand{\d}{\/\mathrm{d}\/}
\newcommand{\dist}{\mathrm{dist}}
\let\originalleft\left
\let\originalright\right
\renewcommand{\left}{\mathopen{}\mathclose\bgroup\originalleft}
\renewcommand{\right}{\aftergroup\egroup\originalright}
\newcommand{\Nn}{\mathcal{N}}
\newcommand{\ip}[2]{\fourIdx{}{0}{}{\!x}{\mathcal{ X}}}
\newcommand\dela[1]{}
\def\l@subsection{\@tocline{2}{0pt}{4pc}{6pc}{}}
\def\l@subsubsection{\@tocline{3}{0pt}{8pc}{8pc}{}}
\def\l@section{\@tocline{1}{12pt}{0pt}{}{\bfseries}}
\title[Well-posedness and asymptotic analysis of a nonlinear heat equation]{Well-posedness and the \L{}ojasiewicz-Simon inequality in the asymptotic analysis of a nonlinear heat equation with constraints of finite codimension}
\begin{document}
	\maketitle
	
	\begin{center}
		\author{Ashish Bawalia\footnote[4]{Department of Mathematics, Indian Institute of Technology Roorkee-IIT Roorkee, Haridwar Highway, Roorkee, Uttarakhand 247667, India.}\orcidlink{0009-0002-9141-2766}, Zdzis\l{}aw Brze\'zniak\footnotemark[2]$^\ast$\orcidlink{0000-0001-8731-6523}, Manil T. Mohan\footnotemark[4]\orcidlink{0000-0003-3197-1136} and Piotr Rybka\footnotemark[3]\orcidlink{0000-0002-0694-8201}}
		\footnotetext[2]{Department of Mathematics, University of York, Heslington, YO10 5DD, York, United Kingdom. 
		}
		\footnotetext[3]{Faculty of Mathematics, Informatics and Mechanics, University of Warsaw, ul. Banacha 2, 02-097 Warsaw, Poland.\\
			\textit{e-mail:} Ashish Bawalia: \email{ashish1@ma.iitr.ac.in; ashish1441chd@gmail.com}.\\
			\textit{e-mail:} Zdzis\l{}aw Brze\'zniak: \email{zdzislaw.brzezniak@york.ac.uk}.\\
			\textit{e-mail:} Manil T. Mohan: \email{maniltmohan@ma.iitr.ac.in; maniltmohan@gmail.com}.\\
			\textit{e-mail:} Piotr Rybka: \email{rybka@mimuw.edu.pl}\\
			$\hspace{2mm} ^\ast$Corresponding author.\\
			\textit{Key words}: Nonlinear Heat equation $\cdot$ Constrained gradient flows $\cdot$ Yosida approximation $\cdot$ $m-$accretivity $\cdot$ \L{}ojasiewicz-Simon gradient inequality $\cdot$ Asymptotic analysis \\
			\textit{MSC}: 
			35B40,
			35G25, 
			35K05,
			58J35
			26D10
		}

	\end{center}

	\begin{abstract}
		We establish the global well-posedness of the $D(A)-$valued strong solution to a nonlinear heat equation with constraints on a  \textit{Poincar\'e domain} $\bO\subset \R^d$ whose boundary is of class $C^2$. Consider the following nonlinear heat equation
		\begin{align*}
			\frac{\partial u}{\partial t} - \Delta u + |u|^{p-2}u = 0,
		\end{align*}
		projected onto the tangent space $T_u\bM$, where
		$\mathcal{M}:=\left\{u\in  L^2(\bO):\|u\|_{L^2(\bO)}=1\right\}$ is a submanifold of $L^2(\bO)$. The  nonlinearity exponent satisfies $2\le p < \infty$ for $1\leq d\leq 4$ and  $2 \le p \le \frac{2d-4}{d-4}$ for $d \ge 5$. The solution is constrained to lie within $\mathcal{M}$ which encodes the norm-preserving constraint. By modifying the nonlinearity and exploiting the abstract theory for \textit{$m-$accretive }evolution equations, we prove the existence of a global strong solution. 
		Using {resolvent-idea }	and the \textit{Yosida approximation} method, we derive regularity results. In the asymptotic analysis, $\bO$ is restricted to bounded domains with even $p$
		and $1\le d \le 3$. For any initial data in $D(A) \cap \mathcal{M}$, we apply the \textit{\L{}ojasiewicz-Simon gradient inequality} on a Hilbert submanifold [F. Rupp, \textit{J. Funct. Anal.}, 279(8), 2020], to demonstrate that the unique global strong solution converges in $W^{2,q}(\bO) \cap W^{1,q}_0(\bO)$ to a stationary state, where $2 \le q < \frac{2d}{d + 4 - 4\beta}$ and $1 < \beta < \frac{3}{2}$.
		This work proposes an alternative method for establishing the global existence and analyzing  long-term behavior of the unique strong solution to an $L^2-$norm preserving nonlinear heat equation.
	\end{abstract}
	
	\tableofcontents 
	
	\section{Introduction}\label{Sec-Intro}
	
	In this manuscript, we address the well-posedness, regularity properties, and long-time behavior of a nonlinear heat equation featuring polynomial-type damping. It is worth noting that the well-posedness and asymptotic analysis of the unique global $L^p\cap H_0^1-$valued strong solution  have been studied in our recent work \cite{AB+ZB+MTM-25+}. Our focus, however, is to consider the problem described in \cite{AB+ZB+MTM-25+} to  Poincar\'e domains and study stronger solutions with higher regularity, specifically, $D(A)-$valued solutions, see below for the definition of $A$.

	For the domain $\mathcal{O}$, if there exists a positive constant $\lambda_{1}$ such that the following Poincar\'e inequality  is satisfied:
	\begin{align}\label{2.1}
		\lambda_{1}\int_{\mathcal{O}} |\psi(x)|^2 \d x \leq \int_{\mathcal{O}} |\nabla \psi(x)|^2 \d x,  \ \text{ for all } \  \psi \in {H}_0^1 (\mathcal{O}),
	\end{align}
	then, we call it as a \emph{Poincar\'e domain} and if  $\mathcal{O}$ is bounded in some direction, then the Poincar\'e inequality \eqref{2.1} holds. For example, one can consider  $\mathcal{O}=\R^{d-1}\times(-L,L),$ $L>0$. On bounded domains, $\lambda_1$ coincides with the first eigenvalue of the Dirichlet Laplacian. 
	Let $\bO \subset \R^d$, for any dimension $d \ge 1$, be a Poincar\'e domain with $C^2-$boundary $\partial\bO$. Given the parameter
	\begin{align}\label{Embed-D(A)-L^p}
		p \in \left\{
		\begin{aligned}
			[2,\infty), & \text{ when }\  1 \le d\le 4,\\ 
			\Big[2, \frac{2d}{d-4}\Big], & \text{ when }\  d \ge 5,
		\end{aligned}
		\right.
	\end{align} 
	we consider the following Cauchy problem:
	\begin{align}\label{eqn-main-heat}
		\left\{\begin{aligned}
			\frac{\partial u(t)}{\partial t} & = \Delta u(t) -|u(t)|^{p-2}u(t)+ \big( \norm{\nabla u(t)}_{L^2(\bO)}^2 + \norm{u(t)}_{L^p(\bO)}^p \big) u(t),\ \ t>0,\\
			u(t)|_{\partial\bO} & = 0,\\
			u(0) & = u_0,
		\end{aligned}\right.
	\end{align}
	such that the constraint $u(t) \in \bM$, for all $t\ge 0$ is satisfied, where $u:[0,\infty) \times\bO \to \R$ and 
	$$\bM:=\left\{v\in L^2(\bO):\|v\|_{L^2(\bO)} = 1 \right\}.$$
	For the precise formulation of the problem \eqref{eqn-main-heat}, we refer the reader to \cite[Introduction]{AB+ZB+MTM-25+}. The condition given in \eqref{Embed-D(A)-L^p} ensures that $D(A)\hookrightarrow L^p(\bO)$, where $A=-\Delta$ is the Dirichlet Laplacian with
	\begin{equation}\label{Def-D(A)}
		D(A):= H^2(\bO)\cap H_0^1(\bO).
	\end{equation}
	Observe that, when $u_0 \in L^p(\bO)\cap H_0^1(\bO)\cap \bM$ for $p \in [2,\infty)$, the uniqueness and the invariance in manifold $\bM$ of the global $L^p(\bO)\cap H_0^1(\bO)-$valued strong solution of the above-mentioned problem \eqref{eqn-main-heat} follows from our recent work \cite[Theorem 1.1]{AB+ZB+MTM-25+}. 
	In this work, for $u_0 \in D(A)\cap \bM$, we focus on the existence and regularity results of the global $D(A)-$valued unique strong solutions of the constrained problem \eqref{eqn-main-heat} on a \textit{Poincar\'e domain}. Additionaly, when $\bO\subset\mathbb{R}^d$ $1\leq d\leq 3$ is a bounded domain, we investigate the asymptotic behaviour of the global $D(A)-$valued unique strong solutions.  The dimension restriction is due to the Sobolev embedding $D(A)\embed C(\bO)$ and the boudnedness of the domain is needed for the compact embedding of $W^{2\beta, 2}(\bO) \embed W^{2,q}(\bO)\ \text{ for  }\ q < \frac{2d}{d+4-4\beta} \ \text{ and }\ \beta\in(1, \frac{3}{2})$. 
	In particular, this work builds upon and introduces an alternative approach to study the global existence and long-term dynamics of the unique strong solution to a nonlinear heat equation that preserves the $L^2-$norm, as investigated in \cite{PA+PC+BS-24, AB+ZB+MTM-25+}.
	
	\subsection{Previous works}
	
	\subsubsection{Constrained equations}
	Initially, Rybka \cite{PR-06} and later Caffarelli and Lin \cite{LC+FL-09} investigated the heat equation in the space $L^2(\Omega)$, where $\Omega$ is a bounded domain in $\R^2$, constrained to evolve on a manifold $M$, defined as
	\begin{align*}
		M = \left\{\varphi \in L^2(\Omega) \cap C(\Omega) : \int_\Omega \varphi^i(x) dx = C_i,\ 1\le i \le K\right\}.
	\end{align*} 
	For sufficiently regular initial condition, Rybka demonstrated that the nonlinear heat equation
	$$
	\frac{\partial u}{\partial t} = \Delta u - \sum_{i=1}^K \kappa_i(u) u^{i-1},
	$$
	subject to Neumann boundary conditions, has a global unique solution. The coefficients $\kappa_i(u)$ are chosen to enforce orthogonality of the time derivative $\frac{\partial u}{\partial t}$ to the span of the set $\{u^{i-1}\}$.
	Similarly, Caffarelli and Lin \cite{LC+FL-09} developed a theory ensuring the global existence and uniqueness of solutions that conserve energy in the context of the classical heat equation. Their analytical framework was broadened to encompass a wider class of singularly perturbed and non-local parabolic equations. The study demonstrated that solutions to these perturbed models exhibit strong convergence toward weak solutions of a constrained, non-local heat flow, potentially within a singular target space.
	In a different setting, Ma and Cheng \cite{LM+LC-09} examined two variants of non-local heat flows that maintain the $L^2-$norm on compact Riemannian manifolds. Their analysis established the global existence and stability of solutions, and  analyzed the long-time behavior of solutions, in addition to deriving gradient bounds for positive solutions. Later, in \cite{LM+LC-13}, they established the global existence of positive solutions to a porous-medium type non-local heat flow on compact Riemannian manifolds with strictly positive initial data. By employing Sobolev embeddings and Moser iteration techniques, they showed that the limit of the flow approaches a solution to the Laplace eigenvalue problem.
	Brze\'zniak and Hussain \cite{ZB+JH-24} investigated a nonlinear heat equation of gradient type, focusing on the existence and $L^2-$sphere invariance supporting unique global strong solutions. Their approach utilized semigroup methods and the application of fixed-point arguments. They investigated an evolution equation derived from the Laplace operator, projected onto the tangent space of the $L^2$-unit sphere, with a polynomial nonlinearity of degree $2p-1$, subject to Dirichlet boundary conditions.
	In a related study, Hussain \cite{JH-23} studied strong solutions of a constrained heat equation with values in a Hilbertian manifold and demonstrated their existence and uniqueness using the standard Faedo-Galerkin approximation along with compactness arguments.
	
	Antonelli et al. \cite{PA+PC+BS-24} analyzed a nonlinear heat equation that conserves the $L^2-$norm of the solution  and established the well-posedness both locally and globally in bounded domains and the full space $\R^d,$ with  $2\leq p<\infty$ for $d=1,2$ and $2\leq p\leq\frac{2d}{d-2}$ for $d\geq 3$, using semigroup techniques. Notably, in the case of an open ball, they showed that when starting with strictly positive initial conditions, the unique strong solution evolves over time toward the positive ground state.
	Recently, Shakarov \cite{BS-25} investigated a similar heat equation where solutions are constrained to evolve on an $L^2-$sphere via a nonlocal term. The existence and uniqueness of weak solutions, both local and global, were established for bounded domains with $C^2$ boundaries, as well as for the whole space $\R^d$. The methodology they employed consists the Schauder fixed point Theorem in bounded settings and the contraction mapping principle in unbounded ones. 
	In our recent work \cite{AB+ZB+MTM-25+}, we studied the global well-posedness of $L^p\cap H_0^1-$valued, for all $2\le p < \infty$, strong solutions to \eqref{eqn-main-heat} on any bounded smooth domain in arbitrary dimensions $d\ge 1$. In addition, our asymptotic analysis presented in \cite{AB+ZB+MTM-25+} generalizes the convergence result of Antonelli et al. \cite{PA+PC+BS-24}, from the specific setting of a ball to general bounded smooth domains. For numerical implementations of the approach developed by Caffarelli and Lin \cite{LC+FL-09}, and for practical applications across disciplines such as population dynamics, ecology, and material science, one may refer to \cite{QD+FL-09} and the references therein.
	
	In contrast, Brzeźniak et al. \cite{ZB+GD+MM-18}, by using fixed point techniques, established the well-posedness of global solutions that conserve energy for the incompressible Navier-Stokes equations along with a constrained forcing on $\mathbb{T}^2$ and $\R^2$. The authors also showed the convergence to the Euler equations as viscosity vanishes, assuming bounded initial vorticity. The first extension to stochastic case appeared in \cite{ZB+JH-20}, where Brze\'zniak and Hussain established unique mild solutions to a stochastic heat equation with constraint driven by a Stratonovich forcing in two-dimensional bounded domains. Later, in \cite{ZB+GD-21}, Brze\'zniak and Dhariwal proved the existence of martingale and strong solutions to two-dimensional stochastic Navier-Stokes equations with multiplicative noise. Brze\'zniak and Cerrai \cite{ZB+SC-25} analyzed stochastic damped wave equations constrained to the unit sphere in Hilbert spaces, showing the well-posedness and asymptotic convergence to a constrained parabolic equation in the vanishing mass limit. Most recently, Cerrai and Xie \cite{SC+MX-25} studied the small-mass limit (Smoluchowski-Kramers approximation) for stochastic damped wave equations constrained to the unit sphere in $L^2(0, L)$.

	\subsubsection{The \L{}ojasiewicz inequality} 
	The \L{}ojasiewicz inequality plays a key role in real algebraic and differential geometry, providing a detailed description of how an analytic function behaves in the vicinity of a critical point.
	Firstly, \L{}ojasiewicz established the inequality \eqref{eqn-Lojasiewicz-original} given below in his seminal work on semianalytic and subanalytic sets on standard Euclidean space of dimension $d$ \cite[Theorem 4]{SL-63}. In particular, the precise statement of the \L{}ojasiewicz inequality is as follows:
	\begin{theorem}[{\L{}ojasiewicz inequality, \cite[Theorem 4]{SL-63}}]
		Let $D\subset \R^d$ be open set. If $f: D \to \R$ is an analytic function and $\a\in D$ is a critical point of $f$, i.e., $\nabla f(\a) =0$, then there exist $C,\sigma >0$ and $\theta\in (0,\frac{1}{2}]$ such that
		\begin{align}\label{eqn-Lojasiewicz-original}
			\abs{f(u) - f(\a)}^{1-\theta} \leq C \|\nabla f(u)\|,\ \text{ for all }\ \|u - \a\| \leq \sigma,
		\end{align}
		where $\|\cdot\|$ denotes the Euclidean norm in $\mathbb{R}^d$.  
	\end{theorem}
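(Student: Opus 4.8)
The plan is to establish the gradient inequality by reducing the multidimensional statement to a one-variable estimate along real-analytic curves through $\a$, and then to invoke two classical tools from real-analytic geometry: the \emph{curve selection lemma} for semianalytic sets and the \emph{Puiseux expansion} of an analytic function of one variable. First I would normalize: after a translation we may assume $\a = \0$, and after subtracting the constant $f(\a)$ we may set $g := f - f(\a)$, so that $g(\0) = 0$, $\nabla g = \nabla f$, and the claim becomes the existence of $C, \sigma > 0$ and $\theta \in (0, \tfrac12]$ with $\abs{g(u)}^{1-\theta} \le C\norm{\nabla g(u)}$ for $\norm{u} \le \sigma$. Writing $s = 1 - \theta$, this is equivalent to showing that the \emph{\L{}ojasiewicz exponent}
\[
	\rho := \limsup_{u \to \0,\, g(u) \neq 0} \frac{\log\norm{\nabla g(u)}}{\log\abs{g(u)}}
\]
satisfies $\rho < 1$; indeed, any rational $s \in (\rho, 1)$ then yields a valid inequality on a small ball, and $s \ge \tfrac12$ can be arranged since $\abs{g} \le 1$ near $\0$.

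Next I would prove the crucial one-variable estimate. Let $\gamma\colon [0,\eps) \to \R^d$ be real analytic with $\gamma(0) = \0$ and $g\circ\gamma \not\equiv 0$, and set $h(t) := g(\gamma(t))$. By analyticity $h(t) = a\,t^m + O(t^{m+1})$ with $a \neq 0$ and $m \ge 1$; crucially, since $\a$ is a critical point we have $h'(0) = \langle \nabla g(\gamma(0)), \gamma'(0)\rangle = \langle \nabla g(\a), \gamma'(0)\rangle = 0$, which forces $m \ge 2$. The chain rule gives $h'(t) = \langle \nabla g(\gamma(t)), \gamma'(t)\rangle$, hence $\abs{h'(t)} \le \norm{\nabla g(\gamma(t))}\,\norm{\gamma'(t)}$ with $\norm{\gamma'(t)}$ bounded near $0$; combined with $\abs{h'(t)} \sim \abs{a} m\, t^{m-1}$ this yields $\norm{\nabla g(\gamma(t))} \gtrsim t^{m-1}$, while $\abs{g(\gamma(t))}^s \sim \abs{a}^s t^{ms}$. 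Comparing the two powers for small $t$ shows that $\norm{\nabla g(\gamma(t))} \ge C\abs{g(\gamma(t))}^s$ holds as soon as $s \ge 1 - \tfrac1m$. Since $m \ge 2$, the exponent $1 - \tfrac1m \in [\tfrac12, 1)$, which is precisely the origin of the constraint $\theta = \tfrac1m \in (0, \tfrac12]$ and explains why the upper bound $\tfrac12$ on $\theta$ is dictated by the criticality of $\a$.

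I would then assemble the global bound by contradiction. If $\rho \ge 1$, then for each rational $s < 1$ the semianalytic set $\set{u : \norm{\nabla g(u)} < \abs{g(u)}^{s},\ g(u) \neq 0}$ has $\0$ in its closure, so the curve selection lemma produces a real-analytic curve $\gamma$ into this set with $\gamma(0) = \0$; but the one-variable estimate forces $\norm{\nabla g(\gamma(t))} \ge C\abs{g(\gamma(t))}^{s_\gamma}$ with $s_\gamma = 1 - \tfrac1m < 1$, which contradicts the defining inequality of the set once $s > s_\gamma$. The main obstacle is exactly the passage from a good exponent on each individual curve to a \emph{uniform} exponent $s < 1$ valid on a whole punctured ball: a priori the multiplicity $m = m(\gamma)$ could grow without bound as the curve varies, driving $s_\gamma \to 1$. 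Controlling this — equivalently, showing that only finitely many Puiseux exponents can occur — is the genuinely deep input, and it is here that the structure theory of semianalytic sets (the curve selection lemma together with \L{}ojasiewicz's stratification theorem, or alternatively Hironaka's resolution of singularities, which locally monomializes $g$ and reduces the inequality to an elementary estimate on monomials) does the heavy lifting. I would therefore expect to spend most of the effort on this finiteness, with the normalization and the one-variable computation being routine by comparison.
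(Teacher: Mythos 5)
The paper offers no proof of this statement: it is quoted directly as Theorem 4 of \L{}ojasiewicz's 1963 work \cite{SL-63} and used as a black box motivating the gradient-inequality framework, so there is no in-paper argument to compare yours against, and your sketch must be judged on its own terms. Your normalization and one-variable analysis are correct: with $g = f - f(\mathrm{a})$ and $h(t) = g(\gamma(t)) = a t^m + O(t^{m+1})$, the identity $h'(0) = \langle \nabla g(\mathrm{a}), \gamma'(0)\rangle = 0$ does force $m \ge 2$; comparing $t^{m-1}$ with $t^{ms}$ gives the curve-wise exponent $s_\gamma = 1 - \tfrac{1}{m} \in [\tfrac12, 1)$; and this is the right explanation of the range $\theta \in (0,\tfrac12]$ (sharp for a Morse point, e.g. $g(u) = \|u\|^2$, where $\theta = \tfrac12$ is optimal). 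Restricting to rational $s$ so that the bad set is semianalytic is also the correct move, since $\|\nabla g\| < \abs{g}^{p/q}$ can be rewritten as $(\|\nabla g\|^2)^q < (g^2)^p$ with both sides analytic; and your set-up automatically handles nearby critical points with $g \neq 0$, because along any selected arc in the bad set one has $h \not\equiv 0$, hence $h = a t^m$ with $a \neq 0$ and $\nabla g(\gamma(t)) \neq 0$ for small $t > 0$.

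However, as you yourself concede, the argument does not close, and the gap is not a technicality but the entire content of the theorem. For a fixed rational $s < 1$, the curve selection lemma returns an arc whose multiplicity $m(\gamma)$ may well satisfy $1 - \tfrac{1}{m(\gamma)} \ge s$, in which case your one-variable bound is perfectly compatible with membership in the bad set and no contradiction results; what is needed is a uniform bound on $m(\gamma)$ over all analytic arcs through $\mathrm{a}$ --- equivalently, that the supremum of the curve-wise exponents is attained and strictly less than $1$ (in fact the optimal exponent is rational). Invoking ``\L{}ojasiewicz's stratification theorem'' at this point is uncomfortably close to citing the theorem to prove itself, since the gradient inequality belongs to the same package of semianalytic structure results in \cite{SL-63}; the honest way to complete your outline is the resolution-of-singularities route you mention only in passing: pull $g$ back by a proper modification $\pi$ so that $g \circ \pi$ and the Jacobian of $\pi$ are simultaneously monomial in local coordinates, verify the inequality for monomials by direct computation, and use properness and a finite chart cover to transport uniform constants $C, \sigma, \theta$ back down, including the chain-rule comparison of $\nabla(g\circ\pi)$ with the pullback of $\nabla g$. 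That step is where all the real work lies and is entirely absent from the proposal; until it is executed, what you have is an accurate roadmap of the classical proof, not a proof.
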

	Subsequently, Simon \cite[Theorem 3]{LS-83} generalized this inequality to apply to specific energy functionals defined on infinite-dimensional Hilbert spaces setting by using {Lyapunov-Schmidt reduction}. In honor of these significant contributions, the inequality is now commonly referred to as the {\L{}ojasiewicz-Simon gradient inequality}. On other hand, {Kurdyka} \cite{KK-98} generalized the inequality to a broader class of function spaces. 
	In \cite{PF+MM-20}, under weaker assumptions, authors established several abstract versions of the  {\L{}ojasiewicz-Simon} gradient inequality for analytic functions on Banach spaces. They also determined the optimal exponent of the {\L{}ojasiewicz-Simon} gradient inequality when the function is {Morse-Bott}. 
	
	Over the past two decades, the \L{}ojasiewicz-Simon inequality has been widely used to analyze the long-time behavior of gradient flows. Roughly speaking, when the energy functional associated with an evolution equation satisfies this inequality near a stationary point $a = \lim_{m\to\infty} u(t_m)$, for some sequence $t_m\to\infty$, and the gradient flows $\{u(t):t\geq 0\}$ is a precompact solution of the gradient system
	\begin{align*}
		\left\{
		\begin{aligned}
			\frac{\partial u(t)}{\partial t} & = - \nabla f(u(t)),\ \text{ for all }\ t>0,\\
			u(0) & = u_0,
		\end{aligned}
		\right.
	\end{align*}
	the inequality provides crucial information about convergence and stability near $a$, see for example \cite{MAJ-98, PR+KHH-98, AH+MAJ-99, RC-03, RC+EF-05, RC+AF-06, HW+MG+SZ-07, AH+MAJ-07, MG+HW+SZ-08, RC+AH+MAJ-09, AH+MAJ-11, SI-21}. 
	Specifically, these results are often employed to analyze the behavior of solutions near equilibrium points in an appropriate norm. A primary application is the study of the long-term dynamics of trajectories, where the behavior of the energy functional near equilibria is crucial. For example, see \cite[for sub-gradient systems]{RC+SM-18}, \cite[fractional Cahn-Hilliard
	system]{GA+GS+AS-19}, \cite[Kurdyka-\L{}ojasiewicz-Simon inequality in metric spaces]{DH+JM-19}, \cite[coupled Yang-Mills energy functionals]{PMNF+MM-20}, among others. Additionally, numerical implementations of this framework are discussed in \cite{PAA+RM+BA-05,HA+JB+PR+AS-10}.
	
	A second line of research has focused on applying inequalities, particularly the \L{}ojasiewicz inequality, to analyze the structural properties of solutions to various evolutionary partial differential equations exhibiting a gradient flow structure. One of the earliest results in the context of constrained gradient flow problems was obtained by {Rybka} \cite{PR-06}, who used the \L{}ojasiewicz inequality to address the convergence of solutions to a heat equation with analytic nonlinearity in the space $W^{2,p}(D)$, where $D$ is a bounded domain in $\R^2$. In \cite{MC+LS+BV-20}, {Colombo} et al. studied parabolic variational inequalities arising from gradient flow problems. They developed a constrained \L{}ojasiewicz-Simon inequality for critical points within a given convex subset of $L^2(\bO)$ associated with an analytic integral functional. This framework was then applied to the parabolic obstacle and thin-obstacle problems. 
	
	Recently, {Rupp} \cite{FR-20} introduced a refined version of the \L{}ojasiewicz-Simon gradient inequality for gradient flow problems in Banach spaces with constraints; see \cite[Theorem 1.4]{FR-20} for details. This advancement has opened new avenues for studying the convergence of solutions to evolution equations with gradient-like structures subject to constraints. Applications of this approach include length-preserving elastic flows \cite{FR+AS-24}, evolving heterogeneous elastic wires \cite{AD+GJ+LL+FR-24}, elastic flows of curves \cite{MP-22}, heterogeneous elastic wires \cite{AD+LL+FR-24}, and Navier-Stokes-Cahn-Hilliard systems \cite{JH+HW-24}, among others. 
	
	\subsection{Highlights and novelties of this work}
	\subsubsection{Highlights}
	This note has two main objectives. First, we establish the global existence and regularity results of the unique $D(A)-$valued strong solution to the system \eqref{eqn-main-heat} on a Poincar\'e domain $\bO\subset\R^d$ with $C^2-$boundary. We adopt a classical strategy of cut-off, inspired by \cite[p. 291]{VB+SSS-01} (see also \cite[p. 1080]{SG+KK+MTM-24}, \cite[Section 3]{ZB+TZ-23}), together with the famous Yosida approximation. 
	Specifically, under the assumptions on $p$ and $d$ as in \eqref{Embed-D(A)-L^p}, we prove the following:
	\begin{itemize}
		\item[$(i)$] For fixed $K\in\N$, the nonlinear cut-off operator $\gn^K: D(A) \to L^2(\bO)$,	see \eqref{Def-gn}, satisfies the demicontinuity property as well as some monotone-type bounds, see Lemmas \ref{Lem-gn-mono} and \ref{gn-demi};
		\item[$(ii)$] Using the properties of the map $\gn^K$, along with the hemicontinuity and coercivity properties, we establish the $m-$accretivity of $\Gn^K + \Gamma I$, for some $\Gamma>0$ depending on $K$, see Theorem \ref{Thm-m-accretive}. Consequently, applying the abstract theory of $m-$accretive operators for evolution equations (\cite[Theorem 1.4--1.6]{VB-93}), we obtain the existence result for the modified problem \eqref{eqn-quant-heat}, see Proposition \ref{Prop-quant};
		\item[$(iii)$] Deriving uniform energy estimates and choosing $K > \frac{1}{2}\norm{\nabla u_0}_{L^2(\bO)}^2 + \frac{1}{p}\norm{u_0}_{L^p(\bO)}^p$, we first establish the global existence of the unique strong solution to the system  \eqref{eqn-main-heat} in $W^{1,\infty}([0,T];L^2(\bO))\cap L^\infty(0,T; D(A))$, see Subsection \ref{Subsec-Thm-I};
		\item[$(iv)$] By introducing the Yosida-approximated solution \eqref{Def-u_mu} and using the resolvent-identity strategy, we demonstrate a convergence result showing that the Yosida-approximated solution converges in $D(A)-$norm to the unique strong solution of \eqref{eqn-main-heat}, see Proposition \ref{Lem-mu->u-cgs}. Next, we derive two regularity results for $u_0\in D(A)$; see Propositions \ref{Prop-A^{3/2}} and \ref{Lem-u_t-is-C}, which completes the proof of Theorem \ref{Thm-Main-Exis}.
	\end{itemize}

	In the second part of this work, we restrict ourselves to bounded domains $\bO\subset \R^d$  and establish two results on uniform-in-time bounds to analyze the long-time behavior of the unique strong solution to \eqref{eqn-main-heat}, following the approach developed in \cite{MAJ-98, PR-06, FR-20}. To begin, let us choose and fix
	\begin{align}\label{Embed-H_0^1-L^p}
		p \in \left\{
		\begin{aligned}
			[2,\infty), & \text{ when }\  1\le d \le 4,\\ 
			\bigg[2, \frac{2d-6}{d-4}\bigg), & \text{ when }\  d \ge 5.
		\end{aligned}
		\right.
	\end{align}
	Then, we establish the following:
	\begin{itemize}
		\item[$(i)$] If $p< \frac{2d-4}{d-4}$ for $d\ge5$  in \eqref{Embed-D(A)-L^p}, and $u_0 \in D(A^\alpha)$ with $\alpha \in (\frac{1}{2},1)$, then the trajectory $\{u(t): t\geq 0\}$ is bounded in $D(A^\alpha)$;
		\item[$(ii$)] For $u_0 \in D(A),$ the orbit $\{u(t): t\geq 1\}$ is bounded in $D(A^\beta)$, for every $\beta \in (1,\frac{3}{2})$, which further implies that the omega-limit set $\omega(u)$ is non-empty, compact and connected in $W^{2,q}(\bO)\cap W^{1,q}_0(\bO)$, where $q\in \big[2, \frac{2d}{d+4-4\beta}\big)$, see Subsection \ref{Subsec-Other-reg};
		\item[$(iii)$] When $p \in \{2, 4, \dots\}$ in addition to $2\le p < \infty$ for $1\le d\le3$, the energy and constrained functionals, defined in \eqref{def-energy} and \eqref{def-constraint}, respectively, are analytic from $D(A)$ to $\R$ with their first order Fr\'echet derivatives take values in $L^2(\bO)$. We also show that the second order Fr\'echet derivatives of the energy and  constrained functionals,   are of Fredholm index zero and compact, respectively, for details see Subsection \ref{Subsec-LSI};
		\item[$(iv)$] Using the above properties of energy and constrained maps, we verify the \L{}ojasiewicz-Simon inequality on Hilbert spaces with constraints, in our settings and further prove that, if $$u_0 \in D(A)\cap\bM,\ \text{ and }\ q \in \left[2, \frac{2d}{d+4-4\beta}\right),$$
		then, the problem \eqref{eqn-main-heat} admits a unique strong solution that approaches a steady state $u^\infty$ in the $W^{2,q}(\bO)\cap W^{1,q}_0(\bO)-$norm as $t\to\infty$, where $u^\infty$ solves
		\begin{align}\label{eqn-stationary}
			\Delta v -|v|^{p-2}v + \big( \norm{\nabla v}_{L^2(\bO)}^2 + \norm{v}_{L^p(\bO)}^p \big) v = 0.
		\end{align}
	\end{itemize}
	
	\begin{remark}
		In the asymptotic analysis, it is important to emphasize that, due to the Dirichlet boundary condition in problem \eqref{eqn-main-heat}, the results of Antonelli et al. \cite{PA+PC+BS-24}, as well as their generalization to bounded domains in \cite{AB+ZB+MTM-25+},  can only be recovered by applying the refined \L{}ojasiewicz-Simon gradient inequality developed by Rupp \cite[Theorem 1.4]{FR-20}, for integer $p\in[2,\infty)$ with $1\le d \le 3$. This restriction arises because the nonlinearity $u^{p-1}$ is not analytic at zero unless $p\in\N$.
	\end{remark}	
	
	\subsubsection{Novelties}
	In this article, we highlight the following novel contributions:
	\begin{itemize}
		\item[$(i)$] To best of our knowledge, for any spatial dimension $d \ge 1$ and any exponent $p$ satisfying \eqref{Embed-D(A)-L^p}, this is the first work to establish the existence of $D(A)-$valued global strong solutions to a nonlinear heat equation defined on arbitrary Poincar\'e domains  subject to $L^2-$norm constraints  through \textit{$m-$accretive} techniques. Our work highlights the potential of the nonlinearity cut-off trick to broaden the applicability of the technique to other constrained problems.
		\item[$(ii)$] This work extends the existing well-posedness theory and establishes new regularity results, {by exploiting both resolvent and spectral analysis within the framework of the Yosida approximation,} for the nonlinear heat equation with constraints on arbitrary Poincar\'e domain with $C^2-$boundary; see, for instance, \cite[Theorem 1.3]{PA+PC+BS-24}, \cite[Theorem 1.10]{AB+ZB+MTM-25+}, \cite[Theorem 1.5]{ZB+JH-24}, and \cite[Theorem 2.2]{JH-23}.
		\item[$(iii)$] In the case of a damped heat equation defined on bounded domains with constraints, for any initial data in $D(A)\cap\bM$, we affirmatively analyze the long-time behaviour by incorporating a fundamentally different approach (cf. \cite[Theorem 1.7]{PA+PC+BS-24} and \cite[Theorem 1.10]{AB+ZB+MTM-25+}), i.e., by utilizing the refined \L{}ojasiewicz-Simon gradient inequality formulated for Hilbert spaces.
		\item [$(iv)$] The \L{}ojasiewicz-Simon gradient inequality allows us to establish local asymptotic stability around every stationary solution (without requiring any positivity assumption), while \cite{PA+PC+BS-24,AB+ZB+MTM-25+} focus on the asymptotic behavior towards a unique positive stationary solution.
		\item[$(v)$] It is worth emphasizing that {Rybka} \cite{PR-06} analyzed the asymptotic behavior of the heat equation with polynomial damping only in two spatial dimensions. In contrast, our results hold for any $1\le d \le 3$ and any even integer $2\leq p<\infty$.
	\end{itemize}

	\subsection{Statement of the main results}
	Consider $\bO \subset \R^d$ to be a Poincar\'e domain with dimension $d\ge 1$ and $C^2-$boundary denoted by $\partial\bO$. To begin, let us clarify the meaning of a strong solution in the context of problem \eqref{eqn-main-heat}.
	\begin{definition}\label{Def-strong-soln}
		Let us choose and fix $p$ as in \eqref{Embed-D(A)-L^p}, $T\in(0,\infty)$ and $u_0 \in D(A)\cap\bM$. A function
		\begin{align*}
			u \in W^{1,\infty}([0,T]; L^2(\bO))\cap C([0,T]; D(A)\cap\bM)\cap L^2(0,T;D(A^{\frac{3}{2}})),
		\end{align*}
		is called a \emph{strong solution} of the system \eqref{eqn-main-heat},  if
		the following two conditions are satisfied:
		\begin{itemize}
			\item[$(i)$] The equation
			\begin{align}\label{eqn-102}
				\displaystyle \frac{\partial u(t)}{\partial t} -\Delta u(t) + |u(t)|^{p-2}u(t) -  \big(\norm{\nabla u(t)}_{L^2(\bO)}^2  + \norm{u(t)}_{L^p(\bO)}^p \big) u(t) = 0,
			\end{align}
			is satisfied in $L^2(0,T;L^2(\bO))$, i.e., for all $\psi\in L^2(0,T; L^2(\bO))$
			\begin{equation}\label{eqn-test}
				\int_0^T\bigg(\frac{\partial u(t)}{\partial t} -\Delta u(t) + |u(t)|^{p-2}u(t) -  \big(\norm{\nabla u(t)}_{L^2(\bO)}^2  + \norm{u(t)}_{L^p(\bO)}^p \big) u(t)),\psi(t)\bigg)dt=0.
			\end{equation}
			\item[$(ii)$] The initial data is satisfied
			\begin{align}
				u(0) = u_0 \ \text{ in }\ L^2(\bO).
			\end{align}
		\end{itemize}
	\end{definition}
	\noindent
	Let us consider the energy functional
	\begin{align*}
		\En: D(A) \ni u \mapsto \En(u) \in \R
	\end{align*}
	defined by
	\begin{align}\label{Def-En}
		\En(u) = \frac{1}{2}\int_\bO \abs{\nabla u(x)}^2 dx + \frac{1}{p} \int_\bO \abs{u(x)}^p dx.
	\end{align}
	\noindent
	First, we demonstrate one of the main results on the global existence of a strong solution of the problem \eqref{eqn-main-heat}, as described in Definition \ref{Def-strong-soln}. Let us choose and fix $p$ satisfying  \eqref{Embed-D(A)-L^p}.
	
	\begin{theorem}\label{Thm-Main-Exis}
		Let us suppose $0<T<\infty$ and $u_0\in D(A)\cap \bM$. Then, there exists a unique strong solution 
		\begin{align}\label{eqn-D(A)}
			u\in W^{1,\infty}([0,T];L^2(\bO))\cap C([0,T]; D(A)\cap \bM)\cap L^2(0,T;D(A^{\frac{3}{2}})),
		\end{align}
		with $\frac{\partial u}{\partial t} \in L^2(0,T;H_0^1(\bO))$ solves the following Cauchy problem \eqref{eqn-main-heat} in $(0,T)\times \bO$ and it satisfies the energy equality
		\begin{align}\label{eqn-dec-ener}
			\En(u(t))+\int_0^t\bigg\|\frac{\partial u(s)}{\partial s}\bigg\|_{L^2(\bO)}^2ds= \En(u_0), \ \text{ for all }\ t\ge0,
		\end{align}
		where $\En$ is defined in \eqref{Def-En}.
	\end{theorem}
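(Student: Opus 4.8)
The plan is to construct the strong solution through a two-level approximation scheme, exactly as the highlights in the introduction announce, and then to establish the uniqueness and energy equality separately. First I would introduce, for a fixed cut-off parameter $K\in\N$, the globally modified problem \eqref{eqn-quant-heat} in which the nonlocal coefficient $\big(\norm{\nabla u}_{L^2(\bO)}^2 + \norm{u}_{L^p(\bO)}^p\big)$ is replaced by a bounded truncation. By Theorem \ref{Thm-m-accretive}, the operator $\Gn^K + \Gamma I$ is $m$-accretive on $L^2(\bO)$ for a suitable $\Gamma = \Gamma(K) > 0$, so the abstract theory of \cite[Theorems 1.4--1.6]{VB-93} yields a unique solution of the modified problem, which is Proposition \ref{Prop-quant}. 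This gives a candidate $u^K$ living in $W^{1,\infty}([0,T];L^2(\bO))\cap L^\infty(0,T;D(A))$.

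Next I would remove the cut-off. Testing the equation against $\frac{\partial u}{\partial t}$ and against $A u$ produces uniform energy estimates; the key point is the decay of the energy functional $\En(u(t))$, which immediately gives the pointwise bound $\En(u(t)) \le \En(u_0)$. Choosing $K > \frac{1}{2}\norm{\nabla u_0}_{L^2(\bO)}^2 + \frac{1}{p}\norm{u_0}_{L^p(\bO)}^p$ forces the truncation never to activate along the trajectory, so $u^K$ in fact solves the original problem \eqref{eqn-main-heat}; the invariance $u(t)\in\bM$ follows by differentiating $\norm{u(t)}_{L^2(\bO)}^2$ in time and using that the right-hand side of \eqref{eqn-main-heat} is the projection onto $T_u\bM$, which makes $\frac{d}{dt}\norm{u(t)}_{L^2(\bO)}^2 = 0$. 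This establishes membership in $W^{1,\infty}([0,T];L^2(\bO))\cap L^\infty(0,T;D(A)\cap\bM)$, as recorded in Subsection \ref{Subsec-Thm-I}.

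For the higher regularity $u\in C([0,T];D(A))\cap L^2(0,T;D(A^{3/2}))$ together with $\frac{\partial u}{\partial t}\in L^2(0,T;H_0^1(\bO))$, I would invoke the Yosida-approximation machinery: define the approximated solution $u_\mu$ as in \eqref{Def-u_mu}, prove via the resolvent identity that $u_\mu \to u$ in the $D(A)$-norm (Proposition \ref{Lem-mu->u-cgs}), and then transfer the uniform $D(A^{3/2})$- and $H_0^1$-estimates available for $u_\mu$ to the limit using weak lower semicontinuity. The continuity-in-time statements come from Propositions \ref{Prop-A^{3/2}} and \ref{Lem-u_t-is-C}. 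Uniqueness I would obtain by an energy argument on the difference of two solutions $w = u_1 - u_2$: the linear Laplacian term dissipates, the local nonlinearity $|u|^{p-2}u$ is monotone, and the nonlocal coefficient, being a Lipschitz function of $u$ on bounded sets (and the solutions are bounded in $D(A)\embed C(\bO)$ or $L^p$ via \eqref{Embed-D(A)-L^p}), contributes a term controllable by Gr\"onwall's inequality, forcing $w\equiv 0$.

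The energy equality \eqref{eqn-dec-ener} is the formal identity obtained by pairing \eqref{eqn-102} with $\frac{\partial u}{\partial t}$ in $L^2(\bO)$: the Laplacian and local-nonlinearity terms assemble into $\frac{d}{dt}\En(u(t))$, while the nonlocal term pairs with $\frac{\partial u}{\partial t}$ to give $\big(\norm{\nabla u}_{L^2(\bO)}^2+\norm{u}_{L^p(\bO)}^p\big)\,\frac{1}{2}\frac{d}{dt}\norm{u(t)}_{L^2(\bO)}^2 = 0$ by the constraint, after which integration in time yields \eqref{eqn-dec-ener}. I expect the main obstacle to be rigorously justifying this pairing and the $A u$-testing at the level of regularity actually available: $\frac{\partial u}{\partial t}$ only lies in $L^2$ a priori, so the manipulations must first be carried out for the Yosida approximations $u_\mu$ (where everything is smooth enough) and then passed to the limit, which is precisely why the resolvent-identity convergence in Proposition \ref{Lem-mu->u-cgs} is the technical heart of the argument. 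The delicate interplay is ensuring that the $D(A^{3/2})$ regularity is strong enough to make sense of $\Delta u \in H_0^1(\bO)$ so that $\frac{\partial u}{\partial t}\in L^2(0,T;H_0^1(\bO))$ can be read off directly from the equation.
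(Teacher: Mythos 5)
Your proposal is correct and follows essentially the same route as the paper: truncation of the nonlocal coefficient, $m$-accretivity of $\Gn^K+\Gamma I$ (Theorem \ref{Thm-m-accretive}) together with Barbu's abstract theory (Proposition \ref{Prop-quant}), uniform energy estimates that deactivate the cut-off once $K > \frac{1}{2}\norm{\nabla u_0}_{L^2(\bO)}^2 + \frac{1}{p}\norm{u_0}_{L^p(\bO)}^p$, the Yosida/resolvent machinery (Propositions \ref{Lem-mu->u-cgs}, \ref{Prop-A^{3/2}} and \ref{Lem-u_t-is-C}) with weak lower semicontinuity and the Lions--Magenes Lemma for the regularity \eqref{eqn-D(A)}, and the gradient-flow identity for \eqref{eqn-dec-ener}, the only organizational difference being that the paper quotes uniqueness from \cite{AB+ZB+MTM-25+} instead of re-running your Gr\"onwall argument. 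One small correction: your claim that $\frac{d}{dt}\norm{u(t)}_{L^2(\bO)}^2=0$ holds ``by projection'' presupposes $u(t)\in\bM$ (the right-hand side of \eqref{eqn-main-heat} is orthogonal to $u$ only when $\norm{u}_{L^2(\bO)}=1$), so, as the paper does, you should derive the ODE $\theta'(t)=2\big(\norm{\nabla u(t)}_{L^2(\bO)}^2+\norm{u(t)}_{L^p(\bO)}^p\big)\theta(t)$ for $\theta(t)=\norm{u(t)}_{L^2(\bO)}^2-1$ and conclude $\theta\equiv0$ from $\theta(0)=0$ via variation of constants.
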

	
	\begin{remark}
		Let us highlight one of the key novelties of Theorem \ref{Thm-Main-Exis}. We establish the well-posedness of the nonlinear heat equation with constraints on {a Poincar\'e domain} with a $C^2-$boundary. In contrast, the results in \cite{PA+PC+BS-24} and our earlier work \cite{AB+ZB+MTM-25+} address the well-posedness only for bounded domains and the whole space, and bounded domains, respectively.
	\end{remark}
	
	For the detailed proof of Theorem \ref{Thm-Main-Exis}, we refer the reader to Section \ref{Sec-Thm-Exi}.
	
	\noindent
	Next, we restrict {our attention} to a bounded domain $\bO \subset \R^d$ with $1 \le d \le 3$, fix an exponent $2 \le p < \infty$ where $p \in \{2,4,\dots\}$, and select an auxiliary parameter $\beta \in \bigl(1, \tfrac{3}{2}\bigr)$ such that
	\begin{align*}
		q\in \left[2, \frac{2d}{d+4-4\beta}\right).
	\end{align*}
	
	Then, the following theorem is the main result on the asymptotic analysis of this work.
	
	\begin{theorem}\label{Thm-u(t)-cgs-u^infty-copy}
		Let $u_0 \in D(A)\cap\bM$.
		Assume that $u$ is the unique global strong solution to problem \eqref{eqn-main-heat}, whose existence is guaranteed by Theorem \ref{Thm-Main-Exis}. Then, there exists $u^\infty\in W^{2,q}(\bO)\cap W^{1,q}_0(\bO)\cap\bM,$ which is a stationary solution to \eqref{eqn-stationary} such that 
		\begin{align}\label{eq-con-to u^infty}
			\norm{u(t)- u^\infty}_{W^{2,q}(\bO)} \to 0 \ \text{ as }\ t \to \infty.
		\end{align}
		In particular, it holds that
		\begin{align*}
			\norm{u(t)- u^\infty}_{D(A)} \to 0 \ \text{ as }\ t \to \infty.
		\end{align*}
	\end{theorem}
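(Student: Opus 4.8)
The plan is to recognize \eqref{eqn-main-heat} as the $L^2(\bO)$-gradient flow of the energy functional $\En$ constrained to the manifold $\bM$, and then run the classical \L{}ojasiewicz--Simon convergence scheme. A direct computation shows that the projection of the free gradient $\nabla\En(u)=Au+|u|^{p-2}u$ onto the tangent space $T_u\bM=\{v:(v,u)_{L^2(\bO)}=0\}$, namely $\Pi_u\nabla\En(u):=\nabla\En(u)-(\nabla\En(u),u)_{L^2(\bO)}\,u$, equals the negative of the right-hand side of \eqref{eqn-102}. Hence $\frac{\partial u}{\partial t}=-\Pi_u\nabla\En(u)$, and the energy equality \eqref{eqn-dec-ener} reads $-\frac{d}{dt}\En(u(t))=\big\|\frac{\partial u}{\partial t}\big\|_{L^2(\bO)}^2=\norm{\Pi_u\nabla\En(u(t))}_{L^2(\bO)}^2$. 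In particular $\En(u(\cdot))$ is nonincreasing, bounded below by $0$, and therefore converges to some limit $\En_\infty$.

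First I would assemble the compactness ingredients prepared in Subsection \ref{Subsec-Other-reg}: for $u_0\in D(A)$ the orbit $\{u(t):t\ge1\}$ is bounded in $D(A^\beta)$, $\beta\in(1,\tfrac32)$, which embeds compactly into $W^{2,q}(\bO)\cap W^{1,q}_0(\bO)$ for $q<\frac{2d}{d+4-4\beta}$ on the bounded domain $\bO$. Thus the trajectory is precompact in $W^{2,q}(\bO)$ and the $\omega$-limit set $\omega(u)$ is nonempty, compact and connected. Passing to the limit in the equation along any sequence $t_n\to\infty$ (using that $\frac{\partial u}{\partial t}(t_n)\to0$ in $L^2(\bO)$ along a subsequence, which follows from $\int_0^\infty\big\|\frac{\partial u}{\partial s}\big\|_{L^2(\bO)}^2\,ds<\infty$) shows that every element of $\omega(u)$ solves the stationary problem \eqref{eqn-stationary}; continuity of $\En$ together with monotonicity of $\En(u(\cdot))$ forces $\En\equiv\En_\infty$ on $\omega(u)$.

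The heart of the argument is the \L{}ojasiewicz--Simon step. Around an arbitrary $u^\infty\in\omega(u)$ the refined inequality of Rupp \cite[Theorem 1.4]{FR-20} applies --- its hypotheses (analyticity of the energy and constraint functionals on $D(A)$ with $L^2(\bO)$-valued first derivatives, and the Fredholm/compactness properties of their second derivatives) being exactly those verified in Subsection \ref{Subsec-LSI} --- yielding constants $C,\sigma>0$ and $\theta\in(0,\tfrac12]$ with
\begin{align*}
	\abs{\En(u)-\En_\infty}^{1-\theta}\le C\,\norm{\Pi_u\nabla\En(u)}_{L^2(\bO)}
\end{align*}
for all $u\in\bM$ in a $\sigma$-neighbourhood of $u^\infty$. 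Since $\omega(u)$ is compact and $\En$ is constant on it, a finite covering produces a single neighbourhood $\mathcal{U}$ of $\omega(u)$ carrying a uniform such inequality. Combined with the energy identity this gives the differential inequality
\begin{align*}
	-\frac{d}{dt}\big(\En(u(t))-\En_\infty\big)^\theta=\theta\big(\En(u(t))-\En_\infty\big)^{\theta-1}\Big\|\frac{\partial u}{\partial t}\Big\|_{L^2(\bO)}^2\ge\frac{\theta}{C}\Big\|\frac{\partial u}{\partial t}\Big\|_{L^2(\bO)},
\end{align*}
valid whenever $u(t)\in\mathcal{U}$.

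Finally I would close by a trapping-and-integration step. Precompactness gives $\dist_{W^{2,q}(\bO)}(u(t),\omega(u))\to0$, so $u(t)\in\mathcal{U}$ for all $t$ beyond some $T_0$; integrating the displayed inequality over $[T_0,\infty)$ yields the finite-length bound $\int_{T_0}^\infty\big\|\frac{\partial u}{\partial t}\big\|_{L^2(\bO)}\,dt\le\frac{C}{\theta}\big(\En(u(T_0))-\En_\infty\big)^\theta<\infty$. Hence $u(t)$ is Cauchy in $L^2(\bO)$ and converges to some $u^\infty\in\omega(u)$; by precompactness in $W^{2,q}(\bO)$ together with uniqueness of the $L^2(\bO)$-limit, the full trajectory converges to $u^\infty$ in $W^{2,q}(\bO)\cap W^{1,q}_0(\bO)$, and since $q\ge2$ and $\bO$ is bounded we have $W^{2,q}(\bO)\cap W^{1,q}_0(\bO)\hookrightarrow H^2(\bO)\cap H_0^1(\bO)=D(A)$, giving convergence in the $D(A)$-norm as well. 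The main obstacle is the interplay of topologies in the trapping step: the \L{}ojasiewicz--Simon inequality lives in the strong $D(A)$/$W^{2,q}$-neighbourhood, whereas the length estimate only controls the $L^2(\bO)$-distance. It is precompactness of the orbit --- which upgrades to $\dist_{W^{2,q}(\bO)}(u(t),\omega(u))\to0$ and thereby guarantees that $u(t)$ never leaves $\mathcal{U}$ --- that reconciles the two and prevents the trajectory from escaping before the finite-length bound can be invoked.
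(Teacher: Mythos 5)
Your proposal is correct and follows essentially the same route as the paper's proof of Theorem \ref{Thm-u(t)-cgs-u^infty}: gradient-flow energy identity, boundedness in $D(A^\beta)$ and compactness of $\omega(u)$ in $W^{2,q}(\bO)\cap W^{1,q}_0(\bO)$, constancy of $\En$ on $\omega(u)$, Rupp's refined \L{}ojasiewicz--Simon inequality with a finite cover and uniform $\theta$, the resulting differential inequality and finite-length bound, and the upgrade from $L^2(\bO)$-convergence to $W^{2,q}(\bO)$-convergence via precompactness. The only cosmetic difference is that you trap the trajectory directly using $\dist_{W^{2,q}(\bO)}(u(t),\omega(u))\to 0$, while the paper packages the same step as an exit-time contradiction (showing $\wtilde{T}=\infty$ in \eqref{eqn-def-T}), with the embedding constant $\wtilde{C}$ of $W^{2,q}(\bO)\cap W^{1,q}_0(\bO)\embed D(A)$ reconciling the two topologies exactly as you describe.
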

	
	\begin{corollary}
		If $u_0 \in D(A)\cap\bM$ is a stationary solution to \eqref{eqn-stationary}, then $u_0\in W^{2,q}(\bO)\cap W^{1,q}_0(\bO)\cap\bM$.
	\end{corollary}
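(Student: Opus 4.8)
The plan is to deduce the statement from the convergence Theorem \ref{Thm-u(t)-cgs-u^infty-copy} by observing that a stationary solution generates a constant trajectory of \eqref{eqn-main-heat}. Concretely, suppose $u_0\in D(A)\cap\bM$ solves \eqref{eqn-stationary}, and set $u(t):=u_0$ for all $t\in[0,\infty)$. Rearranging \eqref{eqn-stationary} gives
$$\Delta u_0-|u_0|^{p-2}u_0+\big(\norm{\nabla u_0}_{L^2(\bO)}^2+\norm{u_0}_{L^p(\bO)}^p\big)u_0=0,$$
so the constant function $u$ satisfies the evolution equation \eqref{eqn-main-heat} with $\frac{\partial u}{\partial t}\equiv 0$ and the initial condition $u(0)=u_0$. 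If I can show that $u$ lies in the strong-solution class of Definition \ref{Def-strong-soln}, then the uniqueness assertion of Theorem \ref{Thm-Main-Exis} forces $u$ to coincide with the global strong solution issued from $u_0$; that solution is therefore constant in time.

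The only nontrivial point in the previous step is to verify the regularity $u\in L^2(0,T;D(A^{3/2}))$, i.e. $u_0\in D(A^{3/2})$; the remaining memberships $u\in W^{1,\infty}([0,T];L^2(\bO))\cap C([0,T];D(A)\cap\bM)$ are immediate for a constant function with values in $D(A)\cap\bM$. To obtain $u_0\in D(A^{3/2})$ I would run an elliptic bootstrap on the stationary identity $\Delta u_0=|u_0|^{p-2}u_0-\big(\norm{\nabla u_0}_{L^2(\bO)}^2+\norm{u_0}_{L^p(\bO)}^p\big)u_0$. Since $1\le d\le 3$, the space $H^2(\bO)$ is a Banach algebra and $D(A)\embed C(\bO)$; as $p$ is an even integer, $|u_0|^{p-2}u_0=u_0^{p-1}$ is a polynomial in $u_0\in H^2(\bO)\cap H_0^1(\bO)$ and hence belongs to $H^2(\bO)$, while it also vanishes on $\partial\bO$. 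Consequently $\Delta u_0\in H^2(\bO)\cap H_0^1(\bO)\subset H_0^1(\bO)=D(A^{1/2})$, which by the spectral characterisation of fractional powers means exactly $u_0\in D(A^{3/2})$.

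With the constant trajectory identified as the unique strong solution, I would conclude by applying Theorem \ref{Thm-u(t)-cgs-u^infty-copy} to it. That theorem produces a stationary state $u^\infty\in W^{2,q}(\bO)\cap W^{1,q}_0(\bO)\cap\bM$ with $\norm{u(t)-u^\infty}_{W^{2,q}(\bO)}\to 0$ as $t\to\infty$; in particular $u(t)$ itself lies in $W^{2,q}(\bO)$ for large $t$. Since $u(t)\equiv u_0$, this already places $u_0$ in $W^{2,q}(\bO)$, and letting $t\to\infty$ forces $u^\infty=u_0$. The memberships $u^\infty\in W^{1,q}_0(\bO)\cap\bM$ then transfer to $u_0$, yielding $u_0\in W^{2,q}(\bO)\cap W^{1,q}_0(\bO)\cap\bM$ as claimed.

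I expect the main (indeed essentially the only) obstacle to be the regularity verification $u_0\in D(A^{3/2})$ needed to place the constant trajectory into the strong-solution class; everything else is a direct invocation of the uniqueness in Theorem \ref{Thm-Main-Exis} and the convergence in Theorem \ref{Thm-u(t)-cgs-u^infty-copy}. As an alternative that sidesteps the dynamical theorems altogether, the bootstrap of the second paragraph already yields $u_0\in D(A^{3/2})\subset D(A^\beta)$ for every $\beta\in(1,\tfrac{3}{2})$, whence the embeddings $D(A^\beta)\embed W^{2\beta,2}(\bO)\embed W^{2,q}(\bO)$ for $q<\frac{2d}{d+4-4\beta}$, together with the boundary condition $u_0|_{\partial\bO}=0$ and $u_0\in\bM$, give the conclusion directly.
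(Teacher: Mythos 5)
Your main argument is exactly the paper's proof: take the constant trajectory $u(t)\equiv u_0$, identify it via uniqueness in Theorem \ref{Thm-Main-Exis} as \emph{the} global strong solution, apply Theorem \ref{Thm-u(t)-cgs-u^infty-copy}, and conclude $u_0=u^\infty$. The paper simply asserts that the constant function is the unique strong solution; you go further and verify the one membership that is not automatic, namely $u_0\in D(A^{3/2})$ (required by Definition \ref{Def-strong-soln} through $u\in L^2(0,T;D(A^{3/2}))$), via the bootstrap $\Delta u_0=|u_0|^{p-2}u_0-\big(\norm{\nabla u_0}_{L^2(\bO)}^2+\norm{u_0}_{L^p(\bO)}^p\big)u_0\in H^2(\bO)\cap H_0^1(\bO)=$ (a subset of) $D(A^{1/2})$, using that $H^2$ is an algebra for $d\le 3$ and $p$ is an even integer --- a correct and worthwhile addition. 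Your closing alternative is also sound and genuinely different from the paper: once the bootstrap gives $u_0\in D(A^{3/2})\subset D(A^{\beta})$, the embeddings $D(A^{\beta})\embed W^{2\beta,2}(\bO)\embed W^{2,q}(\bO)$ (the same ones the paper uses in Corollary \ref{cor-precompact}) yield the conclusion by purely elliptic means, with no appeal to the \L{}ojasiewicz--Simon machinery or the convergence theorem at all; this buys a much lighter, self-contained proof, whereas the paper's dynamical route buys brevity given that Theorem \ref{Thm-u(t)-cgs-u^infty-copy} is already in hand.
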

	\begin{proof}
		Let us choose and fix  $u_0 \in D(A)\cap\bM$ which is a stationary solution to \eqref{eqn-stationary}. Define $u(t)=u_0$ for $t\geq 0$. Then, $u$  is the unique global strong solution to problem \eqref{eqn-main-heat}.
		By Theorem \ref{Thm-u(t)-cgs-u^infty-copy}, there exists $u^\infty\in W^{2,q}(\bO)\cap W^{1,q}_0(\bO)\cap\bM$ such that \eqref{eq-con-to u^infty} holds. Hence $u_0=u^\infty$ and so the result follows.
	\end{proof}
	
	We demonstrate the proof of Theorem \ref{Thm-u(t)-cgs-u^infty-copy} in Subsection \ref{Subsec-Asymp}.
	
	\begin{remark}
		\noindent
		\begin{itemize}
			\item[$(i)$]  Let us first emphasize that Rupp \cite{FR-20} developed a refined \L{}ojasiewicz-Simon gradient inequality on Banach space settings with constaints and applied it to the Allen-Cahn equation. In our work, we establish a version of this refined inequality (cf. \cite[Theorem 1.4]{FR-20}) adapted to our framework and employ it to analyze the asymptotic behavior of the strong solution described in Theorem \ref{Thm-u(t)-cgs-u^infty-copy}.
			
			\item[$(ii)$] It is worth noting that Antonelli et al. \cite{PA+PC+BS-24} established Theorem \ref{Thm-u(t)-cgs-u^infty-copy} for non-negative initial data in $H_0^1(\bO)$. Their result holds on a ball for $2\le p <\infty$ when $d=1,2,$ and $2\leq p\leq \frac{2d}{d-2}$ when $d \ge 3$. In contrast, in our recent work \cite{AB+ZB+MTM-25+}, we lifted the restriction on $p$ for all dimensions $d \ge 1$, and considered the equation on arbitrary bounded domains with initial data in $L^p(\bO)\cap H_0^1(\bO)$.
			Moreover, our current result complements both the above mentioned results for $2\leq p<\infty$,
			with the additional assumptions that $p$ is even and the initial data belongs to $D(A)$.
		\end{itemize}
	\end{remark}
	
	\subsection{Plan of the manuscript}
	The organization of this manuscript is as follows.
	
	Section \ref{Sec-Preliminaries} begins with essential preliminaries, including the functional framework and auxiliary concepts such as the definitions of monotonicity and $m-$accretivity of operators, the notion of the $\omega-$limit set in complete metric spaces along with related results. Then, a linear operator, a nonlinear operator and their monotonicity properties are discussed. Further, we show that the first equation in the problem \eqref{eqn-main-heat} is of gradient structure. 
	
	Section \ref{Sec-Thm-Exi} is divided into two parts. First, we define a nonlinear operator $\gn^K$ (see \eqref{Def-gn}) and show that it satisfies a certain monotone-type bound and it is demicontinuous. Then, using these facts, we establish that the main cut-off operator $\Gn^K + \Gamma I$ (see \eqref{Def-Gn-K}) is \textit{$m-$accretive}, for sufficiently large $\Gamma>0$ depending on $K$. Moreover, by utilizing the abstract theory of \textit{$m-$accretive} operators for evolution equations, we obtain the existence results for the modified system, see Proposition \ref{Prop-quant}. Finally, by calculating the energy estimates and choosing indexed parameter $K$ sufficiently large, we prove the existence result pertaining to the original problem in the class $W^{1,\infty}([0,T];L^2(\bO))\cap L^\infty(0,T; D(A))$, which completes the first part of the proof of Theorem \ref{Thm-Main-Exis}, see Subsection \ref{Subsec-Thm-I}.
	Subsection \ref{Subsec-Thm-II} opens with a regularity result for the solution established in the previous section. Then, by virtue of a well-known method called \textit{Yosida approximation} technique and \textit{resolvent identity}, 
	when the initial condition is in $D(A)$, we prove two regularity results. The first result (see Proposition \ref{Prop-A^{3/2}}) provides the required regularity (see Remark \ref{Rmk-Regularity}), which completes the proof of Theorem \ref{Thm-Main-Exis}. The second result is instrumental in establishing uniform-in-time bounds in the following section.
	
	Next, in Section \ref{Sec-Asy-anal}, we show that if $u_0 \in D(A^\alpha)$ with $\alpha \in \big(\frac{1}{2},1\big)$, then the trajectory $\{u(t): t \ge 0\}$ remains bounded in $D(A^\alpha)$; similarly, if $u_0 \in D(A)$, then $\{u(t): t \ge 1\}$ is bounded in $D(A^\beta)$ for any $\beta \in \big(1,\tfrac{3}{2}\big)$.
	Then, by utilizing the Sobolev embedding Theorem, we produce that the orbit $\{u(t): t\geq1\}$ is precompact and the omega-limit set $\omega(u)$ is compact in $W^{2,q}(\bO)\cap W^{1,q}_0(\bO)$. We also present foundational aspects of the \L{}ojasiewicz-Simon inequality on Hilbert spaces, including analytic functions, the Fredholm index, and other relevant topics and show that the energy and constrained functionals are analytic. Moreover, we demonstrate some related lemmas that are used to establish a \L{}ojasiewicz-Simon inequality in our setting. 
	Further, we establish that each member of the $\omega(u)$ is a critical point of the restricted energy functional $\En|_\bM$ if and only if it solves the stationary problem \eqref{eqn-stationary}. By utilizing the \L{}ojasiewicz-Simon inequality (see \eqref{eqn-Lojasiewicz-Rupp}) along with the previously established results, we finally prove the strong convergence of the unique global strong solution in $W^{2,q}(\bO)\cap W^{1,q}_0(\bO)-$norm to a stationary state, as $t$ tends to $\infty$, see Theorem \ref{Thm-u(t)-cgs-u^infty}. 
	
	The manuscript concludes in Appendix \ref{Sec-Appendix} with the statement of {a Representation Theorem, a particular case of Lions-Magenes Lemma (with the proof)}, the Spectral Theorem for self-adjoint operators, {alternative proofs of the regularity results, demonstrated in Subsection \ref{Subsec-Yosida}, by spectral measures} and a proof of an elementary inequality used in one of the main results, Theorem \ref{Thm-u(t)-cgs-u^infty}.
	
	\section{Preliminaries}\label{Sec-Preliminaries}
	We outline the foundational material used in this work in the present section. This includes the functional settings, along with some elementary definitions and results on  linear and nonlinear operators, and the gradient flow structure of the first equation in \eqref{eqn-main-heat}. We fix $\bO\subset \R^d$, to be any Poincar\'e domain with a boundary of class $C^2$, throughout this section.
	
	\subsection{Functional setting}
	For any $1\leq p < \infty$, the space $L^p(\bO)$ consists of equivalence classes $[h]$ of   Lebesgue measurable functions $h : \bO \to\mathbb{R}$ that satisfy $\int_{\bO}|h(x)|^pdx<\infty.$ The $L^p-$norm of $h \in L^p(\bO)$ is defined by $\|h\|_{L^p(\bO)}:=\left(\int_{\bO}|h(x)|^pdx\right)^{1/p}$.  For $p=2$, the space $L^2(\bO)$ forms a Hilbert space, with the inner product denoted by $(\cdot,\cdot)$. Additionally, let $H_0^1(\bO)$, also written as $W_0^{1,2}(\bO)$, represents the Sobolev space consisting of equivalence classes of Lebesgue measurable functions $h \in L^2(\bO)$ whose weak partial derivatives $\frac{\partial h}{\partial x_i}$ belongs $L^2(\bO)$, and which vanish on the boundary of $\bO$ in the sense of trace. The norm on $H_0^1(\bO)$ is defined, via the Poincar\'e inequality \eqref{2.1}, by $$\norm{h}_{H_0^1(\bO)} := \left(\int_{\bO}|\nabla h(x)|^2dx\right)^{1/2}.$$ 
	We next introduce the dual space $H^{-1}(\bO):=(H_0^1(\bO))^{\ast}$, which consists of continuous linear functionals on $H_0^1(\bO)$, equipped with the norm
	\[\norm{l}_{H^{-1}(\bO)} := \sup\big\{ {\langle l, h\rangle}:  h \in H_0^1(\bO),\ {\norm{h}_{H_0^1(\bO)}}\leq 1\big\}.\]
	Additionally, we denote by $W^{2,q}(\bO)$, for $1\leq q\leq \infty$, the Sobolev space of functions with weak derivatives up to order two in $L^q(\bO)$. 
	
	We now introduce the sum and intersection spaces that will be used throughout this study. Observe that both $L^{p^\prime}(\bO)$ and $H^{-1}(\bO)$ are Banach spaces, endowed with the norms $\norm{\cdot}_{L^{p^\prime}(\bO)}$ and $\norm{\cdot}_{H^{-1}(\bO)}$, respectively, where the exponents satisfy $\frac{1}{p} + \frac{1}{p'} =1$. In addition, the space $L^p(\bO) \cap H_0^1(\bO)$  is dense in each of $L^{p^\prime}(\bO)$ and $H^{-1}(\bO)$ with respect to their respective norms.
	
	We now define the sum space
	\begin{align*}
		L^{p^\prime}(\bO) + H^{-1}(\bO) := \{l_1 + l_2 : l_1 \in L^{p^\prime}(\bO), \ l_2 \in H^{-1}(\bO)\}
	\end{align*}
	which forms a Banach space when equipped with the norm
	\begin{align*}
		\norm{u}_{L^{p^\prime}(\bO) + H^{-1}(\bO)}  = \inf\{\norm{l_1}_{L^{p^\prime}(\bO)} + \norm{l_2}_{H^{-1}(\bO)} : u = l_1 + l_2, l_1 \in L^{p^\prime}(\bO), l_2 \in H^{-1}(\bO)\}.
	\end{align*}
	The intersection space $L^p(\bO) \cap H_0^1(\bO)$ forms a Banach space when endowed with the norm
	\[\norm{u}_{L^p(\bO) \cap H_0^1(\bO)} := \max\{\norm{u}_{L^p(\bO)}, \norm{u}_{H_0^1(\bO)}\}.\]
	This norm is equivalent to both $\norm{u}_{L^p(\bO)}+ \norm{u}_{H_0^1(\bO)}$ and the Euclidean-type norm $\big(\norm{u}_{L^p(\bO)}^2+ \norm{u}_{H_0^1(\bO)}^2\big)^{1/2}$. Furthermore, the dual space of $L^{p^\prime}(\bO) + H^{-1}(\bO)$ can be identified as
	\begin{align*}
		(L^{p^\prime}(\bO) + H^{-1}(\bO))^{\ast} \cong L^p(\bO) \cap H_0^1(\bO),
	\end{align*}
	with the natural duality pairing defined by 
	\begin{align*}
		\langle l, h \rangle = \langle l_1, h \rangle + \langle l_2, h \rangle,
	\end{align*}
	for every $l = l_1 +l_2 \in L^{p^\prime}(\bO) + H^{-1}(\bO)$ and $h\in L^p(\bO) \cap H_0^1(\bO)$. Therefore, it holds that \cite[cf. Section 2]{RF+HK+HS-05}
	\begin{align*}
		\norm{l}_{L^{p^\prime}(\bO) + H^{-1}(\bO)} = \sup \big\{ \langle l_1 + l_2, h\rangle :  h \in L^p(\bO) \cap H_0^1(\bO),\ {\norm{h}_{L^p(\bO) \cap H_0^1(\bO)}}  \leq 1 \big\}.
	\end{align*}
	
	\subsection{Auxiliary results}
	Let us first recall some basic definitions and useful results for nonlinear operators from \cite{JA+PPZ-90,VB-93,DH-81, SL-93, FR-20} as follows:
	
	\subsubsection{M-accretive}
	\begin{definition}[{\cite[Definition 1.1, p. 36]{VB-93}}]
		Let $\Xn$ be a reflexive Banach space with the dual $\Xn^\ast$. The map $\Fn:D(\Fn)  \to \Xn^\ast$, with $D(\Fn)\subset 
		\Xn$, is called to be \textit{monotone} (or \textit{accretive}) if
		\begin{align*}
			\langle \Fn x - \Fn y, x-y \rangle \geq 0,\ \text{ for all }\ x,y \in D(\Fn ).
		\end{align*}
		Moreover, $\Fn $ is \textit{maximal monotone} if there is no monotone operator that properly contains it and \textit{$m-$accretive} if the range is the same as the codomain, i.e., $R(I + \Fn ) = \Xn^\ast$.
	\end{definition}
	
	\begin{theorem}[{\cite[A single valued version of Theorem 1.2]{VB-93}}]\label{Thm-Max-Mono}
		Let $\Xn$ and $\Xn^*$ be reflexive and strictly convex Banach spaces. 
		Then, a map  $\Fn : \Xn \to \Xn^*$ is maximal monotone if and if, for any $\lambda>0$ (equivalently, for some $\lambda >0$), $R(\Fn  + \lambda I) = \Xn^*$.
	\end{theorem}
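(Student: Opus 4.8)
The plan is to prove the equivalence by closing a cycle of implications: maximal monotonicity $\Rightarrow$ surjectivity of $\Fn + \lambda I$ for \emph{every} $\lambda>0$ $\Rightarrow$ surjectivity for \emph{some} $\lambda>0$ $\Rightarrow$ maximal monotonicity. Throughout I read $I$ as the normalized duality mapping $J:\Xn\to\Xn^*$ (which collapses to the identity in the Hilbert setting relevant to our application). The first thing I would record is that, because $\Xn$ and $\Xn^*$ are reflexive and strictly convex, $J$ is single-valued, bounded, demicontinuous, coercive, and strictly monotone, with $\langle Jx-Jy,\,x-y\rangle=0$ forcing $x=y$; these are the only structural facts about $J$ that the argument uses.

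For the easy implication (surjectivity for some $\lambda>0$ $\Rightarrow$ maximal monotonicity), I suppose $\Fn$ is monotone and $R(\Fn+\lambda I)=\Xn^*$, and take any pair $[x_0,y_0]$ monotonically related to the graph of $\Fn$, i.e. $\langle y_0-\Fn x,\,x_0-x\rangle\geq 0$ for all $x\in\Xn$; maximality amounts to showing $y_0=\Fn x_0$. Using the surjectivity hypothesis I solve $\Fn x_1+\lambda J x_1 = y_0+\lambda J x_0$, substitute $x=x_1$ into the monotonicity relation, and replace $y_0-\Fn x_1$ by $\lambda(Jx_1-Jx_0)$. This yields $\langle Jx_1-Jx_0,\,x_1-x_0\rangle\leq 0$, which together with monotonicity of $J$ gives equality, whence $x_1=x_0$ by strict monotonicity and therefore $y_0=\Fn x_1=\Fn x_0$.

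The substantive direction is that maximal monotonicity implies $R(\Fn+\lambda I)=\Xn^*$ for every $\lambda>0$, which is the Browder--Minty surjectivity theorem applied to $\Fn+\lambda J$. The enabling observation is that a single-valued, everywhere-defined maximal monotone operator on a reflexive Banach space is locally bounded and demicontinuous, hence hemicontinuous; the sum $\Fn+\lambda J$ is then monotone, hemicontinuous, and coercive, since the $\lambda J$ term forces $\langle(\Fn+\lambda J)x,\,x\rangle\geq \lambda\|x\|^2-\|\Fn 0\|_{\Xn^*}\|x\|$. I would then invoke Browder--Minty (a monotone, coercive, hemicontinuous map from a reflexive Banach space into its dual is surjective); alternatively one performs a Galerkin reduction to finite-dimensional subspaces, solves each via Brouwer's theorem, extracts a uniform a priori bound from coercivity, and passes to the weak limit by Minty's trick.

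The main obstacle is precisely this surjectivity step: either justifying the demicontinuity of an everywhere-defined maximal monotone operator so that Browder--Minty applies directly, or executing the Galerkin-plus-Minty limiting argument with care. Once $R(\Fn+\lambda I)=\Xn^*$ holds for all $\lambda>0$, the trivial step ``for all $\lambda$'' $\Rightarrow$ ``for some $\lambda$'' closes the cycle together with the easy implication, which establishes the asserted equivalence and, in particular, shows that the resolvent-range condition is independent of the choice of $\lambda>0$.
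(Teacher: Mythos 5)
The paper offers no proof of Theorem \ref{Thm-Max-Mono}: it is quoted directly from Barbu \cite[Theorem 1.2]{VB-93}, so there is no in-paper argument to compare against, and your proposal must be judged on its own terms. It is correct, and it is essentially a streamlined single-valued version of the cited proof. Your decision to read $I$ as the normalized duality map $J$ is the right one --- with the literal identity the statement does not even typecheck in a general Banach space, Barbu's formulation uses $J$, and in the paper's only application (Step IV of Theorem \ref{Thm-m-accretive}, where $\Xn = L^2(\bO) \cong \Xn^*$) $J$ collapses to the identity, so nothing is lost. Your easy direction is complete and is exactly the classical argument: solving $\Fn x_1 + \lambda J x_1 = y_0 + \lambda J x_0$, substituting into the monotone relation, and concluding $x_1 = x_0$ from $\langle Jx_1 - Jx_0, x_1 - x_0\rangle \le 0$ together with the strict monotonicity of $J$, which is where the strict convexity hypotheses enter (strict convexity of $\Xn^*$ for single-valuedness of $J$, of $\Xn$ for strict monotonicity). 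For the substantive direction you correctly isolate the real content: an everywhere-defined, single-valued maximal monotone map is locally bounded (Rockafellar) and hence demicontinuous by the reflexivity-plus-subsequence argument you sketch (any weak cluster point of $\Fn x_n$ is monotonically related to the graph, so maximality pins it to $\Fn x$); then $\Fn + \lambda J$ is monotone, demicontinuous, and coercive via $\langle(\Fn + \lambda J)x, x\rangle \ge \lambda\|x\|^2 - \|\Fn 0\|_{\Xn^*}\|x\|$, and the Browder--Minty surjectivity theorem --- precisely the statement the paper quotes as Corollary \ref{Cor-Mono+Hemi+Coe=Onto} --- yields $R(\Fn + \lambda J) = \Xn^*$ for every $\lambda>0$, closing your cycle of implications. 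Compared with Barbu's treatment, which handles multivalued operators and is correspondingly heavier, your route buys simplicity by exploiting the single-valued, everywhere-defined hypothesis; the one step you flag as the main obstacle (demicontinuity) is indeed a genuine theorem needing the local-boundedness input, but your sketch of it is sound.
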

	\begin{remark}
		Let us remark that Pazy’s book \cite{AP-83} deals only with linear operators and therefore does not provide the relevant definitions and results. Since we are working with a nonlinear operator, we instead refer to Barbu’s book \cite{VB-93}.
	\end{remark}
	
	\begin{corollary}[{\cite[Corollary 1.3]{VB-93}}]\label{Cor-Mono+Hemi+Coe=Onto}
		A monotone, hemicontinuous, and coercive map  $\Fn $ from a reflexive Banach space $\Xn$ to its dual $\Xn^*$, is surjective.
	\end{corollary}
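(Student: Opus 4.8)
The plan is to deduce surjectivity from Theorem \ref{Thm-Max-Mono} by a regularization-and-limit argument in the spirit of Browder and Minty. Since the corollary only assumes reflexivity while Theorem \ref{Thm-Max-Mono} additionally requires strict convexity of $\Xn$ and $\Xn^*$, I would first invoke the Troyanski (Asplund) renorming theorem to replace the norm on $\Xn$ by an equivalent one under which both $\Xn$ and $\Xn^*$ are strictly convex. Monotonicity, hemicontinuity and coercivity of $\Fn$, as well as the purely set-theoretic statement $R(\Fn)=\Xn^*$, are all invariant under passage to an equivalent norm, so there is no loss of generality in assuming strict convexity from the outset. Let $\J:\Xn\to\Xn^*$ denote the resulting single-valued, demicontinuous normalized duality map, which is the operator playing the role of the identity $I$ when one passes from $\Xn$ to $\Xn^*$ in Theorem \ref{Thm-Max-Mono}.

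Second, I would record that a monotone, hemicontinuous map $\Fn:\Xn\to\Xn^*$ is maximal monotone. This is the standard Minty argument: if a pair $(x_0,y_0)\in\Xn\times\Xn^*$ satisfies $\langle y_0-\Fn v,\,x_0-v\rangle\geq 0$ for all $v\in\Xn$, then choosing $v=x_0-tw$, dividing by $t>0$ and letting $t\to 0^+$ along the segment, hemicontinuity yields $\langle y_0-\Fn x_0,\,w\rangle\geq 0$ for every $w\in\Xn$, whence $y_0=\Fn x_0$; thus the graph of $\Fn$ admits no proper monotone extension. By Theorem \ref{Thm-Max-Mono}, maximality is equivalent to $R(\Fn+\lambda\J)=\Xn^*$ for every $\lambda>0$.

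Third, fix an arbitrary $f\in\Xn^*$. For each $\lambda\in(0,1]$ the previous step produces $u_\lambda\in\Xn$ with $\Fn u_\lambda+\lambda\J u_\lambda=f$. Pairing with $u_\lambda$ and using $\langle\J u_\lambda,u_\lambda\rangle=\norm{u_\lambda}^2\geq 0$ gives $\langle\Fn u_\lambda,u_\lambda\rangle\leq\langle f,u_\lambda\rangle\leq\norm{f}_{\Xn^*}\norm{u_\lambda}$, so $\langle\Fn u_\lambda,u_\lambda\rangle/\norm{u_\lambda}\leq\norm{f}_{\Xn^*}$ whenever $u_\lambda\neq 0$. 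Coercivity then forces $\sup_{0<\lambda\leq 1}\norm{u_\lambda}=:M<\infty$; consequently $\norm{\lambda\J u_\lambda}_{\Xn^*}=\lambda\norm{u_\lambda}\leq\lambda M\to 0$, and therefore $\Fn u_\lambda=f-\lambda\J u_\lambda\to f$ strongly in $\Xn^*$ as $\lambda\to 0^+$.

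Finally, by reflexivity the bounded family $\{u_\lambda\}$ admits a sequence $\lambda_n\to 0^+$ with $u_{\lambda_n}\rightharpoonup u$ in $\Xn$, and I would conclude $\Fn u=f$ by Minty's trick. The main obstacle, and the only place needing care, is this limit passage: one has $\langle\Fn u_{\lambda_n},u_{\lambda_n}\rangle=\langle f,u_{\lambda_n}\rangle-\lambda_n\norm{u_{\lambda_n}}^2\to\langle f,u\rangle$, so from the monotonicity inequality $\langle\Fn u_{\lambda_n}-\Fn v,\,u_{\lambda_n}-v\rangle\geq 0$ together with the strong convergence $\Fn u_{\lambda_n}\to f$ one obtains $\langle f-\Fn v,\,u-v\rangle\geq 0$ for all $v\in\Xn$; testing with $v=u-tw$ and letting $t\to 0^+$ via hemicontinuity yields $\langle f-\Fn u,\,w\rangle\geq 0$ for every $w\in\Xn$, hence $\Fn u=f$. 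Since $f\in\Xn^*$ was arbitrary, $\Fn$ is surjective.
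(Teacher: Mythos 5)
The paper does not prove this corollary at all---it is quoted verbatim, with citation, from Barbu \cite{VB-93}---and your argument is precisely the standard proof found in that source: Minty's trick upgrades a monotone hemicontinuous map to a maximal monotone one, Theorem \ref{Thm-Max-Mono} (with the duality map $\J$ correctly read in place of $I$, after a Troyanski renorming to secure strict convexity of $\Xn$ and $\Xn^*$) solves the regularized equation $\Fn u_\lambda + \lambda \J u_\lambda = f$, coercivity yields the uniform bound $\sup_\lambda \norm{u_\lambda} < \infty$, and the strong--weak Minty limit passage identifies $\Fn u = f$. Your proof is correct, and since the paper offers only the citation, it coincides in approach with the cited reference rather than deviating from anything in the manuscript.
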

	
	\begin{proposition}[{\cite[A single valued version of Proposition 3.3]{VB-93}}]\label{Prop-m-acc-Range}
		A map  $\Fn :\Xn\to \Xn^\ast$ is $m-$accretive if and only if $R(\Fn  + \lambda I) = \Xn^\ast$, for all (equivalently, for some) $\lambda>0$.
	\end{proposition}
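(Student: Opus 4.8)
The statement to be proved bundles three conditions into one chain of equivalences: $m$-accretivity of $\Fn$, the range condition $R(\Fn+\lambda I)=\Xn^\ast$ for \emph{all} $\lambda>0$, and the same for \emph{some} $\lambda>0$. The plan is to route everything through maximal monotonicity, for which Theorem \ref{Thm-Max-Mono} already packages the ``for all $\iff$ for some'' equivalence. The first observation is purely definitional: by the definition recalled above, $m$-accretivity of $\Fn$ means that $\Fn$ is monotone and $R(I+\Fn)=\Xn^\ast$, and this last condition is precisely the range condition evaluated at the single value $\lambda=1>0$. Thus $m$-accretivity is exactly ``monotone $+$ range condition for some $\lambda$,'' and the whole proposition reduces to showing that, for a monotone $\Fn$, validity of the range condition at one $\lambda_0>0$ forces it at every $\lambda>0$.

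To obtain this $\lambda$-independence I would appeal directly to Theorem \ref{Thm-Max-Mono}, which asserts that a monotone map $\Fn:\Xn\to\Xn^\ast$ is maximal monotone if and only if $R(\Fn+\lambda I)=\Xn^\ast$ holds for some (equivalently, every) $\lambda>0$. Applying this with $\lambda=1$, $m$-accretivity implies maximal monotonicity, which in turn yields $R(\Fn+\lambda I)=\Xn^\ast$ for all $\lambda>0$; conversely, the range condition at some $\lambda_0>0$ makes $\Fn$ maximal monotone, hence monotone with $R(I+\Fn)=\Xn^\ast$, i.e.\ $m$-accretive. The one hypothesis mismatch is that Theorem \ref{Thm-Max-Mono} requires $\Xn$ and $\Xn^\ast$ to be reflexive \emph{and} strictly convex, whereas the proposition only assumes reflexivity. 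I would close this gap with the Asplund--Troyanski renorming theorem: a reflexive space admits an equivalent norm that is strictly convex with strictly convex dual, and since maximal monotonicity depends only on the duality pairing (through the monotonicity inequality and maximality of the graph) it is invariant under passing to an equivalent norm. In the situation actually used in this paper, where $\Xn=\Xn^\ast=L^2(\bO)$ is a Hilbert space and $I$ is the genuine identity, this subtlety disappears entirely and Theorem \ref{Thm-Max-Mono} applies verbatim.

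If one prefers a self-contained argument not invoking Theorem \ref{Thm-Max-Mono}, the $\lambda$-independence can be proved by a resolvent-continuation (Minty-type) argument, which is really the engine behind the whole equivalence. Assuming $R(\Fn+\lambda_0 I)=\Xn^\ast$ for some $\lambda_0>0$, monotonicity of $\Fn$ makes $\Fn+\lambda_0 I$ strongly monotone, so its inverse $(\Fn+\lambda_0 I)^{-1}$ is single-valued and Lipschitz with constant $1/\lambda_0$; solving $(\Fn+\lambda I)u=f$ is then equivalent to the fixed-point equation $u=(\Fn+\lambda_0 I)^{-1}\big(f-(\lambda-\lambda_0)u\big)$, whose right-hand side is a contraction whenever $\abs{\lambda-\lambda_0}<\lambda_0$. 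The Banach fixed-point theorem gives surjectivity for all $\lambda\in(0,2\lambda_0)$, and bootstrapping from a new base point (e.g.\ $\tfrac{3}{2}\lambda_0$) enlarges the admissible interval geometrically until all $\lambda>0$ are covered. I expect the main obstacle to lie exactly here, in the non-Hilbert Banach setting: verifying the Lipschitz/coercivity estimate for the resolvent when $I$ must be read as the (possibly set-valued) duality map, which is what the renorming reduction is designed to tame; over a Hilbert space, where this estimate is immediate, the argument is routine.
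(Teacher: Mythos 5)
The paper does not actually prove this proposition: it is quoted verbatim from Barbu's book, and the surrounding text only combines it with Theorem \ref{Thm-Max-Mono} (see Remark \ref{Rmk-max+mono-m-acc}) to conclude that in a Hilbert space $m$-accretivity coincides with maximal monotonicity. Your proposal must therefore be compared against the source rather than an in-paper argument, and on that score it holds up. Your first route --- reading $m$-accretivity as ``monotone plus the range condition at $\lambda=1$'' and invoking Theorem \ref{Thm-Max-Mono} for the some-versus-all equivalence --- is legitimate and not circular, since both statements are independently cited results; your second route, the resolvent continuation via $u=(\Fn+\lambda_0 I)^{-1}\big(f-(\lambda-\lambda_0)u\big)$ with contraction for $\abs{\lambda-\lambda_0}<\lambda_0$ and geometric bootstrapping to all $\lambda>0$, is essentially Barbu's own proof of the quoted result. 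Two refinements are worth recording. First, the obstacle you flag in the non-Hilbert case is an artifact of the paper's $\Fn:\Xn\to\Xn^\ast$ phrasing with a literal ``$I$'': in Barbu's accretive setting the operator maps $X$ into $X$, accretivity is defined through the duality map applied to $u-v$, and the resulting estimate $\norm{u-v}\le\norm{u-v+\lambda(\Fn u-\Fn v)}$ makes the resolvents nonexpansive in \emph{every} Banach space, so the continuation argument needs no Hilbert structure there. Second, your renorming patch is slightly off target: an equivalent Asplund--Troyanski norm indeed leaves maximal monotonicity unchanged, but it changes the duality map, hence the meaning of the range condition $R(\Fn+\lambda I)=\Xn^\ast$, so renorming alone cannot transfer the range condition between the two norms; this step would need more care in a genuinely non-Hilbert setting. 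Neither point matters in the only setting the paper uses, $\Xn=\Xn^\ast=L^2(\bO)$, where $I$ is the identity, the Lipschitz bound $1/\lambda_0$ for $(\Fn+\lambda_0 I)^{-1}$ is immediate from monotonicity, and both of your routes are complete.
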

	
	
	\begin{remark}[{\cite[p. 103]{VB-93}}]\label{Rmk-max+mono-m-acc}
		By Theorem \ref{Thm-Max-Mono} and Proposition \ref{Prop-m-acc-Range}, note that, if  $\Xn=\Hn\cong \Hn^*$ is a Hilbert space, then $\Fn $ is $m-$accretive if and only if maximal monotone.
	\end{remark}
	
	\subsubsection{The $\omega$-limit set}
	\begin{definition}[{\cite[Definition 4.3.1]{DH-81}}]
		Suppose $\C$ is a complete metric space. Let $\{S(t): t\geq 0\}$ be a dynamical system on $\C$. A set $K\subset \C$ is \emph{invariant} if, for any $u_0\in K$, there exists a continuous curve $u:[0,\infty) \to K$ with $u(0) = u_0$ and
		\begin{align*}
			S(t)u(\tau) = u(t+ \tau)\ \text{ for }\ t,\tau \geq 0.
		\end{align*}
	\end{definition}
	\begin{definition}[{\cite[Definition 4.3.2]{DH-81}}]\label{Def-omega-limit-set/orbit}
		If $u_0\in \C$ is an initial data, $\gamma(u_0):=\{u(t)= S(t)u_0: t\geq 0\}$ is the \emph{orbit} through $u_0$, then the $\omega-\emph{limit set}$ for $u_0$ or for the orbit  $\gamma(u_0)$ is
		\begin{align*}
			\omega(u) := \{w\in \C: \exists\ t_n \to \infty \text{ such that } u(t_n)=S(t_n)u_0 \to w\}.
		\end{align*}
	\end{definition}
	\begin{lemma}[{\cite[Exercise 1]{DH-81}}]\label{Lem-Ex-1}
		If $u_1 \in \gamma(u_0)$, then $\omega(u) = \omega(u_1)$. Also
		\begin{align*}
			\omega(u) = \bigcap_{\tau\geq 0} \overline{\{S(t)u_0 : t\geq \tau\}} = \bigcap_{\tau\geq 0} \overline{\gamma(S(\tau)u_0)}.
		\end{align*}
	\end{lemma}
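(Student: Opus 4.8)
The plan is to establish the intersection characterization first (the ``Also'' part of the statement), since it is the natural tool for deducing the invariance $\omega(u) = \omega(u_1)$. Throughout, I would write $E_\tau := \overline{\{S(t)u_0 : t \geq \tau\}}$ and let $d$ denote the metric on $\C$. The second equality in the display is immediate from the semigroup property of the dynamical system: since $S(t)S(\tau) = S(t+\tau)$, we have $\gamma(S(\tau)u_0) = \{S(t)S(\tau)u_0 : t \geq 0\} = \{S(s)u_0 : s \geq \tau\}$, so these two sets already coincide before taking closures, and hence so do their closures.

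For the first equality $\omega(u) = \bigcap_{\tau \geq 0} E_\tau$, I would argue by double inclusion. If $w \in \omega(u)$, choose $t_n \to \infty$ with $S(t_n)u_0 \to w$; for each fixed $\tau \geq 0$, all but finitely many $t_n$ exceed $\tau$, so $w$ is a limit of points of $\{S(t)u_0 : t \geq \tau\}$ and therefore $w \in E_\tau$. As $\tau$ is arbitrary, $w \in \bigcap_{\tau \geq 0} E_\tau$. Conversely, if $w \in \bigcap_{\tau \geq 0} E_\tau$, then for each $n \in \N$ membership in $E_n$ produces some $t_n \geq n$ with $d(S(t_n)u_0, w) < \tfrac{1}{n}$; this yields a sequence $t_n \to \infty$ with $S(t_n)u_0 \to w$, so $w \in \omega(u)$. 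This reverse inclusion is the only spot where the metric structure of $\C$ is genuinely used, namely to convert closure back into a divergent convergent sequence.

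Finally, for $\omega(u) = \omega(u_1)$ with $u_1 = S(s)u_0 \in \gamma(u_0)$, I would apply the characterization just proved to the orbit through $u_1$. Using $S(t)u_1 = S(t+s)u_0$ and reindexing $r = t+s$, the characterization gives $\omega(u_1) = \bigcap_{\tau \geq 0} \overline{\{S(r)u_0 : r \geq \tau + s\}} = \bigcap_{\sigma \geq s} E_\sigma$. The decisive observation is that $\tau \mapsto E_\tau$ is nested decreasing, so discarding the indices $0 \leq \sigma < s$ (whose sets merely contain $E_s$) leaves the intersection unchanged; hence $\bigcap_{\sigma \geq s} E_\sigma = \bigcap_{\sigma \geq 0} E_\sigma = \omega(u)$, as desired.

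I do not expect any genuinely hard step: the argument is elementary point-set topology. The two points requiring a little care are the extraction of a divergent sequence $t_n \to \infty$ in the reverse inclusion above, and the monotonicity of $\tau \mapsto E_\tau$, which is precisely what makes the reindexing in the last paragraph legitimate.
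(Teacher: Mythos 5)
Your proof is correct and complete. Note that the paper itself offers no proof of this lemma at all---it is quoted verbatim from Henry's book \cite{DH-81} as an exercise---so there is nothing internal to compare against; your argument is the standard one that the citation implicitly invokes. All three ingredients check out: the semigroup identity $S(t)S(\tau)u_0 = S(t+\tau)u_0$ gives $\gamma(S(\tau)u_0) = \{S(s)u_0 : s \geq \tau\}$ before closures are taken; the double inclusion for $\omega(u) = \bigcap_{\tau \geq 0} E_\tau$ is sound, with the metric (first-countable) structure correctly identified as the only place it is needed, namely extracting $t_n \geq n$ with $d(S(t_n)u_0, w) < \tfrac{1}{n}$ from membership in each $E_n$; and the nestedness $E_{\tau_2} \subseteq E_{\tau_1}$ for $\tau_1 \leq \tau_2$ legitimately justifies $\bigcap_{\sigma \geq s} E_\sigma = \bigcap_{\sigma \geq 0} E_\sigma$, which yields $\omega(u_1) = \omega(u)$ for $u_1 = S(s)u_0$.
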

	
	\subsection{Linear operator}
	We now turn to the Laplace operator with Dirichlet boundary conditions, when $\bO$ is a Poincar\'e domain. One can use the Poincar\'e  inequality \eqref{2.1} to define an equivalent norm on $H_0^1(\bO)$ as $\|u\|_{H_0^1(\bO)}=\|\nabla u\|_{L^2(\bO)}$. 
	
	Let us define a bilinear form 
	\[a: H_0^1(\bO)\times H_0^1(\bO) \to \R \;\; \mbox{by}\; \; a(u,v) := (\nabla u, \nabla v),\; \; \mbox{for}\;\; u,v \in H_0^1(\bO).\]
	From the definition of $a(\cdot, \cdot)$, it follows that $a(\cdot, \cdot)$ is continuous on $H_0^1(\bO)$; specifically, 
	\[\abs{a(u,v)} \le \norm{u}_{H_0^1(\bO)} \norm{v}_{H_0^1(\bO)},\ \text{ for all } \ u,v \in H_0^1(\bO).\] 
	By the Riesz representation Theorem, there exists a unique linear operator $\A : H_0^1(\bO) \to H^{-1}(\bO)$, such that
	\begin{align*}
		a(u,v) = \left\langle\A u, v\right\rangle,\ \mbox{ for all }\ u,v \in H_0^1(\bO).
	\end{align*}
	Furthermore, the form $a(\cdot, \cdot)$ is coercive on $H_0^1(\bO)$, meaning there exists $ \alpha >0$ (in fact, $\alpha=1$) such that $a(u,u) \geq \alpha \norm{u}^2_{H_0^1(\bO)}$, for all $u\in H_0^1(\bO)$. 
	As a consequence, the Lax-Milgram Theorem guarantees that $\A$ is an isomorphism between $H_0^1(\bO)$ and $H^{-1}(\bO)$. Now, we introduce an unbounded linear operator $A$ on $L^2(\bO)$ defined by
	\begin{align*}
		A u & := \A u=-\Delta u, \ \mbox{ for all } \  u \in D(A):= \{u\in H_0^1(\bO): \A u \in L^2(\bO)\},
	\end{align*}
where $D(A)$ is equipped with the graph norm. For a general domain $\bO$, the definition of $D(A)$ differs from the one appearing in \eqref{Def-D(A)}, but they agree if $\partial\bO$ is of class $C^2$.	Observe that $A$ is a self-adjoint operator.  Since $\bO$ is a Poincar\'e domain, it follows from the Poincar\'e inequality \eqref{2.1} that 
	\begin{align*}
		\|u\|_{L^2(\bO)}^2\leq \frac{1}{\lambda_1}\|\nabla u\|_{L^2}^2=\frac{1}{\lambda_1}(A u,u)\leq \frac{1}{\lambda_1}\|A u\|_{L^2(\bO)}\|u\|_{L^2(\bO)},
	\end{align*}
	so that $$	\|u\|_{L^2(\bO)}\leq\frac{1}{\lambda_1}\|A u\|_{L^2(\bO)}, \ \text{ for all }\ u\in D(A). $$ 
	The elliptic regularity theory \cite[Theorem 15.2]{SA+AD+LN-59} provides 
	\begin{equation}\label{eqn-elliptic-reg}
		\|u\|_{H^2(\bO)}\leq C\left(\|u\|_{L^2(\bO)}+\|A u\|_{L^2(\bO)}\right)\leq C\left(\frac{1}{\lambda_1}+1\right)\|Au\|_{L^2(\bO)},
	\end{equation}
	which leads to the identification $D(A)=H^2(\bO)\cap H_0^1(\bO)$. Hence, the graph norm $\|A u\|_{L^2} $ is equivalent to the norm induced by $ H^2(\mathcal{O}) $ and the norm $\|(I+A)u\|_{L^2(\bO)}$, and $$D(A)=D(I+A)=H^2(\bO)\cap H_0^1(\bO).$$
	By choosing $\mathfrak{f}(\cdot, \cdot) = a(\cdot,\cdot)$ and $\Fn = A,$ in  Theorem \ref{Thm-Rep}, we also have $	D(A^{\frac{1}{2}}) = H_0^1(\bO).$
	
	\subsection{Nonlinear operator}
	Consider the nonlinear operator
	\begin{align}\label{Def-Nn}
		\Nn: L^p( \bO)\to L^{p^\prime}( \bO )\;\; \mbox{by} \;\; \Nn(u) := \abs{u}^{p-2} u, 
	\end{align}
	where $p^\prime = \frac{p}{p-1}$. As shown in \cite[Section 2.4]{SG+MTM-25} and \cite[p. 626]{MTM-22}, the nonlinear operator $\Nn$ is monotone in the following sense: for any $p \geq 2$,
	\begin{align*}
		\langle \Nn(u) - \Nn(v), u-v \rangle
		\geq \int_\bO \left(\abs{u(x)}^{p-1}- \abs{v(x)}^{p-1}\right) (\abs{u(x)} - \abs{v(x)})\, dx \geq 0.
	\end{align*}
	Additionally, the following inequality holds:
	\begin{align}
		\langle \Nn(u) - \Nn(v), u-v \rangle \geq \frac{1}{2} \big\|\abs{u}^{\frac{p}{2}-1}(u-v)\big\|^2_{L^2(\bO)} + \frac{1}{2} \big\|\abs{v}^{\frac{p}{2}-1}(u-v)\big\|^2_{L^2(\bO)}.\label{eqn-mono-2}
	\end{align}

	\subsection{Gradient flow}\label{Subsec-Gradient-flow}
	We now show that the energy functional $\En$, defined in \eqref{Def-En}, exhibits dissipative behavior over time. First, we demonstrate that the strong solution to problem \eqref{eqn-main-heat}, as characterized in Definition \ref{Def-strong-soln}, evolves as a gradient flow and obeys the energy inequality \eqref{eqn-energy-eqn} given below; see also \cite[Remark 4.12]{AB+ZB+MTM-25+}.

	For a fixed $u\in\bM$,   the  gradient of $\En$ tangent to $\bM$ is given by 
	\begin{align}
		\nabla_{\bM}\En(u) & = \pi_u(\nabla\En(u)) = \pi_u\left(-\Delta u+|u|^{p-2}u \right)\\
		& = \left(-\Delta u+|u|^{p-2}u \right) - \left( \norm{\nabla u}_{L^2(\bO)}^2 + \norm{u}_{L^p(\bO)}^p \right)u, \label{eqn-tangent-gradient}
	\end{align}
	where $\nabla_{\bM}$ is gradient of $\En$ on the tangent $T_u\bM$ and $\pi_u(\cdot)$ is the orthogonal projection onto $T_u\bM$.
	Accordingly, problem \eqref{eqn-main-heat} can thus be reformulated as
	\begin{align}\label{eqn-gradient-flow}
		\left\{\begin{aligned}
			\frac{\partial u(t)}{\partial t} & = -\nabla_{\bM}\En(u(t)),\\
			u(0) & = u_0, \\
			u(t)|_{\partial\bO} & = 0.
		\end{aligned}\right.
	\end{align}
	Hence the strong solution $u$ of the above problem is a \textit{gradient flow}.
	Moreover, for any $u\in\bM$ and a.e. $t\in[0,T]$, we also have
	\begin{align}
		\frac{d}{dt}\En(u(t))&=\left(\nabla_{\bM}\En(u(t)),\frac{\partial u(t)}{\partial t}\right)=\left(\nabla_{\bM}\En(u(t)),-\nabla_{\bM}\En(u(t))\right)\\&=-\|\nabla_{\bM}\En(u(t))\|_{L^2(\bO)}^2,\label{eqn-En'}
	\end{align}
	and so
	\begin{align}\label{eqn-energy-eqn}
		\En(u(t))+\int_0^t\|\nabla_{\bM}\En(u(s))\|_{L^2(\bO)}^2ds = \En(u_0),\ t\in[0,T].
	\end{align}
	It implies that $\En(u(\cdot))$ is decreasing in time.
	Thus, by using \eqref{eqn-gradient-flow} in the above equation, for all $t\geq 0$, we also have \eqref{eqn-dec-ener}.

	\section{Proof of Theorem \ref{Thm-Main-Exis}}\label{Sec-Thm-Exi}
	In this section, the main objective is to establish the proof of Theorem \ref{Thm-Main-Exis} in two parts. We begin by showing that the cut-off operator $\gn^K$ (see \eqref{Def-gn}) satisfies some useful estimate and is demicontinuous in nature. Using this, we prove that the main cut-off operator $\Gn^K + \Gamma I$ (see \eqref{Def-Gn-K}) satisfies the $m-$accretivity property. Thus, with the help of the cut-off technique and the abstract theory of $m-$accretive operators, in the first part, we show that there exists a unique global strong solution $u$ to the problem \eqref{eqn-main-heat} in the space $W^{1,\infty}([0,T];L^2(\bO))\cap L^{\infty}(0,T; D(A))$. In the second part, using the Yosida approximation, we show that the strong solution obtained in the first part indeed belongs to a higher regularity class, namely $u \in L^2(0,T; D(A^{\frac{3}{2}}))\cap C([0,T]; D(A))$ with $\frac{\partial u}{\partial t} \in L^2(0,T; H_0^1(\bO))$. Finally, we conclude this section with a time-regularity result for the strong solution $u$ obtained in the proof of Theorem \ref{Thm-Main-Exis}.
	
	To begin, let us fix $\bO\subset\R^d$ to be a Poincar\'e domain with $C^2-$boundary and choose
	\begin{align}\label{eqn-D(A)-in-L^p-assump}
		p \in \left\{
		\begin{aligned}
			[2,\infty), & \text{ when }\  1 \le d\le 4,\\ 
			\Big[2, \frac{2d-4}{d-4}\Big], & \text{ when }\  d \ge 5,
		\end{aligned}
		\right.
	\end{align}
	throughout this section, unless specified. In particular, the second condition in \eqref{eqn-D(A)-in-L^p-assump} suggests that the Sobolev embedding $D(A)\embed L^{2p-2}(\bO)$ is valid.
	
	Let us now consider the nonlinear operator of interest $D(A) \ni u \mapsto \Gn(u) \in L^2(\bO)$ defined as 
	\begin{align*}
		\Gn(u) & := A u + \abs{u}^{p-2}u - \big(\norm{\nabla u }_{L^2(\bO)}^2 + \norm{u}_{L^p(\bO)}^p\big)u\\
		& =: A u + \abs{u}^{p-2}u - \gn(u).
	\end{align*}
	For fixed $K\in\N$, we consider a modified operator
	\begin{align}
		\Gn^K(u):=  A u + \abs{u}^{p-2}u  - \gn^K(u),\label{Def-Gn-K}
	\end{align}
	where the cutt-off map $\gn^K: D(A) \to L^2(\bO)$ is defined as
	\begin{align}\label{Def-gn}
		\gn^K(u) := \left\{\begin{aligned}
			\gn(u)&,\ \text{ if }\ \norm{\nabla u }_{L^2(\bO)}^2 + \norm{u}_{L^p(\bO)}^p \leq K,\\
			\frac{K^2}{(\norm{\nabla u }_{L^2(\bO)}^2 + \norm{u}_{L^p(\bO)}^p)^2} \gn(u)&,\ \text{ if }\ \norm{\nabla u }_{L^2(\bO)}^2 + \norm{u}_{L^p(\bO)}^p > K.
		\end{aligned}\right.
	\end{align}

	\subsection{Monotone-type estimates and demicontinuity}\label{Sec-bound-demi}
	In this subsection, we show that the nonlinear cut-off operator $\gn^K: D(A) \to L^2(\bO),$ defined in \eqref{Def-gn}, satisfies some estimates that will be utilized in the next subsection (to prove that the main modified operator $\Gn^K+\Gamma I$ is monotone) and it is demicontinous.

	\begin{definition}\label{Def-Mono-Demi-Coer}
		Let us consider the map $N: D(A) \to L^2(\bO)$. 
		\begin{enumerate}
			\item The map $N$, with $D(N)= D(A)$, is called \emph{monotone} if and only if
			\begin{align*}
				(N (x) - N (y), x-y ) \geq 0,\ \text{ for all }\ x,y \in D(N).
			\end{align*}
			\item Let $\{\psi_k\}_{k\in \N} \subset D(A)$ be any sequence with $\psi_k \to \psi$ in $D(A)$, as $k\to\infty$. Then, the map $N$ is \emph{demicontinuous} if and only if
			$$ (N(\psi_k) - N(\psi), \eta)  \to 0\, \text{ as } \ k\to \infty,\ \text{ for any }\ \eta\in L^2(\bO).$$
			\item We say the map $N$ is \emph{coercive}, if and only if for any $\eta\in D(A)$, we have the following:
			\begin{align*}
				\lim_{\|\eta\|_{L^2(\bO)} \to \infty} \frac{ (N(\eta), \eta)}{\|\eta\|_{L^2(\bO)}} =\infty.
			\end{align*}
		\end{enumerate}
	\end{definition}

	\begin{lemma}\label{Lem-gn-mono} 
		The  map $$D(A)\ni u \mapsto \gn^K(u) \in L^2(\bO)$$ is well-defined and it satisfies the following: for every $u,v\in D(A),$  there exists $C(K)>0$  such that
		\begin{align}
			(\gn^K(u) - \gn^K(v), u-v)  & \le \frac{1}{2}\norm{\nabla(u-v)}_{L^2(\bO)}^2 + C(K)\norm{u-v}_{L^2(\bO)}^2 \\
			& \quad + \frac{1}{4} \Big[\big\|\abs{u}^{\frac{p}{2}-1} \abs{u-v}\big\|_{L^2(\bO)}^2 + \big\|\abs{v}^{\frac{p}{2}-1} \abs{u-v}\big\|_{L^2(\bO)}^2 \Big],\label{eqn-gn-3-loc-mon}
		\end{align}
		where the constant
		\begin{align*}
			C(K) & = [1 + (2+ p^2 2^{2p-3})K\lambda_1^{-1}]K.
		\end{align*}
	\end{lemma}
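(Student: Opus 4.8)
My plan is to exploit the scalar structure of the cut-off. Writing $\Phi(u):=\norm{\nabla u}_{L^2(\bO)}^2+\norm{u}_{L^p(\bO)}^p$, one has $\gn^K(u)=g_K(\Phi(u))\,u$, where $g_K\colon[0,\infty)\to[0,\infty)$ is the scalar cut-off $g_K(s)=s$ for $s\le K$ and $g_K(s)=K^2/s$ for $s>K$. Two elementary properties of $g_K$ drive the entire estimate. First, $0\le g_K(s)\le K$ for all $s\ge0$, the two branches agreeing at $s=K$; this already gives well-definedness, since the prefactor is a bounded scalar and $u\in D(A)\subset L^2(\bO)$, so $\gn^K(u)\in L^2(\bO)$. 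Second, $g_K$ is $1$-Lipschitz but with \emph{decay at infinity}, which I would use in the sharpened form
\begin{align*}
\abs{g_K(a)-g_K(b)}\,\sqrt{b}\le 2K\,\frac{\abs{a-b}}{\sqrt{a}+\sqrt{b}},\qquad a,b\ge0.
\end{align*}
This sharpened bound is the crux: it is verified on the three regimes $a,b\le K$, $a,b>K$, and the mixed case, and it is precisely the decay $g_K(s)\sim K^2/s$ for $s>K$ that compensates the otherwise uncontrolled growth of the $D(A)$-norms of $u$ and $v$.

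With these in hand I would decompose
\begin{align*}
\gn^K(u)-\gn^K(v)=g_K(\Phi(u))\,(u-v)+\big(g_K(\Phi(u))-g_K(\Phi(v))\big)\,v,
\end{align*}
pair with $u-v$, and treat the two resulting terms separately. The first contributes $g_K(\Phi(u))\norm{u-v}_{L^2(\bO)}^2\le K\norm{u-v}_{L^2(\bO)}^2$, which is the source of the lone $K$ in $C(K)$. For the second I would use Cauchy--Schwarz together with the Poincar\'e inequality \eqref{2.1} in the form $\norm{v}_{L^2(\bO)}\le\lambda_1^{-1/2}\norm{\nabla v}_{L^2(\bO)}\le\lambda_1^{-1/2}\sqrt{\Phi(v)}$, so that
\begin{align*}
\abs{\big(g_K(\Phi(u))-g_K(\Phi(v))\big)(v,u-v)}\le\big(\abs{g_K(\Phi(u))-g_K(\Phi(v))}\sqrt{\Phi(v)}\big)\,\lambda_1^{-1/2}\norm{u-v}_{L^2(\bO)},
\end{align*}
and then invoke the sharpened bound to replace the bracket by $2K\,\abs{\Phi(u)-\Phi(v)}/(\sqrt{\Phi(u)}+\sqrt{\Phi(v)})$.

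It then remains to split $\Phi(u)-\Phi(v)$ into its Dirichlet part and its $L^p$ part and estimate each against $\sqrt{\Phi(u)}+\sqrt{\Phi(v)}$. For the Dirichlet part I would use $\abs{\norm{\nabla u}_{L^2(\bO)}^2-\norm{\nabla v}_{L^2(\bO)}^2}\le\norm{\nabla(u-v)}_{L^2(\bO)}(\norm{\nabla u}_{L^2(\bO)}+\norm{\nabla v}_{L^2(\bO)})\le\norm{\nabla(u-v)}_{L^2(\bO)}(\sqrt{\Phi(u)}+\sqrt{\Phi(v)})$, so that after division the residual factor is simply $\norm{\nabla(u-v)}_{L^2(\bO)}$; Young's inequality then produces $\tfrac12\norm{\nabla(u-v)}_{L^2(\bO)}^2$ together with a $2K^2\lambda_1^{-1}\norm{u-v}_{L^2(\bO)}^2$ contribution. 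For the $L^p$ part I would start from the pointwise inequality $\big||a|^p-|b|^p\big|\le p\max(\abs{a},\abs{b})^{p-1}\abs{a-b}$, split the exponent as $p-1=(\tfrac p2-1)+\tfrac p2$, and apply Cauchy--Schwarz in $\bO$ to extract the factor $\big(\norm{\abs{u}^{p/2-1}\abs{u-v}}_{L^2(\bO)}^2+\norm{\abs{v}^{p/2-1}\abs{u-v}}_{L^2(\bO)}^2\big)^{1/2}$ times $(\Phi(u)+\Phi(v))^{1/2}$, the latter again absorbing $\sqrt{\Phi(u)}+\sqrt{\Phi(v)}$ up to a harmless constant; a final Young's inequality yields the $\tfrac14[\cdots]$ term and a residual $\norm{u-v}_{L^2(\bO)}^2$ contribution, whose $p$-dependent weight $p^2 2^{2p-3}K^2\lambda_1^{-1}$ is obtained by collecting the constants from the pointwise inequality and from Young's inequality. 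Assembling the three contributions gives \eqref{eqn-gn-3-loc-mon} with the stated $C(K)$.

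The main obstacle is exactly the uniform-in-$(u,v)$ control: since the estimate must hold for \emph{all} $u,v\in D(A)$, the quantities $\norm{\nabla u}_{L^2(\bO)}$, $\norm{u}_{L^p(\bO)}$ and $\norm{v}_{L^2(\bO)}$ are a priori unbounded, and a direct use of the plain $1$-Lipschitz bound for $g_K$ would leave behind factors such as $(\sqrt{\Phi(u)}+\sqrt{\Phi(v)})\norm{v}_{L^2(\bO)}$ that cannot be absorbed. The sharpened Lipschitz-with-decay estimate for $g_K$, balanced against the Poincar\'e bound $\norm{v}_{L^2(\bO)}\le\lambda_1^{-1/2}\sqrt{\Phi(v)}$, is what forces every coefficient to collapse to a function of $K$ and $\lambda_1$ alone; making the mixed regime $a\le K<b$ of that estimate rigorous, where neither branch formula is globally valid, is the one genuinely delicate computation.
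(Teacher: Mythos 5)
Your proposal is correct, and it takes a genuinely different, more unified route than the paper. The paper never isolates the scalar structure of the cut-off: it estimates the $L^p$-difference via Taylor's formula with the factor $p2^{p-2}$ (inequality \eqref{eqn-gn-4-mono-2}), proves two separate four-case auxiliary lemmas (Lemmas \ref{Lem-gn-1-1} and \ref{Lem-gn-1-2}) bounding the rational expressions $\tfrac{K^2(\norm{\nabla u}+\norm{\nabla v})\norm{\nabla v}}{\Phi(u)\Phi(v)}\le 2K$ and $\tfrac{K^4(\norm{u}_{L^p}^p+\norm{v}_{L^p}^p)}{\Phi(u)^2\Phi(v)}\le 2K^2$, and then runs the main estimate three times over, once per regime combination (both norms $\le K$, both $>K$, mixed). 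Your factorization $\gn^K(u)=g_K(\Phi(u))u$ compresses all of this into one inequality, $\abs{g_K(a)-g_K(b)}\sqrt{b}\le 2K\abs{a-b}/(\sqrt a+\sqrt b)$, and the decomposition $\gn^K(u)-\gn^K(v)=g_K(\Phi(u))(u-v)+(g_K(\Phi(u))-g_K(\Phi(v)))v$ that you pair against $u-v$ is exactly the algebraic split the paper performs inside each of its three cases, so the two arguments reunite from there on. Your delicate mixed regime does check out, and more easily than you fear: for $a\le K<b$ with $ab\le K^2$ one has $K^2-ab\le K(b-a)$, since this rearranges to $K(K+a)\le b(K+a)$, while for $ab>K^2$ one has $ab-K^2\le ab-a^2=a(b-a)\le K(b-a)$; combined with the trivial $\sqrt a+\sqrt b\le 2\sqrt b$ this gives the claim, and the symmetric regime $b\le K<a$ is handled the same way. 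What your route buys is the elimination of the triple case analysis and of both auxiliary lemmas; what the paper's buys is exact constant-tracking.

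Two small caveats. First, your Young-step bookkeeping in the $L^p$-part yields a constant of the same shape $[1+(c_1+c_2(p))K\lambda_1^{-1}]K$ but not automatically the stated value $p^2 2^{2p-3}$ (for $p=2$, collecting your constants as written gives a larger factor); this is harmless, since the downstream $m$-accretivity argument (Theorem \ref{Thm-m-accretive}) only needs $\Gamma\ge C(K)$ for some explicit $C(K)$ of this form, but if you want the lemma's constant verbatim you should optimize the Young weights or revert to the paper's Taylor-formula estimate \eqref{eqn-gn-4-mono-2}. Second, well-definedness of $g_K(\Phi(u))$ needs $\Phi(u)<\infty$, i.e.\ the embedding $D(A)\embed L^p(\bO)\cap H_0^1(\bO)$ guaranteed by \eqref{Embed-D(A)-L^p}, which you should invoke rather than only $D(A)\subset L^2(\bO)$.
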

	
	We discuss the proof of this lemma after proving the following two results. 
	
	\begin{lemma}\label{Lem-gn-1-1} 
		Let $u,v\in D(A)$ be such that $\norm{\nabla u }_{L^2(\bO)}^2 + \norm{u}_{L^p(\bO)}^p, \norm{\nabla v }_{L^2(\bO)}^2 + \norm{v}_{L^p(\bO)}^p > K$.  Then, the following inequality holds:
		\begin{align}
			\frac{K^2 (\norm{\nabla u}_{L^2(\bO)} + \norm{\nabla v}_{L^2(\bO)})\norm{\nabla v }_{L^2(\bO)}}{(\norm{\nabla u }_{L^2(\bO)}^2 + \norm{u}_{L^p(\bO)}^p)(\norm{\nabla v }_{L^2(\bO)}^2 + \norm{v}_{L^p(\bO)}^p)} \leq  2K.\label{eqn-gn-1-1}
		\end{align}
	\end{lemma}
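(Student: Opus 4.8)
The plan is to discard the (nonnegative) $L^p$-contributions in the denominators and reduce the whole statement to an elementary scalar inequality. Introduce the shorthand $a := \norm{\nabla u}_{L^2(\bO)}^2 + \norm{u}_{L^p(\bO)}^p$ and $b := \norm{\nabla v}_{L^2(\bO)}^2 + \norm{v}_{L^p(\bO)}^p$, and write $X := \norm{\nabla u}_{L^2(\bO)}$, $Y := \norm{\nabla v}_{L^2(\bO)}$. The hypothesis reads $a > K$ and $b > K$, while nonnegativity of the $L^p$-terms yields the two crucial bounds $X^2 \le a$ and $Y^2 \le b$. In this notation the assertion \eqref{eqn-gn-1-1} is equivalent, after cancelling one factor of $K$, to
\begin{align*}
	\frac{K(X+Y)\,Y}{ab} \le 2.
\end{align*}

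First I would split the numerator as $(X+Y)Y = XY + Y^2$ and estimate the two resulting fractions separately. For the cross term, the bounds $X \le \sqrt{a}$ and $Y \le \sqrt{b}$ give $XY \le \sqrt{ab}$, whence
\begin{align*}
	\frac{K\,XY}{ab} \le \frac{K}{\sqrt{ab}} \le \frac{K}{K} = 1,
\end{align*}
where the last step uses $ab > K^2$, a consequence of $a,b > K$. For the quadratic term, the bound $Y^2 \le b$ together with $a > K$ gives
\begin{align*}
	\frac{K\,Y^2}{ab} \le \frac{K}{a} \le 1.
\end{align*}

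Adding these two estimates yields $\frac{K(X+Y)Y}{ab} \le 2$, and multiplying through by $K$ recovers \eqref{eqn-gn-1-1}. There is no genuine obstacle here: the only point requiring a little care is choosing the splitting $(X+Y)Y = XY + Y^2$ so that one term is controlled via the product $ab$ (exploiting both lower bounds $a,b>K$ at once through $\sqrt{ab}>K$) while the other is controlled via the single factor $a$; bounding the whole expression in one shot is clumsier and loses the clean constant. Since both pieces are bounded by $1$, the resulting constant $2$ (hence $2K$ after rescaling) is exactly what the statement requires.
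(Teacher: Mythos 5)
Your proof is correct, and it takes a genuinely different route from the paper. The paper proves the same bound by a four-way case analysis according to whether $\norm{\nabla u}_{L^2(\bO)}^2$ and $\norm{\nabla v}_{L^2(\bO)}^2$ are each $\le K$ or $> K$, in each case pairing factors in the numerator with matching factors in the denominator (e.g.\ using $\frac{1}{a} < \frac{1}{K}$ when the gradient term is small and $\frac{1}{a} \le \frac{1}{X^2}$ when it is large, in your notation). Your argument replaces all of this with a single uniform estimate: the splitting $(X+Y)Y = XY + Y^2$, the trivial bounds $X^2 \le a$, $Y^2 \le b$, and the observation $ab > K^2$, so that the cross term is controlled through the product $ab$ and the square term through the single factor $a$. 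Each step checks out ($\tfrac{KXY}{ab} \le \tfrac{K}{\sqrt{ab}} < 1$ and $\tfrac{KY^2}{ab} \le \tfrac{K}{a} < 1$), and you recover the same constant $2K$ -- in fact with strict inequality, which is harmless since the statement only asserts $\le 2K$. What your approach buys is brevity and the elimination of the case distinction; what the paper's case-by-case method buys is that it runs in parallel with the companion estimate in Lemma \ref{Lem-gn-1-2} and with the later case analyses in Lemmas \ref{Lem-gn-mono} and \ref{gn-demi}, so the same bookkeeping template is reused throughout Section \ref{Sec-bound-demi}. Your shorter proof would be a clean drop-in replacement here.
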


	\begin{proof}
		The proof of \eqref{eqn-gn-1-1} is divided into four cases, we treat each one by one. Let us choose and fix $u,v \in D(A)$, and suppose $\norm{\nabla u }_{L^2(\bO)}^2 + \norm{u}_{L^p(\bO)}^p, \norm{\nabla v }_{L^2(\bO)}^2 + \norm{v}_{L^p(\bO)}^p > K$.
		
		\vskip 1mm
		\noindent
		\textit{Case I.} For $\norm{\nabla u}_{L^2(\bO)}^2, \norm{\nabla v}_{L^2(\bO)}^2 \leq K$,
		we have 
		\begin{align*}
			\frac{K^2 (\norm{\nabla u}_{L^2(\bO)} + \norm{\nabla v}_{L^2(\bO)})\norm{\nabla v }_{L^2(\bO)}}{(\norm{\nabla u }_{L^2(\bO)}^2 + \norm{u}_{L^p(\bO)}^p)(\norm{\nabla v }_{L^2(\bO)}^2 + \norm{v}_{L^p(\bO)}^p)}
			\le \frac{K^2 (K^{\frac{1}{2}} + K^{\frac{1}{2}})K^{\frac{1}{2}}}{K^2}
			= 2K,
		\end{align*}
		where we have used the fact that  $\frac{1}{\norm{\nabla u }_{L^2(\bO)}^2 + \norm{u}_{L^p(\bO)}^p},\frac{1}{\norm{\nabla v }_{L^2(\bO)}^2 + \norm{v}_{L^p(\bO)}^p} < \frac{1}{K}$.
		\vskip 1mm
		\noindent
		\textit{Case II.} For $\norm{\nabla u}_{L^2(\bO)}^2, \norm{\nabla v}_{L^2(\bO)}^2 > K$, we consider
		\begin{align*}
			&\frac{K^2 (\norm{\nabla u}_{L^2(\bO)} + \norm{\nabla v}_{L^2(\bO)})\norm{\nabla v }_{L^2(\bO)}}{(\norm{\nabla u }_{L^2(\bO)}^2 + \norm{u}_{L^p(\bO)}^p)(\norm{\nabla v }_{L^2(\bO)}^2 + \norm{v}_{L^p(\bO)}^p)}\\
			& < \frac{K^2 \norm{\nabla u}_{L^2(\bO)}\norm{\nabla v }_{L^2(\bO)}}{\norm{\nabla u }_{L^2(\bO)}^2 \norm{\nabla v }_{L^2(\bO)}^2} + \frac{K^2  \norm{\nabla v}_{L^2(\bO)}^2}{K\norm{\nabla v }_{L^2(\bO)}^2}
			\le \frac{K^2 }{K} + K
			= 2K,
		\end{align*}
		where we have used the estimates $\frac{1}{\norm{\nabla u }_{L^2(\bO)}^2 + \norm{u}_{L^p(\bO)}^p} \le \frac{1}{\norm{\nabla u }_{L^2(\bO)}^2}$ and $\frac{1}{\norm{\nabla v }_{L^2(\bO)}^2 + \norm{v}_{L^p(\bO)}^p} \le \frac{1}{\norm{\nabla v }_{L^2(\bO)}^2}$.
		\vskip 1mm
		\noindent
		\textit{Case III.} For $\norm{\nabla u}_{L^2(\bO)}^2\leq K$ and $ \norm{\nabla v}_{L^2(\bO)}^2 > K$, we find
		\begin{align*}
			&\frac{K^2 (\norm{\nabla u}_{L^2(\bO)} + \norm{\nabla v}_{L^2(\bO)})\norm{\nabla v }_{L^2(\bO)}}{(\norm{\nabla u }_{L^2(\bO)}^2 + \norm{u}_{L^p(\bO)}^p)(\norm{\nabla v }_{L^2(\bO)}^2 + \norm{v}_{L^p(\bO)}^p)}\\
			& < \frac{K^2 \norm{\nabla u}_{L^2(\bO)}\norm{\nabla v }_{L^2(\bO)}}{ K \norm{\nabla v }_{L^2(\bO)}^2} + \frac{K^2  \norm{\nabla v}_{L^2(\bO)}^2}{K\norm{\nabla v }_{L^2(\bO)}^2}
			\le \frac{K^2 }{K} + K
			= 2K,
		\end{align*}
		where we have used the estimates  $\frac{1}{\norm{\nabla u }_{L^2(\bO)}^2 + \norm{u}_{L^p(\bO)}^p} < \frac{1}{K}$, $\frac{1}{\norm{\nabla v }_{L^2(\bO)}^2 + \norm{v}_{L^p(\bO)}^p} \le \frac{1}{\norm{\nabla v }_{L^2(\bO)}^2}$ and $\frac{\norm{\nabla u}_{L^2(\bO)}}{\norm{\nabla v}_{L^2(\bO)}} <1$.
		\vskip 1mm
		\noindent
		\textit{Case IV.} For $\norm{\nabla u}_{L^2(\bO)}^2> K$ and $ \norm{\nabla v}_{L^2(\bO)}^2 \le K$, we have 
		\begin{align*}
			&\frac{K^2 (\norm{\nabla u}_{L^2(\bO)} + \norm{\nabla v}_{L^2(\bO)})\norm{\nabla v }_{L^2(\bO)}}{(\norm{\nabla u }_{L^2(\bO)}^2 + \norm{u}_{L^p(\bO)}^p)(\norm{\nabla v }_{L^2(\bO)}^2 + \norm{v}_{L^p(\bO)}^p)}\\
			& < \frac{K^2 \norm{\nabla u}_{L^2(\bO)}\norm{\nabla v }_{L^2(\bO)}}{ \norm{\nabla u }_{L^2(\bO)}^2 K} + \frac{K^2  \norm{\nabla v}_{L^2(\bO)}^2}{K\norm{\nabla v }_{L^2(\bO)}^2}
			\le \frac{K^2 }{K} + K
			= 2K,
		\end{align*}
		where we have used the fact that  $\frac{1}{\norm{\nabla v }_{L^2(\bO)}^2 + \norm{v}_{L^p(\bO)}^p} < \frac{1}{K}$, $\frac{1}{\norm{\nabla v }_{L^2(\bO)}^2 + \norm{v}_{L^p(\bO)}^p} \le \frac{1}{\norm{\nabla v }_{L^2(\bO)}^2}$ and $\frac{\norm{\nabla v}_{L^2(\bO)}}{\norm{\nabla u}_{L^2(\bO)}} <1$.
		
		Finally, from all the above four cases, we deduce
		\begin{align*}
			\frac{K^2 (\norm{\nabla u}_{L^2(\bO)} + \norm{\nabla v}_{L^2(\bO)})\norm{\nabla v }_{L^2(\bO)}}{(\norm{\nabla u }_{L^2(\bO)}^2 + \norm{u}_{L^p(\bO)}^p)(\norm{\nabla v }_{L^2(\bO)}^2 + \norm{v}_{L^p(\bO)}^p)} \leq 2K,
		\end{align*}
		which completes the proof.
	\end{proof}
	
	\begin{lemma}\label{Lem-gn-1-2} 
		Let  $u,v\in D(A)$ such that $\norm{\nabla u }_{L^2(\bO)}^2 + \norm{u}_{L^p(\bO)}^p, \norm{\nabla v }_{L^2(\bO)}^2 + \norm{v}_{L^p(\bO)}^p > K$.  Then, the following inequality holds:
		\begin{align}
			\frac{K^4 (\norm{u}_{L^p(\bO)}^{p} + \norm{v}_{L^p(\bO)}^{p})}{(\norm{\nabla u }_{L^2(\bO)}^2 + \norm{u}_{L^p(\bO)}^p)^2(\norm{\nabla v }_{L^2(\bO)}^2 + \norm{v}_{L^p(\bO)}^p)} < 2K^{2}.\label{eqn-gn-1-2}
		\end{align}
	\end{lemma}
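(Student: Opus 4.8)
The plan is to reduce \eqref{eqn-gn-1-2} to an elementary inequality between nonnegative reals by setting
\begin{align*}
a = \norm{\nabla u}_{L^2(\bO)}^2, \quad b = \norm{u}_{L^p(\bO)}^p, \quad c = \norm{\nabla v}_{L^2(\bO)}^2, \quad d = \norm{v}_{L^p(\bO)}^p,
\end{align*}
so that the hypotheses become $a+b > K$ and $c+d > K$ (with $a,b,c,d \ge 0$ and $K>0$, hence all denominators positive), while the target bound \eqref{eqn-gn-1-2} reads
\begin{align*}
\frac{K^4(b+d)}{(a+b)^2(c+d)} < 2K^2, \qquad \text{equivalently} \qquad \frac{K^2(b+d)}{(a+b)^2(c+d)} < 2.
\end{align*}

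First I would split the numerator as $b+d$ and estimate the two resulting fractions separately, distributing the available powers of $K$ asymmetrically. For the term carrying $b$, I use $b \le a+b$ together with \emph{both} hypotheses: since $\frac{b}{(a+b)^2} = \frac{b}{a+b}\cdot\frac{1}{a+b} \le \frac{1}{a+b} < \frac{1}{K}$ and $\frac{1}{c+d} < \frac{1}{K}$, this yields
\begin{align*}
\frac{K^2 b}{(a+b)^2(c+d)} = K^2\cdot\frac{b}{(a+b)^2}\cdot\frac{1}{c+d} < K^2\cdot\frac{1}{K}\cdot\frac{1}{K} = 1.
\end{align*}
For the term carrying $d$, I would instead control $\frac{d}{c+d} \le 1$ and spend both factors of $a+b$ on the single hypothesis $(a+b)^2 > K^2$, so that
\begin{align*}
\frac{K^2 d}{(a+b)^2(c+d)} = \frac{K^2}{(a+b)^2}\cdot\frac{d}{c+d} < \frac{K^2}{K^2}\cdot 1 = 1.
\end{align*}
Adding the two bounds gives $\frac{K^2(b+d)}{(a+b)^2(c+d)} < 2$, which is precisely the reformulated inequality; multiplying through by $K^2$ recovers \eqref{eqn-gn-1-2}.

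There is no genuine obstacle here: this is a routine quantitative estimate in the same spirit as Lemma \ref{Lem-gn-1-1}. The only point that requires a moment's care is the \emph{bookkeeping of the powers of $K$}. One must bound the $b$-term using \emph{both} denominators $a+b$ and $c+d$ (each contributing a factor $K^{-1}$), but bound the $d$-term using only the squared denominator $(a+b)^2$ (contributing $K^{-2}$) while dominating $d/(c+d)$ by $1$. A symmetric allocation of the powers to $b$ and $d$ would fail to close the estimate, so this asymmetric distribution is the heart of the argument, and since each of the two fractions is bounded \emph{strictly} below $1$, the strict inequality in \eqref{eqn-gn-1-2} follows.
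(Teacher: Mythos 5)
Your proof is correct, and it takes a genuinely different route from the paper's. The paper proves Lemma \ref{Lem-gn-1-2} by an exhaustive four-case analysis, splitting according to whether $\norm{u}_{L^p(\bO)}^p \le K$ or $> K$ and likewise for $\norm{v}_{L^p(\bO)}^p$, and in each case invoking the appropriate one of the bounds $\frac{1}{\norm{\nabla u}_{L^2(\bO)}^2 + \norm{u}_{L^p(\bO)}^p} < \frac{1}{K}$ or $\frac{1}{\norm{\nabla u}_{L^2(\bO)}^2 + \norm{u}_{L^p(\bO)}^p} \le \frac{1}{\norm{u}_{L^p(\bO)}^p}$, mirroring the case structure of Lemma \ref{Lem-gn-1-1} so that the two proofs read uniformly. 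Your argument is case-free: after substituting $a,b,c,d$ you use only $b \le a+b$, $d \le c+d$, and the two hypotheses $a+b > K$, $c+d > K$, splitting the numerator and distributing the powers of $K$ between the two halves. This is shorter, and it makes transparent exactly which hypotheses enter: only that the two \emph{sums} exceed $K$, with no information needed about the individual summands, which is precisely what the paper's case split exists to handle. Your tracking of strictness is also sound, including the degenerate cases $b=0$ or $d=0$, where the corresponding term vanishes. One caveat on your closing commentary: the asymmetric allocation is not actually forced. For instance, bounding $\frac{b}{(a+b)(c+d)} \le \frac{1}{c+d} < \frac{1}{K}$ and $\frac{d}{(a+b)(c+d)} \le \frac{1}{a+b} < \frac{1}{K}$, and then spending the one remaining factor $\frac{1}{a+b} < \frac{1}{K}$ uniformly on both terms, also yields $\frac{K^2(b+d)}{(a+b)^2(c+d)} < 2$. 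So the claim that a symmetric distribution ``would fail to close the estimate'' is overstated; your allocation is one workable bookkeeping among several, not the crux of the argument, and the lemma itself is robust to how the powers of $K$ are apportioned.
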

	
	\begin{proof}
		As in the proof of previous lemma, the proof is divided into four cases. Let us choose and fix $u,v \in D(A)$, and suppose $\norm{\nabla u }_{L^2(\bO)}^2 + \norm{u}_{L^p(\bO)}^p, \norm{\nabla v }_{L^2(\bO)}^2 + \norm{v}_{L^p(\bO)}^p > K$.
		
		\vskip 1mm
		\noindent
		\textit{Case I.} For $\norm{u}_{L^p(\bO)}^p, \norm{v}_{L^p(\bO)}^p \leq K$,
		we have 
		\begin{align*}
			\frac{K^4 (\norm{u}_{L^p(\bO)}^{p} + \norm{v}_{L^p(\bO)}^{p})}{(\norm{\nabla u }_{L^2(\bO)}^2 + \norm{u}_{L^p(\bO)}^p)^2(\norm{\nabla v }_{L^2(\bO)}^2 + \norm{v}_{L^p(\bO)}^p)}
			\le \frac{K^4 (K + K)}{K^3}
			= 2K^2,
		\end{align*}
		where we have used the fact that $\frac{1}{\norm{\nabla u }_{L^2(\bO)}^2 + \norm{u}_{L^p(\bO)}^p},\frac{1}{\norm{\nabla v }_{L^2(\bO)}^2 + \norm{v}_{L^p(\bO)}^p} < \frac{1}{K}$.
		\vskip 1mm
		\noindent
		\textit{Case II.} For $\norm{u}_{L^p(\bO)}^p, \norm{v}_{L^p(\bO)}^p > K$, we consider
		\begin{align*}
			&\frac{K^4 (\norm{u}_{L^p(\bO)}^{p} + \norm{v}_{L^p(\bO)}^{p})}{(\norm{\nabla u }_{L^2(\bO)}^2 + \norm{u}_{L^p(\bO)}^p)^2(\norm{\nabla v }_{L^2(\bO)}^2 + \norm{v}_{L^p(\bO)}^p)}\\
			& < \frac{K^3 \norm{u}_{L^p(\bO)}^{p}}{\norm{ u }_{L^p(\bO)}^p K} + \frac{K^3  \norm{v}_{L^p(\bO)}^{p}}{K\norm{ v }_{L^p(\bO)}^p}
			< K^2 + K^2 = 2K^2,
		\end{align*}
		where we have used the estimates $\frac{1}{\norm{\nabla u }_{L^2(\bO)}^2 + \norm{u}_{L^p(\bO)}^p} \le \frac{1}{\norm{u }_{L^p(\bO)}^p}$ and $\frac{1}{\norm{\nabla v }_{L^2(\bO)}^2 + \norm{v}_{L^p(\bO)}^p} \le \frac{1}{\norm{ v }_{L^p(\bO)}^p}$.
		\vskip 1mm
		\noindent
		\textit{Case III.} For $\norm{ u}_{L^p(\bO)}^p \leq K$ and $ \norm{ v}_{L^p(\bO)}^p > K$, we obtain 
		\begin{align*}
			\frac{K^4 (\norm{u}_{L^p(\bO)}^{p} + \norm{v}_{L^p(\bO)}^{p})}{(\norm{\nabla u }_{L^2(\bO)}^2 + \norm{u}_{L^p(\bO)}^p)^2(\norm{\nabla v }_{L^2(\bO)}^2 + \norm{v}_{L^p(\bO)}^p)} < K \norm{u}_{L^p(\bO)}^{p} + K^2 
			< 2K^2,
		\end{align*}
		where we have used the fact that $\frac{1}{\norm{\nabla u }_{L^2(\bO)}^2 + \norm{u}_{L^p(\bO)}^p}, \frac{1}{\norm{\nabla v }_{L^2(\bO)}^2 + \norm{v}_{L^p(\bO)}^p} < \frac{1}{K}$ and $\frac{1}{\norm{\nabla v }_{L^2(\bO)}^2 + \norm{v}_{L^p(\bO)}^p} \le \frac{1}{\norm{ v }_{L^p(\bO)}^p}$. 
		\vskip 1mm
		\noindent
		\textit{Case IV.} For $\norm{ u}_{L^p(\bO)}^p> K$ and $ \norm{ v}_{L^p(\bO)}^p \le K$, we get 
		\begin{align*}
			\frac{K^4 (\norm{u}_{L^p(\bO)}^{p} + \norm{v}_{L^p(\bO)}^{p})}{(\norm{\nabla u }_{L^2(\bO)}^2 + \norm{u}_{L^p(\bO)}^p)^2(\norm{\nabla v }_{L^2(\bO)}^2 + \norm{v}_{L^p(\bO)}^p)}	 < K^2 + {K \norm{v}_{L^p(\bO)}^{p}}
			< 2K^2,
		\end{align*}
		where we have used the estimates $\frac{1}{\norm{\nabla u }_{L^2(\bO)}^2 + \norm{u}_{L^p(\bO)}^p}, \frac{1}{\norm{\nabla v }_{L^2(\bO)}^2 + \norm{v}_{L^p(\bO)}^p} < \frac{1}{K}$ and $\frac{1}{\norm{\nabla u }_{L^2(\bO)}^2 + \norm{u}_{L^p(\bO)}^p} \le \frac{1}{\norm{ v }_{L^p(\bO)}^p}$. Finally, from all the above four cases, we deduce
		\begin{align*}
			\frac{K^4 (\norm{u}_{L^p(\bO)}^{p} + \norm{v}_{L^p(\bO)}^{p})}{(\norm{\nabla u }_{L^2(\bO)}^2 + \norm{u}_{L^p(\bO)}^p)^2(\norm{\nabla v }_{L^2(\bO)}^2 + \norm{v}_{L^p(\bO)}^p)} < 2K^2.
		\end{align*}
		which completes the proof.
	\end{proof}
	
	We are now ready to prove Lemma \ref{Lem-gn-mono} by making use of the two lemmas established above.
	
	\begin{proof}[Proof of Lemma \ref{Lem-gn-mono}]
		Note that the well-definedness of $\gn^K:D(A) \to L^2(\bO)$, defined in \eqref{Def-gn}, follows by an application of the Sobolev  inequality. 
		
		Let us now choose and fix $u,v \in D(A)$. 
		An application of Taylor's formula and H\"older's inequality (with exponent $2$ and $2$) assert
		\begin{align}
			&\big( \norm{u}_{L^p(\bO)}^p-\norm{v}_{L^p(\bO)}^p \big)( v,  (u-v))\\
			& \leq p \int_0^1 (\abs{\theta u + (1-\theta)v}^{p-2}|\theta u + (1-\theta)v|, |u-v|) d\theta \norm{v}_{L^2(\bO)}\norm{u-v}_{L^2(\bO)}\\
			& \leq p2^{p-2} (\abs{u}^{p-1} + \abs{v}^{p-1}, \abs{u-v})\norm{ v}_{L^2(\bO)}\norm{  u-v}_{L^2(\bO)} \\
			& = p2^{p-2} \left[\left(\abs{u}^{\frac{p}{2}-1} \abs{u-v}, \abs{u}^{\frac{p}{2}}\right) + \left(\abs{v}^{\frac{p}{2}-1} \abs{u-v}, \abs{v}^{\frac{p}{2}}\right)\right] \norm{v}_{L^2(\bO)}\norm{u-v}_{L^2(\bO)}\\
			& \leq \frac{1}{4} \Big[\big\|\abs{u}^{\frac{p}{2}-1} \abs{u-v}\big\|_{L^2(\bO)}^2 + \big\|\abs{v}^{\frac{p}{2}-1} \abs{u-v}\big\|_{L^2(\bO)}^2 \Big]\\
			& \quad + p^2 2^{2p-4}\big(\norm{u}_{L^p(\bO)}^{p} + \norm{v}_{L^p(\bO)}^{p}\big)\norm{ v}_{L^2(\bO)}^2\norm{u-v}_{L^2(\bO)}^2.\label{eqn-gn-4-mono-2}
		\end{align}
		Using the above inequality, we derive the estimate \eqref{eqn-gn-3-loc-mon} in the following three cases:
		\vskip 1mm
		\noindent
		\textit{Case I.} $\norm{\nabla u}_{L^2(\bO)}^2 + \norm{u}_{L^p(\bO)}^p, \norm{\nabla v}_{L^2(\bO)}^2 + \norm{v}_{L^p(\bO)}^p \le K$.
		
		By using definition of $\gn^K$ (see \eqref{Def-gn}), integration by parts, inequality \eqref{eqn-gn-4-mono-2}, Young's and Poincar\'e's inequalities, we deduce
		\begin{align*}
			&(\gn^K(u) - \gn^K(v), u-v) = ((\norm{\nabla u}_{L^2(\bO)}^2 + \norm{u}_{L^p(\bO)}^p)u - (\norm{\nabla v}_{L^2(\bO)}^2 + \norm{v}_{L^p(\bO)}^p)v, u-v)\\
			& =	 (\norm{\nabla u}_{L^2(\bO)}^2 u - \norm{\nabla v}_{L^2(\bO)}^2 v, u-v) + (\norm{u}_{L^p(\bO)}^p u - \norm{v}_{L^p(\bO)}^p v, u-v)\\
			&  =  \norm{\nabla u}^2_{L^2(\bO)}\norm{u-v}^2_{L^2(\bO)} + (\norm{\nabla u}_{L^2(\bO)}^2-\norm{\nabla v}_{L^2(\bO)}^2)(v, u-v)\\
			& \quad + {\norm{ u }_{L^p(\bO)}^p} \norm{u - v}_{L^2(\bO)}^2 + \big({\norm{ u }_{L^p(\bO)}^p} -  {\norm{v}_{L^p(\bO)}^p}\big)(v, u-v)\\
			& =  \big[\norm{\nabla u}^2_{L^2(\bO)} + {\norm{ u }_{L^p(\bO)}^p} \big]\norm{u-v}^2_{L^2(\bO)} + (\norm{\nabla u}_{L^2(\bO)}^2-\norm{\nabla v}_{L^2(\bO)}^2)(v, u-v)\\
			& \quad + \big({\norm{ u }_{L^p(\bO)}^p} -  {\norm{v}_{L^p(\bO)}^p}\big)(v, u-v)\\
			& \leq 	K\norm{u-v}_{L^2(\bO)}^2 + (\norm{\nabla u}_{L^2(\bO)} + \norm{\nabla v}_{L^2(\bO)})\norm{v}_{L^2(\bO)} \norm{\nabla(u-v)}_{L^2(\bO)}\norm{u-v}_{L^2(\bO)}\\
			& \quad + \frac{1}{4} \Big[\big\|\abs{u}^{\frac{p}{2}-1} \abs{u-v}\big\|_{L^2(\bO)}^2 + \big\|\abs{v}^{\frac{p}{2}-1} \abs{u-v}\big\|_{L^2(\bO)}^2 \Big]
			+ p^2 2^{2p-3}\frac{K^2}{\lambda_1}\norm{u-v}_{L^2(\bO)}^2\\
			& \leq 	\frac{1}{2}\norm{\nabla(u-v)}_{L^2(\bO)}^2+ [1 + (2 + p^2 2^{2p-3})\lambda_1^{-1}K] K \norm{u-v}_{L^2(\bO)}^2\\ 
			& \quad + \frac{1}{4} \Big[\big\|\abs{u}^{\frac{p}{2}-1} \abs{u-v}\big\|_{L^2(\bO)}^2 + \big\|\abs{v}^{\frac{p}{2}-1} \abs{u-v}\big\|_{L^2(\bO)}^2 \Big]\\
			& \leq \frac{1}{2}\norm{\nabla(u-v)}_{L^2(\bO)}^2 + C(K) \norm{u - v}_{L^2(\bO)}^2+ \frac{1}{4} \Big[\big\|\abs{u}^{\frac{p}{2}-1} \abs{u-v}\big\|_{L^2(\bO)}^2 + \big\|\abs{v}^{\frac{p}{2}-1} \abs{u-v}\big\|_{L^2(\bO)}^2 \Big],
		\end{align*}
		where $\lambda_1$ denotes the Poincar\'e constant and  
		\begin{align*}
			C(K) & = [1 + (2 + p^2 2^{2p-3})\lambda_1^{-1}K] K.
		\end{align*}
		
		\vskip 1mm
		\noindent
		\textit{Case II.} $\norm{\nabla u}_{L^2(\bO)}^2 + \norm{u}_{L^p(\bO)}^p, \norm{\nabla v}_{L^2(\bO)}^2 + \norm{v}_{L^p(\bO)}^p > K$.
		
		Similar to the previous case, by utilizing the definition of $\gn^K$ (see \eqref{Def-gn}) and  integration by parts, we obtain
		\begin{align}
			&(\gn^K(u) - \gn^K(v), u-v)  = 
			\bigg( \frac{K^{2} }{\norm{\nabla u }_{L^2(\bO)}^2 + \norm{ u }_{L^p(\bO)}^p} u -  \frac{K^{2} }{\norm{\nabla v }_{L^2(\bO)}^2 +\norm{ v}_{L^p(\bO)}^p} v , u-v\bigg)\\
			& =  \frac{K^{2}}{\norm{\nabla u }_{L^2(\bO)}^2 + \norm{ u }_{L^p(\bO)}^p} \norm{u - v}_{L^2(\bO)}^2 \\
			& \quad + \bigg[\frac{K^{2}}{\norm{\nabla u }_{L^2(\bO)}^2 + \norm{ u }_{L^p(\bO)}^p} -  \frac{K^{2}}{\norm{\nabla v }_{L^2(\bO)}^2 + \norm{v}_{L^p(\bO)}^p}\bigg](v, u-v)\\
			& \leq  K\norm{u - v}_{L^2(\bO)}^2 \\
			& \quad + K^{2}\bigg[\frac{\norm{\nabla v }_{L^2(\bO)}^2 + \norm{ v }_{L^p(\bO)}^p -\norm{\nabla u }_{L^2(\bO)}^2 - \norm{ u }_{L^p(\bO)}^p}{(\norm{\nabla u }_{L^2(\bO)}^2 + \norm{ u }_{L^p(\bO)}^p)(\norm{\nabla v }_{L^2(\bO)}^2 + \norm{ v }_{L^p(\bO)}^p)} \bigg]\frac{\norm{\nabla v}_{L^2(\bO)}}{\lambda_1^{\frac{1}{2}}} \norm{u - v}_{L^2(\bO)} \\
			\nonumber& \le K \norm{u - v}_{L^2(\bO)}^2 + \frac{K^{2}}{\lambda_1^{\frac{1}{2}}}\bigg[\frac{(\norm{\nabla v }_{L^2(\bO)} + \norm{\nabla u }_{L^2(\bO)}) \norm{\nabla v }_{L^2(\bO)} \norm{\nabla (u - v)}_{L^2(\bO)}}{(\norm{\nabla u }_{L^2(\bO)}^2 + \norm{ u }_{L^p(\bO)}^p)(\norm{\nabla v }_{L^2(\bO)}^2 + \norm{ v }_{L^p(\bO)}^p)} \bigg]  \norm{u - v}_{L^2(\bO)} \\
			& \quad + \frac{K^{2}}{\lambda_1^{\frac{1}{2}}}\bigg[\frac{ \big(\norm{ v }_{L^p(\bO)}^p - \norm{ u }_{L^p(\bO)}^p \big) }{(\norm{\nabla u }_{L^2(\bO)}^2 + \norm{ u }_{L^p(\bO)}^p)(\norm{\nabla v }_{L^2(\bO)}^2 + \norm{ v }_{L^p(\bO)}^p)} \bigg]\norm{\nabla v}_{L^2(\bO)}  \norm{u - v}_{L^2(\bO)}.\label{eqn-est-gn-1-1}
		\end{align}
		Therefore, using \eqref{eqn-gn-1-1} (see Lemma \ref{Lem-gn-1-1}),  \eqref{eqn-gn-4-mono-2} and \eqref{eqn-gn-1-2} (see Lemma \ref{Lem-gn-1-2}) in \eqref{eqn-est-gn-1-1}, we infer
		\begin{align*}
			& (\gn^K(u) - \gn^K(v), u-v)\\
			& \le \frac{1}{2}\norm{\nabla(u-v)}_{L^2(\bO)}^2 + [1 + 2 \lambda_1^{-1}K] K \norm{u - v}_{L^2(\bO)}^2\\
			&\quad + \frac{1}{4} \Big[\big\|\abs{u}^{\frac{p}{2}-1} \abs{u-v}\big\|_{L^2(\bO)}^2 + \big\|\abs{v}^{\frac{p}{2}-1} \abs{u-v}\big\|_{L^2(\bO)}^2 \Big]\\
			& \quad + \frac{p^2 2^{2p-4}}{\lambda_1}K^{4}\bigg[ \frac{ [\norm{ v }_{L^p(\bO)}^p +  \norm{ u }_{L^p(\bO)}^p]\norm{\nabla v}_{L^2(\bO)}^2}{(\norm{\nabla u }_{L^2(\bO)}^2 + \norm{ u }_{L^p(\bO)}^p)^2(\norm{\nabla v }_{L^2(\bO)}^2 + \norm{ v }_{L^p(\bO)}^p)^2} \bigg] \norm{u - v}_{L^2(\bO)}^2\\
			& \le \frac{1}{2}\norm{\nabla(u-v)}_{L^2(\bO)}^2 + [1 + (2 + p^2 2^{2p-3})\lambda_1^{-1}K]K \norm{u-v}_{L^2(\bO)}^2\\
			& \quad + \frac{1}{4} \Big[\big\|\abs{u}^{\frac{p}{2}-1} \abs{u-v}\big\|_{L^2(\bO)}^2 + \big\|\abs{v}^{\frac{p}{2}-1} \abs{u-v}\big\|_{L^2(\bO)}^2 \Big]\\
			& \le \frac{1}{2}\norm{\nabla(u-v)}_{L^2(\bO)}^2 + C(K) \norm{u - v}_{L^2(\bO)}^2 + \frac{1}{4} \Big[\big\|\abs{u}^{\frac{p}{2}-1} \abs{u-v}\big\|_{L^2(\bO)}^2 + \big\|\abs{v}^{\frac{p}{2}-1} \abs{u-v}\big\|_{L^2(\bO)}^2 \Big],
		\end{align*}
		where the constant
		\begin{align*}
			C(K) = [1 + (2 + p^2 2^{2p-3})\lambda_1^{-1}K]K.
		\end{align*}
		
		{We notice that}, since $u, v \in D(A)$ are arbitrary, without loss of generality, it is enough to consider either of the cases $\norm{\nabla u}_{L^2(\bO)}^2 + \norm{u}_{L^p(\bO)}^p> K$ and $\norm{\nabla v}_{L^2(\bO)}^2 + \norm{v}_{L^p(\bO)}^p \leq K$ or $\norm{\nabla u}_{L^2(\bO)}^2 + \norm{u}_{L^p(\bO)}^p\leq K$ and $\norm{\nabla v}_{L^2(\bO)}^2 + \norm{v}_{L^p(\bO)}^p > K$.
		
		\vskip 1mm
		\noindent
		\textit{Case III.} When $\norm{\nabla u}_{L^2(\bO)}^2 + \norm{u}_{L^p(\bO)}^p \le K$ and $\norm{\nabla v}_{L^2(\bO)}^2 + \norm{v}_{L^p(\bO)}^p>K$.
		
		Again, from the definition of $\gn^K$ (see \eqref{Def-gn}) and integration by parts, we calculate 
		\begin{align*}
			&| (\gn(u) - \gn^K(v), u-v) | =
			\bigg|\bigg( \big(\norm{\nabla u}_{L^2(\bO)}^2 + \norm{u}_{L^p(\bO)}^p\big) u - \frac{K^2}{\norm{\nabla v}_{L^2(\bO)}^2 + \norm{v}_{L^p(\bO)}^p} v, u-v \bigg)\bigg| \\
			& = \big(\norm{\nabla u}_{L^2(\bO)}^2 + \norm{u}_{L^p(\bO)}^p \big) \norm{u-v}_{L^2(\bO)}^2\\
			&\quad + \bigg[\big(\norm{\nabla u}_{L^2(\bO)}^2 + \norm{u}_{L^p(\bO)}^p\big) - \frac{K^2}{\norm{\nabla v}_{L^2(\bO)}^2 + \norm{v}_{L^p(\bO)}^p}\bigg]|(v, u-v)| \\
			& \le K \norm{u-v}_{L^2(\bO)}^2  + \frac{\norm{\nabla u}_{L^2(\bO)}^2 + \norm{u}_{L^p(\bO)}^p}{\norm{\nabla v}_{L^2(\bO)}^2 + \norm{v}_{L^p(\bO)}^p} \big[ \norm{\nabla v}_{L^2(\bO)}^2 + \norm{v}_{L^p(\bO)}^p - \big(\norm{\nabla u}_{L^2(\bO)}^2 + \norm{u}_{L^p(\bO)}^p\big)\big] \\
			& \quad \times \frac{\norm{\nabla v}_{L^2(\bO)}}{\lambda_1^{\frac{1}{2}}} \norm{u-v}_{L^2(\bO)}\\
			& \le K\norm{u-v}_{L^2(\bO)}^2  + \frac{K(\norm{\nabla v}_{L^2(\bO)} + \norm{\nabla u}_{L^2(\bO)})}{\norm{\nabla v}_{L^2(\bO)}^2 + \norm{v}_{L^p(\bO)}^p} \norm{\nabla v}_{L^2(\bO)} \frac{\norm{\nabla(u-v)}_{L^2(\bO)}}{\lambda_1^{\frac{1}{2}}}\norm{u-v}_{L^2(\bO)}\\
			& \quad + \frac{K}{\norm{\nabla v}_{L^2(\bO)}^2 + \norm{v}_{L^p(\bO)}^p} \big({\norm{ v }_{L^p(\bO)}^p} -  {\norm{u}_{L^p(\bO)}^p}\big) \frac{\norm{\nabla v}_{L^2(\bO)}}{\lambda_1^{\frac{1}{2}}} \norm{u-v}_{L^2(\bO)}\\
			& \leq  \frac{1}{2}\norm{\nabla(u-v)}_{L^2(\bO)}^2 + (1 + 2K\lambda_1^{-1})K \norm{u-v}_{L^2(\bO)}^2 + p^2 2^{2p-3}K^2\lambda_1^{-1} \norm{u-v}_{L^2(\bO)}^2\\
			& \quad + \frac{1}{4} \Big[\big\|\abs{u}^{\frac{p}{2}-1} \abs{u-v}\big\|_{L^2(\bO)}^2 + \big\|\abs{v}^{\frac{p}{2}-1} \abs{u-v}\big\|_{L^2(\bO)}^2 \Big]\\
			& \leq \frac{1}{2}\norm{\nabla(u-v)}_{L^2(\bO)}^2 + C(K) \norm{u - v}_{L^2(\bO)}^2 + \frac{1}{4} \Big[\big\|\abs{u}^{\frac{p}{2}-1} \abs{u-v}\big\|_{L^2(\bO)}^2 + \big\|\abs{v}^{\frac{p}{2}-1} \abs{u-v}\big\|_{L^2(\bO)}^2 \Big],
		\end{align*}
		where we have used \eqref{eqn-gn-1-1}, \eqref{eqn-gn-4-mono-2}, \eqref{eqn-gn-1-2}, and
		\begin{align*}
			C(K) & = [1 + (2+ p^2 2^{2p-3})K\lambda_1^{-1}]K .
		\end{align*}
		Hence both the above three cases yield
		\begin{align*}
			(\gn^K(u) - \gn^K(v), u-v)  & \le \frac{1}{2}\norm{\nabla(u-v)}_{L^2(\bO)}^2 + C(K)\norm{u-v}_{L^2(\bO)}^2 \\
			& \quad + \frac{1}{4} \Big[\big\|\abs{u}^{\frac{p}{2}-1} \abs{u-v}\big\|_{L^2(\bO)}^2 + \big\|\abs{v}^{\frac{p}{2}-1} \abs{u-v}\big\|_{L^2(\bO)}^2 \Big],
		\end{align*}
		where the constant 
		\begin{align*}
			C(K) & = [1 + (2+ p^2 2^{2p-3})K\lambda_1^{-1}]K.
		\end{align*}
		This concludes the proof of lemma.
	\end{proof}

	Next, we provide a result on the demicontinuity of the cut-off operator $\gn^K:D(A)\to L^2(\bO)$, which will be used in the next subsection to prove that the modified operator $\Gn^K$ is \textit{$m-$accretive}.

	\begin{lemma}\label{gn-demi}
		The map $\gn^K: D(A) \to L^2(\bO)$ is demicontinuous.
	\end{lemma}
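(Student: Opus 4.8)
The plan is to recast $\gn^K$ as a scalar multiple of the argument, which reduces the claim to the continuity of a real-valued functional on $D(A)$ together with an elementary splitting. First I would introduce the shorthand $\Phi(u) := \norm{\nabla u}_{L^2(\bO)}^2 + \norm{u}_{L^p(\bO)}^p$ for $u \in D(A)$ and the scalar cut-off function $h_K : [0,\infty) \to [0,\infty)$ given by $h_K(s) = s$ for $0 \le s \le K$ and $h_K(s) = K^2/s$ for $s > K$. A direct comparison against \eqref{Def-gn} shows that $\gn^K(u) = h_K(\Phi(u))\,u$ for every $u \in D(A)$. The function $h_K$ is continuous (both branches agree at $s=K$, with common value $K$) and uniformly bounded, since $h_K(s) \le K$ for all $s \ge 0$.

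Next I would establish that $\Phi : D(A) \to [0,\infty)$ is continuous. The map $u \mapsto \norm{\nabla u}_{L^2(\bO)}^2$ is continuous on $H_0^1(\bO)$, hence on $D(A)$ because $D(A) \embed H_0^1(\bO)$. For the second term I would invoke the Sobolev embedding $D(A) \embed L^p(\bO)$ guaranteed by the standing assumption \eqref{eqn-D(A)-in-L^p-assump}; composing this continuous embedding with the continuous functional $L^p(\bO) \ni w \mapsto \norm{w}_{L^p(\bO)}^p \in \R$ shows that $u \mapsto \norm{u}_{L^p(\bO)}^p$ is continuous on $D(A)$. Consequently, if $\psi_k \to \psi$ in $D(A)$ then $\Phi(\psi_k) \to \Phi(\psi)$, and by continuity of $h_K$ also $h_K(\Phi(\psi_k)) \to h_K(\Phi(\psi))$ in $\R$.

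Finally, for a convergent sequence $\psi_k \to \psi$ in $D(A)$ (so, in particular, $\psi_k \to \psi$ in $L^2(\bO)$), I would use the splitting
\begin{align*}
	\gn^K(\psi_k) - \gn^K(\psi) = h_K(\Phi(\psi_k))(\psi_k - \psi) + \big(h_K(\Phi(\psi_k)) - h_K(\Phi(\psi))\big)\psi
\end{align*}
and estimate in $L^2(\bO)$, using $h_K \le K$ on the first term,
\begin{align*}
	\norm{\gn^K(\psi_k) - \gn^K(\psi)}_{L^2(\bO)} \le K \norm{\psi_k - \psi}_{L^2(\bO)} + \abs{h_K(\Phi(\psi_k)) - h_K(\Phi(\psi))}\, \norm{\psi}_{L^2(\bO)}.
\end{align*}
Both terms tend to $0$ as $k \to \infty$, so in fact $\gn^K(\psi_k) \to \gn^K(\psi)$ \emph{strongly} in $L^2(\bO)$, which is stronger than the required weak convergence; testing against an arbitrary $\eta \in L^2(\bO)$ then gives demicontinuity in the sense of Definition \ref{Def-Mono-Demi-Coer}.

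I do not expect a genuine obstacle here. The only substantive input is the continuity of the functional $u \mapsto \norm{u}_{L^p(\bO)}^p$ on $D(A)$, which hinges on the embedding $D(A) \embed L^p(\bO)$ from \eqref{eqn-D(A)-in-L^p-assump}; the factorization $\gn^K(u) = h_K(\Phi(u))\,u$ converts what appears to be a piecewise-defined nonlinear operator into a manifestly continuous object and, through the continuity of $h_K$ at $s=K$, removes any difficulty at the interface $\Phi = K$.
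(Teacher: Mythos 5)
Your proof is correct, and it takes a genuinely different and cleaner route than the paper. The paper proves demicontinuity by a four-case analysis (both of $\Phi(\psi_k),\Phi(\psi)$ below the threshold $K$, both above, and the two mixed cases, where $\Phi(u):=\norm{\nabla u}_{L^2(\bO)}^2+\norm{u}_{L^p(\bO)}^p$), estimating in each case the pairing $(\gn^K(\psi_k)-\gn^K(\psi),\eta)$ directly via H\"older's inequality and differences of norms; your factorization $\gn^K(u)=h_K(\Phi(u))\,u$, with $h_K(s)=s$ for $s\le K$ and $h_K(s)=K^2/s$ for $s>K$, is indeed an identical rewriting of \eqref{Def-gn} (since $\gn(u)=\Phi(u)u$, the second branch gives $K^2\Phi(u)^{-2}\cdot\Phi(u)u=K^2\Phi(u)^{-1}u$), and it collapses the entire case structure into the continuity of the scalar function $h_K$ at the interface $s=K$. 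What your approach buys: you obtain \emph{strong} $L^2(\bO)$-continuity of $\gn^K$ on $D(A)$, strictly more than the demicontinuity asserted, and you avoid the bookkeeping of sequences whose members straddle the threshold for different $k$ (the paper's per-$k$ case split handles this implicitly but never says so). A further small advantage: the paper's proof fixes the test function $\eta\in D(A)$ even though Definition \ref{Def-Mono-Demi-Coer} requires $\eta\in L^2(\bO)$ — harmless there, since all its bounds involve only $\norm{\eta}_{L^2(\bO)}$ and one can conclude by density, but your strong-convergence argument sidesteps the issue entirely. What the paper's route buys in exchange is essentially nothing for this lemma; its explicit case-by-case estimates are of the same flavor as those genuinely needed in Lemma \ref{Lem-gn-mono}, where the vector-valued difference $\gn^K(u)-\gn^K(v)$ cannot be reduced to a scalar composition, so the authors likely kept a uniform style across the two proofs.
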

	
	\begin{proof}
		Let us choose and fix $\{\psi_k\}_{k\in \N} \subset D(A)$ with $\psi_k \to \psi$ in $D(A)$, as $k\to\infty$.
		Due to the continuous embedding $D(A)\embed L^p(\bO), H_0^1(\bO)$ and the inequality \eqref{eqn-elliptic-reg} for $p$ satisfying \eqref{eqn-D(A)-in-L^p-assump}, we know that
		\begin{equation}
			 \norm{\cdot}_{L^p(\bO)} \le C_2 \norm{A\;\cdot\;}_{L^2(\bO)}\ \text{ and }\  \norm{\nabla\;\cdot\;}_{L^2(\bO)} \le C_1\norm{A\;\cdot\;}_{L^2(\bO)},\ \text{ for some }\ C_1,C_2>0,
		\end{equation}
which immediately implies from the assumptions that 
		\begin{align*}
			\norm{\psi_k - \psi}_{L^p(\bO)} \to 0,\ \norm{A(\psi_k - \psi)}_{L^2(\bO)} \to 0\ \text{ and }\ \norm{\nabla(\psi_k - \psi)}_{L^2(\bO)} \to 0,\ \text{ as }\  k\to \infty.
		\end{align*}
		Let us fix an arbitrary $K\in\N$ {and $\eta\in D(A)$}. Then, the proof is divided into four cases. 
		\vskip 1mm
		\noindent
		\textit{Case I.} When $\norm{\nabla \psi_k}_{L^2(\bO)}^2 +\norm{ \psi_k}_{L^p(\bO)}^p, \norm{\nabla \psi}_{L^2(\bO)}^2 + \norm{\psi}_{L^p(\bO)}^p \le K$.
		
		Using the definition of $\gn^K$ (from \eqref{Def-gn}) and H\"older's inequality (with exponent $2$ and $2$),  we estimate
		\begin{align*}
			&| (\gn^K(\psi_k) - \gn^K(\psi), \eta) | = \big| \big((\norm{\nabla\psi_k}_{L^2(\bO)}^2 + \norm{\psi_k}_{L^p(\bO)}^p)\psi_k - (\norm{\nabla\psi}_{L^2(\bO)}^2 + \norm{\psi}_{L^p(\bO)}^p)\psi, \eta \big) \big|\\
			& \leq \norm{\nabla\psi_k}_{L^2(\bO)}^2 \norm{\psi_k - \psi}_{L^2(\bO)}\norm{\eta}_{L^2(\bO)} + (\norm{\nabla\psi_k}_{L^2(\bO)}^2 - \norm{\nabla\psi}_{L^2(\bO)}^2 )\norm{\psi}_{L^2(\bO)} \norm{\eta}_{L^2(\bO)}\\
			&\quad + \norm{\psi_k}_{L^p(\bO)}^p\norm{\psi_k - \psi}_{L^2(\bO)}\norm{\eta}_{L^2(\bO)}  + (\norm{\psi_k}_{L^p(\bO)}^p - \norm{\psi}_{L^p(\bO)}^p) \norm{\psi}_{L^2(\bO)} \norm{\eta}_{L^2(\bO)}\\
			& \leq \big(\norm{\nabla\psi_k}_{L^2(\bO)}^2 + \norm{\psi_k}_{L^p(\bO)}^p\big)\norm{\psi_k - \psi}_{L^2(\bO)}  \norm{\eta}_{L^2(\bO)}\\
			& \quad + \big(\norm{\nabla\psi_k}_{L^2(\bO)} + \norm{\nabla\psi}_{L^2(\bO)} \big)\norm{\nabla(\psi_k - \psi)}_{L^2(\bO)}\norm{\psi}_{L^2(\bO)} \norm{\eta}_{L^2(\bO)}\\
			&\quad  + p(\norm{\psi_k}_{L^p(\bO)} + \norm{\psi}_{L^p(\bO)})^{p-1} \norm{\psi_k - \psi}_{L^p(\bO)} \norm{\psi}_{L^2(\bO)}  \norm{\eta}_{L^2(\bO)}\\
			& \to 0\ \text{as}\ k \to \infty,
		\end{align*}
		where we have used the embedding $D(A) \embed L^p(\bO)$.
		Therefore, $$| (\gn^K(\psi_k) - \gn^K(\psi), \eta) | \to 0,\ \text{ as }\ k \to \infty.$$
		\vskip 1mm
		\noindent
		\textit{Case II.} When $\norm{\nabla \psi_k}_{L^2(\bO)}^2 + \norm{\psi_k}_{L^p(\bO)}^p, \norm{\nabla \psi}_{L^2(\bO)}^2 + \norm{\psi}_{L^p(\bO)}^p>K$.
		
		Again, from the definition of $\gn^K$ (see \eqref{Def-gn}) and an application of H\"older's inequality twice (with exponents $2$ and $2$, and $p$ and $p/(p-1)$) yield 
		\begin{align*}
			&| (\gn^K(\psi_k) - \gn^K(\psi), \eta) | 
			= \bigg|\bigg( \frac{K^2}{\norm{\nabla \psi_k}_{L^2(\bO)}^2 + \norm{\psi_k}_{L^p(\bO)}^p}\psi_k - \frac{K^2}{\norm{\nabla \psi}_{L^2(\bO)}^2 + \norm{\psi}_{L^p(\bO)}^p}\psi, \eta \bigg)\bigg| \\
			& = \frac{K^2}{\norm{\nabla \psi_k}_{L^2(\bO)}^2 + \norm{\psi_k}_{L^p(\bO)}^p}|(\psi_k - \psi, \eta )|\\
			&\quad + K^2\bigg[\frac{1}{\norm{\nabla \psi_k}_{L^2(\bO)}^2 + \norm{\psi_k}_{L^p(\bO)}^p} - \frac{1}{\norm{\nabla \psi}_{L^2(\bO)}^2 + \norm{\psi}_{L^p(\bO)}^p}\bigg]|(\psi, \eta )| \\
			& \leq K\norm{\psi_k - \psi}_{L^2(\bO)}\norm{\eta}_{L^2(\bO)}  + (\norm{\nabla \psi}_{L^2(\bO)}^2 - \norm{\nabla \psi_k}_{L^2(\bO)}^2) \norm{\psi}_{L^2(\bO)} \norm{\eta}_{L^2(\bO)}\\
			&\quad + (\norm{\psi}_{L^p(\bO)}^p - \norm{\psi_k}_{L^p(\bO)}^p) \norm{\psi}_{L^2(\bO)} \norm{\eta}_{L^2(\bO)}\\
			& \leq \big[K\norm{\psi_k - \psi}_{L^2(\bO)} +  \big(\norm{\nabla\psi_k}_{L^2(\bO)} + \norm{\nabla\psi}_{L^2(\bO)} \big)\norm{\nabla (\psi_k - \psi)}_{L^2(\bO)} \norm{\psi}_{L^2(\bO)}\big]  \norm{\eta}_{L^2(\bO)} \\
			&\quad + p(\norm{\psi_k}_{L^p(\bO)} + \norm{\psi}_{L^p(\bO)})^{p-1} \norm{\psi_k - \psi}_{L^p(\bO)} \norm{\psi}_{L^2(\bO)}  \norm{\eta}_{L^2(\bO)} \\
			& \to 0\ \text{as}\ k \to \infty,
		\end{align*}
		where it is used that $D(A) \embed L^p(\bO)$. 
		
		\vskip 2mm
		\noindent
		\textit{Case III.} When $\|\nabla \psi_k\|^2_{L^2} + \|\psi_k \|^p_{L^p} \le K$ and $\|\nabla \psi\|^2_{L^2} + \|\psi \|^p_{L^p} > K$.
		
		The definition of $\gn^K$ (see \eqref{Def-gn}) and H\"older's inequality (with exponents $2$ and $2$) infer 
		\begin{align*}
			&| (\gn^K(\psi_k) - \gn^K(\psi), \eta) | 
			= \bigg|\bigg( \big(\norm{\nabla \psi_k}_{L^2(\bO)}^2 + \norm{\psi_k}_{L^p(\bO)}^p)\psi_k - \frac{K^2}{\norm{\nabla \psi}_{L^2(\bO)}^2 + \norm{\psi}_{L^p(\bO)}^p}\psi, \eta \bigg)\bigg| \\
			& \le K |(\psi_k - \psi, \eta )|\\
			&\quad + \frac{\norm{\nabla \psi_k}_{L^2(\bO)}^2 + \norm{\psi_k}_{L^p(\bO)}^p}{\norm{\nabla \psi}_{L^2(\bO)}^2 + \norm{\psi}_{L^p(\bO)}^p}\Big[{\norm{\nabla \psi}_{L^2(\bO)}^2 + \norm{\psi}_{L^p(\bO)}^p} - {\norm{\nabla \psi_k}_{L^2(\bO)}^2 - \norm{\psi_k}_{L^p(\bO)}^p}\Big]|(\psi, \eta )| \\
			& \leq K\norm{\psi_k - \psi}_{L^2(\bO)}\norm{\eta}_{L^2(\bO)}  + (\norm{\nabla \psi}_{L^2(\bO)}^2 - \norm{\nabla \psi_k}_{L^2(\bO)}^2) \norm{\psi}_{L^2(\bO)} \norm{\eta}_{L^2(\bO)}\\
			&\quad + (\norm{\psi}_{L^p(\bO)}^p - \norm{\psi_k}_{L^p(\bO)}^p) \norm{\psi}_{L^2(\bO)} \norm{\eta}_{L^2(\bO)}\\
			& \leq \big[K\norm{\psi_k - \psi}_{L^2(\bO)} +  \big(\norm{\nabla\psi_k}_{L^2(\bO)} + \norm{\nabla\psi}_{L^2(\bO)} \big)\norm{\nabla (\psi_k - \psi)}_{L^2(\bO)} \norm{\psi}_{L^2(\bO)}\big]  \norm{\eta}_{L^2(\bO)} \\
			&\quad + p(\norm{\psi_k}_{L^p(\bO)} + \norm{\psi}_{L^p(\bO)})^{p-1} \norm{\psi_k - \psi}_{L^p(\bO)} \norm{\psi}_{L^2(\bO)}  \norm{\eta}_{L^2(\bO)} \\
			& \to 0\ \text{as}\ k \to \infty.
		\end{align*}	
	\vskip 1mm
	\noindent
	\textit{Case IV.} When $\|\nabla \psi_k\|^2_{L^2} + \|\psi_k \|^p_{L^p} > K$ and $\|\nabla \psi\|^2_{L^2} + \|\psi \|^p_{L^p} \le K$.
	
	It follows from the definition of $\gn^K$ (see \eqref{Def-gn}) and H\"older's inequality (with exponents $2$ and $2$) that 
	\begin{align*}
		&| (\gn^K(\psi_k) - \gn^K(\psi), \eta) | 
		= \bigg|\bigg( \frac{K^2}{\norm{\nabla \psi_k}_{L^2(\bO)}^2 + \norm{\psi_k}_{L^p(\bO)}^p}\psi_k - \big(\norm{\nabla \psi}_{L^2(\bO)}^2 + \norm{\psi}_{L^p(\bO)}^p)\psi, \eta \bigg)\bigg| \\
		& \le K |(\psi_k - \psi, \eta )|\\
		&\quad + \frac{K}{\norm{\nabla \psi_k}_{L^2(\bO)}^2 + \norm{\psi_k}_{L^p(\bO)}^p}\Big[{\norm{\nabla \psi_k}_{L^2(\bO)}^2 + \norm{\psi_k}_{L^p(\bO)}^p} - {\norm{\nabla \psi}_{L^2(\bO)}^2 - \norm{\psi}_{L^p(\bO)}^p}\Big]|(\psi, \eta )| \\
		& \leq K\norm{\psi_k - \psi}_{L^2(\bO)}\norm{\eta}_{L^2(\bO)}  + (\norm{\nabla \psi_k}_{L^2(\bO)}^2 - \norm{\nabla \psi}_{L^2(\bO)}^2) \norm{\psi}_{L^2(\bO)} \norm{\eta}_{L^2(\bO)}\\
		&\quad + (\norm{\psi_k}_{L^p(\bO)}^p - \norm{\psi}_{L^p(\bO)}^p) \norm{\psi}_{L^2(\bO)} \norm{\eta}_{L^2(\bO)}\\
		& \leq \big[K\norm{\psi_k - \psi}_{L^2(\bO)} +  \big(\norm{\nabla\psi_k}_{L^2(\bO)} + \norm{\nabla\psi}_{L^2(\bO)} \big)\norm{\nabla (\psi_k - \psi)}_{L^2(\bO)} \norm{\psi}_{L^2(\bO)}\big]  \norm{\eta}_{L^2(\bO)} \\
		&\quad + p(\norm{\psi_k}_{L^p(\bO)} + \norm{\psi}_{L^p(\bO)})^{p-1} \norm{\psi_k - \psi}_{L^p(\bO)} \norm{\psi}_{L^2(\bO)}  \norm{\eta}_{L^2(\bO)} \\
		& \to 0\ \text{as}\ k \to \infty.
	\end{align*}
	Hence, all four cases above imply that the mapping $\gn^K: D(A) \to L^2(\bO)$ is demicontinuous.
	\end{proof}

	\subsection{M-accretivity}
	In this subsection, we turn our attention to proving one of the vital properties of the modified nonlinear operator $\Gn^K + \Gamma I$, namely \textit{$m-$accretivity}, for some $\Gamma>0$ depending on $K$. First, by using the bounds and the demicontinuity properties of the map $\gn^K$ obtained in the subsection \ref{Sec-bound-demi}, we show that $\Gn^K + \Gamma I$ is monotone, hemicontinuous and coercive, in the sense of Definition \ref{Def-Mono-Demi-Coer}, along with the results Theorem \ref{Thm-Max-Mono}, Corollary \ref{Cor-Mono+Hemi+Coe=Onto} and \cite[Theorem 1.4--1.6]{VB-93}, we prove the existence of a strong solution to the modified nonlinear heat equation \eqref{eqn-quant-heat}.

	\begin{theorem}\label{Thm-m-accretive}
		There exists a sufficiently large $\Gamma>0$, depending on $K$, such that $\Gn^K + \Gamma I$ is $m-$accretive in $L^2(\bO)$.
	\end{theorem}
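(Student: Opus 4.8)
The plan is to exhibit $\Gn^K+\Gamma I$ as a monotone, hemicontinuous and coercive operator and then to apply the Browder--Minty surjectivity result, Corollary \ref{Cor-Mono+Hemi+Coe=Onto}, in combination with Theorem \ref{Thm-Max-Mono}, Proposition \ref{Prop-m-acc-Range} and Remark \ref{Rmk-max+mono-m-acc}. Recall that $\Gn^K=A+\Nn-\gn^K$ with $\Nn(u)=\abs{u}^{p-2}u$, and that, since $L^2(\bO)$ is a Hilbert space, it suffices to verify the single range condition $R(\Gn^K+\Gamma I+\lambda I)=L^2(\bO)$ for one $\lambda>0$.

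First I would prove monotonicity for $\Gamma$ large. Expanding $((\Gn^K+\Gamma I)u-(\Gn^K+\Gamma I)v,u-v)$ and using $(Au-Av,u-v)=\norm{\nabla(u-v)}_{L^2(\bO)}^2$, the lower bound \eqref{eqn-mono-2} for $\Nn$, and the monotone-type upper bound of Lemma \ref{Lem-gn-mono} for $\gn^K$, the Dirichlet-form terms combine to leave $\tfrac12\norm{\nabla(u-v)}_{L^2(\bO)}^2$, the quantities $\||u|^{p/2-1}(u-v)\|_{L^2(\bO)}^2$ and $\||v|^{p/2-1}(u-v)\|_{L^2(\bO)}^2$ survive with a nonnegative weight, and only a term $-C(K)\norm{u-v}_{L^2(\bO)}^2$ is left over. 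Choosing $\Gamma\ge C(K)$ absorbs it and yields
\[
((\Gn^K+\Gamma I)u-(\Gn^K+\Gamma I)v,u-v)\ge \tfrac12\norm{\nabla(u-v)}_{L^2(\bO)}^2\ge0 .
\]
Hemicontinuity (indeed demicontinuity) is then immediate: $A$ is linear and bounded from $D(A)$ to $L^2(\bO)$, the Nemytskii map $\Nn$ is continuous from $D(A)\embed L^{2p-2}(\bO)$ into $L^2(\bO)$ under \eqref{eqn-D(A)-in-L^p-assump}, and $\gn^K$ is demicontinuous by Lemma \ref{gn-demi}. For coercivity I would note that both branches of \eqref{Def-gn} give $(\gn^K(u),u)\le K\norm{u}_{L^2(\bO)}^2$, whence, for $\Gamma\ge K$,
\[
((\Gn^K+\Gamma I)u,u)=\norm{\nabla u}_{L^2(\bO)}^2+\norm{u}_{L^p(\bO)}^p-(\gn^K(u),u)+\Gamma\norm{u}_{L^2(\bO)}^2\ge \norm{\nabla u}_{L^2(\bO)}^2+\norm{u}_{L^p(\bO)}^p ,
\]
which forces the relevant Rayleigh-type quotient to diverge with the norm.

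To deploy Corollary \ref{Cor-Mono+Hemi+Coe=Onto} I would work in the reflexive Banach space $V:=L^p(\bO)\cap H_0^1(\bO)$ with dual $V^\ast=L^{p'}(\bO)+H^{-1}(\bO)$, on which $\Gn^K+(\Gamma+\lambda)I$ is monotone, hemicontinuous and $V$-coercive by the bounds above. Browder--Minty then makes it surjective onto $V^\ast\supset L^2(\bO)$, so each $f\in L^2(\bO)$ is attained by some $u\in V$ satisfying $Au=f-(\Gamma+\lambda)u+\gn^K(u)-\Nn(u)$. Once this $u$ is known to lie in $D(A)$, the range condition $R(\Gn^K+\Gamma I+\lambda I)=L^2(\bO)$ holds, and Theorem \ref{Thm-Max-Mono}, Proposition \ref{Prop-m-acc-Range} together with Remark \ref{Rmk-max+mono-m-acc} give the $m$-accretivity of $\Gn^K+\Gamma I$ in $L^2(\bO)$.

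The main obstacle is exactly this regularity upgrade. A priori the right-hand side above only lies in $L^2(\bO)+L^{p'}(\bO)=L^{p'}(\bO)$, so elliptic regularity merely gives $u\in W^{2,p'}(\bO)$, which is weaker than $D(A)=H^2(\bO)\cap H_0^1(\bO)$ unless $p=2$. To close the argument I would bootstrap: from $u\in V$ one alternates elliptic regularity \eqref{eqn-elliptic-reg} with the Sobolev embeddings, improving at each step the integrability of $\Nn(u)=\abs{u}^{p-2}u$, until $u\in L^{2p-2}(\bO)$; then $\Nn(u)\in L^2(\bO)$, hence $Au\in L^2(\bO)$ and $u\in D(A)$. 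The growth restriction \eqref{eqn-D(A)-in-L^p-assump}, equivalently the embedding $D(A)\embed L^{2p-2}(\bO)$, is precisely what makes this iteration terminate, and reconciling the Banach-space surjectivity with the $L^2$-domain $D(A)$ demanded by $m$-accretivity is the delicate part of the proof.
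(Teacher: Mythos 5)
Your Steps for monotonicity, demicontinuity and coercivity coincide with the paper's Steps I--III: the same use of \eqref{eqn-mono-2} and Lemma \ref{Lem-gn-mono} with $\Gamma\ge C(K)$, the same appeal to Lemma \ref{gn-demi}, and the same bound $(\gn^K(u),u)\le K\norm{u}_{L^2(\bO)}^2$ in both branches of \eqref{Def-gn}. The genuine gap is in your final step, the upgrade from $u\in V=L^p(\bO)\cap H_0^1(\bO)$ to $u\in D(A)$: the proposed elliptic bootstrap does not terminate on the full range of exponents covered by the theorem. The iteration $u\in L^{r}\Rightarrow \Nn(u)\in L^{r/(p-1)}\Rightarrow u\in W^{2,r/(p-1)}\embed L^{r'}$ with $1/r'=(p-1)/r-2/d$ improves integrability only when $r>d(p-2)/2$, and starting from $r_0=\max\{p,\,2d/(d-2)\}$ this fails for large $p$. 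Concretely, for $d=3$, $p=8$ you begin with $u\in L^{8}$, get $\Nn(u)\in L^{8/7}$, hence $u\in W^{2,8/7}\embed L^{24/5}$, and $24/5<8$: the integrability has strictly decreased and keeps decreasing (the improvement criterion would require $r_0>9$). Since for $1\le d\le 4$ the theorem allows every $p\in[2,\infty)$, the bootstrap cannot close the argument; the restriction \eqref{eqn-D(A)-in-L^p-assump} guarantees $D(A)\embed L^{2p-2}(\bO)$ only \emph{after} one knows $u\in D(A)$, which is exactly what is at stake.

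What replaces the bootstrap in the paper is a sign argument exploiting the monotone structure of $\Nn$ against $A$. The paper works throughout in the duality $L^2(\bO)\cong(L^2(\bO))^\ast$ (Theorem \ref{Thm-Max-Mono} with $\Xn=\Hn=L^2(\bO)$), introduces $\kappa=\Gn^K+\Gamma I$ on the natural domain $D(\kappa)=\{\psi\in L^{2p-2}(\bO)\cap H_0^1(\bO):A\psi+\Nn(\psi)-\gn^K(\psi)\in L^2(\bO)\}$, and identifies $D(\kappa)=D(A)$ via the a priori estimate obtained by pairing $\kappa(\psi)$ with $A\psi$: since $(\Nn(\psi),A\psi)=(p-1)\|\abs{\psi}^{\frac{p-2}{2}}\nabla\psi\|_{L^2(\bO)}^2\ge0$ and $(\gn^K(\psi),A\psi)\le K\norm{\nabla\psi}_{L^2(\bO)}^2$ in both branches of the cut-off, one obtains $\norm{A\psi}_{L^2(\bO)}^2+[\Gamma-K]\norm{\nabla\psi}_{L^2(\bO)}^2\le\norm{\kappa(\psi)}_{L^2(\bO)}\norm{A\psi}_{L^2(\bO)}$, hence $\norm{A\psi}_{L^2(\bO)}\le\norm{\kappa(\psi)}_{L^2(\bO)}$ for $\Gamma\ge K$ --- uniformly in $p$ and with no Sobolev iteration. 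If you wish to keep your $V$--$V^\ast$ surjectivity step, the same trick repairs it: test $Au+\Nn(u)=g:=f-(\Gamma+\lambda)u+\gn^K(u)\in L^2(\bO)$ against $\Nn(u)$, using $(Au,\Nn(u))\ge0$, to get $\norm{u}_{L^{2p-2}(\bO)}^{2p-2}\le\norm{g}_{L^2(\bO)}\norm{\Nn(u)}_{L^2(\bO)}$, i.e. $\norm{\Nn(u)}_{L^2(\bO)}\le\norm{g}_{L^2(\bO)}$, whence $Au=g-\Nn(u)\in L^2(\bO)$ and $u\in D(A)$. (A secondary point: Lemma \ref{gn-demi} gives demicontinuity of $\gn^K$ along $D(A)$-convergent sequences, so your hemicontinuity claim on $V$ would need a separate, though routine, verification.)
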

	
	\begin{proof}[Proof of Theorem \ref{Thm-m-accretive}] Let us choose and fix $K\in\N$. We will show first that there exists a sufficiently large $\Gamma >0$ depending on $K$ such that 
		$\Gn^K + \Gamma I:D(A)\to L^2(\bO)$ is a monotone operator in the sense of Definition \ref{Def-Mono-Demi-Coer}.  
		
		\vskip 1mm
		\noindent
		\textbf{Step I.} First, let us choose and fix $u,v\in D(A)$. Then, by using \eqref{Def-Gn-K} and integration by parts, we find 
		\begin{align}
			& ((\Gn^K + \Gamma)u - (\Gn^K + \Gamma)v, u-v) =  (\Gn^K(u)-\Gn^K(v), u-v) + \Gamma  \norm{ u-v}_{L^2(\bO)}^2\\
			& =  \Gamma\norm{ u-v}_{L^2(\bO)}^2 + \|\nabla(u-v)\|_{L^2(\bO)}^2 + (|u|^{p-2}u -|v|^{p-2}v, u-v)  - ( \gn^K(u)- \gn^K(v), u-v)\\
			&\geq  \Gamma  \norm{ u-v}_{L^2(\bO)}^2 + \norm{\nabla( u-v)}_{L^2(\bO)}^2 + \frac{1}{2} \left[\norm{\abs{u}^{\frac{p-1}{2}} \abs{u-v}}_{L^2(\bO)}^2 + \norm{\abs{v}^{\frac{p-1}{2}} \abs{u-v}}_{L^2(\bO)}^2 \right]\\
			&\quad  - ( \gn^K(u)- \gn^K(v), u-v),\label{eqn-loc}
		\end{align}
		where, in the last step, we have used \eqref{eqn-mono-2}. 
		Now, from Lemma \ref{Lem-gn-mono}, we also have
		\begin{align}
			((\Gn^K + \Gamma)u - (\Gn^K + \Gamma)v, u-v)
			&\geq  \frac{1}{2}\norm{\nabla( u-v)}_{L^2(\bO)}^2 + (\Gamma  - C(K))\| u-v\|_{L^2(\bO)}^2\\
			& \quad + \frac{1}{2} \Big[\big\|\abs{u}^{\frac{p}{2}-1} \abs{u-v}\big\|_{L^2(\bO)}^2 + \big\|\abs{v}^{\frac{p}{2}-1} \abs{u-v}\big\|_{L^2(\bO)}^2 \Big]\\
			&\quad - \frac{1}{4} \Big[\big\|\abs{u}^{\frac{p}{2}-1} \abs{u-v}\big\|_{L^2(\bO)}^2 + \big\|\abs{v}^{\frac{p}{2}-1} \abs{u-v}\big\|_{L^2(\bO)}^2 \Big] \\
			& \ge 
			[\Gamma - C(K)]\|u-v\|_{L^2(\bO)}^2.\label{eqn-acc-1}
		\end{align}
		Let us now choose $\Gamma$ such that  
		\begin{equation}\label{eqn-C(K)}
			\Gamma \ge C(K) = [1 + (2+ p^2 2^{2p-3})K\lambda_1^{-1}]K.
		\end{equation}
		Then, we deduce that the nonlinear operator $$\Gn^K + \Gamma I: D(A) \to L^2(\bO)$$ is monotone.
		\vskip 1mm
		\noindent
		\textbf{Step II.} We will  prove the hemicontinuity of the nonlinear operator $\Gn^K + \Gamma I$. For this purpose it is suffices to establish that $\Gn^K + \Gamma I$ is demicontinuous, see \cite[Theorem 1]{TK-64}. Let us show that $\Gn^K + \Gamma I$ is demicontinuous. Suppose that $\{\psi_k\}_{k\in \N} \subset D(A)$ is such that  $\psi_k \to \psi$ in $D(A)$ as $k\to \infty$. We will show that 
		$(\Gn^K + \Gamma I)\psi_k$ is weakly convergent in $L^2(\bO)$ to $(\Gn^K + \Gamma I)\psi$.

		By assumptions and the Sobolev embedding Theorem, since $p$ satisfies \eqref{eqn-D(A)-in-L^p-assump}, it follows that 
		\begin{equation}\label{eqn-D(A)-L^{2p-2}-H_0^1}
			\norm{A\psi_k -A \psi}_{L^2(\bO)} \to 0\ \text{ and }\ \norm{\psi_k - \psi}_{L^{2p-2}(\bO)} \to 0,\ \text{ as }\  k\to \infty.
		\end{equation}
		Thus, for any fixed $\eta \in D(A)$, we calculate
		\begin{align*}
			& ( (\Gn^K+\Gamma I)\psi_k - (\Gn^K+\Gamma I)\psi, \eta )\\
			& = (A (\psi_k - \psi), \eta) + (\abs{\psi_k}^{p-2}\psi_k - \abs{\psi}^{p-2}\psi, \eta) - (\gn^K(\psi_k) - \gn^K(\psi), \eta) + \Gamma (\psi_k - \psi), \eta)\\
			&  =: \mathcal{I}_1 + \mathcal{I}_2 + \mathcal{I}_3 + \mathcal{I}_4.
		\end{align*}
		We next estimate each $\mathcal{I}_i$ individually, for each $1\le i \le 4$, as follows:
		\begin{align*}
			\abs{\mathcal{I}_1} 
			& = \abs{(A(\psi_k - \psi), \eta )} \leq \norm{A\psi_k - A\psi}_{L^2(\bO)}  \norm{\eta}_{L^2(\bO)} \to 0\ \text{ as }\ k \to \infty.
		\end{align*}
		By applying Hölder's inequality twice (first with exponent $2$ and $2$, and then with $2p-2$ and $2p-2/p-2$), for any $2\le p < \infty$, we deduce 
		\begin{align*}
			\abs{\mathcal{I}_2} 
			& = \abs{ (\abs{\psi_k}^{p-2}\psi_k - \abs{\psi}^{p-2}\psi, \eta) } \\
			& \leq (p-1)\norm{\psi_k - \psi}_{L^{2p-2}(\bO)} \big(\norm{\psi_k}_{L^{2p-2}(\bO)} + \norm{\psi}_{L^{2p-2}(\bO)}\big)^{p-2} \norm{\eta}_{L^2(\bO)}\\
			& \to 0\ \text{ as }\ k \to \infty,
		\end{align*}
		where we have used the convergences \eqref{eqn-D(A)-L^{2p-2}-H_0^1}. Similarly, we also have
		\begin{align*}
			\abs{\mathcal{I}_4} \le \Gamma \norm{\psi_k - \psi}_{L^2(\bO)} \norm{\eta}_{L^2(\bO)} \to 0, \ \text{ as }\ k \to \infty.
		\end{align*}
		Lastly, from Lemma \ref{gn-demi}, we infer 
		\begin{align*}
			\mathcal{I}_3 \to 0, \ \text{ as }\ k \to \infty.
		\end{align*} 
		Therefore, the map $\Gn^K + \Gamma I: D(A) \to L^2(\bO)$ is demicontinuous and hence hemicontinuous.
		
		
		\vskip 1mm
		\noindent
		\textbf{Step III.} We now show that 
		there exists $\Gamma_K>0$ such that for every $\Gamma > \Gamma_K$, the map $\Gn^K + \Gamma I:D(A) \to L^2(\bO)$ is coercive. Let us prove it in two different cases: 
		\vskip 1mm
		\noindent
		\textit{Case I.} Suppose that  $\norm{\nabla\psi}_{L^2(\bO)}^2 +  \norm{\psi}_{L^p(\bO)}^p \leq K$.
		\vskip 1mm
		\noindent
		By the definition of $\Gn^K$ given in \eqref{Def-Gn-K}, we find 
		\begin{align*}
			((\Gn^K +\Gamma I)\psi, \psi)
			& =  \norm{\nabla \psi}_{L^2(\bO)}^2 + \norm{\psi}_{L^p(\bO)}^p + [\Gamma - \norm{\nabla \psi}_{L^2(\bO)}^2 - \norm{\psi}_{L^p(\bO)}^p]  \norm{\psi}_{L^2(\bO)}^2\\
			& \ge  [\Gamma - K] \norm{\psi}_{L^2(\bO)}^2.
		\end{align*}
		Dividing on both sides by $\norm{\psi}_{L^2(\bO)}$ and letting $\norm{\psi}_{L^2(\bO)} \to \infty$ yield
		\begin{align*}
			\frac{((\Gn^K +\Gamma I)\psi, \psi) }{\norm{\psi}_{L^2(\bO)}} 
			& \ge [\Gamma - K] \norm{\psi}_{L^2(\bO)} \to \infty,
		\end{align*}
		for all $\Gamma > \Gamma_K = K$.
		
		\vskip 1mm
		\noindent
		\textit{Case II.} When $\norm{\nabla\psi}_{L^2(\bO)}^2 + \norm{\psi}_{L^p(\bO)}^p > K$.
		\vskip 1mm
		\noindent
		Again, by utilizing the definition of $\Gn^K$, we calculate 
		\begin{align*}
			((\Gn^K +\Gamma I)\psi, \psi)
			& =  \norm{\nabla \psi}_{L^2(\bO)}^2 + \norm{\psi}_{L^p(\bO)}^p + \bigg[\Gamma - \frac{K^{2}}{\norm{\nabla \psi}_{L^2(\bO)}^2 + \norm{\psi}_{L^p(\bO)}^p}\bigg] \norm{\psi}_{L^2(\bO)}^2\\
			& \ge  \big[\Gamma - K\big]\norm{\psi}_{L^2(\bO)}^2.
		\end{align*}
		Dividing on both sides by $\norm{\psi}_{L^2(\bO)}$ and letting $\norm{\psi}_{L^2(\bO)} \to \infty$ yield
		\begin{align*}
			\frac{((\Gn^K +\Gamma I)\psi, \psi) }{\norm{\psi}_{L^2(\bO)}} 
			& \ge [\Gamma - K] \norm{\psi}_{L^2(\bO)} \to \infty,
		\end{align*}
		for sufficiently large $\Gamma$ such that $\Gamma > \Gamma_K = K$.
		Thus, it shows that $\Gn^K + \Gamma I$ is coercive from $D(A)$ to $L^2(\bO)$, for $\Gamma > \Gamma_K = K$. 
		
		Observe from \eqref{eqn-C(K)} that $C(K) > K$, thus, it is enough to consider $\Gamma \ge C(K)$ to get that the map $\Gn^K + \Gamma I$ is monotone, hemicontinuous and coercive.

		\vskip 1mm
		\noindent
		\textbf{Step IV.} In the previous three steps, we have shown that the nonlinear operator $\Fn = \Gn^K + \Gamma I:D(A) \to L^2(\bO)$ satisfies the assumptions of Corollary \ref{Cor-Mono+Hemi+Coe=Onto}, i.e., it is a monotone, hemicontinuous, and coercive map for $\Gamma \ge C(K)$, see \eqref{eqn-C(K)}. Therefore, we infer that $\Gn^K + \Gamma I$ is surjective, i.e., $R(\Gn^K + \Gamma I) = L^2(\bO)$. 
		Now, let us define 
		\begin{equation}\label{Def-Kappa}
			\kappa(\psi):= A(\psi) + \Nn(\psi) - \gn^K(\psi)  + \Gamma \psi,
		\end{equation}
		where $D(\kappa):= \{\psi\in L^{2p-2}(\bO)\cap H_0^1(\bO) : A(\psi) + \Nn(\psi) - \gn^K(\psi)  \in L^2(\bO)\}$.
		
		Let us choose $\mathcal{X} = L^2(\bO) \cong \mathcal{X}^\ast$ in Theorem \ref{Thm-Max-Mono}. This implies that the map $\kappa = \Gn^K + \Gamma I$ is \textit{maximal-monotone }and hence \textit{$m-$accretive} (see Remark \ref{Rmk-max+mono-m-acc}), with domain $D(\kappa) \supseteq D(A)$, for 
		\begin{equation}\label{eqn-Gamma}
			\Gamma \ge \Gamma_K:= \max\{C(K), K\} = C(K),
		\end{equation}
		where $C(K)$ is appearing while proving the monotonicity property, see \eqref{eqn-C(K)}.
		Let us now conclude the proof by showing that the map $\Gn^K + \Gamma I$ is \textit{$m-$accretive} in $L^2(\bO)$ with the domain $D(A)$. 
		Fix $0 \neq\psi \in D(\kappa)$, $K\in\N$, and take the $L^2-$inner product of \eqref{Def-Kappa} with $A\psi$ to deduce
		\begin{align*}
			& \norm{A\psi}_{L^2(\bO)}^2  + \big[\Gamma - K\big] \norm{\nabla\psi}_{L^2(\bO)}^2\\
			&\le \norm{A\psi}_{L^2(\bO)}^2 + (p-1)\norm{\abs{\psi}^{\frac{p-2}{2}} \nabla \psi}_{L^2(\bO)}^2 + \Gamma\norm{\nabla\psi}_{L^2(\bO)}^2 - (\gn^K(\psi), A\psi)\\
			& = ((\Gn^K +\Gamma I)\psi, A\psi) = (\kappa(\psi), A\psi)  \leq \norm{\kappa(\psi)}_{L^2(\bO)}\norm{A\psi}_{L^2(\bO)},
		\end{align*}
		where we have used Step III. Thus, for sufficiently large $\Gamma \ge \Gamma_K$, see \eqref{eqn-Gamma}, we deduce
		\begin{align*}
			\norm{A\psi}_{L^2(\bO)} \leq \norm{\kappa(\psi)}_{L^2(\bO)}, \ \text{ which  implies }\ D(\kappa) \subseteq D(A).
		\end{align*}
		Thus, $D(\kappa) = D(A)$, and hence the map $\kappa = \Gn^K +\Gamma I$ is \textit{$m-$accretive} on $L^2(\bO)$ with domain $D(A)$, for sufficiently large $\Gamma \ge \Gamma_K$, see \eqref{eqn-Gamma}. It completes the proof.
	\end{proof}
	
	From Theorem \ref{Thm-m-accretive} and \cite[Theorem 1.8, Chapter IV]{VB-93}, we have the following  immediate result:
	
	\begin{proposition}\label{Prop-quant}
		Let $T>0$ and $u_0\in D(A)\cap \bM$ be fixed. Then, there exists a unique strong solution \begin{align}\label{eqn-regularity}u_K\in W^{1,\infty}([0,T];L^2(\bO))\cap L^{\infty}(0,T; D(A))\end{align}
		which solves (in $(0,T)\times \bO$)
		\begin{align}\label{eqn-quant-heat}
			\left\{
			\begin{aligned}
				\frac{d u_K(t)}{d t} + A u_K(t) + |u_K(t)|^{p-2}u_K(t) - \gn^K(u_K(t)) & = 0,\\
				u_K(0) & = u_0,\\
				u_K(t)|_{\partial \bO} & = 0,
			\end{aligned}
			\right.
		\end{align} 
		in $L^2(\bO)$. 	Moreover, $u_K(t)\in\bM$, for all $t>0$, only when $\norm{\nabla u_K(t)}_{L^2(\bO)}^2 + \norm{u_K(t)}_{L^p(\bO)}^p \le K$, otherwise, $L^2-$norm of the solution dissipates in time, i.e., $\norm{u_K(t)}_{L^2(\bO)} < 1,$ for all $t>0$ and the right derivative of $u_K$, denoted by $ \frac{d^+u_K(t)}{dt}$, exists for all $t\in [0,T)$.
	\end{proposition}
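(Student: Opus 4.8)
The plan is to recast the modified system \eqref{eqn-quant-heat} as an abstract Cauchy problem driven by $\Gn^K$ and to read off existence, uniqueness and regularity directly from the $m$-accretivity established in Theorem \ref{Thm-m-accretive}. Writing $\kappa = \Gn^K + \Gamma I$, the system \eqref{eqn-quant-heat} is exactly
\begin{align*}
\frac{d u_K(t)}{d t} + \Gn^K(u_K(t)) = 0, \qquad u_K(0) = u_0.
\end{align*}
By Theorem \ref{Thm-m-accretive}, $\kappa$ is $m$-accretive on $L^2(\bO)$ with domain $D(A)$; equivalently, $\Gn^K$ is quasi-$m$-accretive with constant $\Gamma$, i.e.
\begin{align*}
(\Gn^K(u) - \Gn^K(v), u - v) \ge -\Gamma \norm{u-v}_{L^2(\bO)}^2, \qquad u, v \in D(A),
\end{align*}
together with the range condition $R(\Gn^K + \Gamma I) = L^2(\bO)$. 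This is precisely the structure covered by \cite[Theorem 1.8, Chapter IV]{VB-93}. Since $u_0 \in D(A) = D(\Gn^K)$, that theorem furnishes a unique strong solution enjoying the regularity \eqref{eqn-regularity}, namely $u_K \in W^{1,\infty}([0,T]; L^2(\bO)) \cap L^\infty(0,T; D(A))$.

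For the differentiability claim I would invoke the standard Hilbert-space theory for $m$-accretive operators: because $u_0$ lies in the domain, the trajectory $t \mapsto u_K(t)$ is Lipschitz into $L^2(\bO)$, the map $t \mapsto \Gn^K(u_K(t))$ is bounded, and the right derivative $\tfrac{d^+ u_K}{dt}$ exists for every $t \in [0,T)$ and satisfies $\tfrac{d^+ u_K(t)}{dt} = -\Gn^K(u_K(t))$; the operator being single-valued, the minimal section coincides with $\Gn^K$ itself.

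It remains to describe the invariance/dissipation dichotomy, which follows from a direct energy identity. Testing the equation against $u_K(t)$ and integrating by parts gives, for a.e. $t$,
\begin{align*}
\frac{1}{2}\frac{d}{dt}\norm{u_K(t)}_{L^2(\bO)}^2 = -\norm{\nabla u_K(t)}_{L^2(\bO)}^2 - \norm{u_K(t)}_{L^p(\bO)}^p + (\gn^K(u_K(t)), u_K(t)).
\end{align*}
Set $S(t) := \norm{\nabla u_K(t)}_{L^2(\bO)}^2 + \norm{u_K(t)}_{L^p(\bO)}^p$ and $w(t) := \norm{u_K(t)}_{L^2(\bO)}^2 - 1$. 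On the region $\{S(t) \le K\}$ the cut-off is inactive, so $(\gn^K(u_K), u_K) = S(t)\norm{u_K}_{L^2(\bO)}^2$ and the identity reduces to the linear relation
\begin{align*}
w'(t) = 2\,S(t)\, w(t), \qquad w(0) = 0,
\end{align*}
whence $w \equiv 0$ by Gr\"onwall's inequality, i.e. $u_K(t) \in \bM$, as long as $S(t) \le K$. On the region $\{S(t) > K\}$ one has $(\gn^K(u_K), u_K) = \tfrac{K^2}{S(t)}\norm{u_K}_{L^2(\bO)}^2$, so at $\norm{u_K}_{L^2(\bO)} = 1$ the right-hand side equals $\tfrac{K^2 - S(t)^2}{S(t)} < 0$; hence the norm is strictly decreasing, giving $\norm{u_K(t)}_{L^2(\bO)} < 1$.

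The main obstacle is the careful bookkeeping in applying \cite[Theorem 1.8, Chapter IV]{VB-93}: one must check that the quasi-accretive constant $\Gamma = \Gamma_K$ supplied by Theorem \ref{Thm-m-accretive} fits the hypotheses, and---more delicately---that the abstract solution genuinely lies in $L^\infty(0,T; D(A))$ and admits an everywhere-defined right derivative, rather than being merely a mild solution. Once the strong solution is secured, the energy identity and the two-case analysis above are routine, the only subtlety being the justification of testing against $u_K(t)$, which is legitimate because $u_K(t) \in D(A)$ for almost every $t$.
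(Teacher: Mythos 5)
Your proposal is correct and follows essentially the same route as the paper: Proposition \ref{Prop-quant} is obtained there as an immediate consequence of Theorem \ref{Thm-m-accretive} combined with \cite[Theorem 1.8, Chapter IV]{VB-93}, exactly as you invoke them. The invariance/dissipation dichotomy is likewise proved in the paper (Subsection \ref{Subsec-Thm-I}, Cases I and II) by the same device you use, namely testing against $u_K$, reducing to the ODE $w'(t)=2S(t)w(t)$ with $w(0)=0$ when the cut-off is inactive, and deriving the strict differential inequality $w'(t)<2S(t)w(t)$ when $S(t)>K$, so the two arguments coincide.
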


	\subsection{Proof of Theorem \ref{Thm-Main-Exis}: Part I}\label{Subsec-Thm-I}
	In this subsection, we present the first part of the proof of Theorem \ref{Thm-Main-Exis}. By utilizing the previously obtained results, we derive energy estimates for the modified system \eqref{eqn-quant-heat} and establish the existence and uniqueness of a strong solution in the class $u\in W^{1,\infty}([0,T];L^2(\bO))\cap L^{\infty}(0,T; D(A))$ for the original problem \eqref{eqn-main-heat}.
	
	Let us begin by choosing and fixing $K\in\N$, and $u_0\in D(A)$. Suppose $u_K$ is the unique strong solution to the modified problem \eqref{eqn-quant-heat} (guaranteed by Proposition \ref{Prop-quant}). Then, we calculate the energy estimates in the following two cases:
	\vskip 1mm
	\noindent
	\textit{Case I.} When $\norm{\nabla u_K}_{L^2(\bO)}^2 + \norm{u_K}_{L^p(\bO)}^p \le K$.
	\vskip 1mm
	\noindent
	\textbf{(a)} Using the regularity given in \eqref{eqn-regularity}, we infer from Lions-Magenes Lemma \cite{JLL+EM-II-72}
	that the mapping $[0,T]\ni t\mapsto\|u_K(t)\|_{L^2(\bO)}^2\in\mathbb{R}$ is absolutely continuous. 
	Therefore, taking the $L^2-$inner product of the equation \eqref{eqn-quant-heat} with $u_K$ and integrating by parts, we have  for a.e. $t\in[0,T]$ 
	\begin{align*}
		\frac{d}{dt}\left(\norm{u_K(t)}_{L^2(\bO)}^2 -1 \right) 
		& = 2\left( u_K(t), \frac{\partial u_K(t)}{\partial t} \right)
		= 2 \left( u_K(t), \Gn^K(u_K(t))\right)\\
		& = -2 \norm{\nabla u_K(t)}_{L^2(\bO)}^2 -2 \norm{ u_K(t)}_{L^p(\bO)}^p \\
		& \quad + 2\left(\norm{\nabla u_K(t)}_{L^2(\bO)}^2 + \norm{ u_K(t)}_{L^p(\bO)}^p \right) \norm{ u_K(t)}_{L^2(\bO)}^2\\
		& = 2\left(\norm{\nabla u_K(t)}_{L^2(\bO)}^2 + \norm{ u_K(t)}_{L^p(\bO)}^p \right)( \norm{ u_K(t)}_{L^2(\bO)}^2 -1).
	\end{align*}
	Let us denote $\theta(t) = \norm{u_K(t)}_{L^2(\bO)}^2 -1$. Thus, the above equality transforms into
	\begin{align*}
		\frac{d\theta(t)}{dt} = 2\left(\norm{\nabla u_K(t)}_{L^2(\bO)}^2 + \norm{ u_K(t)}_{L^p(\bO)}^p\right)\theta(t).
	\end{align*}
	Applying the variation of constant formula, we deduce
	\begin{align*}
		\theta(t) = \theta(0) \exp\left[2\int_0^t \left(\norm{\nabla u_K(s)}_{L^2(\bO)}^2 + \norm{ u_K(s)}_{L^p(\bO)}^p\right) ds\right].
	\end{align*}
	Since $u_K \in L^\infty(0,T; D(A)) \embed L^\infty(0,T; L^p(\bO)\cap H_0^1(\bO))$, for any $p$ satisfying \eqref{eqn-D(A)-in-L^p-assump}, and because $$\theta(0) = \norm{u_K(0)}_{L^2(\bO)}^2 -1= \norm{u_0}_{L^2(\bO)}^2 - 1=0, $$ we immediately have $\norm{u_K(t)}_{L^2(\bO)}^2 -1 = 0,$ for all  $t\in [0,T].$ Thus, it shows that
	\begin{align}\label{eqn-||um||=1}
		u_K(t)\in \bM ,\ \text{ for all }\ t\in [0,T],
	\end{align}
	Moreover, the fact $\norm{u_K(t)}_{L^2(\bO)} = 1$, for all $t\in[0,T]$, implies
	\begin{align}\label{eqn-der}
		\frac{d}{dt} \norm{u_K(t)}_{L^2(\bO)}^2 = \left(u_K(t), \frac{\partial u_K(t)}{\partial t}\right) = 0,\ \text{for a.e.}\ t\in[0,T].
	\end{align}
	\vskip 1mm
	\noindent
	\textbf{(b)} Once again  using the regularity given in \eqref{eqn-regularity}, we infer from Lions-Magenes Lemma that the mapping $[0,T]\ni t\mapsto\|\nabla u_K(t)\|_{L^2(\bO)}^2\in\mathbb{R}$ is absolutely continuous. Hence, taking the $L^2-$inner product of equation \eqref{eqn-quant-heat} with $Au_K$ and applying Lemma \ref{AbsC}, we obtain for a.e. $t\in[0,T]$ that
	\begin{align*}
		\frac{1}{2}\frac{d}{dt}\norm{\nabla u_K(t)}_{L^2(\bO)}^2 
		& {= \left(\frac{\partial u_K(t)}{\partial t}, Au_K(t) \right)} \\
		&= \left(\frac{\partial u_K(t)}{\partial t}, -\frac{\partial u_K(t)}{\partial t} - |u_K(t)|^{p-2}u_K(t) + \gn^K(u_K(t)) \right)\\
		& = - \norm{\frac{\partial u_K(t)}{\partial t}}_{L^2(\bO)}^2 - \left( \frac{\partial u_K(t)}{\partial t} , |u_K(t)|^{p-2}u_K(t)\right) \\
		& \quad + \left(\norm{\nabla u_K(t)}_{L^2(\bO)}^2  + \norm{u_K(t)}_{L^p(\bO)}^p\right) \left( \frac{\partial u_K(t)}{\partial t},u_K(t)\right).
	\end{align*}
	Thus, by using the equation \eqref{eqn-der},  we assert
	\begin{align}
		\frac{1}{2}\frac{d}{dt}\norm{\nabla u_K(t)}_{L^2(\bO)}^2 + \norm{\frac{\partial u_K(t)}{\partial t}}_{L^2(\bO)}^2 +\frac{1}{p}\frac{d}{dt}\norm{u_K(t)}^{p}_{L^p(\bO)} = 0.\label{eqn-first-estimates}
	\end{align}
	Integrating the above equality over the time interval $[0,t]$ yields
	\begin{align}
		\frac{1}{2}\norm{\nabla u_K(t)}_{L^2(\bO)}^2 & + \int_0^t\norm{\frac{\partial u_K(s)}{\partial s}}_{L^2(\bO)}^2 ds + \frac{1}{p}\norm{u_K(t)}^{p}_{L^p(\bO)}\\
		& = \frac{1}{2}\norm{\nabla u_0}_{L^2(\bO)}^2
		+ \frac{1}{p}\norm{u_0}^{p}_{L^p(\bO)}.\label{eqn-energy-equality}
	\end{align}
	
	\vskip 1mm
	\noindent
	\textit{Case II.} When $\norm{\nabla u_K}_{L^2(\bO)}^2 + \norm{u_K}_{L^p(\bO)}^p > K$.
	\vskip 1mm
	\noindent
	\textbf{(a)} Similar to Case I, we take the $L^2-$inner product of equation \eqref{eqn-quant-heat} with $u_K$ and perform  integrating by parts for a.e. $t\in[0,T]$ to find
	\begin{align}
		\frac{d}{dt}\left(\norm{u_K(t)}_{L^2(\bO)}^2 -1 \right) 
		& = 2\left( u_K(t), \frac{\partial u_K(t)}{\partial t} \right)
		= 2 \left( u_K(t), \Gn^K(u_K(t))\right)\\
		& = -2 \norm{\nabla u_K(t)}_{L^2(\bO)}^2 -2 \norm{ u_K(t)}_{L^p(\bO)}^p \\
		& \quad + 2\left(\frac{K^2}{\norm{\nabla u_K(t)}_{L^2(\bO)}^2 + \norm{ u_K(t)}_{L^p(\bO)}^p} \right) \norm{ u_K(t)}_{L^2(\bO)}^2\\
		& < 2\left(\norm{\nabla u_K(t)}_{L^2(\bO)}^2 + \norm{ u_K(t)}_{L^p(\bO)}^p \right)( \norm{ u_K(t)}_{L^2(\bO)}^2 -1).\label{eqn-um<1}
	\end{align}
	We denote $\theta(t) = \norm{u_K(t)}_{L^2(\bO)}^2 -1$. Consequently, the equation above can be rewritten as
	\begin{align*}
		\frac{d\theta(t)}{dt} < 2\left(\norm{\nabla u_K(t)}_{L^2(\bO)}^2 + \norm{ u_K(t)}_{L^p(\bO)}^p\right)\theta(t).
	\end{align*}
	Applying the variation of constant formula, we deduce
	\begin{align*}
		\theta(t) < \theta(0) \exp\left[2\int_0^t \left(\norm{\nabla u_K(s)}_{L^2(\bO)}^2 + \norm{ u_K(s)}_{L^p(\bO)}^p\right) ds\right].
	\end{align*}
	Since $u_K \in L^\infty(0,T; D(A)) \embed L^\infty(0,T; L^p(\bO)\cap H_0^1(\bO))$ and $\theta(0) = \norm{u_K(0)}_{L^2(\bO)}^2 -1= \norm{u_0}_{L^2(\bO)}^2 - 1=0$, we immediately have $\norm{u_K(t)}_{L^2(\bO)} < 1,$ for all  $t\in [0,T].$ Thus, it shows that
	\begin{align}\label{eqn-||um||<1-3}
		u_K(t)\notin \bM ,\ \text{ for all }\ t\in [0,T].
	\end{align}
	Moreover, using the fact that $\norm{u_K(t)}_{L^2(\bO)} < 1$, for all $t\in[0,T]$, in \eqref{eqn-um<1} implies
	\begin{align}\label{eqn-der>>}
		\frac{d}{dt} \norm{u_K(t)}_{L^2(\bO)}^2 = \left(u_K(t), \frac{\partial u_K(t)}{\partial t}\right) < 0,\ \text{for a.e.}\ t\in[0,T].
	\end{align}
	\vskip 1mm
	\noindent
	\textbf{(b)} Taking the $L^2-$inner product of the equation \eqref{eqn-quant-heat} with $Au_K$, utilizing Lemma \ref{AbsC} and integrating by parts yield
	\begin{align*}
		\frac{1}{2}\frac{d}{dt}\norm{\nabla u_K(t)}_{L^2(\bO)}^2 
		&= \left(\frac{\partial u_K(t)}{\partial t}, Au_K(t) \right) \\
		&= \left(\frac{\partial u_K(t)}{\partial t}, -\frac{\partial u_K(t)}{\partial t} - |u_K(t)|^{p-2}u_K(t) + \gn^K(u_K(t)) \right)\\
		& = - \norm{\frac{\partial u_K(t)}{\partial t}}_{L^2(\bO)}^2 - \left( \frac{\partial u_K(t)}{\partial t} , |u_K(t)|^{p-2}u_K(t)\right) \\
		& \quad + \left(\frac{K^2}{\norm{\nabla u_K(t)}_{L^2(\bO)}^2 + \norm{u_K(t)}_{L^p(\bO)}^p}\right) \left( \frac{\partial u_K(t)}{\partial t},u_K(t)\right).
	\end{align*}
	Thus, by using the equation \eqref{eqn-der>>},  we obtain
	\begin{align}
		\frac{1}{2}\frac{d}{dt}\norm{\nabla u_K(t)}_{L^2(\bO)}^2 + \norm{\frac{\partial u_K(t)}{\partial t}}_{L^2(\bO)}^2 +\frac{1}{p}\frac{d}{dt}\norm{u_K(t)}^{p}_{L^p(\bO)} < 0.
	\end{align}
	Now, by integrating the above equality with respect to time from $0$ to $t$, we get
	\begin{align}
		\frac{1}{2}\norm{\nabla u_K(t)}_{L^2(\bO)}^2 & + \int_0^t\norm{\frac{\partial u_K(s)}{\partial s}}_{L^2(\bO)}^2 ds + \frac{1}{p}\norm{u_K(t)}^{p}_{L^p(\bO)} \\
		& < \frac{1}{2}\norm{\nabla u_0}_{L^2(\bO)}^2
		+ \frac{1}{p}\norm{u_0}^{p}_{L^p(\bO)}.\label{eqn-u_K-H_0^1-L^p}
	\end{align}
	In particular, combining the above two cases, we have 
	\begin{align}\label{est-Lp-cap-H_0^1}
		\frac{1}{2}\norm{\nabla u_K(t)}_{L^2(\bO)}^2	+ \frac{1}{p}\norm{u_K(t)}^{p}_{L^p(\bO)} 
		& \le C, \ \text{ for all }\  t\in [0,T],
	\end{align}
	where $C= \frac{1}{2}\norm{\nabla u_0}_{L^2(\bO)}^2
	+ \frac{1}{p}\norm{u_0}^{p}_{L^p(\bO)}$ is independent of $K$. Hence, by choosing $K \ge C$ and redefining $u_K = u$, we deduce
	\begin{align*}
		\gn^K(u) = (\norm{\nabla u}_{L^2(\bO)}^2 + \norm{u}_{L^p(\bO)}^p) u,
	\end{align*}
	so that $u$ is a strong solution to the original problem \eqref{eqn-main-heat} such that 
	\begin{align*}
		u\in W^{1,\infty}([0,T];L^2(\bO))\cap L^{\infty}(0,T; D(A)),
	\end{align*}
	satisfying \eqref{eqn-dec-ener} (see \eqref{eqn-energy-equality}). Moreover, the uniqueness of the strong solution $u$ obtained above follows from \cite[Section 4.4]{AB+ZB+MTM-25+}. 
	
	The proof of Theorem \ref{Thm-Main-Exis} will be completed once we establish that
	\begin{align*}
		u \in L^2(0,T; D(A^{\frac{3}{2}}))\cap C([0,T]; D(A))\ \text{ and }\ \frac{\partial u}{\partial t} \in L^2(0,T; H_0^1(\bO)),
	\end{align*}
	This verification is deferred to the next subsection; see Remark \ref{Rmk-Regularity}.

	\begin{remark}
	Similar to the Case II (b), see \eqref{eqn-u_K-H_0^1-L^p}), for the solution $u$ to the problem \eqref{eqn-main-heat}, we also have
	\begin{align}
		&\sup_{t\in[0,T]}\Big[\frac{1}{2}\norm{\nabla u(t)}_{L^2(\bO)}^2+ \frac{1}{p}\norm{u(t)}^{p}_{L^p(\bO)}\Big] + \int_0^T \Big\|\frac{du(s)}{ds}\Big\|_{L^2(\bO)}^2 ds  \\
		& \leq  \frac{1}{2}\norm{\nabla u_0}_{L^2(\bO)}^2
		+ \frac{1}{p}\norm{u_0}^{p}_{L^p(\bO)}.\label{eqn-H_0^1-L^p}
	\end{align}
\end{remark}

	\subsection{Proof of Theorem \ref{Thm-Main-Exis}: Part II}\label{Subsec-Thm-II}
	This subsection aims to establish the second part of the proof of Theorem \ref{Thm-Main-Exis}, namely the regularity results for the strong solution obtained in the previous subsection. We first prove a regularity result for the solution to problem \eqref{eqn-main-heat}, motivated by \cite[Proposition 4.6]{AB+ZB+MTM-25+}. Then, by applying the well-known Yosida approximation, a regularization technique relevant to our main problem \eqref{eqn-main-heat}, we derive two additional regularity results. Let us fix $T\in(0,\infty)$ and recall that $\bO\subset \R^d$ is a Poincar\'e domain with $C^2-$boundary.

	\subsubsection{A regularity result}
	We choose and fix  $p$ as in \eqref{eqn-D(A)-in-L^p-assump}. First, we state a regularity result for the unique strong solution of problem \eqref{eqn-main-heat}, which will serve as a foundation for deriving additional regularity properties of the solution.
	
	\begin{proposition}\label{Prop-u-L2-D(A)}
		Suppose $u_0\in D(A)\cap\bM$. Then, the unique strong solution $u$ to the problem \eqref{eqn-main-heat}, as established in Subsection \ref{Subsec-Thm-I}, satisfies the following estimate:
		\begin{align}
			&\int_0^T \Big(\norm{\Delta u(t)}_{L^2(\bO)}^2 + 2(p-1)\|\abs{u(t)}^{\frac{p-2}{2}}\nabla u(t)\|_{L^2(\bO)}^2 + \norm{u(t)}_{L^{2p-2}(\bO)}^{2p-2} \Big) dt\\
			& \leq  C(T, \norm{u_0}_{L^p(\bO) \cap H_0^1(\bO)}).\label{eqn-est-L^2p-2}
		\end{align}
	\end{proposition}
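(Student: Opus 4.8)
The plan is to derive a single differential identity for the energy $\En(u(t))$ by testing equation \eqref{eqn-102} against $Au(t)=-\Delta u(t)$ and against $|u(t)|^{p-2}u(t)$, adding the two resulting identities, integrating in time, and finally controlling the right-hand side by the uniform bound \eqref{est-Lp-cap-H_0^1} already obtained in Part~I. This works because the two test functions are precisely the ones whose nonlinear pairings reproduce the three dissipation terms on the left of \eqref{eqn-est-L^2p-2}, while the sum of the two leading time derivatives reassembles $\frac{d}{dt}\En(u)$.

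First I would test \eqref{eqn-102} with $Au=-\Delta u$. By the regularity $u\in W^{1,\infty}([0,T];L^2(\bO))\cap L^\infty(0,T;D(A))$ established in Subsection~\ref{Subsec-Thm-I}, the pairing $(\partial_t u, Au)$ is well defined for a.e.\ $t$, and Lemma~\ref{AbsC} (used already in Part~I) justifies $(\partial_t u, Au)=\frac12\frac{d}{dt}\norm{\nabla u}_{L^2(\bO)}^2$. Using $-(\Delta u,Au)=\norm{\Delta u}_{L^2(\bO)}^2$, Green's formula $(|u|^{p-2}u,-\Delta u)=(p-1)\||u|^{\frac{p}{2}-1}\nabla u\|_{L^2(\bO)}^2$ (the boundary term vanishing since $u|_{\partial\bO}=0$), and $(u,Au)=\norm{\nabla u}_{L^2(\bO)}^2$, this yields
\begin{align*}
\tfrac12\tfrac{d}{dt}\norm{\nabla u}_{L^2(\bO)}^2 + \norm{\Delta u}_{L^2(\bO)}^2 + (p-1)\big\||u|^{\frac{p}{2}-1}\nabla u\big\|_{L^2(\bO)}^2 = \big(\norm{\nabla u}_{L^2(\bO)}^2 + \norm{u}_{L^p(\bO)}^p\big)\norm{\nabla u}_{L^2(\bO)}^2 .
\end{align*}

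Next I would test \eqref{eqn-102} with $|u|^{p-2}u$, which lies in $L^2(\bO)$ for a.e.\ $t$ since $D(A)\embed L^{2p-2}(\bO)$ under \eqref{eqn-D(A)-in-L^p-assump} and $\big\||u|^{p-2}u\big\|_{L^2(\bO)}^2=\norm{u}_{L^{2p-2}(\bO)}^{2p-2}$. Employing $(|u|^{p-2}u,|u|^{p-2}u)=\norm{u}_{L^{2p-2}(\bO)}^{2p-2}$, $(u,|u|^{p-2}u)=\norm{u}_{L^p(\bO)}^p$, together with the chain-rule identity $(\partial_t u,|u|^{p-2}u)=\frac1p\frac{d}{dt}\norm{u}_{L^p(\bO)}^p$ (justified by the Lions--Magenes-type lemma of Appendix~\ref{Sec-Appendix}), I obtain
\begin{align*}
\tfrac1p\tfrac{d}{dt}\norm{u}_{L^p(\bO)}^p + (p-1)\big\||u|^{\frac{p}{2}-1}\nabla u\big\|_{L^2(\bO)}^2 + \norm{u}_{L^{2p-2}(\bO)}^{2p-2} = \big(\norm{\nabla u}_{L^2(\bO)}^2 + \norm{u}_{L^p(\bO)}^p\big)\norm{u}_{L^p(\bO)}^p .
\end{align*}
Adding the two identities and recalling $\En(u)=\frac12\norm{\nabla u}_{L^2(\bO)}^2+\frac1p\norm{u}_{L^p(\bO)}^p$ gives
\begin{align*}
\tfrac{d}{dt}\En(u) + \norm{\Delta u}_{L^2(\bO)}^2 + 2(p-1)\big\||u|^{\frac{p}{2}-1}\nabla u\big\|_{L^2(\bO)}^2 + \norm{u}_{L^{2p-2}(\bO)}^{2p-2} = \big(\norm{\nabla u}_{L^2(\bO)}^2 + \norm{u}_{L^p(\bO)}^p\big)^2 .
\end{align*}

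Finally I would integrate over $[0,T]$. Since $\En\ge 0$, the endpoint term $-\En(u(T))$ is nonpositive and $\En(u_0)$ is finite, so the integral of the three dissipation terms is at most $\En(u_0)+\int_0^T(\norm{\nabla u}_{L^2(\bO)}^2+\norm{u}_{L^p(\bO)}^p)^2\,dt$. By the uniform estimate \eqref{est-Lp-cap-H_0^1} one has $\norm{\nabla u(t)}_{L^2(\bO)}^2\le 2\En(u_0)$ and $\norm{u(t)}_{L^p(\bO)}^p\le p\,\En(u_0)$ for all $t\in[0,T]$, whence $\norm{\nabla u}_{L^2(\bO)}^2+\norm{u}_{L^p(\bO)}^p\le (2+p)\En(u_0)$ and the remaining integral is bounded by $T(2+p)^2\En(u_0)^2$; this gives \eqref{eqn-est-L^2p-2} with $C(T,\norm{u_0}_{L^p(\bO)\cap H_0^1(\bO)})=\En(u_0)+T(2+p)^2\En(u_0)^2$. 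The main obstacle is not the algebra but the rigorous justification of the two time-derivative identities — especially $\frac1p\frac{d}{dt}\norm{u}_{L^p(\bO)}^p=(\partial_t u,|u|^{p-2}u)$, which rests on the absolute-continuity (Lions--Magenes-type) machinery for the convex functional $u\mapsto\frac1p\norm{u}_{L^p(\bO)}^p$ along trajectories with $u\in L^\infty(0,T;L^{2p-2}(\bO))$ and $\partial_t u\in L^\infty(0,T;L^2(\bO))$ — together with the validity of the integration by parts $(|u|^{p-2}u,-\Delta u)=(p-1)\||u|^{\frac{p}{2}-1}\nabla u\|_{L^2(\bO)}^2$ for $u\in D(A)$, which I would secure by density/approximation using $D(A)\embed L^{2p-2}(\bO)$.
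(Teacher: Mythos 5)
Your proposal is correct and follows essentially the same route as the paper: the paper tests \eqref{eqn-102} against the combined quantity $Au+|u|^{p-2}u$ and then expands $\|Au+|u|^{p-2}u\|_{L^2(\bO)}^2$ via the identity \eqref{eqn-Lap-Nn}, which is algebraically identical to your two separate pairings against $Au$ and $|u|^{p-2}u$ added together. The absolute-continuity issues you flag are handled in the paper exactly as you anticipate, via the Lions--Magenes Lemma applied to $t\mapsto\|u(t)\|_{L^p(\bO)}^p$ (using $D(A)\embed L^p(\bO)\embed (D(A))^\ast$) and to $t\mapsto\|\nabla u(t)\|_{L^2(\bO)}^2$, with the right-hand side controlled by the uniform bound \eqref{eqn-H_0^1-L^p}.
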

	
	\begin{proof}
		Let us fix $T>0$ and choose $u_0\in D(A)$. Then, from first part of Theorem \ref{Thm-Main-Exis}, see Subsection \ref{Subsec-Thm-I}, there exists a unique strong solution $u$ of the problem \eqref{eqn-main-heat} such that $u \in L^{\infty}(0,T; D(A))$ and $\frac{\partial u}{\partial t} \in L^\infty(0,T; L^2(\bO))$. Since $u \in L^\infty(0,T; D(A))$ and $\bO$ is a Poincar\'e domain, we have $D(A)=H^2(\bO)\cap H_0^1(\bO)$ and an application of the Sobolev inequality yields $D(A) \embed L^p(\bO) \embed (D(A))^\ast$ and $\frac{\partial u}{\partial t}\in L^2(0,T; L^2(\bO))$, it follows by the Lions-Magenes Lemma that, the map $$[0,T]\ni t\mapsto\|u(t)\|_{L^p(\bO)}^p\in\mathbb{R}$$ is absolutely continuous. Thus, along with the absolute continuity of the function $[0,T]\ni t\mapsto\|\nabla u(t)\|_{L^2(\bO)}^2\in\mathbb{R}$, for a.e. $t\in[0,T]$, we deduce
		\begin{align}
			\frac{d}{dt}\bigg[\frac{\norm{\nabla u(t)}_{L^2(\bO)}^2}{2} + \frac{\norm{u(t)}_{L^p(\bO)}^p}{p}\bigg]
			& = \left(\frac{\partial u(t)}{\partial t}, Au(t) + \abs{u(t)}^{p-2}u(t)\right)\\
			& = (-\Gn(u(t)), Au(t) + \abs{u(t)}^{p-2}u(t)).	\label{eqn-Exis-Apriori-1}
		\end{align}
		Performing integration by parts yields
		\begin{align}
			\left( \abs{u}^{p-2}u , -\Delta u \right) 
			& = \int_\bO \nabla((\abs{u(x)}^2)^{\frac{p-2}{2}}u(x))\cdot \nabla u(x)  dx\\
			&= \int_\bO \bigg[ \frac{p-2}{2}(2 u(x) \nabla u(x))((\abs{u(x)}^2)^{\frac{p-4}{2}}u(x)) + \abs{u(x)}^{p-2}\nabla u(x) \bigg] \cdot\nabla u(x) dx\\
			&= \int_\bO \left[ (p-2) \abs{u(x)}^{p-2} \abs{\nabla u(x)}^2 + \abs{u(x)}^{p-2}\abs{\nabla u(x)}^2 \right] dx\\
			&= (p-1)\|\abs{u}^{\frac{p-2}{2}} \nabla u\|_{L^2(\bO)}^2. \label{eqn-Lap-Nn}
		\end{align}
		By applying integration by parts, we conclude 
		\begin{align*}
			\left(\Gn(u), -\Delta u + \abs{u}^{p-2}u\right)
			& = (\Delta u - |u|^{p-2}u + ( \norm{\nabla u}_{L^2(\bO)}^2 + \norm{u}_{L^p(\bO)}^p) u, -\Delta u + \abs{u}^{p-2}u)\\
			& = - \| A u + |u|^{p-2}u\|_{L^2(\bO)}^{2}  + \big(\norm{\nabla u}_{L^2(\bO)}^2 + \norm{u}_{L^p(\bO)}^p \big)^2.
		\end{align*}
		By inserting the previously obtained identity into \eqref{eqn-Exis-Apriori-1}, we infer
		\begin{align}
			\frac{d}{dt}\bigg[\frac{\norm{\nabla u(t)}_{L^2(\bO)}^2}{2} + \frac{\norm{u(t)}_{L^p(\bO)}^p}{p}\bigg]
			& + \| A u(t) + |u(t)|^{p-2}u(t)\|_{L^2(\bO)}^{2}\\ 
			& = \big(\norm{\nabla u(t)}_{L^2(\bO)}^2 + \norm{u(t)}_{L^p(\bO)}^p \big)^2.
		\end{align}
		Now, integrating with respect to time over the interval $[0,T]$ and applying \eqref{eqn-H_0^1-L^p}, we obtain
		\begin{align}
			\int_0^T \| A u(t) + |u(t)|^{p-2}u(t) \|_{L^2(\bO)}^{2} dt
			& \leq C\big(T, \norm{u_0}_{L^p(\bO) \cap H_0^1(\bO)}\big).
		\end{align}
		In particular, we get
		\begin{align}
			&\int_0^T\| A u(t) + |u(t)|^{p-2}u(t)\|_{L^2(\bO)}^{2} dt \\
			\nonumber& = \int_0^T \Big[\norm{\Delta u(t)}_{L^2(\bO)}^2 + 2(p-1)\|\abs{u(t)}^{\frac{p-2}{2}}\nabla u(t)\|_{L^2(\bO)}^2 + \norm{u(t)}_{L^{2p-2}(\bO)}^{2p-2} \Big] dt\\
			& \leq  C \big(T, \norm{u_0}_{L^p(\bO) \cap H_0^1(\bO)} \big).\label{eqn-D(A)-est}
		\end{align}
		If we combine this with \eqref{eqn-Lap-Nn}, then it follows that $Au, \abs{u}^{\frac{p-2}{2}}\nabla u \in L^2(0,T; L^2(\bO))$ and $u\in L^{2p-2}(0,T; L^{2p-2}(\bO))$. Hence it completes the proof.
	\end{proof}

	\subsubsection{Yosida approximation and regularity results}\label{Subsec-Yosida}
	We now move to some more useful results, which will be obtained with the help of the well-known Yosida approximation technique.
	
	Let us choose and fix $u_0\in D(A)\cap\bM$. Suppose $u$ is the unique strong solution to the problem \eqref{eqn-main-heat} such that 
	$$u\in W^{1,\infty}([0,T];L^2(\bO))\cap L^{\infty}(0,T; D(A)),$$
	guaranteed by Theorem \ref{Thm-Main-Exis}. Then, throughout this subsection, for the solution $u$, we consider the following Yosida approximation motivated from \cite[ p. 28]{ZB+WL+JZ-21},
	\begin{align}\label{Def-u_mu}
		u_\mu = \Jn(u) := \mu(\mu I + A)^{-1} u,
	\end{align}
	where, for every $\mu>0$, the operator $\Jn$ is a self-adjoint operator which commutes with the Laplace operator $A$, i.e., $\Jn A = A \Jn$.
	
	Next, we consider the following Cauchy problem in $(0,T)\times\bO$, i.e., a Yosida approximation problem corresponding to the problem \eqref{eqn-main-heat}:
	\begin{equation}\label{eqn-Yosida-heat}
		\left\{\begin{aligned}
			\frac{\partial u_\mu(t)}{\partial t} + \Gn_\mu(u_\mu(t)) & = 0,\\
			u_\mu(0) & = \Jn u_0, \\
			u_\mu(t)|_{\partial\bO} & = 0,
		\end{aligned}\right.
	\end{equation}
	where $\Gn_\mu(u_\mu) := A u_\mu + \Jn(|u_\mu|^{p-2}u_\mu) - ( \norm{\nabla u_\mu}_{L^2(\bO)}^2 + \norm{u_\mu}_{L^p(\bO)}^p ) u_\mu$. Note that $ \Jn u_0\to u_0$ in $D(A)$ as $\mu\to\infty$, whenever $u_0\in D(A)$ (see \eqref{eqn-conv} below). 
	
	Observe from \eqref{Def-u_mu} that $\Jn$ consists the inverse of a self-adjoint operator $(\mu I + A)$. Therefore, it is intriguing to expect a regularizing effect. Thus, by utilizing this property of $\Jn$, we derive some useful energy estimates corresponding to the above-mentioned Yosida approximated system \eqref{eqn-Yosida-heat}. 
	
	
	We begin with a useful remark on resolvent identity, that will be used to prove a convergence result which guarantees the Yosida-approximated solution converges in the $D(A)-$norm to the solution of the original system \eqref{eqn-main-heat}.

	\begin{remark}\label{Rmk-Au_mu-est}
			Let us suppose that $u\in D(A)$. Then, for $\mu>0$, we have
			\begin{equation*}
				\| A u_\mu - A u\|_{L^2(\bO)} \le \| \mu (\mu I + A)^{-1} - I \|_{\Ls(L^2(\bO))} \| A u \|_{L^2(\bO)},
			\end{equation*}
			where $\|\cdot\|_{\Ls(L^2(\bO))}$ denotes the operator norm. We claim that 
			\begin{equation}\label{eqn-op-norm-le-1} 
				\| \mu (\mu I + A)^{-1} - I \|_{\Ls(L^2(\bO))} \le 1.
			\end{equation}
			Let us recall that $A: D(A)\subset L^2(\bO) \to L^2(\bO)$ is self-adjoint and positive, so is $\mu I + A$ for $\mu>0$. As a result, for $0\neq u\in D(A)$, after the integration by parts, we have
			\begin{align*}
				 \|(\mu I + A)u\|_{L^2(\bO)} \| u\|_{L^2(\bO)} \ge ((\mu I + A)u, u ) =	\| \nabla u \|^2_{L^2(\bO)} + \mu \| u\|^2_{L^2(\bO)} \ge \mu \| u\|_{L^2(\bO)}^2.
			\end{align*}
			Therefore, we deduce
			\begin{equation}\label{eqn-oper-bound}
				\| (\mu I + A)^{-1} \|_{\Ls(L^2(\bO))} \le \frac1\mu, \ \text{ for }\ \mu>0. 
			\end{equation}
			We recall from \cite[Proposition 6.9]{HB-11} that if $T:H\to H$ is a self-adjoint bounded linear operator, then 
			$$
			\| T \|_{\Ls(L^2(\bO))} =\max\{ |M|, |m|\},
			$$
			where 
			$$
			M = \sup_{\| u \|_{H} =1} (Tu, u), \qquad m =\inf_{\| u \|_{H} =1} (Tu, u).
			$$
			Let us choose $T = I - \mu (\mu I + A)^{-1}$ and fix $H=L^2(\bO)$ in the above settings. Then, we obtain 
			$$
			(Tu, u) = \| u\|_{L^2(\bO)}^2 - \mu ((\mu I + A)^{-1} u, u) \le \| u\|_{L^2(\bO)}^2.
			$$
			Hence, by taking supremum over $\|u\|_{L^2(\bO)} =1$, we get $M \le 1$. Moreover, due to \eqref{eqn-oper-bound}, we see
			$$
			(Tu, u)\ge  \| u\|_{L^2(\bO)}^2 - \mu \frac 1 \mu \| u\|_{L^2(\bO)}^2=0,
			$$
			thus, taking infimum over $\|u\|_{L^2(\bO)} =1$ yields $m=0$. Combining these estimates yields $\|T\|_{\Ls(L^2(\bO))} \le 1$, hence \eqref{eqn-op-norm-le-1} follows as well as
			\begin{equation}\label{eqn-A_mu-A}
				\| A u_\mu - A u\|_{L^2(\bO)} \le \|Au\|_{L^2(\bO)}.
			\end{equation}
	\end{remark}
	
	\begin{remark}
		Let us emphasize that the resolvent-based strategy can be avoided by employing spectral measures, and making use of spectral theory and functional calculus, since the operator $A$ is self-adjoint. As this constitutes a stronger and more general approach, we present it separately in Appendix \ref{Sec-Appendix} for reader’s convenience.
	\end{remark}

	\begin{proposition}\label{Lem-mu->u-cgs}
		If $u_\mu$ denotes the Yosida approximated solution (see \eqref{Def-u_mu}) corresponding to $u$, then, $u_\mu \in L^\infty(0,T; D(A))$ and it satisfies the following limit:
		\begin{align}\label{eqn-Yosida-cgs}
			\lim_{\mu\to \infty} \norm{Au_\mu - Au}_{L^\infty(0,T;L^2(\bO))} = 0.
		\end{align}
	\end{proposition}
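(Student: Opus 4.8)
The plan is to reduce the whole statement to the behaviour of the single operator $\Jn - I$ acting on $Au(t)$, and then to separate the (easy) boundedness and pointwise-in-time parts from the (delicate) uniformity in time. Recall from \eqref{Def-u_mu} that $u_\mu(t) = \Jn u(t)$ with $\Jn = \mu(\mu I + A)^{-1}$ self-adjoint and commuting with $A$, so that $A u_\mu(t) = \Jn A u(t)$ for every $t$. By Theorem \ref{Thm-Main-Exis} we already have $u \in L^\infty(0,T; D(A))$, hence $M := \norm{Au}_{L^\infty(0,T;L^2(\bO))} < \infty$. Since $\norm{\Jn}_{\Ls(L^2(\bO))} \le 1$ by \eqref{eqn-oper-bound} (equivalently, the spectral function $\lambda \mapsto \mu/(\mu+\lambda)$ takes values in $(0,1]$), we get $\norm{A u_\mu(t)}_{L^2(\bO)} = \norm{\Jn A u(t)}_{L^2(\bO)} \le \norm{Au(t)}_{L^2(\bO)} \le M$ for all $t$, which together with $\norm{u_\mu(t)}_{L^2(\bO)} \le \norm{u(t)}_{L^2(\bO)}$ yields $u_\mu \in L^\infty(0,T;D(A))$.

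Next I would settle the pointwise-in-time convergence. Fix $t\in[0,T]$; then $Au(t) \in L^2(\bO)$ and, by the standard strong convergence of the Yosida regularisation, $\Jn w \to w$ in $L^2(\bO)$ as $\mu \to \infty$ for every $w \in L^2(\bO)$. Applying this to $w = Au(t)$ and invoking the uniform bound $\norm{Au_\mu(t) - Au(t)}_{L^2(\bO)} \le \norm{Au(t)}_{L^2(\bO)}$ from Remark \ref{Rmk-Au_mu-est} (see \eqref{eqn-A_mu-A}) gives $\norm{Au_\mu(t) - Au(t)}_{L^2(\bO)} \to 0$ for each fixed $t$. In particular, by dominated convergence (the integrand is bounded by $\norm{Au(t)}_{L^2(\bO)}^2 \le M^2 \in L^1(0,T)$), one immediately obtains the time-integrated convergence $\int_0^T \norm{Au_\mu(t)-Au(t)}_{L^2(\bO)}^2\,\d t \to 0$.

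The step I expect to be the main obstacle is upgrading this pointwise statement to the uniform-in-time convergence actually claimed. By the spectral theorem for the self-adjoint operator $A = \int_{[\lambda_1,\infty)} \lambda\, \d E_\lambda$, writing $\psi_\mu(t) := \norm{Au_\mu(t) - Au(t)}_{L^2(\bO)}^2$, functional calculus gives
\[
\psi_\mu(t) = \int_{[\lambda_1,\infty)} \left(\frac{\lambda}{\mu+\lambda}\right)^2 \d\norm{E_\lambda Au(t)}_{L^2(\bO)}^2 ,
\]
and for each fixed $t$ the integrand decreases to $0$ as $\mu\uparrow\infty$, so $\psi_\mu(t)\downarrow 0$ monotonically. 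On the compact interval $[0,T]$, Dini's theorem then promotes this monotone pointwise convergence to uniform convergence, \emph{provided} each $t\mapsto\psi_\mu(t)$ is continuous. The crux is therefore this continuity, equivalently the relative compactness of the trajectory $\{Au(t):t\in[0,T]\}$ in $L^2(\bO)$: a uniformly bounded family of operators converging strongly (here $\Jn \to I$, with $\norm{\Jn}_{\Ls(L^2(\bO))}\le 1$) does converge uniformly on compact subsets of $L^2(\bO)$, whereas without compactness the uniform statement is genuinely false (a family of normalised eigenvectors with eigenvalues $\lambda\sim\mu$ keeps $\norm{(\Jn-I)\,\cdot\,}_{L^2(\bO)}$ bounded away from $0$). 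I would secure the required compactness from the continuity $u\in C([0,T];D(A))$ of the strong solution, so that $\{Au(t)\}$ is the continuous image of a compact set; the uniform estimate then follows by covering $\{Au(t)\}$ with finitely many $\varepsilon$-balls and using $\norm{\Jn}_{\Ls(L^2(\bO))}\le 1$ on the residuals.

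Finally, I would record that the resolvent route above can be bypassed by arguing directly with spectral measures. The uniform tail bound $\sup_t \norm{(I-E_R)Au(t)}_{L^2(\bO)} \to 0$ as $R\to\infty$ (again a compactness statement for $\{Au(t)\}$) combined with the elementary splitting $\left(\frac{\lambda}{\mu+\lambda}\right)^2 \le \frac{R^2}{\mu^2}$ for $\lambda\le R$ and $\le 1$ for $\lambda>R$ yields
\[
\sup_{t\in[0,T]}\psi_\mu(t) \le \frac{R^2}{\mu^2}\,M^2 + \sup_{t\in[0,T]}\norm{(I-E_R)Au(t)}^2_{L^2(\bO)},
\]
whence $\norm{Au_\mu - Au}_{L^\infty(0,T;L^2(\bO))}\to 0$ by sending first $\mu\to\infty$ and then $R\to\infty$; this is the argument I expect to be carried out in detail via spectral measures in Appendix \ref{Sec-Appendix}.
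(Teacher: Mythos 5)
Your proposal is correct, and its first two steps coincide with the paper's own proof: the uniform bound $\norm{Au_\mu(t)}_{L^2(\bO)}\le\norm{Au(t)}_{L^2(\bO)}$ comes from $\norm{\Jn}_{\Ls(L^2(\bO))}\le 1$ (the paper routes this through Remark \ref{Rmk-Au_mu-est}, i.e.\ \eqref{eqn-op-norm-le-1} and \eqref{eqn-A_mu-A}), and the strong convergence $\Jn w\to w$ for every $w\in L^2(\bO)$ is obtained in the paper by exactly your density argument, recorded as \eqref{eqn-Jn-cgs}. Where you genuinely diverge is the final, decisive step. The paper concludes directly that ``since $u\in L^\infty(0,T;D(A))$, it follows from \eqref{eqn-Jn-cgs} that $\sup_{t\in[0,T]}\norm{\Jn Au(t)-Au(t)}_{L^2(\bO)}\to 0$'', and the spectral-measure variant in Appendix \ref{Sec-Appendix} likewise asserts the limit after establishing only the $\mu$-uniform majorant \eqref{eqn-L^inf-D(A)-mu-1}; neither version spells out the mechanism producing uniformity in $t$. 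You are right that strong operator convergence alone does not yield uniform convergence on the merely bounded set $\{Au(t):t\in[0,T]\}$ (your normalized-eigenvector example is the standard counterexample), and your fix --- norm-compactness of the trajectory, then Dini's theorem or an $\varepsilon$-net combined with $\norm{I-\Jn}_{\Ls(L^2(\bO))}\le 1$, or equivalently a uniform spectral tail bound --- is the correct way to close this step, so your write-up is, if anything, more complete than the paper's.

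One caveat you should make explicit: in the paper's logical order, $u\in C([0,T];D(A))$ is established only \emph{after} this proposition (Remark \ref{Rmk-Regularity}, via Proposition \ref{Prop-A^{3/2}}), so invoking it here risks circularity. The circle can be avoided: Proposition \ref{Prop-A^{3/2}} and Remark \ref{Rmk-Regularity} use from the present proposition only the $\mu$-uniform bound \eqref{eqn-mu-D(A)} --- which you prove independently --- together with identification of weak limits, and the latter already follows from your pointwise-in-$t$ convergence plus dominated convergence. Hence the clean ordering is: prove the boundedness part first, run Proposition \ref{Prop-A^{3/2}} and Remark \ref{Rmk-Regularity} to secure $u\in C([0,T];D(A))$, and only then return to the uniform convergence \eqref{eqn-Yosida-cgs}. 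Note also that the weak continuity $u\in C_w([0,T];D(A))$, which is directly available from $u\in W^{1,\infty}([0,T];L^2(\bO))\cap L^\infty(0,T;D(A))$, would not suffice, since weak compactness of $\{Au(t)\}$ does not rescue uniform convergence of a strongly convergent operator family.
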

	
	\begin{proof}
		Let us choose and fix $u_0\in D(A)\cap\bM$. Suppose $u$ is the unique strong solution to the problem \eqref{eqn-main-heat} such that $u\in W^{1,\infty}([0,T];L^2(\bO))\cap L^\infty(0,T; D(A))$ (guaranteed by Theorem \ref{Thm-Main-Exis}). Assume that $u_\mu$ is the Yosida approximated solution, defined in \eqref{Def-u_mu}, corresponding to the strong solution $u$.
		Using Remark \ref{Rmk-Au_mu-est} and in particular \eqref{eqn-A_mu-A}, we have 
		\begin{align}
			\sup_{t\in[0,T]} \norm{Au_\mu(t) - Au(t)}_{L^2(\bO)} \le \sup_{t\in[0,T]} 
			\| A u \|_{L^2(\bO)} 
			< \infty,
		\end{align}\label{eqn-L^inf-D(A)-mu} 
		since $u\in L^\infty(0,T;D(A))$.  It follows that
		\begin{align}\label{eqn-mu-D(A)}
			\sup_{t\in[0,T]} \norm{Au_\mu(t)}_{L^2(\bO)} \leq C,
		\end{align}
		where $C>0$ is independent of $\mu$, hence $u_\mu \in L^\infty(0,T; D(A))$. 
		Note from the definition of $\Jn$ that
		\begin{align}
			\Jn = \mu (\mu I + A)^{-1} = I - A(\mu I + A)^{-1}.
		\end{align}
		Then, by using the bound \eqref{eqn-oper-bound}, we estimate for all $u\in D(A)$ that 
		\begin{align}
			\norm{ \Jn u - u }_{L^2(\bO)} &=\norm{- A(\mu I + A)^{-1}u}_{L^2(\bO)}= \norm{ (\mu I + A)^{-1} A u }_{L^2(\bO)} \\& \le \norm{ (\mu I + A)^{-1}}_{\Ls(L^2(\bO))} \norm{Au}_{L^2(\bO)}
			\le \frac{1}{\mu} \norm{Au}_{L^2(\bO)}\nonumber\\&\to 0\ \text{ as }\ \mu\to\infty. \label{eqn-conv}
		\end{align}
		Since $D(A)$ is dense in $L^2(\bO)$, for any $u\in L^2(\bO)$, there exists a sequence $\{u_n\}_{n\in\N}\in D(A)$ such that $u_n\to u$ in $L^2(\bO)$ as $n\to\infty$. Then, by using triangle inequality, we have 
		\begin{align}\label{eqn-111}
			\|\Jn u - u\|_{L^2(\bO)}\leq\|\Jn (u-u_n)\|_{L^2(\bO)}+\|\Jn u_n-u\|_{L^2(\bO)}+\|u_n-u\|_{L^2(\bO)}. 
		\end{align}
		Using the bound \eqref{eqn-oper-bound}, we deduce 
		$$\|\Jn\|_{\Ls(L^2(\bO))} \le 1, \ \text{ for }\ \mu>0. $$
		Therefore, the first term in the right hand side of \eqref{eqn-111}  satisfies $$\|\Jn (u-u_n)\|_{L^2(\bO)}\leq  \|\Jn\|_{\Ls(L^2(\bO))}\|u_n-u\|_{L^2(\bO)}\leq  \|u_n-u\|_{L^2(\bO)}.$$ 
		Therefore, from \eqref{eqn-111}, we infer 
		\begin{align}\label{eqn-222}
			\|\Jn u - u\|_{L^2(\bO)}\leq \|\Jn u_n-u\|_{L^2(\bO)}+2\|u_n-u\|_{L^2(\bO)}. 
		\end{align}
		Since $u_n\in D(A)$,  by using \eqref{eqn-conv}, we obtain 	for fixed $n$, $ \|\Jn u_n-u\|_{L^2(\bO)}\to 0$ as $\mu\to \infty$.  Then, letting $\mu\to\infty$ followed by $n\to\infty$ in \eqref{eqn-222}, we deduce that 
		\begin{align}
			\mbox{$\Jn u \to  u$  as $\mu\to\infty$ for any $u\in L^2(\bO)$. }\label{eqn-Jn-cgs}
		\end{align}
		Since $u\in L^\infty(0,T;D(A))$, it follows from the convergence \eqref{eqn-Jn-cgs} that
		\begin{align*}
			\lim_{\mu\to \infty} \sup_{t\in[0,T]} \norm{Au_\mu(t) - Au(t)}_{L^2(\bO)} = \lim_{\mu\to \infty} \sup_{t\in[0,T]} \norm{\Jn Au(t) - Au(t)}_{L^2(\bO)} = 0,
		\end{align*}
		which completes the proof. 
	\end{proof}
	
	\begin{remark}\label{rem-nonlinear}
		Observe that, by applying H\"older's inequality (with the exponent $d$ and $(2d)/(d-2)$), along with the embedding $D(A) \embed L^p(\bO)$ and the estimate given in  \eqref{eqn-mu-D(A)}, we have
		\begin{align}
			\int_0^T\norm{\abs{u_\mu(t)}^{p-2}\nabla u_\mu(t)}_{L^2(\bO)}^2dt 
			& \leq C \int_0^T\norm{\abs{u_\mu(t)}^{p-2}}_{L^d(\bO)}^2\norm{\nabla u_\mu(t)}_{L^{\frac{2d}{d-2}}(\bO)}^2dt\\
			& \leq C \int_0^T \norm{u_\mu(t)}_{L^{d(p-2)}(\bO)}^{2p-4}\norm{A u_\mu(t)}_{L^2(\bO)}^2dt\\
			& \leq C T \sup_{t\in[0,T]}\norm{A u_\mu(t)}_{L^2(\bO)}^{2p-2}\\
			& \leq C(T),\label{eqn-mu-|u|^{p-2}grad-u}
		\end{align}
		where $C(T)>0$ is independent of $\mu$. 
	\end{remark}
	
	Next, we present two useful regularity results concerning the unique strong solution to problem \eqref{eqn-main-heat}, derived using the properties of the Yosida-approximated solutions as outlined below. 
	
	We begin by presenting a remark similar in spirit to Remark \ref{Rmk-Au_mu-est}, which will be employed in the next proposition.
	
	{\begin{remark}	
			Let us choose and fix $z\in L^2(\bO)$. Then, by regularity result (see \cite[Theorem 9.25]{HB-11}), for each $\mu>0$, there exists $y\in D(A)$ such that 
			$$(\mu I + A)y = z 
			\text{ in }\ L^2(\bO).$$ Therefore, $y = (\mu I + A)^{-1}z$ in $L^2(\bO)$ and 
			using the above fact, we estimate
			\begin{align}
				\|A^{\frac{1}{2}} (\mu I + A)^{-1}z\|_{L^2(\bO)}^2 & = \|A^{\frac{1}{2}}y \|^2_{L^2(\bO)} = (Ay, y) = (z-\mu y, y) = (z,y) - \mu \|y\|_{L^2(\bO)}^2\\ 
				& \le \|z\|_{L^2(\bO)}\|y\|_{L^2(\bO)} - \mu \|y\|_{L^2(\bO)}^2.
			\end{align}
			For each fixed $z\in L^2(\bO)$, the function
			\begin{align*}
				[0,\infty) \ni l \mapsto \|z\|_{L^2(\bO)} l - \mu l^2 \in \R
			\end{align*}
			attains its maxima at the point $l = \frac{\|z\|_{L^2(\bO)}}{2\mu}$. Hence, we have
			\begin{align}
				\|A^{\frac{1}{2}} (\mu I + A)^{-1}z\|_{L^2(\bO)}^2 \le \frac{\|z\|_{L^2(\bO)}^2}{4\mu}.
			\end{align}
			Since the above inequality holds for any $z\in L^2(\bO)$, it implies that
			\begin{align}\label{eqn-resolvent-A^{1/2}}
				\|A^{\frac{1}{2}} (\mu I + A)^{-1}\|_{\Ls(L^2(\bO))} \le \frac{1}{2\sqrt{\mu}}, \ \text{ for }\ \mu >0,
			\end{align}
			where $\|\cdot\|_{\Ls(L^2(\bO))}$ denotes the operator norm.
	\end{remark}}

{The following result yields the required regularity of the time derivative of the Yosida-approximated solution $u_\mu$, which in turn allows us to deduce the time derivative regularity of the original solution $u$ of the system \eqref{eqn-main-heat}, see Remark~\ref{Rmk-Regularity}.}
	
	\begin{proposition}\label{Prop-A^{3/2}}
		The Yosida approximated solution $u_\mu$ to the problem \eqref{eqn-quant-heat} satisfies
		\begin{align}
			u_\mu\in L^{2}(0,T;D(A^\frac{3}{2}))\ \text{ and }\ \frac{\partial u_\mu}{\partial t} \in L^2(0,T; H_0^1(\bO)). 
			\label{eqn-D(A{3/2})-u'}
		\end{align}
		Moreover, the following estimate holds true:
		\begin{align}
			\sup_{t\in[0,T]}\norm{Au_\mu(t)}_{L^2(\bO)}^2 + \int_0^T\norm{A^{\frac{3}{2}}u_\mu(s)}_{L^{2}(\bO)}^{2} ds \le
			C\big(p,T, \norm{u_0}_{D(A)} \big),	\label{eqn-est-D(A^{3/2})}
		\end{align}
		where $C\big(p,T, \norm{u_0}_{D(A)} \big) > 0$ is independent of $\mu$.
	\end{proposition}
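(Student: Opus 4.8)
The plan is to run an energy estimate for \eqref{eqn-Yosida-heat} one order above the natural energy level, exploiting that $u_\mu=\Jn u$ gains regularity: since $u(t)\in D(A)$ and $\Jn=\mu(\mu I+A)^{-1}$ maps $D(A)$ into $D(A^2)$, the function $u_\mu(t)$ lies in $D(A^2)$, which makes the formal testing with $A^2u_\mu$ rigorous. Concretely, I would take the $L^2(\bO)$-inner product of the first equation in \eqref{eqn-Yosida-heat} with $A^2u_\mu$. Using that $A$ is self-adjoint, the linear terms give
\begin{align*}
\Big(\frac{\partial u_\mu}{\partial t},A^2u_\mu\Big)=\frac{1}{2}\frac{d}{dt}\norm{Au_\mu}_{L^2(\bO)}^2,\qquad (Au_\mu,A^2u_\mu)=\norm{A^{\frac{3}{2}}u_\mu}_{L^2(\bO)}^2,
\end{align*}
so that, after integrating in time, the left-hand side is exactly the pair of quantities appearing in \eqref{eqn-est-D(A^{3/2})}. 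It then remains to bound the two nonlinear contributions uniformly in $\mu$.

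For the polynomial term I would shift half a derivative by self-adjointness, writing $\big(\Jn(\abs{u_\mu}^{p-2}u_\mu),A^2u_\mu\big)=\big(A^{\frac{1}{2}}\Jn(\abs{u_\mu}^{p-2}u_\mu),A^{\frac{3}{2}}u_\mu\big)$. Because $u_\mu\in D(A)$ vanishes on $\partial\bO$ and $\nabla(\abs{u_\mu}^{p-2}u_\mu)=(p-1)\abs{u_\mu}^{p-2}\nabla u_\mu\in L^2(\bO)$ (finite by Remark \ref{rem-nonlinear}), we have $\abs{u_\mu}^{p-2}u_\mu\in H_0^1(\bO)=D(A^{\frac12})$; since $\Jn$ commutes with $A^{\frac12}$ and is an $L^2$-contraction by \eqref{eqn-oper-bound} (with \eqref{eqn-resolvent-A^{1/2}} quantifying the same smoothing), this yields
\begin{align*}
\norm{A^{\frac12}\Jn(\abs{u_\mu}^{p-2}u_\mu)}_{L^2(\bO)}\le \norm{A^{\frac12}(\abs{u_\mu}^{p-2}u_\mu)}_{L^2(\bO)}=(p-1)\norm{\abs{u_\mu}^{p-2}\nabla u_\mu}_{L^2(\bO)}.
\end{align*}
A Cauchy--Schwarz and Young inequality then dominates this contribution by $\tfrac14\norm{A^{\frac32}u_\mu}_{L^2(\bO)}^2+C(p)\norm{\abs{u_\mu}^{p-2}\nabla u_\mu}_{L^2(\bO)}^2$; the first piece is absorbed into the dissipation, and the second is integrable in time, uniformly in $\mu$, by the bound \eqref{eqn-mu-|u|^{p-2}grad-u}.

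For the constraint coefficient I would use $(u_\mu,A^2u_\mu)=\norm{Au_\mu}_{L^2(\bO)}^2$, so the remaining term equals $\big(\norm{\nabla u_\mu}_{L^2(\bO)}^2+\norm{u_\mu}_{L^p(\bO)}^p\big)\norm{Au_\mu}_{L^2(\bO)}^2$, whose scalar prefactor is bounded uniformly in $t,\mu$ by $\sup_{t}\norm{Au_\mu}_{L^2(\bO)}$ (finite and $\mu$-independent by Proposition \ref{Lem-mu->u-cgs}, together with $\norm{A^{\frac12}u_\mu}_{L^2(\bO)}^2=(Au_\mu,u_\mu)$ and $D(A)\embed L^p(\bO)$); call it $M=M(u_0)$. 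Collecting the estimates gives
\begin{align*}
\frac{d}{dt}\norm{Au_\mu}_{L^2(\bO)}^2+\norm{A^{\frac32}u_\mu}_{L^2(\bO)}^2\le 2M\norm{Au_\mu}_{L^2(\bO)}^2+C(p)\norm{\abs{u_\mu}^{p-2}\nabla u_\mu}_{L^2(\bO)}^2,
\end{align*}
and Gr\"onwall's inequality, started from $\norm{Au_\mu(0)}_{L^2(\bO)}=\norm{\Jn Au_0}_{L^2(\bO)}\le\norm{Au_0}_{L^2(\bO)}$, closes \eqref{eqn-est-D(A^{3/2})} with a constant independent of $\mu$, giving in particular $u_\mu\in L^2(0,T;D(A^{\frac32}))$. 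The regularity $\tfrac{\partial u_\mu}{\partial t}\in L^2(0,T;H_0^1(\bO))$ I would then read off the equation, $\tfrac{\partial u_\mu}{\partial t}=-Au_\mu-\Jn(\abs{u_\mu}^{p-2}u_\mu)+\big(\norm{\nabla u_\mu}_{L^2(\bO)}^2+\norm{u_\mu}_{L^p(\bO)}^p\big)u_\mu$, by applying $A^{\frac12}$: the three terms are controlled in $L^2(0,T;L^2(\bO))$ by $\norm{A^{\frac32}u_\mu}_{L^2(\bO)}$, by the half-derivative estimate above, and by $M\norm{\nabla u_\mu}_{L^2(\bO)}$, respectively.

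The main obstacle is the uniform-in-$\mu$ control of the polynomial nonlinearity against the two extra derivatives hidden in $A^2u_\mu$: the crude bound for $A^{\frac12}\Jn$ in $\Ls(L^2(\bO))$ grows like $\sqrt{\mu}$, so one must \emph{not} estimate $\Jn(\abs{u_\mu}^{p-2}u_\mu)$ in $L^2$ and then apply $A^{\frac12}$ to the resolvent, but instead first place the half-derivative on the already-$H_0^1$ function $\abs{u_\mu}^{p-2}u_\mu$ and only afterwards invoke the contractivity of $\Jn$, so that the $\mu$-uniform a priori bound of Remark \ref{rem-nonlinear} applies. Everything else is a standard Gr\"onwall argument.
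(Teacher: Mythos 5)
Your proof is correct, and for the central estimate it follows essentially the same route as the paper: both test \eqref{eqn-Yosida-heat} against $A^2u_\mu$, transfer half a derivative by self-adjointness to obtain $\big(A^{\frac12}\Jn(\abs{u_\mu}^{p-2}u_\mu), A^{\frac32}u_\mu\big)$, commute $A^{\frac12}$ past $\Jn$ and use the $L^2$-contractivity of $\Jn$ so that the $\mu$-uniform bound of Remark \ref{rem-nonlinear} applies, absorb by Young's inequality, and close with Gr\"onwall starting from $\norm{A\Jn u_0}_{L^2(\bO)}\le\norm{Au_0}_{L^2(\bO)}$; your closing warning about not using the $\sqrt{\mu}$-growing operator bound on $A^{\frac12}\Jn$ against the nonlinearity is exactly the point of the paper's Step II. You deviate in two places, both legitimate. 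First, in the Gr\"onwall step you bound the scalar prefactor $\norm{\nabla u_\mu}_{L^2(\bO)}^2+\norm{u_\mu}_{L^p(\bO)}^p$ uniformly in $t$ and $\mu$ via \eqref{eqn-mu-D(A)}, whereas the paper keeps its time integral inside the exponential and controls it by the energy bound \eqref{eqn-H_0^1-L^p}; the outcome is the same $\mu$-independent constant. Second, and more substantively, for $\frac{\partial u_\mu}{\partial t}\in L^2(0,T;H_0^1(\bO))$ you read the bound directly off the equation by applying $A^{\frac12}$ termwise, controlling $A^{\frac32}u_\mu$ by the estimate just proved, the nonlinear term by the same half-derivative/contractivity bound, and the constraint term by $M\norm{\nabla u_\mu}_{L^2(\bO)}$; the paper instead runs a separate energy estimate (its Step III), pairing the equation with $A\frac{\partial u_\mu}{\partial t}$ and re-using \eqref{eqn-mu-|u|^{p-2}grad-u}. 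Your shortcut is shorter and yields the same uniform bound \eqref{eqn-L^infty-H01}, while the paper's version redundantly re-derives the $\sup_t\norm{Au_\mu}_{L^2(\bO)}$ control along the way. One compression in your write-up deserves a sentence: the identity $\frac{d}{dt}\norm{Au_\mu(t)}_{L^2(\bO)}^2=2\big(\frac{\partial u_\mu(t)}{\partial t},A^2u_\mu(t)\big)$ needs an absolute-continuity justification, which the paper supplies in its Step I via Lions--Magenes (using $Au_\mu\in L^2(0,T;H_0^1(\bO))$ and $\frac{\partial Au_\mu}{\partial t}\in L^2(0,T;H^{-1}(\bO))$); your observation that $\Jn$ maps $D(A)$ into $D(A^2)$, together with $\frac{\partial u_\mu}{\partial t}=\Jn\frac{\partial u}{\partial t}\in L^2(0,T;D(A))$, does supply the needed regularity, but you should state this explicitly rather than let it ride on the phrase ``makes the formal testing rigorous.''
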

	
	\begin{proof} 
		\textbf{Step I.} First, let us choose and fix $T>0$ and $\mu>0$. Suppose $u_\mu$ is the Yosida approximated unique strong solution corresponding to $u$ such that $u \in L^\infty(0,T; D(A))$. Thus, by using the fact that $A$ is a self-adjoint positive operator and \eqref{eqn-resolvent-A^{1/2}}, we deduce
		\begin{equation}
			{\norm{A^{\frac{3}{2}} u_\mu (t)}_{L^2(\bO)}
				\le \mu\norm{ A^{\frac{1}{2}} (\mu I + A)^{-1}}_{\Ls(L^2(\bO))} \norm{Au (t)}_{L^2(\bO)}
				= \frac{\sqrt{\mu}}{2}\norm{Au(t)}_{L^2(\bO)} < \infty},\label{eqn-A3/2}
		\end{equation}
		where for each $\mu>0$, the above estimate implies that $u_\mu \in L^2(0,T; D(A^{\frac{3}{2}}))$. In a similar way, for each fixed $\mu>0$, using \eqref{eqn-resolvent-A^{1/2}}, we also have
		\begin{align*}
				\int_0^T\norm{A^{\frac{1}{2}} \frac{\partial u_\mu (t)}{\partial t} }_{L^2(\bO)}^2 dt
				& = \int_0^T \norm{  \mu A^{\frac{1}{2}}(\mu I + A)^{-1}}_{\Ls(L^2(\bO))}^2 \norm{\frac{\partial u(t)}{\partial t} }_{L^2(\bO)}^2 dt \\
				& = \frac{\mu}{4}\int_0^T\norm{\frac{\partial u(t)}{\partial t}}_{L^2(\bO)}^2 dt  \le \frac{\mu}{4} \norm{\frac{\partial u}{\partial t}}_{L^2(0,T;L^2(\bO))}^2  < \infty,
		\end{align*}
		where we have utilized the fact that $ \frac{\partial u}{\partial t} \in L^2(0,T; L^2(\bO))$. Thus, for each fixed $\mu>0$, we have 
		\begin{align}\label{eqn-u'_mu-H_0^1}
			\frac{\partial u_\mu}{\partial t} \in L^2(0,T; H_0^1(\bO)) \implies 	\frac{\partial A u_\mu}{\partial t} \in L^2(0,T; H^{-1}(\bO)). 
		\end{align}
		Moreover, the estimate \eqref{eqn-A3/2} implies that $A u_{\mu}\in L^2(0,T;H_0^1(\bO))$. Therefore, 	the Lions-Magenes Lemma yields that the function 
		$$[0,T]\ni t \mapsto \norm{Au_\mu(t)}_{L^2(\bO)}^2 \in \R$$
		is absolutely continuous  and for a.e. $t\in [0,T]$, we have
		\begin{align}
			\frac{d}{dt}\norm{Au_\mu(t)}_{L^2(\bO)}^2 = 2\bigg\langle\frac{\partial A u_\mu(t)}{\partial t}, A u_\mu(t) \bigg\rangle = 2\bigg\langle\frac{\partial  u_\mu(t)}{\partial t}, A^2 u_\mu(t) \bigg\rangle.
		\end{align}
		
		\vskip 1mm
		\noindent
		\textbf{Step II.} We proceed by taking the {duality pairing} of the first equation in the problem \eqref{eqn-Yosida-heat} with $A^2 u_\mu(t)$. By using  integration by parts and the Lions-Magenes Lemma, for a.e. $t\in[0,T]$, we get 
		\begin{align}
			&\frac{1}{2}\frac{d}{dt}\norm{A u_\mu(t)}_{L^{2}(\bO)}^{2}  + \big\|A^{\frac{3}{2}}u_\mu(t)\big\|_{L^{2}(\bO)}^{2} \\
			& =   (A^{\frac{1}{2}}\Jn(\abs{u_\mu(t)}^{p-2} u_\mu(t)), A^{\frac{3}{2}}  u_\mu(t))+ (\norm{\nabla u_\mu(t)}_{L^2(\bO)}^2 + \norm{u_\mu(t)}_{L^p(\bO)}^p)\norm{A u_\mu(t)}_{L^{2}(\bO)}^{2}.\hspace{1cm}\label{eqn-Hol+You}
		\end{align}
		In particular, applying H\"older's inequality (with exponents $2$ and $2$), Young's inequality, integration by parts and identity \eqref{eqn-Lap-Nn}, 
		we further obtain
		\begin{align}
			|(A^{\frac{1}{2}}\Jn(\abs{u_\mu}^{p-2} u_\mu), A^{\frac{3}{2}}  u_\mu)|&\leq\|A^{\frac{1}{2}}(\abs{u_\mu}^{p-2} u_\mu)\|_{L^2(\bO)}\|A^{\frac{3}{2}}  u_\mu\|_{L^2(\bO)}\\&=(p-1)\||u_\mu|^{p-2}\nabla u_\mu\|_{L^2(\bO)}\|A^{\frac{3}{2}}  u_\mu\|_{L^2(\bO)}\\&\leq\frac{1}{2}\|A^{\frac{3}{2}}  u_\mu\|_{L^2(\bO)}^2+\frac{(p-1)^2}{2}\||u_\mu|^{p-2}\nabla u_\mu\|_{L^2(\bO)}^2.
		\end{align}
		Plugging the above estimate into \eqref{eqn-Hol+You} and using Gronwall's inequality yield
		\begin{align*}
			&	\norm{A u_\mu(t)}_{L^{2}(\bO)}^{2} +  \int_0^T\norm{A^{\frac{3}{2}}u_\mu(t)}_{L^{2}(\bO)}^{2} dt \\
			& \leq \bigg[\norm{A \Jn u_0}_{L^{2}(\bO)}^{2} + (p-1)^2\int_0^T \norm{\abs{u_\mu(s)}^{p-2}\nabla u_\mu(t)}_{L^2(\bO)}^2 dt \bigg]\\
			& \quad \times \exp\bigg\{2\int_0^T (\norm{\nabla u_\mu(t)}_{L^2(\bO)}^2 + \norm{u_\mu(t)}_{L^p(\bO)}^p)dt\bigg\}\\
			& \leq  \big[ \norm{A u_0}_{L^{2}(\bO)}^{2} + C\big(p, T, \norm{u_0}_{D(A)}\big)\big] \exp\big\{2 C\big(p, T, \norm{u_0}_{L^p(\bO)\cap H_0^1(\bO)}\big)\big\},
		\end{align*}
		where it is used that $\|\Jn v\|_{L^2(\bO)} \le \|v\|_{L^2(\bO)}$ for all $v\in L^2(\bO)$, and the bounds \eqref{eqn-H_0^1-L^p}, \eqref{eqn-mu-|u|^{p-2}grad-u} and \eqref{eqn-mu-D(A)}. Hence
		\begin{align}
			\sup_{t\in[0,T]}\norm{Au_\mu(t)}_{L^2(\bO)}^2 + 2\int_0^T\norm{A^{\frac{3}{2}}u_\mu(t)}_{L^{2}(\bO)}^{2} dt
			\leq C(p, T, \norm{u_0}_{D(A)}),
		\end{align}
		where $C$ is independent of $\mu$.
		
		\vskip 1mm
		\noindent
		\textbf{Step III.} Taking {duality pairing} of the first equation of the system \eqref{eqn-Yosida-heat} with $A\frac{\partial u_\mu}{\partial t}$ gives
		\begin{align*}
			\left\langle \frac{\partial u_\mu}{\partial t}, A\frac{\partial u_\mu}{\partial t} \right\rangle & = \left\langle -A u_\mu - \Jn(|u_\mu|^{p-2} u_\mu) + \left( \norm{\nabla u_\mu}_{L^2(\bO)}^2 + \norm{u_\mu}_{L^p(\bO)}^p \right) u_\mu, A \frac{\partial u_\mu}{\partial t} \right\rangle.
		\end{align*}
		Integration by parts and  the self-adjointness of $\Jn$ yield 
		\begin{align*}
			\norm{A^{\frac{1}{2}} \frac{\partial u_\mu}{\partial t}}_{L^{2}(\bO)}^2 + \left\langle A u_\mu, A \frac{\partial u_\mu}{\partial t}\right\rangle & = - \left(|u_\mu|^{p-2} u_\mu, \Jn A \frac{\partial u_\mu}{\partial t}\right)\\
			&\quad + \left( \norm{\nabla u_\mu}_{L^2(\bO)}^2 + \norm{u_\mu}_{L^p(\bO)}^p \right)\frac12 \frac{d}{dt}\norm{\nabla u_\mu}_{L^{2}(\bO)}^2.
		\end{align*}
		Since $Au_\mu, A\frac{\partial u_\mu}{\partial t} \in L^2(0,T; L^2(\bO))$, by Lions-Magenes Lemma, it follows that
		\begin{align*}
			\norm{A^{\frac{1}{2}} \frac{\partial u_\mu}{\partial t}}_{L^{2}(\bO)}^2 + \frac{1}{2}\frac{d}{dt}\norm{A u_\mu}_{L^{2}(\bO)}^2 & =  - \left(|u_\mu|^{p-2} u_\mu, \Jn A \frac{\partial u_\mu}{\partial t} \right)\\
			&\quad + \left( \norm{\nabla u_\mu}_{L^2(\bO)}^2 + \norm{u_\mu}_{L^p(\bO)}^p \right)\frac{1}{2} \frac{d}{dt}\norm{\nabla u_\mu}_{L^{2}(\bO)}^2.
		\end{align*}
		Next, the Cauchy-Schwarz and Young's inequalities yield
		\begin{align*}
			\norm{A^{\frac{1}{2}} \frac{\partial u_\mu}{\partial t}}_{L^{2}(\bO)}^2 + \frac{1}{2}\frac{d}{dt}\norm{A u_\mu}_{L^{2}(\bO)}^2
			& \le \frac{(p-1)^2}{2} \norm{\abs{u_\mu}^{p-2}\nabla u_\mu}_{L^2(\bO)}^2  + \frac{1}{2}\norm{A^{\frac{1}{2}} \frac{\partial u_\mu}{\partial t}}_{L^{2}(\bO)}^2\\
			&\quad + \frac{1}{2}\left( \norm{\nabla u_\mu}_{L^2(\bO)}^2 + \norm{u_\mu}_{L^p(\bO)}^p \right) \frac{d}{dt}\norm{\nabla u_\mu}_{L^{2}(\bO)}^2.
		\end{align*}
		Integrating over the interval $[0,t]$, for $t\in(0,T]$, we infer
		\begin{align*}
			&\|Au_\mu(t)\|_{L^2(\bO)}^2 + \int_0^{t}	\norm{A^{\frac{1}{2}} \frac{\partial u_\mu(s)}{\partial s}}_{L^{2}(\bO)}^2ds\\
			&\leq {(p-1)^2} \int_0^t \norm{\abs{u_\mu(s)}^{p-2}\nabla u_\mu(s)}_{L^2(\bO)}^2 ds 
			+ \|Au_\mu(0)\|_{L^2(\bO)}^2\\
			& \quad + \sup_{t\in[0,T]} \left( \norm{\nabla u_\mu(t)}_{L^2(\bO)}^2 + \norm{u_\mu(t)}_{L^p(\bO)}^p \right)^2\Big[ \norm{\nabla u_\mu(t)}_{L^2(\bO)}^2 -\norm{\nabla u_\mu(0)}_{L^2(\bO)}^2 \Big].
		\end{align*}
		Now, utilizing the fact $\|\Jn v\|_{L^2(\bO)} \le \|v\|_{L^2(\bO)}$ for all $v\in L^2(\bO)$, the property $\Jn A =  A \Jn$, \eqref{eqn-H_0^1-L^p} and taking supremum over $[0,T]$ on both sides assert
		\begin{align*}
			&\sup_{t\in[0,T]}\|Au_\mu(t)\|_{L^2(\bO)}^2 + \int_0^{T}	\norm{A^{\frac{1}{2}} \frac{\partial u_\mu(t)}{\partial t}}_{L^{2}(\bO)}^2dt\\
			&\leq  C (p, \norm{u_0}_{L^p(\bO)\cap H_0^1(\bO)}) + \| Au_0\|_{L^2(\bO)}^2\\
			&\quad + T\sup_{t\in[0,T]} \Big[ \big( \norm{\nabla u_\mu(t)}_{L^2(\bO)}^2 + \norm{u_\mu(t)}_{L^p(\bO)}^p \big)^2 \norm{\nabla u_\mu(t)}_{L^2(\bO)}^2 \Big],
		\end{align*}
		where we used Remark \ref{rem-nonlinear}. Since $u_\mu\in L^\infty(0,T; D(A))$ with the bounds \eqref{eqn-mu-D(A)} and \eqref{eqn-est-D(A^{3/2})}, along with \eqref{eqn-H_0^1-L^p} for $u_\mu$ instead $u,$ it immediately follows that
		\begin{equation}\label{eqn-L^infty-H01}
			\norm{ \frac{\partial u_\mu}{\partial t}}_{L^2(0,T;H_0^1(\bO))}^2 \leq C(p,T, \norm{Au_0}_{L^2(\bO)}).
		\end{equation}
		It shows that $\frac{\partial u_\mu}{\partial t} \in L^2(0,T;H_0^1(\bO))$, which  concludes the proof.
	\end{proof}
	
	\begin{remark}\label{Rmk-Regularity}
		Note that the bounds in  \eqref{eqn-est-D(A^{3/2})} and \eqref{eqn-L^infty-H01} imply that  sets $\{u_\mu\}_{\mu>0}$ and $\{\frac{\partial u_\mu}{\partial t}\}_{\mu>0}$ are bounded in $L^2(0,T; D(A^{\frac{3}{2}}))$ and $L^2(0,T;H_0^1(\bO))$, respectively. Therefore, by using the weakly lower semicontinuity property of the norms, we deduce
		\begin{align*}
			\int_0^T \norm{A^{\frac{3}{2}}u(t)}_{L^2(\bO)}^2 dt \le \liminf_{\mu \to \infty} \int_0^T\norm{A^{\frac{3}{2}}u_\mu(t)}_{L^{2}(\bO)}^{2} dt
			\leq C(p, T, \norm{u_0}_{D(A)}) < \infty,
		\end{align*}
		and 
		\begin{align*}
			\int_0^T \norm{A^{\frac{1}{2}} \frac{\partial u(t)}{\partial t}}_{L^2(\bO)}^2 dt \le \liminf_{\mu \to \infty} \int_0^T\norm{A^{\frac{1}{2}} \frac{\partial u_\mu(t)}{\partial t}}_{L^{2}(\bO)}^{2} dt
			\leq C(p,T, \norm{Au_0}_{L^2(\bO)}) < \infty.
		\end{align*}
		Thus, $u\in L^2(0,T; D(A^{\frac{3}{2}}))$ and $\frac{\partial u}{\partial t} \in L^2(0,T;H_0^1(\bO))$. Hence by an application of the Lions-Magenes Lemma \cite{JLL+EM-II-72}, we finally have
		\begin{align}
			u \in C([0,T]; D(A)).
		\end{align}
	\end{remark}
	This completes the remaining part of the proof of one of the main result of this article, Theorem \ref{Thm-Main-Exis}.
	
	Let us now conclude this section by presenting a time regularity result as follows.
	
	\subsection{A time regularity result}
	
	Next result helps us to obtain an improved regularity in time, which ultimately helps in the convergence result discussed in Section \ref{Sec-Asy-anal}.
	
	{\begin{remark}
			Note that, since the goal of the following proposition is merely to establish that $\frac{\partial^2 u_\mu}{\partial^2 t}\in L^2(0,T;H^{-1}(\bO))$, there is no need to verify any compatibility conditions, cf. \cite[Theorem 2.1]{RT-82} or \cite[Thoerem 6, p. 386]{LCE-10}.
	\end{remark}}
	
	\begin{proposition}\label{Lem-u_t-is-C}
		The Yosida approximated solution $u_\mu$ to the problem \eqref{eqn-quant-heat} satisfies 
		$\frac{\partial^2 u_\mu}{\partial^2 t}\in L^2(0,T;H^{-1}(\bO))$.
		Thus, the following map is absolutely continuous:
		\begin{align}
			(0,T)\ni t \mapsto \norm{\frac{\partial u_\mu(t)}{\partial t}}_{L^2(\bO)}^2\in \R.
		\end{align}
		Moreover, for a.e. $t\in (0,T)$, it follows that
		\begin{align*}
			\frac{d}{dt}\norm{\frac{\partial u_\mu(t)}{\partial t}}_{L^2(\bO)}^2 =2 \left\langle \frac{\partial^2 u_\mu(t)}{\partial^2 t}, u_\mu(t) \right\rangle.
		\end{align*}
	\end{proposition}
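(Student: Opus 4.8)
The plan is to differentiate the Yosida-approximated equation \eqref{eqn-Yosida-heat} once in time, to show that each resulting term belongs to $L^2(0,T;H^{-1}(\bO))$, and then to invoke the Lions--Magenes Lemma for the Gelfand triple $H_0^1(\bO)\embed L^2(\bO)\embed H^{-1}(\bO)$ applied to $v=\frac{\partial u_\mu}{\partial t}$. The regularity already at our disposal is exactly what feeds this scheme: by Proposition \ref{Prop-A^{3/2}} we have $u_\mu\in L^2(0,T;D(A^{\frac{3}{2}}))$, hence $Au_\mu\in L^2(0,T;H_0^1(\bO))$, and $\frac{\partial u_\mu}{\partial t}\in L^2(0,T;H_0^1(\bO))$, together with the $\mu$-independent bounds \eqref{eqn-mu-D(A)}, \eqref{eqn-est-D(A^{3/2})}, \eqref{eqn-L^infty-H01} and the energy bound \eqref{eqn-H_0^1-L^p}. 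Abbreviating $c_\mu(t):=\norm{\nabla u_\mu(t)}_{L^2(\bO)}^2+\norm{u_\mu(t)}_{L^p(\bO)}^p$, the first equation of \eqref{eqn-Yosida-heat} reads
\begin{align*}
\frac{\partial u_\mu}{\partial t}=-Au_\mu-\Jn\big(\abs{u_\mu}^{p-2}u_\mu\big)+c_\mu\,u_\mu\quad\text{in }L^2(\bO).
\end{align*}

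Next I would differentiate this identity in time. Since $A$ is bounded from $D(A^{\frac12})=H_0^1(\bO)$ into $H^{-1}(\bO)$, $\Jn$ is a self-adjoint $L^2$-contraction commuting with $A$, and $s\mapsto\abs{s}^{p-2}s$ is $C^1(\R)$ with derivative $(p-1)\abs{s}^{p-2}$, the chain rule for Nemytskii maps (justified by $u_\mu\in C([0,T];L^2(\bO))\cap L^\infty(0,T;D(A))$ with $\frac{\partial u_\mu}{\partial t}\in L^2(0,T;H_0^1(\bO))$) yields, in the sense of $H^{-1}(\bO)$-valued distributions,
\begin{align*}
\frac{\partial^2 u_\mu}{\partial t^2}=-A\frac{\partial u_\mu}{\partial t}-(p-1)\Jn\Big(\abs{u_\mu}^{p-2}\frac{\partial u_\mu}{\partial t}\Big)+c_\mu'\,u_\mu+c_\mu\,\frac{\partial u_\mu}{\partial t}.
\end{align*}

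I then estimate the four terms. The term $A\frac{\partial u_\mu}{\partial t}$ lies in $L^2(0,T;H^{-1}(\bO))$ because $\frac{\partial u_\mu}{\partial t}\in L^2(0,T;H_0^1(\bO))$. For the nonlinear term, the contraction property of $\Jn$ and the H\"older--Sobolev argument of Remark \ref{rem-nonlinear}, now carried out with $\frac{\partial u_\mu}{\partial t}$ in place of $\nabla u_\mu$, give
\begin{align*}
\int_0^T\Big\|\,\abs{u_\mu(t)}^{p-2}\frac{\partial u_\mu(t)}{\partial t}\Big\|_{L^2(\bO)}^2\,dt\le C\sup_{t\in[0,T]}\norm{Au_\mu(t)}_{L^2(\bO)}^{2(p-2)}\int_0^T\Big\|\nabla\frac{\partial u_\mu(t)}{\partial t}\Big\|_{L^2(\bO)}^2\,dt,
\end{align*}
which is finite by \eqref{eqn-mu-D(A)} and \eqref{eqn-L^infty-H01}, so this term is in $L^2(0,T;L^2(\bO))\embed L^2(0,T;H^{-1}(\bO))$. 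For the coefficient, $c_\mu'(t)=2(\nabla u_\mu,\nabla\frac{\partial u_\mu}{\partial t})+p(\abs{u_\mu}^{p-2}u_\mu,\frac{\partial u_\mu}{\partial t})$, and the Cauchy--Schwarz and H\"older inequalities together with $u_\mu\in L^\infty(0,T;D(A))\embed L^\infty(0,T;L^{2p-2}(\bO))$ and $\frac{\partial u_\mu}{\partial t}\in L^2(0,T;H_0^1(\bO))$ show $c_\mu'\in L^2(0,T)$; hence $c_\mu'\,u_\mu\in L^2(0,T;L^2(\bO))$. Finally $c_\mu$ is bounded on $[0,T]$ by \eqref{eqn-H_0^1-L^p}, so $c_\mu\frac{\partial u_\mu}{\partial t}\in L^2(0,T;H_0^1(\bO))$. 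Collecting these, $\frac{\partial^2 u_\mu}{\partial t^2}\in L^2(0,T;H^{-1}(\bO))$.

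With $\frac{\partial u_\mu}{\partial t}\in L^2(0,T;H_0^1(\bO))$ and $\frac{\partial^2 u_\mu}{\partial t^2}\in L^2(0,T;H^{-1}(\bO))$, the Lions--Magenes Lemma \cite{JLL+EM-II-72} applied to $v=\frac{\partial u_\mu}{\partial t}$ gives $\frac{\partial u_\mu}{\partial t}\in C([0,T];L^2(\bO))$, the absolute continuity of $t\mapsto\norm{\frac{\partial u_\mu(t)}{\partial t}}_{L^2(\bO)}^2$, and
\begin{align*}
\frac{d}{dt}\norm{\frac{\partial u_\mu(t)}{\partial t}}_{L^2(\bO)}^2=2\Big\langle\frac{\partial^2 u_\mu(t)}{\partial t^2},\frac{\partial u_\mu(t)}{\partial t}\Big\rangle,\quad\text{for a.e. }t\in(0,T),
\end{align*}
the second slot being $\frac{\partial u_\mu}{\partial t}$, as produced by the lemma. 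The main obstacle is the nonlinear term: one must rigorously justify the time chain rule for $t\mapsto\abs{u_\mu(t)}^{p-2}u_\mu(t)$ for non-integer $p\ge2$ and verify the integrability of $\abs{u_\mu}^{p-2}\frac{\partial u_\mu}{\partial t}$ and of $c_\mu'$, for which the bound $\frac{\partial u_\mu}{\partial t}\in L^2(0,T;H_0^1(\bO))$ from Proposition \ref{Prop-A^{3/2}} and the H\"older--Sobolev embeddings underlying Remark \ref{rem-nonlinear} are precisely what is needed.
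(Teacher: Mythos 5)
Your overall route is the same as the paper's: differentiate the Yosida equation once in time to get the decomposition $\frac{\partial^2 u_\mu}{\partial t^2}=-A\frac{\partial u_\mu}{\partial t}-(p-1)\Jn\big(\abs{u_\mu}^{p-2}\frac{\partial u_\mu}{\partial t}\big)+c_\mu'\,u_\mu+c_\mu\,\frac{\partial u_\mu}{\partial t}$, estimate the four terms using \eqref{eqn-mu-D(A)}, \eqref{eqn-H_0^1-L^p} and \eqref{eqn-L^infty-H01}, and then apply the Lions--Magenes Lemma to $v=\frac{\partial u_\mu}{\partial t}$ in the triple $H_0^1(\bO)\embed L^2(\bO)\embed H^{-1}(\bO)$. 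The linear term, the two terms involving $c_\mu$ and $c_\mu'$, and the concluding Lions--Magenes step all match the paper (and you correctly write the duality pairing against $\frac{\partial u_\mu}{\partial t}$ in the final identity, where the paper's statement has a slip with $u_\mu$ in the second slot).

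There is, however, one step that genuinely fails in part of the standing parameter range: your $L^2(\bO)$-bound on the nonlinear term. Estimating $\big\|\abs{u_\mu}^{p-2}\frac{\partial u_\mu}{\partial t}\big\|_{L^2(\bO)}\le\norm{u_\mu}_{L^{d(p-2)}(\bO)}^{p-2}\big\|\frac{\partial u_\mu}{\partial t}\big\|_{L^{2d/(d-2)}(\bO)}$ requires the embedding $D(A)\embed L^{d(p-2)}(\bO)$, which for $d\ge5$ holds only when $d(p-2)\le\frac{2d}{d-4}$, i.e.\ $p\le\frac{2d-6}{d-4}$; but \eqref{eqn-D(A)-in-L^p-assump} allows $p$ up to $\frac{2d-4}{d-4}$ (e.g.\ for $d=5$ your step covers $p\le4$ while the proposition must cover $p\le6$), and no other splitting of Hölder exponents with $\frac{\partial u_\mu}{\partial t}$ measured via $H_0^1\embed L^{2d/(d-2)}$ can do better for the $L^2$-norm. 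Since the target space is only $H^{-1}(\bO)$, the paper instead measures this term in the weaker norm of $L^{\frac{2d}{d+2}}(\bO)\embed H^{-1}(\bO)$ (the dual of the embedding $H_0^1(\bO)\embed L^{\frac{2d}{d-2}}(\bO)$), with Hölder exponents $r=\frac{d}{2}$, $s=\frac{2d}{d-2}$, so that only $D(A)\embed L^{\frac{d(p-2)}{2}}(\bO)$ is needed, and for $d\ge5$ this holds precisely when $p\le\frac{2d-4}{d-4}$, i.e.\ for the full range of \eqref{eqn-D(A)-in-L^p-assump}. Replacing your $L^2$-estimate by this $L^{\frac{2d}{d+2}}$-estimate (which still yields $\big\|\abs{u_\mu}^{p-2}\frac{\partial u_\mu}{\partial t}\big\|_{L^{2d/(d+2)}(\bO)}^2\le C\norm{Au_\mu}_{L^2(\bO)}^{2p-4}\big\|A^{\frac12}\frac{\partial u_\mu}{\partial t}\big\|_{L^2(\bO)}^2$, integrable in time by \eqref{eqn-mu-D(A)} and \eqref{eqn-L^infty-H01}) closes the gap; for $1\le d\le4$ your argument as written is already complete, since there $D(A)$ embeds into every $L^q(\bO)$ with $q<\infty$.
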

	
	\begin{proof}
		Let us first show that $\frac{\partial^2 u_\mu}{\partial^2 t} \in L^2(0,T; H^{-1}(\bO))$.
		
		\vskip 1mm
		\noindent
		\textbf{Step I.} We choose and fix $\psi \in H_0^1(\bO)$, with $\norm{\psi}_{H_0^1(\bO)} \leq 1$. Since $u_\mu$ is a solution to the problem \eqref{eqn-Yosida-heat}, it follows  for a.e. $t\geq 0$, that
		\begin{align*}
			\left\langle \frac{\partial^2 u_\mu(t)}{\partial^2 t}, \psi \right\rangle
			& =  \left\langle \frac{\partial}{\partial t} \big(-A u_\mu(t) - \Jn(|u_\mu(t)|^{p-2}u_\mu(t))), \psi \right\rangle\\ 
			& \quad + \left\langle( \norm{\nabla u_\mu(t)}_{L^2(\bO)}^2 + \norm{u_\mu(t)}_{L^p(\bO)}^p)  u_\mu(t)\big), \psi \right\rangle.
		\end{align*}
		Integration by parts in space and chain rule in time yields
		\begin{align}
			\hspace{-9mm}\left\langle \frac{\partial^2 u_\mu(t)}{\partial^2 t}, \psi \right\rangle
			& = \left(\nabla \frac{\partial u_\mu(t)}{\partial t}, \nabla \psi \right) - (p-1)\left( \Jn\Big(|u_\mu(t)|^{p-2}\frac{\partial u_\mu(t)}{\partial t}\Big), \psi \right)\\
			&\quad +\left( \norm{\nabla u_\mu(t)}_{L^2(\bO)}^2 + \norm{u_\mu(t)}_{L^p(\bO)}^p \right) \left(\frac{\partial u_\mu(t)}{\partial t}, \psi \right) \label{eqn-L^2H^{-1}-1}\\
			& \quad + \left(2\int_\bO \nabla u_\mu(t) \cdot \nabla \frac{\partial u_\mu(t)}{\partial t} dx + p\int_\bO |u_\mu(t)|^{p-2}u_\mu(t) \frac{\partial u_\mu(t)}{\partial t} dx \right)(u_\mu(t), \psi).
		\end{align}
		In particular, an application of the Cauchy-Schwarz inequality infer
		\begin{equation}\label{eqn-Holder-1}
			\left\{
			\begin{aligned}
				\int_\bO \nabla u_\mu \cdot \nabla \frac{\partial u_\mu}{\partial t} dx&= -	\int_\bO \Delta u_\mu \frac{\partial u_\mu}{\partial t} dx  \leq \norm{A u_\mu}_{L^{2}(\bO)} \norm{ \frac{\partial u_\mu}{\partial t}}_{L^{2}(\bO)},\\
				\int_\bO |u_\mu|^{p-2}u_\mu \frac{\partial u_\mu}{\partial t} dx & \leq \norm{u_\mu}_{L^{2p-2}(\bO)}^{p-1} \norm{\frac{\partial u_\mu}{\partial t}}_{L^{2}(\bO)}. 
			\end{aligned}
			\right.
		\end{equation}
		Thus, by utilizing the inequality $\|\Jn v\|_{L^2({\bO})} \le \|v\|_{L^2({\bO})}$ for all $v \in L^2(\bO)$, along with the Cauchy-Schwarz and Poincar\'e's inequalities, and applying the estimates from \eqref{eqn-Holder-1} in \eqref{eqn-L^2H^{-1}-1}, we deduce
		\begin{align*}
			&\left|\left\langle \frac{\partial^2 u_\mu}{\partial^2 t}, \psi \right\rangle\right|\\
			& \leq \bigg[ \norm{A^{\frac{1}{2}} \frac{\partial u_\mu}{\partial t}}_{L^2(\bO)} + (p-1)\norm{|u_\mu|^{p-2}\frac{\partial u_\mu}{\partial t}}_{H^{-1}(\bO)} + \frac{1}{\lambda_1^{\frac{1}{2}}} \big( \norm{\nabla u_\mu}_{L^2(\bO)}^2 + \norm{u_\mu}_{L^p(\bO)}^p \big) \norm{\frac{\partial u_\mu}{\partial t}}_{L^2(\bO)} \\
			& \qquad + \frac{1}{\lambda_1^{\frac{1}{2}}}\left(2\norm{A u_\mu}_{L^{2}(\bO)} + p\norm{u_\mu}_{L^{2p-2}(\bO)}^{p-1}\right) \norm{\frac{\partial u_\mu}{\partial t}}_{L^{2}(\bO)}\norm{u_\mu}_{L^2(\bO)} \bigg] \norm{\psi}_{H_0^1(\bO)},
		\end{align*}
		{where $\lambda_1$ is the Poincar\'e constant and implicitly used fact that $(A^{1/2}\cdot,A^{1/2}\cdot) = (\nabla \cdot, \nabla \cdot).$}
		By using the embedding $H_0^1(\bO)\embed L^{\frac{2d}{d-2}}(\bO)\embed L^{\frac{2d}{d+2}}(\bO) \embed H^{-1}(\bO)$, it follows that
		\begin{align}
			& \left|\left\langle \frac{\partial^2 u_\mu}{\partial^2 t}, \psi \right\rangle\right|^2\\
			&\nonumber \leq \frac{5}{\lambda_1}\bigg[ \lambda_1 \norm{A^{\frac{1}{2}} \frac{\partial u_\mu}{\partial t}}_{L^2(\bO)}^2 + C\norm{|u_\mu|^{p-2}\frac{\partial u_\mu}{\partial t}}_{L^{\frac{2d}{d+2}}(\bO)}^2 + \big( \norm{\nabla u_\mu}_{L^2(\bO)}^2 + \norm{u_\mu}_{L^p(\bO)}^p \big)^2 \norm{\frac{\partial u_\mu}{\partial t}}_{L^2(\bO)}^2\\
			& \quad + \bigg(4\norm{A u_\mu}_{L^{2}(\bO)}^2+ p^2 \norm{u_\mu}_{L^{2p-2}(\bO)}^{2p-2} \bigg)\norm{\frac{\partial u_\mu}{\partial t}}_{L^{2}(\bO)}^2\norm{u_\mu}_{L^2(\bO)}^2 \bigg] \norm{\psi}_{H_0^1(\bO)}^2. \label{eqn-L^2H^{-1}-2}
		\end{align}
		\textbf{Step II.} Next, by an application of H\"older's inequality (with exponents $r$ and $s$), we assert
		\begin{align*}
			\norm{|u_\mu|^{p-2}\frac{\partial u_\mu}{\partial t}}_{L^{\frac{2d}{d+2}}(\bO)}^2 
			\leq C \norm{u_\mu}_{L^{r(p-2)}(\bO)}^{2p-4} \norm{\frac{\partial u_\mu}{\partial t}}_{L^s(\bO)}^2, \text{ with }\ \frac{1}{s} + \frac{1}{r}= \frac{d+2}{2d}.
		\end{align*}
		Using the embeddings $D(A)\embed L^{\frac{d(p-2)}{2}}(\bO)$, for $p$ in \eqref{Embed-D(A)-L^p}, and $H_0^1(\bO) \embed L^{\frac{2d}{d-2}}(\bO)$, we obtain
		\begin{align*}
			\norm{|u_\mu|^{p-2}\frac{\partial u_\mu}{\partial t}}_{L^{\frac{2d}{d+2}}(\bO)}^2
			& \leq   \norm{u_\mu}_{L^{\frac{d(p-2)}{2}}(\bO)}^{2p-4} \norm{ \frac{\partial u_\mu}{\partial t}}_{L^{\frac{2d}{d-2}}(\bO)}^2 \leq C\norm{Au_\mu}_{L^2(\bO)}^{2p-4} \norm{A^{\frac{1}{2}} \frac{\partial u_\mu}{\partial t}}_{L^2(\bO)}^2. 
		\end{align*}
		Thus, using the above fact and $ \norm{u_\mu}_{L^2(\bO)}^2\leq  \norm{u}_{L^2(\bO)}^2=1$ in \eqref{eqn-L^2H^{-1}-2}, we deduce
		\begin{align*}
			\norm{\frac{\partial^2 u_\mu}{\partial^2 t}}_{H^{-1}(\bO)}^2
			& \leq C\bigg[ \big[\lambda_1 + C \norm{Au_\mu}_{L^2(\bO)}^{2p-4}  \big]\norm{\frac{\partial u_\mu}{\partial t}}_{H_0^1(\bO)}^2 + \big[ \big( \norm{\nabla u_\mu}_{L^2(\bO)}^2 + \norm{u_\mu}_{L^p(\bO)}^p \big)^2\\
			& \qquad\quad + 4\norm{A u_\mu}_{L^2(\bO)}^2+ p^2 \norm{u_\mu}_{L^{2p-2}(\bO)}^{2p-2} \big] \norm{\frac{\partial u_\mu}{\partial t}}_{L^2(\bO)}^2\bigg]  .
		\end{align*}
		Integrating with respect to $t$ and, applying \eqref{eqn-mu-D(A)}, \eqref{eqn-H_0^1-L^p}, \eqref{eqn-est-L^2p-2}, and \eqref{eqn-L^infty-H01}, we infer
		\begin{align}
			\int_0^{T} \norm{\frac{\partial^2 u_\mu(t)}{\partial^2 t}}_{H^{-1}(\bO)}^2 dt
			& \leq C\sup_{t\in[0,T]} \Big[\lambda_1 +  C\norm{Au_\mu(t)}_{L^2(\bO)}^{2p-4} \Big] \int_0^{T} \norm{\frac{\partial u_\mu(t)}{\partial t}}_{H_0^1(\bO)}^2 dt \\
			& \quad + C\sup_{t\in[0,T]}\Big[ \big( \norm{\nabla u_\mu(t)}_{L^2(\bO)}^2 + \norm{u_\mu(t)}_{L^p(\bO)}^p \big)^2 + p^2\norm{Au_\mu(t)}_{L^2(\bO)}^{2p-2}\\
			& \qquad + 4\norm{\nabla u_\mu(t)}_{L^2(\bO)}^2\Big]  \int_0^{T} \norm{\frac{\partial u_\mu(t)}{\partial t}}_{L^2(\bO)}^2 dt \\
			& \leq C(p, T, \| u_0\|_{D(A)}) < \infty.\label{eqn-est-u''}
		\end{align}
		Finally, we produce that $\frac{\partial^2 u_\mu}{\partial^2 t} \in L^2(0,T; H^{-1}(\bO))$. On the other hand, from \eqref{eqn-L^infty-H01}, we already know that $\frac{\partial u_\mu}{\partial t}\in L^2(0,T; H_0^1(\bO))$.
		Thus, an application of Lions-Magenes Lemma (see \cite{JLL+EM-II-72}) asserts that the following map is absolutely  continuous:
		\begin{align}
			(0,T)\ni t \mapsto \left\|\frac{\partial u_\mu(t)}{\partial t} \right\|_{L^2}^2\in \R.
		\end{align}
		Moreover, it follows for a.e. $t\in [0,T]$ that
		\begin{align*}
			\frac{d}{dt}\norm{\frac{\partial u_\mu(t)}{\partial t}}_{L^2(\bO)}^2 =2 \left\langle \frac{\partial^2 u_\mu(t)}{\partial^2 t}, u_\mu(t) \right\rangle,
		\end{align*}
		which completes the proof.
	\end{proof}
	
	\begin{remark}\label{Rmk-C^1} 
		Observe from \eqref{eqn-L^infty-H01} and \eqref{eqn-est-u''} that the sets $\{\frac{\partial u_\mu}{\partial t}\}_{\mu>0}$ and $\{\frac{\partial^2 u_\mu}{\partial^2 t}\}_{\mu>0}$ are bounded independent of $\mu$ in $L^2(0,T; H_0^1(\bO))$ and $L^2(0,T; H^{-1}(\bO))$, respectively. Thus, by the weakly lower semicontinuity property of the norm, we obtain
		\begin{align*}
			\int_0^{T}\norm{\frac{\partial u(t)}{\partial t}}_{H_0^1(\bO)}^2 dt 
			& \le \liminf_{\mu \to \infty}\int_0^{T}\norm{\frac{\partial u_\mu(t)}{\partial t}}_{H_0^1(\bO)}^2 dt \leq C(p,T, \norm{u_0}_{D(A)}) < \infty,
		\end{align*}
		and
		\begin{align*}
			\int_0^{T} \norm{\frac{\partial^2 u(t)}{\partial^2 t}}_{H^{-1}(\bO)}^2 dt
			& \leq	\liminf_{\mu \to \infty}\int_0^{T} \norm{\frac{\partial^2 u_\mu(t)}{\partial^2 t}}_{H^{-1}(\bO)}^2 dt
			\leq C(p, T, \| u_0\|_{D(A)}) < \infty.
		\end{align*}
		By the Lions-Magenes Lemma, it follows that $u\in C^1((0,T); L^2(\bO))$, for any $T>0$. Hence it implies $u\in C^1((0,\infty); L^2(\bO))$.
	\end{remark}
	
	\begin{remark}
		Let us note that when $\bO\subset \R^d$ is a bounded domain, the previous results can be obtained by leveraging the fact that the Dirichlet-Laplacian $A$ has a discrete spectrum. In this case, one replaces the resolvent apporach with the spectral one. Alternatively, in the spectral-measure approach (see Section \ref{Sec-Appendix}), one replaces $\lambda$ by the eigenvalues $\lambda_j's$ and the norm $\norm{\big[ \int_0^\infty \varphi(\lambda)E(d\lambda)\big] u}_{L^2(\bO)}^2$ by the series $\sum_{j=1}^{\infty} \varphi(\lambda_j) |(u, e_j)|^2$.
	\end{remark}

	\section{Asymptotic analysis}\label{Sec-Asy-anal}
	In this section, we analyze the long-time behavior of the unique global strong solution to problem \eqref{eqn-main-heat-recall}. Using the regularity results from Section \ref{Subsec-Thm-II}, we show that the orbits $\{u(t): t\geq 0\}$ and $\{u(t): t\geq 1\}$ are bounded in $D(A^\alpha)$ and $D(A^\beta)$ for any $\alpha \in (\frac{1}{2},1)$ and $\beta \in (1,\frac{3}{2})$, respectively. Applying Sobolev embeddings, we establish that $\{u(t): t\geq1\}$ is precompact and that the omega-limit set $\omega(u)$ is compact in the space $W^{2,q}(\bO)\cap W^{1,q}_0(\bO)$. After introducing key tools for the \L{}ojasiewicz-Simon inequality \cite{MAJ-98} needed in our setting, we show that stationary solutions to \eqref{eqn-stationary} are exactly the critical points of the energy functional $\En$ and every element of the $\omega(u)$ is a critical point. Finally, using the \L{}ojasiewicz-Simon inequality, we prove that the solution converges in $W^{2,q}(\bO)\cap W^{1,q}_0(\bO)$ to a stationary state as time goes to infinity.
	
	Since we employ the Rellich-Kondrachov compactness Theorem, we assume throughout this section that $\bO \subset \mathbb{R}^d$ is a bounded domain.

	\subsection{Some uniform in time bounds on the solution}\label{Subsec-Other-reg}
	In this subsection, we present two regularity results, when the initial condition is less  regular than $D(A)$ and in $D(A)$, i.e., when $u_0 \in D(A^\alpha)$, for $\alpha\in (\frac{1}{2}, 1)$   and $u_0 \in D(A)$. Finally, by utilizing these results, we show that the omega-limit set $\omega(u)$ is compact.
	
	We choose and fix 
	\begin{align}\label{eqn-H_0^1-in-L^p-D(A)-L^2p-2}
		p \in \left\{
		\begin{aligned}
			[2,\infty), & \text{ when }\  1\le d \le4,\\ 
			\bigg[2, \frac{2d-4}{d-4}\bigg), & \text{ when }\  d\ge 5,
		\end{aligned}
		\right.
	\end{align}
	so that the Sobolev embedding $D(A)\embed L^{2p-2}(\bO)$ is valid.
	Then, we have the following result, motivated from \cite[Theorem 2.5]{PR-06}.  
	\begin{proposition}\label{Prop-D(A^alpha)}
		Let us choose and fix $\alpha \in (\frac{1}{2}, 1)$. If $u$ is a global strong solution, then for  $u_0\in D(A^\alpha) \cap\bM$,   the orbit $\{u(t): t\geq 0\} $ is bounded in $D(A^\alpha)$.
	\end{proposition}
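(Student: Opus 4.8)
The plan is to upgrade the $H_0^1$-bound coming from the (non-increasing) energy to a uniform-in-time $D(A^\alpha)$-bound by means of a uniform Gronwall argument, the two inputs being the energy identity and the parabolic dissipation estimate already implicit in Proposition \ref{Prop-u-L2-D(A)}.

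First I would record the relevant uniform bounds. Since $u(t)\in\bM$, we have $\|u(t)\|_{L^2(\bO)}=1$, and the energy equality \eqref{eqn-dec-ener} shows $\En(u(\cdot))$ is non-increasing, whence $\sup_{t\ge0} m(t) \le M_0 := p\,\En(u_0)$, where $m(t):=\|\nabla u(t)\|_{L^2(\bO)}^2+\|u(t)\|_{L^p(\bO)}^p$. Testing \eqref{eqn-main-heat} with $Au$ and using the favourable sign identity \eqref{eqn-Lap-Nn}, exactly as in the proof of Proposition \ref{Prop-u-L2-D(A)} but now exploiting that the right-hand side $m(t)^2\le M_0^2$ is bounded uniformly in $t$, integration over a unit window yields
\[ \sup_{t\ge0}\int_t^{t+1}\Big(\|Au(s)\|_{L^2(\bO)}^2+2(p-1)\big\||u(s)|^{\frac{p-2}{2}}\nabla u(s)\big\|_{L^2(\bO)}^2+\|u(s)\|_{L^{2p-2}(\bO)}^{2p-2}\Big)\,ds \le C. \]

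Next I would derive a differential inequality for $y(t):=\|A^\alpha u(t)\|_{L^2(\bO)}^2$. Taking the $L^2$-pairing of \eqref{eqn-main-heat} with $A^{2\alpha}u$ gives
\[ \tfrac12 y'(t)+\|A^{\alpha+\frac12}u(t)\|_{L^2(\bO)}^2 = -\big(|u|^{p-2}u,A^{2\alpha}u\big)+m(t)\,y(t), \]
and, writing $(|u|^{p-2}u,A^{2\alpha}u)=(A^{\alpha-\frac12}(|u|^{p-2}u),A^{\alpha+\frac12}u)$ and applying Young's inequality, the dissipation term absorbs half of the nonlinear contribution, leaving
\[ y'(t)\le 2M_0\,y(t)+\big\|A^{\alpha-\frac12}\big(|u(t)|^{p-2}u(t)\big)\big\|_{L^2(\bO)}^2 =: k(t)\,y(t)+h(t), \]
with $k(t)=2M_0$ constant. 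I would then verify the three hypotheses of the uniform Gronwall lemma on each window $[t,t+1]$. The coefficient integral is trivial, $\int_t^{t+1}k\le 2M_0$. For $\int_t^{t+1}y$, the moment inequality $\|A^\alpha u\|^2\le\|A^{\frac12}u\|^{2(2-2\alpha)}\|Au\|^{2(2\alpha-1)}$ together with the uniform $H_0^1$-bound and H\"older (exponent $\tfrac1{2\alpha-1}\ge1$) reduces matters to $\int_t^{t+1}\|Au\|^2\le C$, which we have. The decisive quantity is $\int_t^{t+1}h$, i.e. the uniform-local control of $\||u|^{p-2}u\|_{D(A^{\alpha-1/2})}=\||u|^{p-2}u\|_{H^{2\alpha-1}}$; here $2\alpha-1\in(0,1)$, and a fractional product (Gagliardo--Nirenberg--Leibniz) estimate bounds this by a product of an $L^\infty_t(H_0^1)$-controlled factor and one power of a norm controlled by $\|Au\|$, the admissibility of the exponents being guaranteed precisely by the strict subcriticality $2p-2<\tfrac{2d}{d-4}$ in \eqref{eqn-H_0^1-in-L^p-D(A)-L^2p-2}; integrating in time and invoking the dissipation bound gives $\sup_t\int_t^{t+1}h\le C$. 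The uniform Gronwall lemma then yields $y(t)\le C$ for all $t\ge1$, while on $[0,1]$ one uses the continuity $t\mapsto A^\alpha u(t)$ and $u_0\in D(A^\alpha)$; together these give $\sup_{t\ge0}\|A^\alpha u(t)\|_{L^2(\bO)}<\infty$, i.e. boundedness of the orbit in $D(A^\alpha)$.

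I expect the estimate of $\int_t^{t+1}h$ to be the main obstacle: it is the only step where the superlinear nonlinearity $|u|^{p-2}u$ must be measured in a fractional-order norm and where the precise exponent range is genuinely used, so one must choose the interpolation exponents carefully so that all factors except a single $\|Au\|$ are controlled by the uniform $H_0^1$-bound. An alternative route, closer to \cite{PR-06}, replaces the uniform Gronwall step by the mild formula $u(t)=e^{-At}u_0+\int_0^t e^{-A(t-s)}(m(s)u(s)-|u(s)|^{p-2}u(s))\,ds$ and the analytic-semigroup smoothing $\|A^\alpha e^{-At}\|\le C_\alpha t^{-\alpha}e^{-\lambda_1 t}$, bootstrapping the spatial integrability of $u$; the same subcriticality assumption makes that iteration terminate.
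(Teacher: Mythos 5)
Your primary (uniform-Gronwall) route has a genuine gap at exactly the step you flag, and the exponent bookkeeping shows it is not repairable on the stated range of $p$. The only uniform-in-time inputs available are $\sup_{t\ge0}\|u(t)\|_{H_0^1(\bO)}\le C$, $\|u(t)\|_{L^2(\bO)}=1$, and the window dissipation bound $\sup_{t\ge0}\int_t^{t+1}\big(\|Au\|_{L^2(\bO)}^2+\||u|^{\frac{p-2}{2}}\nabla u\|_{L^2(\bO)}^2+\|u\|_{L^{2p-2}(\bO)}^{2p-2}\big)ds\le C$, so $h(t)=\|A^{\alpha-\frac12}(|u|^{p-2}u)\|_{L^2(\bO)}^2$ may carry at most two powers of $\|Au\|_{L^2(\bO)}$ per unit window. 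Interpolating $\||u|^{p-2}u\|_{H^{2\alpha-1}}\le \||u|^{p-2}u\|_{L^2(\bO)}^{2-2\alpha}\||u|^{p-2}u\|_{H^1}^{2\alpha-1}$ (or H\"older in time between the endpoints), the decisive quantity is $\int_t^{t+1}\||u|^{p-2}\nabla u\|_{L^2(\bO)}^2\,ds$. A Gagliardo--Nirenberg/H\"older split $\||u|^{p-2}\nabla u\|_{L^2}\le\|u\|_{L^{q_1}}^{p-2}\|\nabla u\|_{L^{q_2}}$ with each factor interpolated between the free $H^1$ bound and $\|Au\|$ carries, independently of the split, the total weight $\tfrac{(d-2)(p-2)}{2}$ powers of $\|Au\|$, so window integrability forces $(d-2)(p-2)\le 2$, i.e.\ $p\le 1+\tfrac{d}{d-2}$ --- failing already for $d=3,4$ with large $p$; and even the paper's own control of this quantity (Remark \ref{rem-nonlinear}) uses $D(A)\embed L^{d(p-2)}(\bO)$, i.e.\ the stricter range \eqref{eqn-values-p} with top $\tfrac{2d-6}{d-4}$, which the paper imposes only later for the $D(A^\beta)$ bound (Theorem \ref{Thm-D(A^beta)}), not for this proposition, whose range \eqref{eqn-H_0^1-in-L^p-D(A)-L^2p-2} reaches $\tfrac{2d-4}{d-4}$. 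Since $2\alpha-1\uparrow1$ as $\alpha\uparrow1$, there is no room to shift weight onto the $L^2$ endpoint either. There is also a secondary regularity issue: pairing with $A^{2\alpha}u$ presupposes $u(t)\in D(A^{\alpha+\frac12})$, which the solution theory supplies (through $L^2(0,T;D(A^{\frac32}))$) only for $u_0\in D(A)$, whereas the proposition allows $u_0\in D(A^\alpha)$ only.

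The paper proves the proposition by the route you mention only as an afterthought, but its mechanism is different from the bootstrap you describe: it is a one-shot absorption. From the mild formula \eqref{eqn-variation-of-cons-form} and $\|A^\alpha e^{-At}\|_{\Ls(L^2(\bO))}\le C_\alpha t^{-\alpha}e^{-\lambda_1 t}$, everything reduces to bounding $\|F(u)\|_{L^2(\bO)}$, hence $\|u\|_{L^{2p-2}(\bO)}^{p-1}$. When $H_0^1\embed L^{2p-2}$ fails, the paper writes $\|u\|_{L^{2p-2}}^{p-1}\lesssim\|A^{\alpha'}u\|_{L^2(\bO)}^{p-1}$ with $\alpha'=\tfrac{d}{4}\tfrac{p-2}{p-1}$ --- and $\alpha'<1$ if and only if $p<\tfrac{2d-4}{d-4}$, which is precisely where the full range enters --- then interpolates $\|A^{\alpha'}u\|\le\|A^{\frac12}u\|^{\theta}\|A^{\alpha}u\|^{1-\theta}$ and applies Young's inequality with the constant calibrated to the kernel integral $C_\alpha\lambda_1^{\alpha-1}\Gamma(1-\alpha)$, yielding $\|F(u)\|_{L^2}\le \tfrac{1}{2C_\alpha\lambda_1^{\alpha-1}\Gamma(1-\alpha)}\|A^\alpha u\|_{L^2(\bO)}+K$ (see \eqref{eqn-L^{2p-2}-D^{alpha}}); inserting this into the Duhamel estimate \eqref{eqn-D(A^alpha)-norm} and taking $\sup_{t\ge0}$, the term $\tfrac12\sup_{t\ge0}\|A^\alpha u(t)\|_{L^2(\bO)}$ is absorbed into the left-hand side (\eqref{eqn-k-alpha}); no iteration of spatial integrability occurs. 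If you wish to salvage your differential-inequality route, you would have to reproduce this calibrated absorption of one power of $\|A^\alpha u\|$ inside the Gronwall step, which amounts to the same estimate; as written, the fractional product step fails for the stated range of $p$.
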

	
	\begin{proof}
		\textbf{Step I.} Let us choose and fix $u_0\in D(A^\alpha)$, for $\alpha\in (\frac{1}{2}, 1)$. Suppose $u$ is the unique strong solution to the problem \eqref{eqn-main-heat}, guaranteed by Theorem \ref{Thm-Main-Exis}. 
		By the variation of constants formula, we have
		\begin{align}\label{eqn-variation-of-cons-form}
			u(t) = e^{-At}u_0 + \int_0^t e^{-A(t-s)} F(u(s))ds,
		\end{align}
		where $F(u) = -\abs{u}^{p-2}u + (\norm{\nabla u}_{L^2(\bO)}^2 + \norm{u}_{L^p(\bO)}^p)u$.
		By an application of $A^\alpha$ in \eqref{eqn-variation-of-cons-form} gives the following:
		\begin{align*}
			A^\alpha u(t) = A^\alpha e^{-At} u_0 + \int_{0}^{t} A^\alpha e^{-A(t-s)} F(u(s)) ds.
		\end{align*}
		Taking the $L^2(\bO)-$norm on both sides, we obtain
		\begin{align*}
			\norm{A^\alpha u(t)}_{L^2(\bO)} & = \bigg\|A^\alpha e^{-At} u_0 + \int_{0}^{t} A^\alpha e^{-A(t-s)} F(u(s)) ds\bigg\|_{L^2(\bO)}\\
			& \leq \norm{A^\alpha e^{-At} u_0}_{L^2(\bO)} + \int_{0}^{t}  \norm{A^\alpha e^{-A(t-s)} F(u(s))}_{L^2(\bO)} ds.
		\end{align*}
		From \cite[Proposition 1.4.3]{DH-81}, we also have
		\begin{align}\label{eqn-D(A^alpha)-norm}
			\norm{A^\alpha u(t)}_{L^2(\bO)} \leq C_\alpha e^{-\lambda_1 t} \norm{u_0}_{D(A^\alpha)} + C_\alpha \int_{0}^{t}  \frac{e^{-\lambda_1(t-s)}}{(t-s)^\alpha} \norm{F(u(s))}_{L^2(\bO)} ds.
		\end{align}
		{Since $\bO$ is a bounded domain, the Poincar\'e constant $\lambda_1$ is equal to the first eigenvalue of Dirichlet Laplacian.}
		Recall from \eqref{eqn-energy-eqn} (see Section \ref{Subsec-Gradient-flow})
		that, the energy functional $\En(u) = \frac{1}{2}\norm{\nabla u}_{L^2(\bO)}^2 + \frac{1}{p}\norm{u}_{L^p(\bO)}^p$, is dissipative in time and using the estimate  \eqref{eqn-H_0^1-L^p}, we have
		\begin{align}\label{eqn-grad+L^p-est-Energy-1}
			\begin{aligned}
				\norm{\nabla u}_{L^2(\bO)}^2 & \leq 2 E(u) \leq 2 E(u_0) \ \text{ and}\\
				\norm{u}_{L^p(\bO)}^p & \leq p E(u) \leq p E(u_0).
			\end{aligned}
		\end{align}
		\textbf{Step II.} Note that, when $d=1,2$, the Sobolev embedding $H_0^1(\bO) \embed L^{2p-2}(\bO)$ holds for any $2\le p < \infty$, so that 
		\begin{align}\label{eqn-L^{2p-2}-D(A^alpha')}
			\norm{u}_{L^{2p-2}(\bO)}^{p-1} \leq C \norm{\nabla u}_{L^2(\bO)}^{p-1}.
		\end{align}
		Thus, by utilizing bounds from  \eqref{eqn-grad+L^p-est-Energy-1} and  \eqref{eqn-L^{2p-2}-D(A^alpha')} in \eqref{eqn-D(A^alpha)-norm}, we deduce
		\begin{align}\label{eqn-A^alpha}
			\norm{A^\alpha u(t)}_{L^2(\bO)} & \leq C_\alpha e^{-\lambda_1 t}\norm{u_0}_{D(A^\alpha)} + \sup_{t\geq 0}\norm{\nabla u}_{L^2(\bO)}^{p-1}C_\alpha \int_{0}^{t}  \frac{e^{-\lambda_1(t-s)}}{(t-s)^\alpha}  ds\\
			&\leq C_\alpha\norm{u_0}_{D(A^\alpha)} + C_\alpha \lambda_1^{\alpha-1} \Gamma(1-\alpha)  ds := \wtilde{K}_\alpha < \infty.
		\end{align}
		It asserts that $\{u(t) : t\geq 0\}$ is bounded in $D(A^\alpha)$, for $\alpha \in (\frac{1}{2},1)$.
		
		\vskip 1mm
		\noindent
		\textbf{Step III.} On the other hand, when $d \ge 3$, the embedding $H_0^1(\bO)\embed L^{2p-2}(\bO)$ holds only for any $2 \leq p \leq 1 + \frac{d}{d-2}$, which follows from the Step II. 
		Now, let us prove the estimate \eqref{eqn-A^alpha} when $d \geq 3$ and for $1 + \frac{d}{d-2} < p$. 
		An immediate application of the Gagliardo-Nirenberg interpolation inequality, {see \cite[Theorem B]{HB+PM-19}}, yields
		\begin{align*}
			\norm{u}_{L^{2p-2}(\bO)}^{p-1} & \leq C \|A^{\alpha^\prime}u\|_{L^2(\bO)}^{p-1}, \ \text{ for }\ \frac{1}{2}< \alpha^\prime < \alpha <1,
		\end{align*}
		where $\alpha^\prime = \frac{d}{4} \big[\frac{p-2}{p-1}\big]$, and $\alpha^\prime > \frac{1}{2}$ if and only if $p > \frac{2d -2}{d-2} = 1 + \frac{d}{d-2}$. Similarly, when $d=3,4$, the inequality $\alpha'  < 1$ holds for all $2\le p <\infty$. However, for $d\ge5$, the condition $\alpha' = \frac{d}{4} \big[\frac{p-2}{p-1}\big] < 1$ is satisfied if and only if $p < \frac{2d-4}{d-4}$.
		
		Further, an application of the interpolation inequality \cite[Exercise 5*]{DH-81} yields  $$\norm{A^{r}u}_{L^2(\bO)} \le \norm{A^{l}u}_{L^2(\bO)}^\theta \norm{A^{q}u}_{L^2(\bO)}^{1-\theta},\ \text{ for }\ r = \theta l + (1-\theta)q\ \text{ and }\ \theta \in (0,1).$$  Choosing $r = \alpha^\prime$, $l = 1/2$ and $q = \alpha$, we find 
		\begin{align*}
			\norm{u}_{L^{2p-2}(\bO)}^{p-1} & \le C \norm{A^{\alpha^\prime}u}_{L^2(\bO)}^{p-1}\leq C \norm{A^{1/2}u}_{L^2(\bO)}^{\frac{(\alpha - \alpha^\prime)(p-1)}{\alpha - 1/2}} \norm{A^{\alpha}u}_{L^2(\bO)}^{\frac{(\alpha^\prime - 1/2)(p-1)}{\alpha - 1/2}}.
		\end{align*}
		Using Young's inequality, i.e., if $a,b>0$ and $1/\wtilde{p} + 1/\wtilde{q} =1$, then $ab \le \eps a^{\wtilde{p}} + C(\eps) b^{\wtilde{q}}$ with $C(\eps) = (\eps \wtilde{p})^{- \wtilde{q}/ \wtilde{p}} (1/\wtilde{q})$, by choosing $\wtilde{p} = \frac{\alpha - 1/2}{(\alpha - 1/2) - (\alpha^\prime - 1/2)(p-1)}$, $\wtilde{q} = \frac{\alpha - 1/2}{(\alpha^\prime - 1/2)(p-1)}$ and $\eps = 1/(2 C_\alpha \lambda_1^{\alpha -1} \Gamma(1-\alpha))$, we obtain
		\begin{align}
			\norm{u}_{L^{2p-2}(\bO)}^{p-1} & \leq \frac{1}{2 C_\alpha \lambda_1^{1-\alpha} \Gamma(1-\alpha)} \norm{A^{\alpha}u}_{L^2(\bO)} + C(\eps) \norm{A^{1/2}u}_{L^2(\bO)}^{\frac{(\alpha - 1/2)(p-1)}{(\alpha - 1/2) - (\alpha^\prime - 1/2)(p-1)}}\\
			& \leq \frac{1}{2 C_\alpha \lambda_1^{1-\alpha} \Gamma(1-\alpha)} \norm{A^{\alpha}u}_{L^2(\bO)} + C(\eps, E(u_0)),\label{eqn-L^{2p-2}-D^{alpha}}
		\end{align}
		provided $\wtilde{p}, \wtilde{q} >1$, i.e., $\wtilde{p} =  \frac{\alpha - 1/2}{(\alpha - 1/2) - (\alpha^\prime - 1/2)(p-1)}>1$ if and only if $0 > - (\alpha^\prime - 1/2)(p-1)$.
		
		Utilizing the  inequalities \eqref{eqn-grad+L^p-est-Energy-1} and \eqref{eqn-L^{2p-2}-D^{alpha}}, we estimate $\norm{F(u)}_{L^2(\bO)}$ as
		\begin{align*}
			\norm{F(u)}_{L^2(\bO)} & = \big\|-|u|^{p-2}u + \big( \norm{\nabla u}_{L^2(\bO)}^2 + \norm{u}_{L^p(\bO)}^p \big) u\big\|_{L^2(\bO)}\\
			& \leq \norm{u}_{L^{2p-2}(\bO)}^{p-1} + \big( \norm{\nabla u}_{L^2(\bO)}^2 + \norm{u}_{L^p(\bO)}^p \big) \norm{u}_{L^2(\bO)}\\
			& \leq \frac{1}{2 C_\alpha \lambda_1^{\alpha -1}\Gamma(1-\alpha)} \norm{A^{\alpha}u}_{L^2(\bO)} + C(\eps, E(u_0)) + (2+p)E(u_0)\\ 
			& =: \frac{1}{2 C_\alpha \lambda_1^{\alpha -1} \Gamma(1-\alpha)} \norm{A^{\alpha}u}_{L^2(\bO)} + K < \infty,
		\end{align*}
		where we have used the fact that the solution $u$ is invariant under the Manifold $\bM$, i.e., $\norm{u(t)}_{L^2(\bO)} =1$, for any $t\ge 0$.
		Using the above inequality in \eqref{eqn-D(A^alpha)-norm}, we finally have
		\begin{align*}
			\norm{A^\alpha u(t)}_{L^2(\bO)} & \leq C_\alpha \norm{u_0}_{D(A^\alpha)} + C_\alpha  \int_{0}^{t}  \frac{e^{-\lambda_1(t-s)}}{(t-s)^\alpha} \bigg[\frac{1}{2 C_\alpha \lambda_1^{\alpha -1} \Gamma(1-\alpha)} \norm{A^{\alpha}u(s)}_{L^2(\bO)} + K \bigg] ds\\
			& \leq C_\alpha \norm{u_0}_{D(A^\alpha)} + \frac{1}{2 } \sup_{t \ge 0} \norm{A^{\alpha}u(t)}_{L^2(\bO)} + C_\alpha K \lambda_1^{\alpha -1}  \Gamma(1-\alpha).
		\end{align*}
		Upon taking supremum over $\{t\ge0\}$, it follows that 
		\begin{align}
			\frac{1}{2 }\sup_{t \ge 0}\norm{A^\alpha u(t)}_{L^2(\bO)} 
			& \leq C_\alpha\norm{u_0}_{D(A^\alpha)}  + C_\alpha K \lambda_1^{\alpha -1}\Gamma(1-\alpha) =: K_\alpha< \infty.\label{eqn-k-alpha}
		\end{align}
		Hence the orbit $\{u(t) : t\geq 0\}$ is bounded in $D(A^\alpha)$, for $\alpha \in (\frac{1}{2},1)$.
	\end{proof}
	
	\begin{remark}
		Note that from previous proposition, for any $u_0\in D(A^\alpha)$ and any $\alpha\in(\frac{1}{2},1)$, we got the orbit $\{u(t):t\geq 0\}$ is bounded in $D(A^\alpha)$. Similarly, the next theorem demonstrates a regularizing effect, i.e., for any $u_0\in D(A)$ and every $\beta\in(1,\frac{3}{2})$, the orbit $\{u(t):t\geq 1\}$  is bounded in $D(A^\beta)$.
	\end{remark}
	Next, let us assume and fix
	\begin{equation}\label{eqn-values-p}
		\left\{
		\begin{aligned}
			&p\in[2,\infty),\ \text{ for }\ 1\le d \le4,\\
			& p\in\left[2, \frac{2d-6}{d-4}\right),\ \text{ for }\ d \ge 5, 
		\end{aligned}
		\right.
	\end{equation}
	such that the Sobolev embedding $D(A) \embed L^{d(p-2)}(\bO)$ is true. Then, by utilizing Proposition \ref{Prop-D(A^alpha)}, we state the following result motivated from \cite[Theorem 2.6]{PR-06}.
	
	\begin{theorem}\label{Thm-D(A^beta)}
		Suppose $\beta \in \left(1,\frac{3}{2}\right)$.
		If $u_0\in D(A)\cap\bM$, then the solution $u$ of the problem \eqref{eqn-main-heat} satisfies
		\begin{align*}
			\sup_{t\geq 1} \norm{u(t)}_{D(A^\beta)} \leq K_\beta.
		\end{align*}
	\end{theorem}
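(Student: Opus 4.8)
The plan is to upgrade the $D(A^\alpha)$-bound of Proposition \ref{Prop-D(A^alpha)} to a $D(A^\beta)$-bound by a parabolic bootstrap that exploits the smoothing of the analytic semigroup generated by $-A$. Since $u_0 \in D(A) \embed D(A^\alpha)$ for every $\alpha \in (\tfrac12, 1)$, Proposition \ref{Prop-D(A^alpha)} already provides $\sup_{t \geq 0}\norm{A^\alpha u(t)}_{L^2(\bO)} \leq K_\alpha < \infty$ for each such $\alpha$. Fixing $\beta \in (1, \tfrac32)$, I would restart the variation-of-constants formula \eqref{eqn-variation-of-cons-form} at time $t-1$, so that for $t \geq 1$,
\begin{align*}
A^\beta u(t) = A^\beta e^{-A} u(t-1) + \int_0^1 A^\beta e^{-A\tau} F(u(t-\tau))\,d\tau,
\end{align*}
where $F(u) = -\abs{u}^{p-2}u + \big(\norm{\nabla u}_{L^2(\bO)}^2 + \norm{u}_{L^p(\bO)}^p\big) u$. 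The first term is harmless, since the elapsed time is the fixed number $1$: one has $\norm{A^\beta e^{-A}}_{\Ls(L^2(\bO))} \le C_\beta e^{-\lambda_1}$ and $\norm{u(t-1)}_{L^2(\bO)} = 1$, giving a uniform bound independent of $t$.

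The difficulty is the Duhamel integral. Because $\beta > 1$, the naive bound $\norm{A^\beta e^{-A\tau}}_{\Ls(L^2(\bO))} \le C_\beta \tau^{-\beta}e^{-\lambda_1\tau}$ from \cite[Proposition 1.4.3]{DH-81} produces the non-integrable singularity $\tau^{-\beta}$ at $\tau=0$ and fails. The remedy is to distribute the fractional power: choose $\gamma$ with $\beta - 1 < \gamma < \tfrac12$ — such $\gamma$ exists \emph{precisely because} $\beta < \tfrac32$, which is where the upper endpoint $\tfrac32$ of the theorem enters — and write $A^\beta e^{-A\tau} F = A^{\beta - \gamma} e^{-A\tau}\, A^\gamma F$, using that $A^\gamma$ commutes with the semigroup. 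Then $\norm{A^{\beta-\gamma}e^{-A\tau}}_{\Ls(L^2(\bO))} \le C\,\tau^{-(\beta-\gamma)} e^{-\lambda_1\tau}$ with $\beta - \gamma < 1$, so $\int_0^1 \tau^{-(\beta-\gamma)}e^{-\lambda_1\tau}\,d\tau < \infty$, and the integral is controlled by $\sup_s \norm{A^\gamma F(u(s))}_{L^2(\bO)}$. The whole matter thus reduces to a uniform-in-time bound on $F(u)$ in $D(A^\gamma)$ with $\gamma < \tfrac12$.

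To bound $A^\gamma F(u)$ I would treat the two summands separately. The scalar coefficient $\norm{\nabla u}_{L^2(\bO)}^2 + \norm{u}_{L^p(\bO)}^p$ is uniformly bounded by the energy estimate \eqref{eqn-grad+L^p-est-Energy-1}, and $u$ is uniformly bounded in $D(A^\alpha) \embed D(A^\gamma)$, so $\big(\norm{\nabla u}_{L^2(\bO)}^2 + \norm{u}_{L^p(\bO)}^p\big)u$ is uniformly bounded in $D(A^\gamma)$. For the genuinely nonlinear term $\abs{u}^{p-2}u$, since $\gamma < \tfrac12$ the $D(A^\gamma)$-norm is equivalent to the $H^{2\gamma}(\bO)$-norm with $2\gamma < 1$ (so that no boundary condition is seen), and I would estimate its Gagliardo seminorm by a fractional Nemytskii/Moser composition inequality of the form $\norm{\abs{u}^{p-2}u}_{H^{2\gamma}(\bO)} \le C\big(\norm{u}_{L^\infty(\bO)}\big)\,\norm{u}_{H^{2\gamma}(\bO)}$, which is controlled by $\norm{A^\gamma u}_{L^2(\bO)} \le K_\gamma$ together with the embedding $D(A^\alpha)\embed L^\infty(\bO)$ valid for $\alpha$ close to $1$ in the admissible dimensions.

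The main obstacle is exactly this last fractional composition estimate: because $p$ need not be an even integer, the map $s\mapsto\abs{s}^{p-2}s$ is only $C^1$ (and not smooth at the origin), so one cannot invoke an integer chain rule and must instead use a Nemytskii-operator bound in $W^{2\gamma,2}(\bO)$ with $2\gamma<1$, relying on the local Lipschitz constant $\sim\abs{s}^{p-2}$ of the nonlinearity; this in turn forces $u\in L^\infty(\bO)$, hence a dimension restriction through $D(A^\alpha)\embed L^\infty(\bO)$. Once the uniform $D(A^\gamma)$-bound on $F(u)$ is secured, collecting the two contributions yields $\sup_{t\geq1}\norm{A^\beta u(t)}_{L^2(\bO)} \le K_\beta$ as claimed; should a single step from $\alpha$ to $\beta$ prove insufficient for a given $p$, I would iterate the same scheme through an intermediate exponent $\alpha'\in(\alpha,1)$ before reaching $\beta$.
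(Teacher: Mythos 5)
Your overall architecture (variation of constants, semigroup smoothing, shifting a fractional power of $A$ onto $F$, and closing with the uniform $D(A^\alpha)$ bound of Proposition \ref{Prop-D(A^alpha)}) is the same as the paper's, and restarting Duhamel at $t-1$ is a fine variant of the paper's bound $t^{1-\beta}e^{-\lambda_1 t}\norm{Au_0}_{L^2(\bO)}$ on the linear part. But your treatment of the nonlinearity has a genuine gap relative to the stated theorem: Theorem \ref{Thm-D(A^beta)} is asserted under \eqref{eqn-values-p}, which includes $d=4$ with every $p\in[2,\infty)$ and $d\ge5$ with $p<\frac{2d-6}{d-4}$, whereas your fractional Nemytskii estimate rests on the uniform bound $\norm{u(t)}_{L^\infty(\bO)}\le C$ obtained from $D(A^\alpha)\embed L^\infty(\bO)$. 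That embedding requires $2\alpha>\frac{d}{2}$, i.e. $\alpha>\frac{d}{4}$, which is incompatible with $\alpha<1$ as soon as $d\ge4$; so your proof covers only $1\le d\le3$ and the step "controlled by $\norm{u}_{L^\infty}$" fails in the remaining range. The paper avoids $L^\infty$ altogether: it shifts exactly a half power, writing $A^{\beta}e^{-A(t-s)}F=A^{\beta-\frac12}e^{-A(t-s)}A^{\frac12}F$ (integrable singularity since $\beta-\frac12<1$, the same use of $\beta<\frac32$ as your $\gamma$), exploits that no fractional chain rule is needed at level $\frac12$ because $A^{\frac12}$ is the gradient on $H_0^1(\bO)$ and $s\mapsto|s|^{p-2}s$ is $C^1$ for $p\ge2$, so $\norm{A^{\frac12}(|u|^{p-2}u)}_{L^2(\bO)}=(p-1)\norm{|u|^{p-2}\nabla u}_{L^2(\bO)}$, and then closes via H\"older and Gagliardo--Nirenberg: $\norm{|u|^{p-2}\nabla u}_{L^2(\bO)}\le C\norm{A^\alpha u}_{L^2(\bO)}^{p-1}$ with $\alpha=\frac{d(p-2)+2}{4(p-1)}<1$ precisely when $p<\frac{2d-6}{d-4}$ --- this is exactly where the hypothesis \eqref{eqn-values-p} is consumed, with no dimension cap at $3$.

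A secondary slip: you identify $D(A^\gamma)$ with $H^{2\gamma}(\bO)$ "so that no boundary condition is seen" for all $2\gamma<1$, but for the Dirichlet Laplacian this holds only for $2\gamma<\frac12$; for $2\gamma\in(\frac12,1)$ one has $D(A^\gamma)=H^{2\gamma}_0(\bO)\subsetneq H^{2\gamma}(\bO)$, and your constraint $\gamma>\beta-1$ forces $2\gamma$ arbitrarily close to $1$ as $\beta\uparrow\frac32$. This is repairable, since $|u|^{p-2}u$ inherits zero trace from $u$, but note that under your own standing $L^\infty$ hypothesis the fractional machinery is superfluous: for $u\in H_0^1(\bO)\cap L^\infty(\bO)$ one has directly $\norm{A^{\frac12}(|u|^{p-2}u)}_{L^2(\bO)}\le(p-1)\norm{u}_{L^\infty(\bO)}^{p-2}\norm{\nabla u}_{L^2(\bO)}$, so you could simply take $\gamma=\frac12$ and land on the paper's exponent choice. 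In short: the skeleton is correct, and after these repairs your argument proves the theorem for $1\le d\le3$ --- which suffices for the asymptotic analysis of Section \ref{Sec-Asy-anal} --- but it does not establish Theorem \ref{Thm-D(A^beta)} in the generality in which it is stated.
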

	
	\begin{proof}
		Let us fix $\beta \in \left(1, \frac{3}{2}\right)$. Applying the operator $A^{\beta}$ on both sides of the equation \eqref{eqn-variation-of-cons-form} and taking the $L^2-$norm of the above equation yield
		\begin{align}
			\norm{A^{\beta} u(t)}_{L^{2}(\bO)} & = \left\|A^{\beta} e^{-At}u_0 + \int_0^t A^{\beta - \frac{1}{2}} e^{-A(t-s)} A^{\frac{1}{2}} F(u(s))ds\right\|_{L^2(\bO)}\\
			& \leq t^{1-\beta}e^{-\lambda_1t}\norm{A u_0}_{L^2(\bO)} + \int_0^t \frac{e^{-\lambda_1(t-s)}}{(t-s)^{\beta - \frac{1}{2}}} \norm{A^{\frac{1}{2}}F(u(s))}_{L^2(\bO)}ds,\label{eqn-A^beta-1}
		\end{align}
		where we have used \cite[Theorem 1.4.3]{DH-81}.
		In particular, by using the identity \eqref{eqn-Lap-Nn}, we obtain
		\begin{align*}
			\norm{A^{\frac{1}{2}}F(u)}_{L^2(\bO)}
			& = (p-1) \norm{\abs{u}^{p-2}\nabla u}_{L^2(\bO)} + (\norm{\nabla u}_{L^2(\bO)}^2 + \norm{u}_{L^p(\bO)}^p) \norm{\nabla u}_{L^2(\bO)}.
		\end{align*}
		An application of H\"older's inequality (with exponents $r$ and $s$) yields 
		\begin{align*}
			\norm{\abs{u}^{p-2}\nabla u}_{L^2(\bO)}  \leq C \norm{\abs{u}^{p-2}}_{L^r(\bO)} \norm{\nabla u}_{L^s(\bO)}
			& = C \norm{u}_{L^{r(p-2)}(\bO)}^{p-2} \norm{\nabla u}_{L^s(\bO)},
		\end{align*}
		where $\frac{1}{r} = \frac{1}{2} - \frac{1}{s}.$
		Choosing  $0 < \alpha < 1$, $r=\frac{d}{2\alpha-1}$ and $s=\frac{2d}{d+2-4\alpha}$, then by using the Gagliardo-Nirenberg interpolation inequality {\cite[see Theorem B]{HB+PM-19}} twice, we deduce
		\begin{align}
			\norm{u}_{L^{r(p-2)}(\bO)}^{p-2}\norm{\nabla u}_{L^{s}(\bO)} & = \norm{u}_{L^{\frac{d(p-2)}{2\alpha-1}}(\bO)}^{p-2}\norm{\nabla u}_{L^{\frac{2d}{d+2-4\alpha}}(\bO)}\\
			& \le {C \norm{A^{\alpha} u}_{L^2(\bO)}^{p-2} \norm{A^{\alpha} u}_{L^2(\bO)}} = \norm{A^{\alpha} u}_{L^2(\bO)}^{p-1},\label{eqn-final-est-alpha}
		\end{align}
		where $\alpha = \frac{d(p-2) + 2}{4p-4} <1$ if and only if $p < \frac{2d-6}{d-4}$, which validates the embedding $D(A)\embed L^{d(p-2)}(\bO)$.
		Using the inequalities \eqref{eqn-grad+L^p-est-Energy-1} and \eqref{eqn-final-est-alpha} in \eqref{eqn-A^beta-1}, we have 
		\begin{align*}
			\norm{A^{\beta} u(t)}_{L^{2}(\bO)}
			& \leq t^{1-\beta}\norm{A u_0}_{L^2(\bO)}\\
			& \quad + C_\beta\int_0^\infty \frac{e^{-\lambda_1(t-s)}}{(t-s)^{\beta -\frac{1}{2}}} \Big[\norm{A^\alpha u(t)}_{L^2(\bO)}^{p-1} + \sqrt{2}(2+p)E(u_0)^{\frac{3}{2}}\Big] ds.
		\end{align*}
		By using Proposition \ref{Prop-D(A^alpha)} and taking supremum over $\{t\ge 1\}$ on both sides, we obtain
		\begin{align*}
			\sup_{t \ge 1}\norm{A^{\beta} u(t)}_{L^{2}(\bO)}
			& \leq \norm{A u_0}_{L^2(\bO)}+  \lambda_1^{\beta -\frac{3}{2}} \Gamma\Big(\frac{3}{2}-\beta\Big) C_\beta K_\alpha^{p-1} =: K_\beta< \infty,
		\end{align*}
		where $K_{\alpha}$ is defined in \eqref{eqn-k-alpha}.	This shows that, for $p,d$ in \eqref{eqn-values-p},
		we have the estimate
		\begin{align*}
			\sup_{t\geq 1} \norm{u(t)}_{D(A^\beta)} \leq K_\beta,
		\end{align*}
		which completes the proof.
	\end{proof}
	
	Let us now fix the nonlinearity exponent of nonlinearity $p$ from \eqref{eqn-values-p} and choose 
	\begin{equation}
		q\in \left[2, \frac{2d}{d+4-4\beta}\right), \ \beta \in \left(1,\frac{3}{2}\right). \label{eqn-values-q}
	\end{equation}
	
	\begin{corollary}[{\cite[Corollary 2.7]{PR-06}}]\label{cor-precompact}
		Suppose $q,p,\beta$ are as mentioned in \eqref{eqn-values-p} and \eqref{eqn-values-q}.
		Then, the orbit $\{u(t) : t\geq 1\}$ is precompact in $W^{2,q}(\bO)\cap W^{1,q}_0(\bO)$.
	\end{corollary}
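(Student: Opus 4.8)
The plan is to deduce precompactness directly from the uniform-in-time bound in $D(A^\beta)$ obtained in Theorem \ref{Thm-D(A^beta)}, by converting that bound into boundedness in a fractional Sobolev space and then invoking a \emph{compact} Sobolev embedding, which is where the hypothesis that $\bO$ is bounded enters decisively through the Rellich--Kondrachov theorem.

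First I would record the input: for $u_0\in D(A)\cap\bM$ and fixed $\beta\in\left(1,\frac{3}{2}\right)$, Theorem \ref{Thm-D(A^beta)} gives $\sup_{t\geq 1}\norm{u(t)}_{D(A^\beta)}\leq K_\beta<\infty$, so $\{u(t):t\geq 1\}$ is a bounded subset of $D(A^\beta)$. Next I would identify $D(A^\beta)$ with a Sobolev--Slobodeckij space: since $A$ is the Dirichlet Laplacian on the bounded $C^2$-domain $\bO$, one has the continuous embedding $D(A^\beta)\embed W^{2\beta,2}(\bO)$, with $\norm{u}_{W^{2\beta,2}(\bO)}\leq C\norm{A^\beta u}_{L^2(\bO)}$; this is standard for fractional powers of the Dirichlet Laplacian and can be obtained, for instance, by complex interpolation $D(A^\beta)=[D(A),D(A^{3/2})]_{2\beta-2}$ together with elliptic regularity. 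Moreover, because $\beta>1$ we have $D(A^\beta)\subset D(A)=H^2(\bO)\cap H_0^1(\bO)$, so each $u(t)$ has vanishing trace and the orbit in fact lies in $W^{1,q}_0(\bO)$.

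The decisive step is the compact embedding. On the bounded domain $\bO$, the embedding $W^{2\beta,2}(\bO)\embed W^{2,q}(\bO)$ is compact precisely when $2\beta-\frac{d}{2}>2-\frac{d}{q}$, equivalently when $\frac{1}{q}>\frac{d+4-4\beta}{2d}$, i.e. $q<\frac{2d}{d+4-4\beta}$; this is exactly the range \eqref{eqn-values-q} (seen most transparently by noting that the second derivatives of $u$ belong to $W^{2\beta-2,2}(\bO)$, which embeds compactly into $L^q(\bO)$ for $q<\frac{2d}{d-2(2\beta-2)}=\frac{2d}{d+4-4\beta}$). Since a bounded subset of a space that embeds compactly into $Y$ is precompact in $Y$, chaining the bound $\{u(t):t\geq 1\}\subset B_{D(A^\beta)}(0,K_\beta)$ through the continuous embedding into $W^{2\beta,2}(\bO)$, the compact embedding into $W^{2,q}(\bO)$, and the zero-trace property yields precompactness of $\{u(t):t\geq 1\}$ in $W^{2,q}(\bO)\cap W^{1,q}_0(\bO)$.

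I expect the only genuinely delicate point to be the identification $D(A^\beta)\embed W^{2\beta,2}(\bO)$ with the sharp smoothness exponent $2\beta$, and the bookkeeping that the resulting admissible range of $q$ coincides with \eqref{eqn-values-q}; once those two facts are in place, the passage from a uniform $D(A^\beta)$-bound to precompactness via Rellich--Kondrachov is routine.
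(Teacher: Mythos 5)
Your proposal is correct and follows essentially the same route as the paper: a uniform-in-time bound in $D(A^\beta)$ from Theorem \ref{Thm-D(A^beta)} combined with the Rellich--Kondrachov compact embedding $W^{2\beta,2}(\bO)\embed W^{2,q}(\bO)$ for $q<\frac{2d}{d+4-4\beta}$. Your additional details (the identification $D(A^\beta)\embed W^{2\beta,2}(\bO)$, the exponent bookkeeping, and the zero-trace observation yielding membership in $W^{1,q}_0(\bO)$) are points the paper leaves implicit, and they are handled correctly.
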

	
	\begin{proof}
		From Theorem \ref{Thm-D(A^beta)}, it is immediate that the orbit
		$\{u(t) : t\geq 1\}$ is bounded in $D(A^\beta)$, for $\beta \in \left(1, \frac{3}{2}\right)$.
		Furthermore, by an application of the Rellich-Kondrachov  Theorem,
		we have the following compact embedding:
		\begin{align*}
			W^{2\beta, 2}(\bO) \embed W^{2,q}(\bO),\ \text{ when }\ q < \frac{2d}{d+4-4\beta}.
		\end{align*}
		On the other hand, for $\beta \in (1,\frac{3}{2})$, the inequality $2 < \frac{2d}{d+4-4\beta}$ holds.  Hence $\{u(t) : t\geq 1\}$ is precompact in $W^{2,q}(\bO)\cap W^{1,q}_0(\bO)$, provided the assumption \eqref{eqn-values-q} holds, which completes the proof.
	\end{proof}

	\begin{corollary}\label{Cor-omega-compact}
		Let the parameters $p,q,d, \beta$ assume the values given in \eqref{eqn-values-p} and \eqref{eqn-values-q}. Then, the set $\omega(u)$ is non-empty, compact and connected in $W^{2,q}(\bO)\cap W^{1,q}_0(\bO)$. Moreover,
		\begin{align}\label{eqn-dist-cgs}
			\lim_{ t \to \infty}\dist (u(t), \omega(u)):= \lim_{ t \to \infty}\inf_{\{w\in \omega(u)\}}\norm{u(t) - w}_{W^{2,q}(\bO)} = 0.
		\end{align}
	\end{corollary}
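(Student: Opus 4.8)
The plan is to invoke the classical theory of $\omega$-limit sets for precompact, continuous trajectories of a semiflow, so that all four assertions (non-emptiness, compactness, connectedness, and the attraction property \eqref{eqn-dist-cgs}) follow once two ingredients are secured: the precompactness of $\{u(t):t\geq 1\}$ in $X:=W^{2,q}(\bO)\cap W^{1,q}_0(\bO)$, supplied by Corollary \ref{cor-precompact}, and the continuity of the curve $[1,\infty)\ni t\mapsto u(t)\in X$. Granting these, I would characterize $\omega(u)$ through Lemma \ref{Lem-Ex-1} as $\omega(u)=\bigcap_{\tau\geq 1}\overline{\{u(t):t\geq\tau\}}$, with all closures taken in $X$.

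For non-emptiness and compactness, note that since $\{u(t):t\geq 1\}$ is precompact in $X$ its closure $\overline{\{u(t):t\geq 1\}}$ is compact, and for each $\tau\geq 1$ the set $\overline{\{u(t):t\geq\tau\}}$ is a nonempty closed subset of this compact set, hence compact. These sets are nested and decreasing in $\tau$, so by the finite-intersection property their intersection $\omega(u)$ is a nonempty compact subset of $X$. The attraction property \eqref{eqn-dist-cgs} I would prove by contradiction: if $\dist(u(t_n),\omega(u))\geq\eps$ for some $\eps>0$ and some $t_n\to\infty$, precompactness yields a subsequence with $u(t_{n_k})\to w$ in $X$, whence $w\in\omega(u)$ by Definition \ref{Def-omega-limit-set/orbit}, contradicting $\dist(u(t_{n_k}),\omega(u))\geq\eps$.

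Connectedness is the step that genuinely uses continuity of the trajectory. Suppose for contradiction that $\omega(u)=\omega_1\cup\omega_2$ with $\omega_1,\omega_2$ nonempty, compact and disjoint, and set $2\delta:=\dist(\omega_1,\omega_2)>0$ (distance in $X$). Each $\omega_i$ contains a genuine limit point of the orbit, so the trajectory enters the $\delta$-neighborhood of $\omega_1$ and that of $\omega_2$ at arbitrarily large times; combining this with the attraction property \eqref{eqn-dist-cgs} and the continuity of the real-valued map $t\mapsto\dist(u(t),\omega_1)$, which is $1$-Lipschitz with respect to the $X$-norm, an intermediate-value argument produces times $\tau_n\to\infty$ with $\dist(u(\tau_n),\omega_1)=\delta$. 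Then $\dist(u(\tau_n),\omega_2)\geq\dist(\omega_1,\omega_2)-\dist(u(\tau_n),\omega_1)=\delta$ as well, so $\dist(u(\tau_n),\omega(u))=\delta$ for every $n$, contradicting \eqref{eqn-dist-cgs}. Hence $\omega(u)$ is connected.

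The hard part will be establishing the missing continuity $[1,\infty)\ni t\mapsto u(t)\in X$, since Theorem \ref{Thm-Main-Exis} only provides $u\in C([0,T];D(A))$, and the $D(A)=H^2(\bO)\cap H_0^1(\bO)$ topology is strictly weaker than that of $X$ when $q>2$. To bridge this gap I would combine the $D(A)$-continuity with the uniform bound $\sup_{t\geq 1}\norm{u(t)}_{D(A^\beta)}\leq K_\beta$ from Theorem \ref{Thm-D(A^beta)} and the compact embedding $D(A^\beta)=W^{2\beta,2}(\bO)\embed\embed X$, valid for $q<\frac{2d}{d+4-4\beta}$: for $t_n\to t$ in $[1,\infty)$ the sequence $\{u(t_n)\}$ is bounded in $D(A^\beta)$, hence precompact in $X$, while $u(t_n)\to u(t)$ already in the weaker $D(A)$-topology; therefore every $X$-convergent subsequence of $\{u(t_n)\}$ must have limit $u(t)$, and uniqueness of the limit upgrades this to $u(t_n)\to u(t)$ in $X$. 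This continuity, together with the precompactness of Corollary \ref{cor-precompact}, furnishes exactly the hypotheses under which the abstract $\omega$-limit set arguments above apply, completing the proof.
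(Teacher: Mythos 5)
Your proof is correct, but it takes a more self-contained route than the paper, whose entire proof is a one-line appeal to the precompactness from Corollary \ref{cor-precompact} together with the general result on $\omega$-limit sets of precompact orbits in \cite[Theorem 4.3.3]{DH-81}. You instead unpack that abstract theorem: the nested-intersection characterization $\omega(u)=\bigcap_{\tau\geq 1}\overline{\{u(t):t\geq\tau\}}$ for non-emptiness and compactness, the subsequence-contradiction argument for \eqref{eqn-dist-cgs}, and the intermediate-value argument (via the $1$-Lipschitz map $t\mapsto\dist(u(t),\omega_1)$) for connectedness. More significantly, you explicitly verify a hypothesis the paper leaves implicit in citing Henry's theorem, namely continuity of the trajectory in the topology of $X=W^{2,q}(\bO)\cap W^{1,q}_0(\bO)$ rather than merely in $D(A)$: your upgrade argument — boundedness in $D(A^\beta)$ from Theorem \ref{Thm-D(A^beta)}, the compact embedding $D(A^\beta)\embed W^{2,q}(\bO)$ for $q<\frac{2d}{d+4-4\beta}$, convergence in the weaker $D(A)$-topology (which is legitimate since $q\geq 2$ and $\bO$ is bounded give $X\embed D(A)$, an embedding the paper itself uses in the proof of Theorem \ref{Thm-u(t)-cgs-u^infty}), and uniqueness of limits — is sound and is exactly what is needed for the connectedness claim, which can genuinely fail for precompact but discontinuous curves. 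So the paper's citation buys brevity, while your version buys transparency and closes the continuity gap; both are valid proofs of the corollary.
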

	
	\begin{proof}
		The proof follows immediately due the pre-compactness of $\{u(t) : t\geq 1\}$ in $W^{2,q}(\bO)\cap W^{1,q}_0(\bO)$ and by a general result \cite[Theorem 4.3.3]{DH-81}.
	\end{proof}

	\subsection{Some preliminaries for the \L{}ojasiewicz-Simon inequality}
	In this subsection, we present fundamental definitions, tools, and results, such as analytic functions, superposition operators, Fredholm operators, and more, to facilitate the study of the \L{}ojasiewicz-Simon inequality in Hilbert spaces.
	
	\subsubsection{Analytic maps and Fredholm index}
	\begin{definition}[{\cite[Definition 2.1]{FR-20}}]\label{def-analytic}
		Let $\Vn,\Wn$ be real Banach spaces, $\U\subseteq \Vn$ be an open set. A function 
		\begin{align*}
			f:\U\ni u \mapsto f(u) \in \Wn
		\end{align*}
		is called \emph{real analytic} at $u_0\in \U$, if there exist $\delta>0$ and continuous $\R-$multilinear forms $a_k: \Vn^k:=\underbrace{\Vn\times\cdots\times \Vn}_{k-times} \to \Wn$ for all $k\in\N\cup\{0\}$ such that
		\begin{align}\label{eqn-series}
			\sum_{n=0}^{\infty} \norm{a_k}_{\Ls(\Vn^k,\Wn)} \norm{u-u_0}_\Vn^n \ \text{ converges  and }\ f(u) = \sum_{k=0}^{\infty} a_k(u-u_0)\ \text{ in }\ \Wn,
		\end{align}
		for all $\norm{u-u_0}_{\Vn} < \delta$, where $a_k(u-u_0)^k := a_n(u-u_0,\dots, u-u_0)\in\Wn$. A function is real analytic throughout $D$, if it is analytic at each individual point $u_0$ in $D$.
	\end{definition}
	
	\begin{theorem}[{\cite[Theorem 2.2]{FR-20}}]\label{thm-analytic}
		Let $\Vn,\Wn,\Xn$ be Banach spaces, $D\subset\Vn$ and $\wtilde{D}\subset\Wn$ be open and $f:D\to\Wn$, $g:\wtilde{D} \to\Xn$ be analytic with $f(D)\subset\wtilde{D}$. Then, $g\circ f: D \to \Xn$ is analytic.
	\end{theorem}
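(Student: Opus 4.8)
The plan is to exploit the fact that analyticity is a purely local notion, so that it suffices to fix an arbitrary $u_0 \in D$ and produce a convergent power-series representation of $g \circ f$ centred at $u_0$. Set $v_0 := f(u_0) \in \wtilde{D}$. By Definition \ref{def-analytic} applied to $f$ at $u_0$, there are $\delta_1 > 0$ and continuous $k$-linear forms $a_k \colon \Vn^k \to \Wn$ with
\begin{align*}
f(u) = \sum_{k=0}^{\infty} a_k(u-u_0)^k, \qquad \sum_{k=0}^{\infty} \norm{a_k}_{\Ls(\Vn^k,\Wn)} \norm{u-u_0}_{\Vn}^k < \infty \quad \text{for } \norm{u-u_0}_{\Vn} < \delta_1,
\end{align*}
and since $a_0 = f(u_0) = v_0$ we have $f(u) - v_0 = \sum_{k \ge 1} a_k (u-u_0)^k$. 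Likewise, applying Definition \ref{def-analytic} to $g$ at $v_0$ furnishes $\delta_2 > 0$ and continuous $m$-linear forms $b_m \colon \Wn^m \to \Xn$ with $g(v) = \sum_{m \ge 0} b_m(v-v_0)^m$ whenever $\norm{v-v_0}_{\Wn} < \delta_2$, together with the absolute convergence of the majorant series $\sum_m \norm{b_m}_{\Ls(\Wn^m,\Xn)} \rho^m$ for $\rho < \delta_2$.

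The heart of the argument is the formal substitution of the series for $f(u)-v_0$ into the series for $g$. First I would introduce the scalar majorant $M(r) := \sum_{k \ge 1} \norm{a_k}_{\Ls(\Vn^k,\Wn)} r^k$, which is finite, continuous and increasing on $[0,\delta_1)$ with $M(0)=0$; hence I may pick $r_0 \in (0,\delta_1)$ so small that $M(r_0) < \delta_2$. For $\norm{u-u_0}_{\Vn} \le r_0$, expanding each $m$-linear form by multilinearity gives
\begin{align*}
b_m\big((f(u)-v_0)^m\big) = \sum_{k_1,\dots,k_m \ge 1} b_m\big(a_{k_1}(u-u_0)^{k_1}, \dots, a_{k_m}(u-u_0)^{k_m}\big),
\end{align*}
each summand being a homogeneous form of degree $k_1 + \cdots + k_m$ in $u-u_0$. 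Taking norms and using $\norm{b_m(\,\cdot\,)}_{\Xn} \le \norm{b_m}\prod_i \norm{a_{k_i}}$ yields the bound $\sum_{m \ge 0} \sum_{k_1,\dots,k_m \ge 1} \norm{b_m}\norm{a_{k_1}}\cdots\norm{a_{k_m}} r_0^{k_1+\cdots+k_m} = \sum_{m \ge 0} \norm{b_m} M(r_0)^m < \infty$, which is the crucial absolute-convergence control obtained from the two majorants.

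Finally I would group this absolutely convergent family by total degree $n = k_1 + \cdots + k_m$ and define, for each $n \ge 1$,
\begin{align*}
c_n := \sum_{m=1}^{n} \ \sum_{\substack{k_1+\cdots+k_m = n \\ k_i \ge 1}} b_m\big(a_{k_1}\,\cdot\,, \dots, a_{k_m}\,\cdot\,\big),
\end{align*}
a finite sum (since $k_i \ge 1$ forces $m \le n$) of continuous $n$-linear forms $\Vn^n \to \Xn$, together with $c_0 := g(v_0)$. The estimate of the previous paragraph gives $\sum_{n \ge 0} \norm{c_n}_{\Ls(\Vn^n,\Xn)} r_0^n \le \sum_{m \ge 0}\norm{b_m} M(r_0)^m < \infty$, so that $g(f(u)) = \sum_{n \ge 0} c_n(u-u_0)^n$ is a genuine convergent power series on the ball of radius $r_0$, establishing analyticity of $g \circ f$ at $u_0$ and hence, $u_0$ being arbitrary, throughout $D$. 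The main obstacle is the rigorous justification of the regrouping into the degree-graded series: this rests on an unconditional, Fubini-type summation theorem for absolutely convergent families in the Banach space $\Xn$, which the bound $\sum_m \norm{b_m} M(r_0)^m < \infty$ precisely licenses. Packaging the regrouped terms as bona fide continuous multilinear forms $c_n$ and verifying that their operator norms satisfy the required summability is the only genuinely technical point; the reduction to a single point and the elementary majorant estimate are routine.
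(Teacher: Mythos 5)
Your argument is correct. Note first that the paper itself offers no proof of this statement: it is quoted verbatim as \cite[Theorem 2.2]{FR-20} and used as an imported tool (ultimately it goes back to classical work on analytic maps between Banach spaces), so there is no internal proof to compare against. Your proposal supplies the standard and correct self-contained argument: the formal-substitution (majorant series) proof. All the steps check out against Definition \ref{def-analytic} as stated in the paper. The scalar majorant $M(r)=\sum_{k\ge1}\norm{a_k}r^k$ is indeed finite on $[0,\delta_1)$ (for $r<\delta_1$ take $u=u_0+re$ with $e$ a unit vector, so the defining convergence applies), continuous, increasing, and vanishes at $0$, which legitimises the choice of $r_0$ with $M(r_0)<\delta_2$; this also guarantees $\norm{f(u)-v_0}_{\Wn}<\delta_2$ on the ball, so the expansion of $g$ at $v_0$ really represents $g(f(u))$ there. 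The expansion of $b_m\bigl((f(u)-v_0)^m\bigr)$ into the multi-sum over $(k_1,\dots,k_m)$ is justified slot-by-slot by continuity and linearity of $b_m$ in each argument, the bound $\sum_m\norm{b_m}M(r_0)^m<\infty$ gives absolute summability of the whole triple-indexed family, and in a Banach space absolute summability implies unconditional summability, so the regrouping by total degree $n$ — the one step you rightly single out as the technical crux — is licensed. Each $c_n$ is a finite sum of compositions $b_m(a_{k_1}\cdot,\dots,a_{k_m}\cdot)$, hence a continuous $n$-linear form with $\norm{c_n}\le\sum\norm{b_m}\prod_i\norm{a_{k_i}}$, and the resulting estimate $\sum_n\norm{c_n}r_0^n\le\sum_m\norm{b_m}M(r_0)^m$ delivers exactly the convergence of the majorant series required by \eqref{eqn-series} with $\delta=r_0$. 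Since $u_0\in D$ was arbitrary, analyticity of $g\circ f$ on $D$ follows; this is precisely the Faà di Bruno-type rearrangement proof one finds in the literature the paper points to.
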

	Examples of analytic maps include  bounded linear maps.
	\begin{example}[{\cite[Example 2.3]{FR-20}}]\label{Lem-analytic}
		Let $\Vn, \Wn$ be Banach spaces. If $T: \Vn \to \Wn$ is linear and continuous, then it is analytic. The same result holds true for multilinear maps also. 
	\end{example}
	
	\begin{theorem}[{\cite[Theorem 6.8]{JA+PPZ-90}}]\label{Thm-superposition}
		Suppose $\bO \subset \R^d$ is a bouned domain with $C^1$-boundary. Let $F \in C(\R)$. Then, the superposition operator $\Fn: C(\overline{\bO}) \to  C(\overline{\bO})$, $\Fn(v) = F(v)$ is analytic if and if the function $F$ is analytic.
	\end{theorem}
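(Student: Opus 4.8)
My plan is to prove the two implications of the equivalence separately, disposing of the forward (necessity) direction quickly via the composition rule for analytic maps and reserving the genuine work for the converse.

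\textbf{Necessity.} To show that analyticity of $\Fn$ forces analyticity of $F$, I would fix a point $x_0 \in \bO$ and introduce the two continuous linear maps $\iota: \R \to C(\overline{\bO})$, $\iota(c) = c\mathbf{1}$ (the constant function with value $c$), and the evaluation functional $\mathrm{ev}_{x_0}: C(\overline{\bO}) \to \R$, $\mathrm{ev}_{x_0}(v) = v(x_0)$. Both are analytic by Example \ref{Lem-analytic}. Since $\Fn(c\mathbf{1}) = F(c)\mathbf{1}$ is again constant, one has $\mathrm{ev}_{x_0} \circ \Fn \circ \iota(c) = F(c)$ for every $c \in \R$, so $F = \mathrm{ev}_{x_0} \circ \Fn \circ \iota$ is analytic by Theorem \ref{thm-analytic}.

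\textbf{Sufficiency.} For the converse I would fix $u_0 \in C(\overline{\bO})$ and construct the data required by Definition \ref{def-analytic}. Because $\overline{\bO}$ is compact, the range $I := u_0(\overline{\bO}) \subset \R$ is compact. Real analyticity of $F$ provides a holomorphic extension to an open complex neighbourhood of $\R$, and compactness of $I$ lets me fix $\delta > 0$ and $C_0 > 0$ with the extension holomorphic and bounded by $C_0$ on $\{z \in \mathbb{C}: \dist(z, I) < 2\delta\}$. Cauchy's estimates then yield the uniform bound
\[
\frac{\abs{F^{(k)}(t)}}{k!} \le \frac{C_0}{\delta^k}, \qquad \text{for all } t \in I, \ k \in \N \cup \{0\},
\]
and the Taylor series of $F$ about any $t \in I$ converges to $F$ on the disc of radius $2\delta$. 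I would then define, for each $k$, the map $a_k: C(\overline{\bO})^k \to C(\overline{\bO})$ by $a_k(h_1, \dots, h_k)(x) := \frac{F^{(k)}(u_0(x))}{k!} h_1(x) \cdots h_k(x)$. Since $x \mapsto F^{(k)}(u_0(x))$ is continuous, each $a_k$ is a well-defined symmetric continuous $k$-linear form with $\norm{a_k}_{\Ls(C(\overline{\bO})^k, C(\overline{\bO}))} \le C_0/\delta^k$. Hence $\sum_{k} \norm{a_k} \norm{u-u_0}_\infty^k \le C_0 \sum_k (\norm{u-u_0}_\infty/\delta)^k$ converges for $\norm{u-u_0}_\infty < \delta$. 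For such $u$ and each $x$ the scalar Taylor identity $F(u(x)) = \sum_k \frac{F^{(k)}(u_0(x))}{k!}(u(x)-u_0(x))^k$ holds (as $\abs{u(x)-u_0(x)} < \delta < 2\delta$), and the partial sums are dominated uniformly in $x$ by the convergent majorant, so the Weierstrass $M$-test gives convergence in the sup-norm with limit $\Fn(u)$. This verifies Definition \ref{def-analytic} at $u_0$, and since $u_0$ is arbitrary, $\Fn$ is analytic.

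\textbf{Main obstacle.} The delicate point, and the only step beyond bookkeeping, will be the passage from \emph{pointwise} to \emph{uniform} control of the Taylor data over the compact range $I$: analyticity of $F$ only guarantees, a priori, a positive radius of convergence and derivative bounds at each separate point, whereas the operator series in $C(\overline{\bO})$ requires a single $\delta$ and constant $C_0$ valid simultaneously on all of $I$. Securing this via the holomorphic extension, the compactness of $I$, and Cauchy's estimates is exactly what upgrades the pointwise expansion of $F$ into convergence of $\sum_k a_k(u-u_0)^k$ in the sup-norm.
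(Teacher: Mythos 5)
Your proposal is correct, but note that the paper itself offers no proof of this statement: Theorem \ref{Thm-superposition} is imported verbatim from \cite[Theorem 6.8]{JA+PPZ-90} and used as a black box, so the only comparison available is with the standard literature argument — which is essentially what you have reconstructed. Your necessity direction (factoring $F = \mathrm{ev}_{x_0} \circ \Fn \circ \iota$ through constant functions and invoking Example \ref{Lem-analytic} and Theorem \ref{thm-analytic}) and your sufficiency direction (holomorphic extension of $F$ to a complex neighbourhood of $\R$, uniform Cauchy estimates $\abs{F^{(k)}(t)}/k! \le C_0/\delta^k$ on the compact range $I = u_0(\overline{\bO})$, the induced multilinear forms $a_k$, and the Weierstrass $M$-test to upgrade pointwise Taylor convergence to sup-norm convergence) together constitute a complete and correct verification of Definition \ref{def-analytic}, and you correctly identify the uniformity over $I$ as the only genuinely delicate step. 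Two minor refinements: when fixing $\delta$, you should say explicitly that the \emph{closed} $2\delta$-neighbourhood of $I$ is a compact subset of the open domain of the holomorphic extension, which is what justifies the finite bound $C_0$; and it is worth observing that your argument uses only the compactness of $\overline{\bO}$, so the $C^1$-boundary hypothesis carried along in the statement plays no role here — consistent with the fact that in the paper this hypothesis serves the surrounding Sobolev-embedding framework (e.g.\ $D(A) \embed C(\overline{\bO})$ in Lemma \ref{Lem-Gn-analytic}) rather than the superposition result itself.
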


	\begin{definition}[{\cite[Definition 2.4]{FR-20}}]
		Let $\Vn,\Wn$ be Banach spaces. An operator $T\in\Ls(\Vn,\Wn)$ is called a \emph{Fredholm operator}, if both $\dim (\mathrm{Ker}(T))$ and $\mathrm{codim}(\mathrm{Im}(T),\Wn) = \dim(\Wn,\mathrm{Im}(T))$ are finite. The number $$\mathrm{index}(T):= \dim (\mathrm{Ker}(T)) - \mathrm{codim}(\mathrm{Im}(T),\Wn)$$ is called the \emph{Fredholm index of $T$}.
	\end{definition}
	\begin{proposition}[{\cite[XVII, Corollaries 2.6 and 2.7]{SL-93}}]\label{Prop-Fredholm}
		Let $T\in \Ls(\Vn,\Wn)$ be a Fredholm operator. Then
		\begin{itemize}
			\item[$(i)$] the image $\mathrm{Im}(T) \subset \Wn$ is closed,
			\item[$(ii)$] for any compact operator $K: \Vn \to \Wn$, the perturbed operator $T+K$ is Fredholm with $\mathrm{index}\;(T+K) = \mathrm{index}(T)$. This holds, in particular, if $K$ has finite rank.
		\end{itemize}
	\end{proposition}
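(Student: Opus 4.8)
The plan is to prove the two classical Fredholm facts directly from the definition, using the open mapping theorem for the closedness of the range in $(i)$ and Atkinson's parametrix characterization for the stability of the Fredholm property and its index under compact perturbations in $(ii)$.

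For part $(i)$, I would first exploit that $\mathrm{Ker}(T)$ is finite-dimensional, hence topologically complemented: there is a closed subspace $\Vn_0 \subseteq \Vn$ with $\Vn = \mathrm{Ker}(T) \oplus \Vn_0$. Since $\mathrm{codim}(\mathrm{Im}(T),\Wn) =: n < \infty$, I would then pick a finite-dimensional algebraic complement $F \subseteq \Wn$ with $\Wn = \mathrm{Im}(T) \oplus F$ as vector spaces. The product $\Vn_0 \times F$ is a Banach space, and the map
\begin{align*}
	S : \Vn_0 \times F \to \Wn, \qquad S(x,f) = Tx + f,
\end{align*}
is a continuous linear bijection: injectivity follows from $\mathrm{Im}(T) \cap F = \{0\}$ together with the injectivity of $T|_{\Vn_0}$, and surjectivity from the decomposition $\Vn = \mathrm{Ker}(T)\oplus\Vn_0$. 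By the open mapping theorem $S$ is a topological isomorphism, so $\mathrm{Im}(T) = S(\Vn_0 \times \{0\})$ is the image of a closed subspace under a homeomorphism and is therefore closed in $\Wn$.

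For part $(ii)$, the key tool is \emph{Atkinson's theorem}: $T$ is Fredholm if and only if it admits a parametrix, namely an operator $R \in \Ls(\Wn,\Vn)$ and finite-rank operators $P \in \Ls(\Vn)$, $Q \in \Ls(\Wn)$ with $RT = I - P$ and $TR = I - Q$. Concretely I would take $P$ to be the projection onto $\mathrm{Ker}(T)$ along $\Vn_0$, $Q$ the projection onto $F$ along $\mathrm{Im}(T)$, and $R$ the inverse of the isomorphism $T|_{\Vn_0}:\Vn_0\to\mathrm{Im}(T)$ (which exists by $(i)$) extended by zero on $F$. Given a compact $K$, I would then compute $R(T+K) = I - P + RK$ and $(T+K)R = I - Q + KR$; since $RK$ and $KR$ are compact, $T+K$ is invertible modulo compact operators on both sides, and the converse direction of Atkinson's theorem gives that $T+K$ is Fredholm.

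To obtain the equality of indices, I would use the homotopy $[0,1]\ni t \mapsto T + tK$: each $T+tK$ is Fredholm by the argument just given (with $tK$ compact), and $t\mapsto T+tK$ is norm-continuous. \textbf{The hard part will be} establishing that the Fredholm set is open in $\Ls(\Vn,\Wn)$ and that $\mathrm{index}(\cdot)$ is locally constant there; granting this, $t\mapsto \mathrm{index}(T+tK)$ is a continuous integer-valued function on the connected interval $[0,1]$, hence constant, and $\mathrm{index}(T+K)=\mathrm{index}(T)$ follows. Openness and local constancy I would derive again from the parametrix, controlling the dimensions of kernel and cokernel of nearby operators via a Neumann-series perturbation of $I-P$, and closing the index bookkeeping with the multiplicativity relation $\mathrm{index}(AB) = \mathrm{index}(A) + \mathrm{index}(B)$ for Fredholm $A,B$. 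The delicate point throughout is tracking the finite-rank corrections so that these index computations remain consistent; the closedness from $(i)$ and the continuous inverse $R$ are precisely what legitimise the manipulations.
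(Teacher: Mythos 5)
The paper offers no proof of this proposition at all: it is imported verbatim from Lang \cite{SL-93} (Ch.\ XVII, Corollaries 2.6 and 2.7), so there is no in-paper argument to compare against, and your proposal must be judged on its own. On that basis it is correct and is essentially the classical textbook treatment. Your part $(i)$ is the standard argument: $\mathrm{Ker}(T)$ is complemented because it is finite-dimensional, the algebraic complement $F$ is automatically closed (finite-dimensional subspaces of a Banach space are closed), so $S(x,f)=Tx+f$ is a continuous bijection between Banach spaces and the open mapping theorem applies; $\mathrm{Im}(T)=S(\Vn_0\times\{0\})$ is then closed. Your parametrix construction in $(ii)$ is also sound — note that continuity of $R$ uses both the closedness of $\mathrm{Im}(T)$ from $(i)$ and the open mapping theorem applied to $T|_{\Vn_0}:\Vn_0\to\mathrm{Im}(T)$, which you correctly flag. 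Two remarks on the index step. First, for the compact-perturbation statement actually needed here, the homotopy and the openness/local-constancy machinery you label as ``the hard part'' can be bypassed entirely: from $R(T+K)=I-P+RK=I+(RK-P)$ with $RK-P$ compact, Riesz--Schauder theory gives $\mathrm{index}(I+(RK-P))=0$, so multiplicativity yields $\mathrm{index}(R)+\mathrm{index}(T+K)=0$; applying the same computation to $RT=I-P$ gives $\mathrm{index}(R)+\mathrm{index}(T)=0$, whence $\mathrm{index}(T+K)=\mathrm{index}(T)$ directly. Second, if you do follow your homotopy route, be careful about the logical order: you propose to prove local constancy of the index \emph{via} the multiplicativity relation $\mathrm{index}(AB)=\mathrm{index}(A)+\mathrm{index}(B)$, but multiplicativity is itself a nontrivial lemma and must be established independently (by the usual kernel/cokernel dimension count for composites) before it can be invoked — otherwise the ``index bookkeeping'' you mention risks circularity. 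With that dependency made explicit, the plan closes correctly.
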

	
	Now, let us state the famous \L{}ojasiewicz-Simon inequality in the setting of Hilbert spaces taken from \cite[Corollary 5.2]{FR-20}.

	Let $\Vn$ be a Hilbert space, $\U\subseteq \Vn$ an open set, $m\in\N$ and $\En: \U \to \R$, $\Gn:\U \to \R^m$ be analytic. Let us choose and fix $\a \in \U$. Suppose the following assumptions are satisfied:
	\begin{itemize}
		\item[$(i)$] $(\Hn, (\cdot,\cdot)_\Hn)$ is a Hilbert space such that $\Vn\embed \Hn$ densely,
		\item[$(ii)$] the functional $\En$ admits a gradient $\nabla\En(u)$ in $\Hn$ at every point $u\in \U,$ and the mapping $\U\ni u\mapsto \nabla\En(u) \in \Hn$ is analytic,
		\item[$(iii)$] the second Fr\'echet derivative $\En^{\prime\prime}(\a) = (\nabla\En)^\prime(\a): \Vn \to \Hn$ is Fredholm of index zero,
		\item[$(iv)$] for every $u\in \U$, each component $\Gn_k(u):\U \to \R$ of $\Gn$ has an $\Hn$-gradients $\nabla \Gn_k$, and the mapping $\U \ni u \mapsto \nabla \Gn_k(u) \in \Hn$ is analytic, for all $k=1,\dots,m$,
		\item[$(v)$] for each $k=1,\dots,m$, the Fr\'echet derivative $(\nabla\Gn_k)^\prime (\a) : \Vn \to \Hn$ is a compact operator,
		\item[$(vi)$] $\Gn(\a) = 0$ and the $\Hn$-gradients $\nabla\Gn_1 (\a),\dots, \nabla\Gn_m (\a)$ are linearly independent.
	\end{itemize}
	
	\begin{proposition}\label{Prop-Submanifold}
		Assuming that the functional $\Gn$ defined above meets the conditions $\{(iv), (v), (vi)\}$, then the set $\bM := \{u\in \U : \Gn(u) = 0\}$ is a locally analytic submanifold of $\Vn$ of codimension $m$  in a neighborhood of $\a$.
	\end{proposition}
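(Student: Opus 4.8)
The plan is to show that $\Gn$ is a submersion at $\a$, i.e.\ that its Fréchet derivative $\Gn'(\a)\colon \Vn \to \R^m$ is surjective, and then to invoke the analytic implicit function theorem to exhibit $\bM$ locally as the graph of an analytic map over the kernel of $\Gn'(\a)$. Since $\R^m$ is finite-dimensional, surjectivity forces $\mathrm{Ker}(\Gn'(\a))$ to be a closed subspace of $\Vn$ of codimension exactly $m$, which is the codimension asserted for the submanifold. Note that hypothesis $(v)$ is not needed for the submanifold structure itself; it enters only later through the Fredholm properties required by the \L{}ojasiewicz-Simon inequality. The two ingredients that matter here are the gradient representation $(iv)$ and the linear independence $(vi)$, together with the density of the embedding $\Vn\embed\Hn$.

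First I would express the derivative through the $\Hn$-gradients. By $(iv)$ each component $\Gn_k$ possesses an $\Hn$-gradient, so that for every $v \in \Vn$,
\begin{align}
  \Gn_k'(\a)[v] = (\nabla\Gn_k(\a), v)_\Hn, \qquad k = 1,\dots,m,
\end{align}
and hence $\Gn'(\a)[v] = \big((\nabla\Gn_1(\a), v)_\Hn, \dots, (\nabla\Gn_m(\a), v)_\Hn\big)$. The linear map $\Gn'(\a)\colon \Vn \to \R^m$ is onto precisely when the $m$ functionals $v \mapsto (\nabla\Gn_k(\a), v)_\Hn$ are linearly independent in $\Vn^\ast$. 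This is the step I expect to be the main obstacle, since $(vi)$ only supplies linear independence of $\nabla\Gn_1(\a),\dots,\nabla\Gn_m(\a)$ in $\Hn$, not a priori in the dual of the smaller space $\Vn$. Here the density of $\Vn \embed \Hn$ is decisive: if $\sum_{k=1}^m c_k (\nabla\Gn_k(\a), v)_\Hn = 0$ for all $v \in \Vn$, then $\big(\sum_{k=1}^m c_k \nabla\Gn_k(\a), v\big)_\Hn = 0$ on a dense subset of $\Hn$, whence $\sum_{k=1}^m c_k \nabla\Gn_k(\a) = 0$ in $\Hn$; by $(vi)$ all $c_k$ vanish. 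Thus $\Gn'(\a)$ is surjective.

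With surjectivity in hand, I would complete the argument by the standard splitting construction. Set $\Vn_0 := \mathrm{Ker}(\Gn'(\a))$, a closed subspace of codimension $m$; choosing a complement $\Vn_1$ with $\Vn = \Vn_0 \oplus \Vn_1$ and $\dim \Vn_1 = m$, the restriction $\Gn'(\a)|_{\Vn_1}\colon \Vn_1 \to \R^m$ is a linear isomorphism. Writing $u = \a + v_0 + v_1$ with $v_0 \in \Vn_0$, $v_1 \in \Vn_1$, and using $\Gn(\a) = 0$ from $(vi)$, the analytic implicit function theorem (applicable because $\Gn$ is analytic, cf.\ Theorem \ref{thm-analytic}) yields $\delta > 0$ and an analytic map $h$ from a neighborhood of $0$ in $\Vn_0$ into $\Vn_1$ with $h(0) = 0$ such that, for $\norm{v_0}_\Vn + \norm{v_1}_\Vn < \delta$,
\begin{align}
  \Gn(\a + v_0 + v_1) = 0 \iff v_1 = h(v_0).
\end{align}
The assignment $v_0 \mapsto \a + v_0 + h(v_0)$ then provides an analytic chart realizing $\bM$ near $\a$ as a graph over $\Vn_0$, so $\bM$ is a locally analytic submanifold of $\Vn$ of codimension $m$ in a neighborhood of $\a$, as claimed.
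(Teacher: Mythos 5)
Your proposal is correct. Note that the paper itself does not prove this proposition: it is imported verbatim from Rupp's framework (\cite[Theorem 1.2 and Corollary 5.2]{FR-20}), so there is no in-paper proof to compare against. Your argument — representing $\Gn'(\a)[v]$ through the $\Hn$-gradients via $(iv)$, upgrading the linear independence in $\Hn$ from $(vi)$ to surjectivity of $\Gn'(\a)\colon \Vn \to \R^m$ by the density of $\Vn \embed \Hn$, and then splitting $\Vn = \mathrm{Ker}(\Gn'(\a)) \oplus \Vn_1$ with $\dim \Vn_1 = m$ to apply the analytic implicit function theorem — is exactly the standard proof underlying the cited result, and your observation that hypothesis $(v)$ plays no role in the submanifold structure (entering only through the Fredholm analysis for the gradient inequality itself) is accurate.
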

	
	\begin{theorem}\label{Thm-LSI}
		Suppose $\Gn$ and $\En$ satisfy the set of assumptions $\{(i)-(vi)\}$. If $\a$ is a critical point of the restriction $\En|_{\bM}$, then a refined \L{}ojasiewicz-Simon gradient inequality holds at $\a$. More precisely, there exist constants $C,\sigma>0$ and an exponent $\theta \in (0,\frac{1}{2}]$ such that for every $u\in\bM$ satisfying $\|u - \a\|_\Vn \leq \sigma$,
		the following inequality holds:
		\begin{align*}
			\abs{\En(u)- \En(\a)}^{1-\theta} \leq C \norm{\pi_u(\nabla\En (u))}_{\Hn} = C \norm{\nabla_\bM\En (u)}_{\Hn},
		\end{align*}
		where $\pi_u(\cdot)$ is the orthogonal projection onto $\overline{T_u\bM} := \overline{T_u\bM}^{\norm{\cdot}_\Hn}$.
	\end{theorem}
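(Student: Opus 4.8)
The plan is to reduce the constrained inequality to a classical \emph{unconstrained} \L{}ojasiewicz-Simon inequality by locally parametrizing $\bM$ near $\a$ and pulling the energy back to a flat Hilbert space. First I would record the structural consequences of the hypotheses: by assumption $(vi)$ the $\Hn$-gradients $\nabla\Gn_1(\a),\dots,\nabla\Gn_m(\a)$ are linearly independent, so Proposition \ref{Prop-Submanifold} guarantees that $\bM$ is a locally analytic submanifold of $\Vn$ of codimension $m$ near $\a$, with tangent space $T_\a\bM=\bigcap_{k=1}^m\ker\big(\langle\nabla\Gn_k(\a),\cdot\,\rangle\big)$, a closed subspace of $\Vn$. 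Using the analytic implicit function theorem applied to $\Gn$ (whose differential is surjective onto $\R^m$ at $\a$), I would construct an analytic chart $\Phi:\bO_0\subseteq T_\a\bM\to\bM$ with $\Phi(0)=\a$ and $d\Phi(0)$ equal to the inclusion $\iota:T_\a\bM\embed\Vn$.

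Next I would transfer the variational structure to the chart. Set $\widehat{\En}:=\En\circ\Phi$, analytic on $\bO_0$ by Theorem \ref{thm-analytic}, and equip $T_\a\bM$ with the Hilbert structure inherited from $\Hn$ through the dense embedding $\Vn\embed\Hn$. The point $0$ is a critical point of $\widehat{\En}$ precisely because $\a$ is a critical point of $\En|_\bM$. The two analytic inputs needed for the unconstrained inequality are then verified at $0$: that $x\mapsto\nabla\widehat{\En}(x)=d\Phi(x)^\ast\nabla\En(\Phi(x))$ is analytic (a composition of analytic maps by assumption $(ii)$), and that the Hessian $\widehat{\En}''(0)$ is Fredholm of index zero. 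For the latter I would exploit the Lagrange-multiplier identity $\nabla\En(\a)=\sum_{k}\lambda_k\nabla\Gn_k(\a)$, which kills the first-order term and, after differentiating the constraints $\Gn_k(\Phi(x))\equiv 0$ twice, identifies $\widehat{\En}''(0)$ with the compression to $T_\a\bM$ of $\En''(\a)-\sum_k\lambda_k(\nabla\Gn_k)'(\a)$. Since $\En''(\a)$ is Fredholm of index zero by $(iii)$ and each $(\nabla\Gn_k)'(\a)$ is compact by $(v)$, the bracketed operator is Fredholm of index zero by Proposition \ref{Prop-Fredholm}$(ii)$; compression to $T_\a\bM$ restricts domain and projects range by the \emph{same} finite codimension $m$, so the two index contributions cancel and $\widehat{\En}''(0)$ remains Fredholm of index zero.

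With these hypotheses established I would invoke the classical \L{}ojasiewicz-Simon gradient inequality for analytic functionals with index-zero Fredholm Hessian on a Hilbert space (the abstract statement underlying \cite[Corollary 5.2]{FR-20}): there exist $C,\sigma'>0$ and $\theta\in(0,\tfrac{1}{2}]$ with $|\widehat{\En}(x)-\widehat{\En}(0)|^{1-\theta}\le C\,\|\nabla\widehat{\En}(x)\|_{\Hn}$ for $\|x\|_\Vn<\sigma'$. Finally I would translate back along $\Phi$: writing $u=\Phi(x)$, the identity $\nabla\widehat{\En}(x)=d\Phi(x)^\ast\nabla\En(u)$ together with $\mathrm{Im}\,d\Phi(x)=T_u\bM$ shows, using that $d\Phi(x)$ is uniformly close to $\iota$ for $x$ near $0$, that $\|\nabla\widehat{\En}(x)\|_{\Hn}$ is comparable to $\|\pi_u(\nabla\En(u))\|_{\Hn}=\|\nabla_\bM\En(u)\|_{\Hn}$. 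Since $\Phi$ is bi-Lipschitz in the $\Vn$-norm near $0$ and $\En(u)=\widehat{\En}(x)$, shrinking $\sigma$ if necessary yields the asserted inequality for all $u\in\bM$ with $\|u-\a\|_\Vn\le\sigma$.

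The main obstacle I anticipate is the interplay of the two Hilbert structures at the level of gradients and projections: the chart $\Phi$ is built in $\Vn$ and its differential identifies the $\Vn$-tangent space $T_u\bM$, whereas the tangential gradient in the statement is the $\Hn$-orthogonal projection $\pi_u$ onto the $\Hn$-closure $\overline{T_u\bM}^{\|\cdot\|_\Hn}$. Showing that the $\Hn$-adjoint $d\Phi(x)^\ast$ reproduces precisely this $\Hn$-orthogonal projection up to a uniformly invertible factor is the delicate functional-analytic point, and it is where the dense embedding $\Vn\embed\Hn$ and the analyticity of the $\Hn$-gradient maps in $(ii)$ and $(iv)$ must be used carefully. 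A cleaner alternative, which avoids re-deriving Simon's reduction, is to observe that assumptions $(i)$--$(vi)$ are \emph{exactly} the hypotheses of \cite[Corollary 5.2]{FR-20} and to quote that result directly; I would present the chart-reduction argument above chiefly to make the mechanism transparent in our concrete setting.
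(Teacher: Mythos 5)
The paper does not prove Theorem \ref{Thm-LSI} at all: it is imported verbatim, stated ``taken from \cite[Corollary 5.2]{FR-20}'', and the assumptions $(i)$--$(vi)$ listed just above it are precisely the hypotheses of Rupp's result. So your closing observation --- that the clean route is simply to quote \cite[Corollary 5.2]{FR-20} because the hypotheses match exactly --- is not merely an ``alternative''; it \emph{is} what the paper does, and on that route your proposal is correct and complete. (The paper's own contribution lies downstream, in Theorem \ref{Thm-LSI-our-setting}, where Lemmas \ref{Lem-Gn-analytic}--\ref{Lem-En''-Fredholm} verify $(i)$--$(vi)$ for the concrete $\En$, $\Gn$ on $D(A)$.)

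Your chart-reduction sketch, presented ``to make the mechanism transparent'', is essentially the skeleton of Rupp's own argument, but as written it leaves two genuine steps unestablished, and you should be aware they are exactly where the work in \cite{FR-20} is done. First, the identity $\nabla\widehat{\En}(x)=d\Phi(x)^\ast\nabla\En(\Phi(x))$ presupposes that the pulled-back functional even \emph{has} an $\Hn$-gradient: $d\Phi(x)$ is produced by the analytic implicit function theorem in the $\Vn$-topology, so the functional $h\mapsto(\nabla\En(\Phi(x)),d\Phi(x)h)_{\Hn}$ is a priori only $\Vn$-bounded, and representing it by an element of $\overline{T_\a\bM}^{\|\cdot\|_{\Hn}}$ requires showing $d\Phi(x)$ extends boundedly in the $\Hn$-norm, uniformly near $0$ --- this does not follow from assumptions $(ii)$ and $(iv)$ without an argument, and it is the same difficulty as your ``adjoint reproduces the projection up to a uniformly invertible factor'' claim, which is asserted rather than proved. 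Second, the index-cancellation argument for the compressed Hessian (``restricts domain and projects range by the same finite codimension $m$'') conflates codimension of $T_\a\bM$ in $\Vn$ with $\Hn$-orthogonal codimension of $\overline{T_\a\bM}^{\|\cdot\|_{\Hn}}$ in $\Hn$; one must use the Lagrange-multiplier structure and the compactness in $(v)$ (via Proposition \ref{Prop-Fredholm}$(ii)$) to see that the compression $P\circ\bigl(\En''(\a)-\sum_k\lambda_k(\nabla\Gn_k)'(\a)\bigr)\big|_{T_\a\bM}$ retains index zero, with $P$ the $\Hn$-orthogonal projection. Since you flagged the first issue honestly and supplied the citation as the rigorous fallback, the proposal stands; but had the sketch been offered as the sole proof, these two points would constitute real gaps.
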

	
	\subsection{The \L{}ojasiewicz-Simon gradient inequality}\label{Subsec-LSI}
	In this subsection, we provide the famous \L{}ojasiewicz-Simon inequality on submanifolds in a Hilbert space in our setting. 
	
	Let us fix $p\in\{2,4,\ldots\}$ such that 
	\begin{equation}\label{eqn-H_0^1-in-L^p-asp}
		p\in [2,\infty),  \text{ if }\  1\le d \le 3,
		\ \text{ and }\ q\in \left[2, \frac{2d}{d+4-4\beta}\right), \ \beta \in \left(1,\frac{3}{2}\right)
	\end{equation}
	throughout this section, so that the Sobolev embedding
	$D(A)\embed C(\overline{\bO})$ is valid. Let us recall the following problem in $(0,\infty)\times\bO$:
	\begin{align}\label{eqn-main-heat-recall}
		\left\{\begin{aligned}
			\frac{\partial u(t)}{\partial t}  & = \Delta u(t) -|u(t)|^{p-2}u(t) + \left( \norm{\nabla u(t)}_{L^2(\bO)}^2 + \norm{u(t)}_{L^p(\bO)}^p \right) u(t),\\
			u(0) & = u_0, \\
			u(t)|_{\partial\bO} & = 0,
		\end{aligned}\right.
	\end{align}
	where $u:[0,\infty) \times\bO \to \R$ with $u(t)=u(t,x)$.
	
	\begin{remark}
		In the works \cite{MAJ-98, PR-06}, the authors have considered the system \[\frac{\partial u(t)}{\partial t} = \Delta u(t) - f(x,u(t)),\] and the corresponding energy functional as
		\begin{align*}
			\En(u) := \int_{\bO} \left[\frac{1}{2}\abs{\nabla u(x)}^2 + F(x,u(x))\right]dx,
		\end{align*}
		where $\displaystyle\frac{\partial F}{\partial u} = f$. By comparing to the unconstrained problem in our case, we have
		\begin{align}\label{eqn-unconstrained}
			\frac{\partial u(t)}{\partial t} = \Delta u(t) -|u(t)|^{p-2}u(t).
		\end{align}
		Thus, $f(u) := |u|^{p-2}u$ and $F(u)= \abs{u}^p$. Then, the corresponding energy functional is
		\begin{align}\label{Def-En-compare}
			\En(u):= \int_{\bO} \left[\frac{1}{2}\abs{\nabla u(x)}^2 + \frac{1}{p}|u(x)|^p\right]dx.
		\end{align}
		In the work of Rybka \cite{PR-06}, in the two dimensional setting, the forcing $f(x,u) = \sum_{k=1}^{N} \lambda_k u^{k-1}$, which is a polynomial of order $N$, is obviously analytic. It implies that the functional $\En$ satisfies the \L{}ojasiewicz inequality \cite{SL-63}. In our case, we consider $p$ as in \eqref{eqn-H_0^1-in-L^p-asp}, in particular, we assume that $p$ is even, so that the nonlinear function $F(u)=|u|^p = u^p$ is	analytic.
	\end{remark}
	
	\begin{remark}
		It is worth emphasizing that when the analysis is restricted to positive solutions on $\mathcal{O}$, the nonlinearity $F(u) = |u|^p$ becomes analytic for any integer exponent $p$ satisfying the condition in \eqref{eqn-H_0^1-in-L^p-asp}, without the need for $p$ to be even. This analyticity enables the application of the \L{}ojasiewicz-Simon inequality to establish convergence. Moreover, under the additional assumption of positivity,  uniqueness of the stationary state can also be guaranteed; see \cite[Subsection 5.2]{AB+ZB+MTM-25+} for further details.
	\end{remark}
	\noindent
	Before stating our result, in the context of Theorem \ref{Thm-LSI}, let us fix $\U =\Vn = D(A) = H^2(\bO)\cap H_0^1(\bO)$ and $\Hn=L^2(\bO)$  throughout this section. Then, by Sobolev embedding $D(A)\embed L^p(\bO)$, we also have
	\begin{align}
		\Vn = D(A) \embed L^p(\bO)\cap  H_0^1(\bO) \embed L^2(\bO) = \Hn,
	\end{align}
	for $p$ fixed in \eqref{eqn-H_0^1-in-L^p-asp}, and the embedding of $\Vn\embed\Hn$ is dense.

	Note that the energy $\En$ defined already in \eqref{Def-En} merely require $u\in L^p(\bO)\cap H_0^1(\bO)$, but an obvious space for $L^2-$gradient flow, suggested by the embedding  $D(A)\embed L^p(\bO)$, is $D(A)$. So, let us consider the energy and constraint functionals as follows:
	\begin{align}
		\En : D(A) \ni u \mapsto \int_\bO \left[\frac{1}{2}\abs{\nabla u(x)}^2 + \frac{u^p(x)}{p}\right] dx \in \R \label{def-energy}
	\end{align}
	and
	\begin{align}
		\Gn : D(A) \ni u \mapsto \frac{1}{2} \int_\bO \left[u^2(x) - \frac{1}{\abs{\bO}}\right] dx \in \R,\label{def-constraint}
	\end{align}
	respectively.

	\begin{lemma}\label{Lem-Gn-analytic}
		The map $\Gn$, defined in \eqref{def-constraint}, is analytic with
		\begin{align*}
			\Gn^\prime(u) h= \int_\bO g^\prime(u(x)) h(x)dx,
		\end{align*}
		where $h\in L^2(\bO)$ and 
		\begin{align}\label{Def-g}
			g(u) = \frac{1}{2} \left(u^2 - \frac{1}{\abs{\bO}}\right).
		\end{align}
		In particular, $\Gn^\prime(u) = g^\prime(u)=u\in  L^2(\bO)$.
	\end{lemma}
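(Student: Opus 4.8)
The plan is to establish analyticity of $\Gn$ by viewing it as a composition of elementary maps and then compute the Fr\'echet derivative directly from the definition. First I would observe that the function $g(u) = \frac{1}{2}(u^2 - \frac{1}{|\bO|})$ is a polynomial in $u$, hence the superposition (Nemytskii) operator $v \mapsto g(v)$ is analytic; more precisely, the quadratic map $u \mapsto u^2$ is a bounded bilinear form evaluated on the diagonal, so by Example \ref{Lem-analytic} it is analytic, and adding the constant $-\frac{1}{2|\bO|}$ preserves analyticity. Since $\Gn(u) = \int_\bO g(u(x))\,dx$ is obtained by post-composing with the bounded linear integration functional $L^1(\bO) \ni w \mapsto \int_\bO w\,dx \in \R$, Theorem \ref{thm-analytic} (composition of analytic maps) yields analyticity of $\Gn$ as a map $D(A) \to \R$, once I verify that $u \mapsto u^2$ maps $D(A)$ (or $L^2$) into $L^1(\bO)$ continuously. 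The latter is immediate from the Cauchy--Schwarz inequality, $\|u^2\|_{L^1(\bO)} = \|u\|_{L^2(\bO)}^2$, together with the dense embedding $D(A) \embed L^2(\bO)$ recorded just before the statement.

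Next I would compute the first derivative. For $u, h \in D(A)$ I expand
\begin{align*}
	\Gn(u+h) - \Gn(u) = \frac{1}{2}\int_\bO \big[(u+h)^2 - u^2\big]\,dx = \int_\bO u(x) h(x)\,dx + \frac{1}{2}\int_\bO h(x)^2\,dx.
\end{align*}
The linear term $h \mapsto \int_\bO u h\,dx$ is bounded on $L^2(\bO)$ (hence on $D(A)$) by Cauchy--Schwarz, while the remainder satisfies $\frac{1}{2}\|h\|_{L^2(\bO)}^2 = o(\|h\|_{D(A)})$ as $\|h\|_{D(A)} \to 0$, again using $D(A) \embed L^2(\bO)$. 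This identifies the Fr\'echet derivative as
\begin{align*}
	\Gn^\prime(u) h = \int_\bO u(x) h(x)\,dx = \int_\bO g^\prime(u(x)) h(x)\,dx,
\end{align*}
since $g^\prime(u) = u$. Representing this bounded linear functional through the $L^2$ inner product then gives the $\Hn$-gradient $\Gn^\prime(u) = g^\prime(u) = u \in L^2(\bO)$, which is the asserted formula.

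I do not anticipate a genuine obstacle here, as the map is quadratic and all the analyticity machinery (Example \ref{Lem-analytic}, Theorem \ref{thm-analytic}) is already in place. The only point requiring mild care is the bookkeeping of which topology is used: one must confirm that the multilinear forms $a_0 = \Gn(0)$, $a_1 = \Gn^\prime(0)$, $a_2(h,h) = \frac{1}{2}\int_\bO h^2\,dx$ (and $a_k = 0$ for $k \ge 3$) are continuous on the relevant powers of $\Vn = D(A)$ and that the series in \eqref{eqn-series} terminates, so convergence is trivial. Because the Taylor expansion of $\Gn$ about any $u_0$ is exactly finite (degree two), verifying Definition \ref{def-analytic} directly is arguably cleaner than invoking the composition theorem, and I would likely present both observations briefly to make the analyticity manifest.
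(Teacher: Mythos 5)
Your proof is correct, but it takes a genuinely different route from the paper. The paper proves analyticity by composing three maps: the (analytic, because linear and continuous) embedding $D(A)\embed C(\overline{\bO})$, which is where the standing restriction $1\le d\le 3$ enters; the superposition operator $u\mapsto g(u)$ on $C(\overline{\bO})$, analytic by Theorem \ref{Thm-superposition} since $g$ is a polynomial; and the bounded linear integration functional, analytic by Example \ref{Lem-analytic}; then Theorem \ref{thm-analytic} closes the argument, and the derivative is computed as a Gateaux derivative $\frac{d}{d\eps}\Gn(u+\eps h)\big|_{\eps=0}=(u,h)$ exactly as you do. Your alternative — writing $\Gn$ as a continuous quadratic form on $L^2(\bO)$ and verifying Definition \ref{def-analytic} directly via the exact finite expansion $\Gn(u_0+h)=\Gn(u_0)+\int_\bO u_0h\,dx+\frac{1}{2}\|h\|_{L^2(\bO)}^2$ with $a_k=0$ for $k\ge3$ — is more elementary and slightly more general: it never invokes $D(A)\embed C(\overline{\bO})$ or Theorem \ref{Thm-superposition}, so it yields analyticity of $\Gn$ on all of $L^2(\bO)$ with no dimension restriction, and the remainder estimate $\frac{1}{2}\|h\|_{L^2(\bO)}^2=o(\|h\|_{D(A)})$ identifies the Fr\'echet derivative rigorously rather than only the Gateaux derivative. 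What the paper's heavier machinery buys is uniformity: the same $C(\overline{\bO})$-composition template is reused verbatim for the energy functional $\En$ in Lemmas \ref{Lem-En-analytic} and \ref{Lem-En'-analytic}, where the nonlinearity $u^p$ is of high degree and the superposition framework (hence the constraint $d\le 3$) is genuinely needed, whereas your diagonal-of-a-bilinear-form trick is special to the quadratic constraint $\Gn$.
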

	
	\begin{proof}
		First note from \eqref{Def-g} that $g(y)=\frac{1}{2} \left(y^2 - \frac{1}{\abs{\bO}}\right)$ is a quadratic polynomial, it is obvious that $g$ is analytic.
		Since $D(A)	\embed C(\overline{\bO})$ is continuous and dense, for $d\leq3$, it implies that the map 
		$$ D(A) \ni u \mapsto u \in C(\overline{\bO})$$ is analytic. By Theorem \ref{Thm-superposition}, the map $D(A) \ni u \mapsto g(u) \in C(\overline{\bO})$ is analytic. As the integral operator is linear and bounded, thus from Example \ref{Lem-analytic}, it is also analytic. By using the fact that composition of analytic functions is analytic, finally Theorem \ref{thm-analytic} asserts that $\Gn$ is analytic. 
		
		Now, for any fixed $u,h\in L^2(\bO)$, we assert
		\begin{align*}
			\Gn^\prime(u)h = \frac{d}{d\eps} \Gn(u+\eps h) \bigg|_{\eps=0} = \int_{\bO} \frac{d}{d\eps} g(u(x) + \eps h(x)) dx \bigg|_{\eps=0} = \int_{\bO}u(x) h(x) dx = (u, h).
		\end{align*}
		Hence it implies $\Gn^\prime(u) = g^\prime(u)=u \in  L^2(\bO)$.
	\end{proof}

	\begin{lemma}\label{Lem-Gn''-compact}
		Let us fix $u\in\mathcal{V}$. 
		The operator $\Gn^{\prime\prime}(u) :D(A) \to L^2(\bO)$ is compact.
	\end{lemma}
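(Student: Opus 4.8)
The goal is to show that the second Fréchet derivative $\Gn''(u): D(A) \to L^2(\bO)$ of the constraint functional $\Gn(u) = \frac{1}{2}\int_\bO (u^2 - |\bO|^{-1})\,dx$ is a compact operator.

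**First I would compute $\Gn''(u)$ explicitly.** From Lemma \ref{Lem-Gn-analytic} we already know $\Gn'(u) = u \in L^2(\bO)$, viewed as the $L^2$-gradient. More precisely, the first derivative acts as $\Gn'(u)h = (u,h)$ for $h \in D(A)$, so the gradient map is $D(A) \ni u \mapsto u \in L^2(\bO)$, which is just the (restriction of the) identity inclusion. Differentiating once more, for fixed $u$ the second derivative $\Gn''(u)$ is the derivative of the affine map $u \mapsto u$, hence it is \emph{independent of $u$} and equals the canonical embedding
\begin{align*}
	\Gn''(u) = \iota : D(A) \hookrightarrow L^2(\bO), \qquad \iota h = h.
\end{align*}
Concretely, $\Gn''(u)(h,k) = \int_\bO h(x)k(x)\,dx = (h,k)$ as a bilinear form, so the associated operator $D(A) \ni h \mapsto \nabla\big(\Gn'(\cdot)h\big) \in L^2(\bO)$ is exactly $h \mapsto h$, i.e. the inclusion $\iota$.

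**The key step is then a compact-embedding argument.** Since $\bO \subset \R^d$ is a bounded domain (assumed throughout Section \ref{Sec-Asy-anal}) and $D(A) = H^2(\bO)\cap H_0^1(\bO)$ carries the $H^2$-norm, equivalent to the graph norm $\|A\,\cdot\|_{L^2(\bO)}$ by \eqref{eqn-elliptic-reg}, the inclusion factors as
\begin{align*}
	D(A) = H^2(\bO)\cap H_0^1(\bO) \embed H^2(\bO) \embed L^2(\bO).
\end{align*}
By the Rellich--Kondrachov compactness theorem (already invoked for Corollary \ref{cor-precompact}), the embedding $H^2(\bO) \embed L^2(\bO)$ is compact whenever $\bO$ is bounded with $C^2$-boundary. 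Composing a bounded operator with a compact one yields a compact operator, so $\iota = \Gn''(u)$ is compact. This is precisely the property required to verify hypothesis $(v)$ of the \L{}ojasiewicz--Simon framework (the compactness of $(\nabla\Gn_k)'(\a)$), feeding into Theorem \ref{Thm-LSI}.

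**I do not expect a serious obstacle here,** since the constraint is purely quadratic and its second derivative is constant. The only point demanding a little care is the bookkeeping between the \emph{Fréchet} derivative (a bounded bilinear form on $D(A)$) and its \emph{$L^2$-gradient representative} (the operator into $\Hn = L^2(\bO)$): one must confirm that $\Gn''(u)$ as an operator $D(A) \to L^2(\bO)$ is indeed the gradient map $h \mapsto \nabla\big((\Gn'(\cdot)h)\big)(u) = h$, rather than a form-valued object, so that compactness of the inclusion is exactly what is being asserted. Once that identification is made, the result is immediate from Rellich--Kondrachov and the boundedness of $\bO$.
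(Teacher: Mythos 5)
Your proposal is correct and follows essentially the same route as the paper: both compute $\Gn''(u)h = h$ (so the second derivative is $u$-independent and equals the inclusion $D(A)\hookrightarrow L^2(\bO)$) and then conclude compactness from the Rellich--Kondrachov theorem on the bounded domain $\bO$. Your extra remark distinguishing the Fréchet bilinear form from its $L^2$-gradient representative is a sensible clarification, but it does not change the argument.
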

	
	\begin{proof}
		Let us first choose and fix $u\in D(A)$. For each fixed $h\in D(A)$, we find
		\begin{align*}
			\Gn^{\prime\prime}(u)h = \frac{d}{d\eps} \Gn^\prime(u+\eps h) \bigg|_{\eps=0} =  \frac{d}{d\eps} g^\prime(u+\eps h) \bigg|_{\eps=0} =g^{\prime\prime}(u)h  = h\in L^2(\bO).
		\end{align*}
		It implies $D(A)\ni h \mapsto \Gn^{\prime\prime}(u)h = h \in L^2(\bO)$, for all $u,h\in D(A)$. {Since the domain $\bO$ is bounded, it follows by the Rellich-Kondrachov Theorem that the injection map (or embedding) $D(A)\ni h\mapsto h\in L^2(\bO)$ is compact,
			which further implies that $\Gn^{\prime\prime}(u):D(A) \to L^2(\bO)$ is compact.}
	\end{proof}
	
	\begin{lemma}\label{Lem-En-analytic}
		The energy functional $\En$, defined in \eqref{def-energy}, is analytic with
		\begin{align}
			\En^\prime(u)h =  \int_\bO [-\Delta u + u^{p-1}]h dx = (-\Delta u + u^{p-1}, h), \ \text{ for }\ u,h \in D(A).
		\end{align}
		Moreover, $\En^\prime(u) \in L^2(\bO),$ for all $u\in D(A)$.
	\end{lemma}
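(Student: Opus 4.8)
The plan is to treat $\En$ as the sum of a quadratic Dirichlet part $u \mapsto \tfrac12\norm{\nabla u}_{L^2(\bO)}^2$ and a nonlinear potential part $u \mapsto \tfrac1p\int_\bO u^p\,dx$, to establish analyticity of each piece separately, and then to compute the first derivative directly. For the Dirichlet part, recall that $D(A)\embed H_0^1(\bO)$, so $u\mapsto \tfrac12(\nabla u,\nabla u)$ is the restriction to the diagonal of the bounded symmetric bilinear form $(u,v)\mapsto (\nabla u,\nabla v)$ on $D(A)$; by Example \ref{Lem-analytic}, bounded (multi)linear maps are analytic, hence this quadratic part is analytic. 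For the nonlinear part I would mimic the proof of Lemma \ref{Lem-Gn-analytic}: since $p$ is even (fixed in \eqref{eqn-H_0^1-in-L^p-asp}), the scalar function $y\mapsto y^p/p$ is a polynomial and therefore analytic; using the continuous and dense embedding $D(A)\embed C(\overline{\bO})$, valid for $1\le d\le 3$, the map $D(A)\ni u\mapsto u\in C(\overline{\bO})$ is linear and continuous, hence analytic by Example \ref{Lem-analytic}, and Theorem \ref{Thm-superposition} then gives that $u\mapsto u^p/p$ is analytic from $C(\overline{\bO})$ to $C(\overline{\bO})$. Composing with the bounded linear integral functional $C(\overline{\bO})\ni w\mapsto \tfrac1p\int_\bO w\,dx\in\R$ (analytic by Example \ref{Lem-analytic}) and invoking the composition result, Theorem \ref{thm-analytic}, yields analyticity of the nonlinear part. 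As a finite sum of analytic maps is analytic, $\En$ is analytic on $D(A)$.

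Next I would compute the Fr\'echet derivative by evaluating the G\^ateaux derivative along an arbitrary direction $h\in D(A)$. Expanding $\En(u+\eps h)$ and differentiating at $\eps=0$ gives
\begin{align*}
	\En^\prime(u)h = \int_\bO \nabla u\cdot\nabla h\,dx + \int_\bO u^{p-1}h\,dx.
\end{align*}
Since $u\in D(A)=H^2(\bO)\cap H_0^1(\bO)$ we have $\Delta u\in L^2(\bO)$, and as $h\in H_0^1(\bO)$ the boundary term vanishes upon integration by parts, so that $\int_\bO \nabla u\cdot\nabla h\,dx = (-\Delta u, h)$. This produces the claimed identity $\En^\prime(u)h = (-\Delta u + u^{p-1}, h)$ for all $u,h\in D(A)$.

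Finally, to identify $\En^\prime(u)$ with an element of $\Hn=L^2(\bO)$ I would argue that $-\Delta u + u^{p-1}\in L^2(\bO)$: indeed $\Delta u = -Au\in L^2(\bO)$ by the definition of $D(A)$, and since $D(A)\embed C(\overline{\bO})$ for $1\le d\le 3$ the function $u$ is bounded, whence $u^{p-1}\in L^\infty(\bO)\subset L^2(\bO)$ on the bounded domain $\bO$. Thus the representation through the $L^2$-inner product is legitimate and $\En^\prime(u)\in L^2(\bO)$. The only genuinely delicate points, which I would emphasize, are the structural hypotheses that make the superposition-operator machinery applicable: the evenness of $p$ (so that $y\mapsto y^p$ is analytic at the origin, unlike $\abs{y}^p$) and the restriction $1\le d\le 3$ guaranteeing $D(A)\embed C(\overline{\bO})$; together with the integration-by-parts step, this is precisely what upgrades the abstract Fr\'echet derivative to the concrete $L^2$-gradient $-\Delta u + u^{p-1}$.
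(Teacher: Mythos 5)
Your proposal is correct and follows essentially the same strategy as the paper's proof: analyticity of the potential term via the dense embedding $D(A)\embed C(\overline{\bO})$ (valid for $1\le d\le 3$), the superposition result of Theorem \ref{Thm-superposition} applied to the polynomial $y\mapsto y^p/p$ (with evenness of $p$ being precisely what makes $\abs{u}^p=u^p$ analytic at zero), composition with the bounded linear integration functional via Theorem \ref{thm-analytic}, and then the identical G\^ateaux-derivative computation followed by integration by parts (legitimate since $u\in H^2(\bO)$ and $h\in H_0^1(\bO)$). The one place where you genuinely deviate — and in fact improve on the paper — is the quadratic Dirichlet term: the paper factors it as $D(A)\ni u\mapsto\abs{\nabla u}^2\in C(\overline{\bO})$ followed by integration, but for $u\in D(A)=H^2(\bO)\cap H_0^1(\bO)$ in $d=2,3$ one only has $\nabla u\in H^1(\bO)$, which does not embed into $C(\overline{\bO})$, so the intermediate space in the paper's factorization is not quite right (though easily repaired by landing in $L^1(\bO)$ instead). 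Your treatment via the bounded symmetric bilinear form $(u,v)\mapsto(\nabla u,\nabla v)$ on $D(A)\times D(A)$ with values in $\R$, restricted to the diagonal, sidesteps this issue entirely and rests only on Example \ref{Lem-analytic}. Your final step is likewise slightly more self-contained: you obtain $u^{p-1}\in L^\infty(\bO)\subset L^2(\bO)$ directly from $D(A)\embed C(\overline{\bO})$ on the bounded domain, whereas the paper invokes $D(A)\embed L^p(\bO)$, which on its own would only yield $u^{p-1}\in L^{p/(p-1)}(\bO)$; under the standing assumption $1\le d\le 3$ both routes of course reach the same conclusion.
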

	
	\begin{proof}
		Let us first recall that, we are dealing even $p$ (see \eqref{eqn-H_0^1-in-L^p-asp}), therefore, for each fixed $u\in D(A)$, it follows that $u^p$ is analytic. Moreover, using Example \ref{Lem-analytic}, the following maps are analytic:
		\begin{itemize}
			\item The map 
			$$D(A)\ni u \mapsto \abs{\nabla u}^2 \in C(\overline{\bO}),$$
			as it is the sum of diagonal entries of the bounded bilinear map $$D(A) \ni u \mapsto (\partial_i u, \partial_j u)\in \R, \ \text{ for }\ 1\leq i,j \leq d.$$
			\item The map 
			$$C(\overline{\bO}) \ni w \mapsto \int_\bO w(x) dx \in \R$$ is linear and bounded.
		\end{itemize}
		Since the energy functional $\En$ can be written as the composition of the above stated maps,  Theorem \ref{thm-analytic} implies that $\En$ is analytic. Now for $u\in D(A)$, we calculate
		\begin{align*}
			\En^\prime(u)h & = \frac{d}{d\eps} \En(u+\eps h) \bigg|_{\eps=0} = \int_{\bO} \frac{d}{d\eps} \bigg[\frac{\abs{\nabla(u+\eps h)}^2}{2} + \frac{(u+\eps h)^p}{p}\bigg] dx \bigg|_{\eps=0}\\
			& = \int_{\bO}[\nabla u\cdot \nabla h + u^{p-1}h]dx = (-\Delta u + u^{p-1}, h).
		\end{align*}
		Again by an application of the Sobolev embedding Theorem, $D(A) \embed L^p(\bO)$ and for $u\in D(A)$, we have 
		\begin{align}\label{eqn-En-in-L^2}
			\nabla \En(u) = \En^\prime(u) = -\Delta u + u^{p-1}\ \text{ in }\ L^2(\bO),
		\end{align}
		which establishes the required result.
	\end{proof}
	
	\begin{lemma}\label{Lem-En'-analytic}
		The function $D(A) \ni u \mapsto \En^\prime(u) \in L^2(\bO)$ is analytic.
	\end{lemma}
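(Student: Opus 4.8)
The plan is to exploit the explicit formula for the gradient already obtained in Lemma \ref{Lem-En-analytic}, namely $\En^\prime(u) = -\Delta u + u^{p-1}$ in $L^2(\bO)$, and to show that each of the two summands, viewed as a map $D(A)\to L^2(\bO)$, is analytic. Analyticity of the sum then follows immediately from Definition \ref{def-analytic}: adding the two convergent power series term by term produces a convergent power series that represents $\En^\prime$ near any base point. So the whole argument reduces to handling the linear part and the superposition part separately.

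First I would treat the linear part. The map $D(A)\ni u \mapsto -\Delta u = Au \in L^2(\bO)$ is linear, and it is bounded when $D(A)$ is equipped with the graph norm, since $\norm{Au}_{L^2(\bO)}\le \norm{u}_{D(A)}$. Hence, by Example \ref{Lem-analytic}, it is analytic from $D(A)$ to $L^2(\bO)$.

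Next I would handle the nonlinear part, i.e. the superposition operator $u\mapsto u^{p-1}$, by writing it as a composition of three analytic maps and invoking Theorem \ref{thm-analytic}. The three maps are: $(a)$ the inclusion $D(A)\embed C(\overline{\bO})$, which is valid for $1\le d \le 3$ under the standing assumption \eqref{eqn-H_0^1-in-L^p-asp} and is bounded and linear, hence analytic by Example \ref{Lem-analytic}; $(b)$ the superposition operator $C(\overline{\bO})\ni v\mapsto v^{p-1}\in C(\overline{\bO})$, which is analytic by Theorem \ref{Thm-superposition} because $p$ is even, so that $F(s)=s^{p-1}$ is a polynomial and therefore an analytic function on $\R$; and $(c)$ the inclusion $C(\overline{\bO})\embed L^2(\bO)$, which is bounded (as $\bO$ is bounded) and linear, hence analytic again by Example \ref{Lem-analytic}. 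Since the target of each map matches the domain of the next, Theorem \ref{thm-analytic} yields that $u\mapsto u^{p-1}$ is analytic from $D(A)$ to $L^2(\bO)$.

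Combining the two parts, $\En^\prime = A(\cdot) + (\cdot)^{p-1}$ is analytic as a map $D(A)\to L^2(\bO)$, which is the assertion. I do not expect a genuine obstacle here; the argument is a routine composition. The only points demanding care are bookkeeping ones, namely that the intermediate target spaces line up so that Theorem \ref{thm-analytic} applies verbatim, and that it is precisely the evenness of $p$ that makes $s\mapsto s^{p-1}$ globally analytic on $\R$ rather than merely analytic away from the origin.
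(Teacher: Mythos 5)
Your proposal is correct and takes essentially the same route as the paper: both split $\En^\prime(u) = Au + u^{p-1}$, handle the linear part via Example \ref{Lem-analytic}, and obtain analyticity of $u \mapsto u^{p-1}$ through the chain $D(A)\embed C(\overline{\bO}) \to C(\overline{\bO}) \embed L^2(\bO)$ using Theorems \ref{Thm-superposition} and \ref{thm-analytic}. One small correction to your closing remark: the monomial $s\mapsto s^{p-1}$ is analytic on $\R$ for every integer $p\ge 2$, so the evenness of $p$ is not what makes it analytic; evenness is used earlier, to identify the nonlinearity $\abs{u}^{p-2}u$ with the polynomial $u^{p-1}$ (equivalently $\abs{u}^p$ with $u^p$ in \eqref{def-energy}).
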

	
	\begin{proof}
		Note from \eqref{eqn-En-in-L^2} (see Lemma \ref{Lem-En-analytic}) that $$\En^\prime(u) = -\Delta u + u^{p-1} \ \text{ in }\ L^2(\bO).$$ 
		Then, Example \ref{Lem-analytic} and the embedding $D(A) \embed L^p(\bO)$ asserts that the following maps are analytic:
		\begin{align*}
			&D(A) \ni u \mapsto \partial_i\partial_j u \in 
			L^2(\bO)\ \text{ for any }\ 1\leq i,j \leq d,\\
			& D(A) \ni u \mapsto u^{p-1}  \in C(\overline{\bO})\embed L^2(\bO).
		\end{align*}
		Since the sum of two analytic functions is analytic, it follows that the map
		$u\mapsto \En^\prime(u)$ is analytic.
	\end{proof}
	
	\begin{lemma}\label{Lem-En''-Fredholm}
		The Fr\'echet derivative $\En^{\prime\prime}(u):D(A) \to L^2(\bO)$ is Fredholm of index zero.
	\end{lemma}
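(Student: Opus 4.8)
The plan is to compute the second Fr\'echet derivative explicitly and to recognise it as a compact perturbation of the invertible operator $A$. First, differentiating the expression $\En^\prime(u) = -\Delta u + u^{p-1} = Au + u^{p-1}$ obtained in Lemma \ref{Lem-En-analytic} (whose analyticity in $u$ was recorded in Lemma \ref{Lem-En'-analytic}) in a direction $h\in D(A)$, one finds
$$\En^{\prime\prime}(u)h = Ah + (p-1)\,u^{p-2}\,h, \qquad h\in D(A).$$
Thus $\En^{\prime\prime}(u) = A + M_u$, where $M_u : D(A)\ni h\mapsto (p-1)u^{p-2}h \in L^2(\bO)$ is the multiplication operator associated with the fixed function $(p-1)u^{p-2}$. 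Both summands lie in $\Ls(D(A),L^2(\bO))$: indeed $A\in\Ls(D(A),L^2(\bO))$ by the definition of the graph norm, while $M_u$ is bounded because the Sobolev embedding $D(A)\embed C(\overline{\bO})$ (valid for $1\le d\le 3$) gives $u^{p-2}\in L^\infty(\bO)$.

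Next I would verify that $A$ is Fredholm of index zero. Since $\bO$ is a bounded domain, the Dirichlet Laplacian $A$ has positive first eigenvalue $\lambda_1$ and compact resolvent; consequently $A:D(A)\to L^2(\bO)$ is a bijection with bounded inverse, i.e.\ an isomorphism onto $L^2(\bO)$. In particular $\mathrm{Ker}(A)=\{0\}$ and $\mathrm{Im}(A)=L^2(\bO)$, so $A$ is Fredholm with $\mathrm{index}(A)=0$.

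The remaining, and principal, step is to show that the perturbation $M_u$ is a \emph{compact} operator from $D(A)$ to $L^2(\bO)$. The key is to factor $M_u$ as the embedding $\iota:D(A)\embed L^2(\bO)$ followed by multiplication $L^2(\bO)\ni w\mapsto (p-1)u^{p-2}w\in L^2(\bO)$. The latter is bounded on $L^2(\bO)$ since $(p-1)u^{p-2}\in L^\infty(\bO)$, while $\iota$ is compact by the Rellich--Kondrachov theorem (as $\bO$ is bounded); a compact operator followed by a bounded one is compact, so $M_u$ is compact. Finally, applying Proposition \ref{Prop-Fredholm}$(ii)$ with $T=A$ (Fredholm of index zero) and the compact operator $K=M_u$, I conclude that $\En^{\prime\prime}(u)=A+M_u$ is Fredholm with $\mathrm{index}(\En^{\prime\prime}(u))=\mathrm{index}(A)=0$, as required. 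I expect the compactness of $M_u$ to be the main obstacle, everything else reducing to the invertibility of $A$ and the stability of the Fredholm index under compact perturbations.
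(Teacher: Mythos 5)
Your proposal is correct and follows essentially the same route as the paper: the identical decomposition $\En^{\prime\prime}(u)=A+(p-1)u^{p-2}\,\cdot$, the observation that the Dirichlet Laplacian $A:D(A)\to L^2(\bO)$ is an isomorphism (hence Fredholm of index zero), and the stability of the index under compact perturbation via Proposition \ref{Prop-Fredholm}. The only difference is cosmetic: where the paper cites Gilbarg--Trudinger for the compactness of the multiplication perturbation, you prove it directly by factoring $M_u$ through the Rellich--Kondrachov compact embedding $D(A)\embed L^2(\bO)$ and using $u^{p-2}\in L^\infty(\bO)$ from $D(A)\embed C(\overline{\bO})$, which is a valid and self-contained substitute.
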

	
	\begin{proof}
		Let us choose and fix $u\in D(A)$. Then, for each fixed $h\in D(A)$, Lemma \ref{Lem-En-analytic} yields
		\begin{align}
			\En^{\prime\prime}(u)h = \frac{d}{d\eps} \En^\prime(u+\eps h)\bigg|_{\eps=0} = -\Delta h + (p-1) u^{p-2} h =: (A+ K)h.
		\end{align}
		Since the operator $\En^{\prime\prime}(u):\Vn= D(A) \to L^2(\bO)$ is a compact perturbation 
		(\cite[Theorem 7.26]{DG+NST-01}) of the Dirichlet-Laplacian $ -\Delta :\Vn= D(A) \to L^2(\bO)$, which is an isomorphism by standard elliptic theory \cite[see Theorem 9.15]{DG+NST-01}. It follows by Proposition \ref{Prop-Fredholm} that the map $\En^{\prime\prime}(u)$ is Fredholm of zero index.
	\end{proof}
	
	Next, using the supporting results above, we derive the \L{}ojasiewicz-Simon gradient inequality within our framework, where the constraint set is $\bM$.

	\begin{theorem}[{\cite[Theorem 1.4]{FR-20}}]\label{Thm-LSI-our-setting}
		Let $\a \in D(A),$ $\Gn(\a) =0$ with $g^\prime(\a) \not\equiv 0$, be a constraint critical point of $\En|_\bM$, where 
		\begin{align*}
			\bM = \{u\in L^2(\bO): \Gn(u) = 0\}.
		\end{align*}
		Then, $\bM$ is locally an analytic submanifold near $a$ and satisfies the \L{}ojasiewicz-Simon gradient inequality on $\bM$, i.e., there exist $C,\sigma>0$ and $\theta \in (0,\frac{1}{2}]$ such that for any $u\in\bM$ with $\|u - \a\|_{D(A)} \leq \sigma$, we have
		\begin{align}
			\abs{\En(u)- \En(\a)}^{1-\theta} &\leq C \norm{\En^\prime (u) - \frac{\int_\bO \En^\prime(u) g^\prime(u) dx}{\int_\bO (g^\prime(u))^2 dx} g^\prime(u)}_{L^2(\bO)}\\
			&= C \norm{\pi_u(\nabla\En (u))}_{L^2(\bO)} = C \norm{\nabla_\bM\En (u))}_{L^2(\bO)}.\label{eqn-Lojasiewicz-Rupp}
		\end{align}
	\end{theorem}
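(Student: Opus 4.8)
The plan is to obtain this statement as a direct application of the abstract constrained \L{}ojasiewicz--Simon inequality recorded in Theorem \ref{Thm-LSI}, with the concrete identifications $\Vn = \U = D(A)$, $\Hn = L^2(\bO)$, $m=1$, and the functionals $\En$ and $\Gn$ from \eqref{def-energy} and \eqref{def-constraint}. All of the genuine analytic and functional-analytic content has already been isolated in the preceding lemmas, so the work here consists of checking that the six hypotheses $\{(i)-(vi)\}$ of the abstract framework are met and then translating the abstract conclusion into the explicit projection formula \eqref{eqn-Lojasiewicz-Rupp}.

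First I would verify the hypotheses one by one. Assumption $(i)$, the dense embedding $\Vn = D(A) \embed L^2(\bO) = \Hn$, is immediate since $C_c^\infty(\bO) \subset D(A)$. Assumption $(ii)$ follows from Lemma \ref{Lem-En-analytic}, which shows that $\En$ is analytic with $\Hn$-gradient $\nabla\En(u) = -\Delta u + u^{p-1} \in L^2(\bO)$ (see \eqref{eqn-En-in-L^2}), together with Lemma \ref{Lem-En'-analytic}, which gives analyticity of $D(A) \ni u \mapsto \En^\prime(u) \in L^2(\bO)$. Assumption $(iii)$ is exactly Lemma \ref{Lem-En''-Fredholm}: the structural choice $\Vn = D(A)$, $\Hn = L^2(\bO)$ makes the operator $\En^{\prime\prime}(\a)h = -\Delta h + (p-1)\a^{p-2}h$ a compact perturbation of the isomorphism $-\Delta : D(A) \to L^2(\bO)$, hence Fredholm of index zero. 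Assumptions $(iv)$ and $(v)$ are supplied by Lemmas \ref{Lem-Gn-analytic} and \ref{Lem-Gn''-compact}, which give $\nabla\Gn(u) = g^\prime(u) = u \in L^2(\bO)$ with analytic dependence and compactness of $\Gn^{\prime\prime}(u) : D(A) \to L^2(\bO)$ via Rellich--Kondrachov. Finally, assumption $(vi)$ holds by hypothesis: $\Gn(\a) = 0$ and, since $m=1$, linear independence of the single gradient $\nabla\Gn(\a) = g^\prime(\a) = \a$ reduces to the assumed condition $g^\prime(\a) \not\equiv 0$.

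With all hypotheses in force, Proposition \ref{Prop-Submanifold} yields that $\bM = \{u : \Gn(u) = 0\}$ is locally an analytic submanifold of $D(A)$ of codimension one near $\a$, and Theorem \ref{Thm-LSI}, applied at the constrained critical point $\a$ of $\En|_\bM$, produces constants $C,\sigma>0$ and $\theta\in(0,\tfrac12]$ such that $\abs{\En(u)-\En(\a)}^{1-\theta} \le C\norm{\pi_u(\nabla\En(u))}_{L^2(\bO)}$ for all $u\in\bM$ with $\norm{u-\a}_{D(A)}\le\sigma$. The last step is to make the projection explicit. Since $\bM$ has codimension one with normal direction spanned by $\nabla\Gn(u)=g^\prime(u)$, and since every $u\in\bM$ satisfies $\norm{u}_{L^2(\bO)}=1$ so that $g^\prime(u)=u$ never vanishes, the orthogonal projection of $L^2(\bO)$ onto $\overline{T_u\bM}$ is
\begin{align*}
\pi_u(v) &= v - \frac{(v,g^\prime(u))}{(g^\prime(u),g^\prime(u))}\,g^\prime(u)\\
&= v - \frac{\int_\bO v\,g^\prime(u)\,dx}{\int_\bO (g^\prime(u))^2\,dx}\,g^\prime(u).
\end{align*}
Substituting $v=\nabla\En(u)=\En^\prime(u)$ gives precisely the right-hand side of \eqref{eqn-Lojasiewicz-Rupp}, which by definition equals $\norm{\nabla_\bM\En(u)}_{L^2(\bO)}$.

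I expect the main obstacle to lie not in this assembly but in the groundwork already carried out, above all the Fredholm-index-zero property in assumption $(iii)$ and the analyticity of the $\Hn$-valued gradient maps in $(ii)$ and $(iv)$; these are exactly what force the choice of the pair $(\Vn,\Hn)=(D(A),L^2(\bO))$ and the evenness of $p$, so that $u\mapsto u^p$ is a genuine analytic superposition operator on $C(\overline{\bO})$ (via Theorem \ref{Thm-superposition} and the embedding $D(A)\embed C(\overline{\bO})$ valid for $1\le d\le 3$). At the level of the present theorem, the only point requiring care is to confirm that the abstract orthogonal projection onto $\overline{T_u\bM}$ coincides with the single-constraint formula above, which is straightforward because $m=1$ and $g^\prime(u)=u$ is nowhere the zero element for $u\in\bM$ near $\a$.
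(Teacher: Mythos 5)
Your proposal is correct and follows essentially the same route as the paper's proof: the same identifications $(\Vn,\Hn)=(D(A),L^2(\bO))$, verification of hypotheses $(i)$--$(vi)$ via Lemmas \ref{Lem-En-analytic}, \ref{Lem-En'-analytic}, \ref{Lem-En''-Fredholm}, \ref{Lem-Gn-analytic}, \ref{Lem-Gn''-compact}, then Proposition \ref{Prop-Submanifold} and Theorem \ref{Thm-LSI}, followed by the identification of the codimension-one projection with $\nabla_\bM\En(u)$ using $g^\prime(u)=u$ and $\norm{u}_{L^2(\bO)}=1$. The only cosmetic difference is that you derive the projection formula abstractly from the normal direction $g^\prime(u)$, whereas the paper substitutes directly and matches against \eqref{eqn-tangent-gradient}; these are the same computation.
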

	
	\begin{proof}
		The proof crucially depends upon the \L{}ojasiewicz-Simon inequality given in Theorem \ref{Thm-LSI}.
		\vskip 1mm
		\noindent
		\textbf{Step I. }
		First, note that $D(A)$ is densely embedded in $ L^2(\bO)$. By Lemmas \ref{Lem-En-analytic} and   \ref{Lem-Gn-analytic}, it follows that $\En$ and $\Gn$ are analytic. Moreover, Lemma \ref{Lem-En'-analytic} provides that $\En^\prime(\a) = \nabla \En (\a)$ is analytic from $D(A)$ to $L^2(\bO)$.
		\vskip 1mm
		\noindent
		\textbf{Step II.} The Fredholm property of $\En^{{\prime\prime}}(\a): \Vn \to L^2(\bO)$ holds from  Lemma \ref{Lem-En''-Fredholm}. Using Lemma \ref{Lem-Gn-analytic}, $\Gn^\prime$ extends analytically to satisfy the assumption (iv) of Theorem \ref{Thm-LSI}. Furthermore, Lemma \ref{Lem-Gn''-compact} implies that the Fr\'echet derivative of $\Gn^\prime$, i.e., $\Gn^{\prime\prime}(\a): D(A) \to L^2(\bO)$ is compact. By assumption on $\Gn'(\a) = g^\prime(\a) \not\equiv0$, it implies that the map $\Gn^\prime (\a) $ is surjective from $ L^2(\bO)$ to $\R$. 
		\vskip 1mm
		\noindent
		\textbf{Step III.} Thus, by Proposition \ref{Prop-Submanifold}, the set $\bM$ is locally a manifold near $\a$. Further, by Theorem \ref{Thm-LSI}, there exist $C,\sigma>0$ and $\theta \in (0,\frac{1}{2}]$ such that for $u\in\bM$ with $\norm{u - \a}_{D(A)} \leq \sigma$, we assert
		\begin{align}
			\abs{\En(u)- \En(\a)}^{1-\theta} \leq C \norm{\En^\prime (u) - \frac{\int_\bO \En^\prime(u) g^\prime(u) dx}{\int_\bO (g^\prime(u))^2 dx} g^\prime(u)}_{L^2(\bO)}.\label{eqn-Lojasiewicz-Rupp-1}
		\end{align}
		By the definition of energy $\En$ given in \eqref{def-energy}, we deduce
		\begin{align*}
			\En^\prime (u) - \frac{\int_\bO \En^\prime(u) g^\prime(u) dx}{\int_\bO (g^\prime(u))^2 dx} g^\prime(u) & = -\Delta u + u^{p-1} - \frac{\left(-\Delta u + u^{p-1}, u\right)}{\|u\|_{L^2(\bO)}^2} u\\
			& = -\Delta u + u^{p-1} - \left(\|\nabla{u}\|_{L^2(\bO)} + \|u\|_{L^p(\bO)}^p\right) u\\
			& = \nabla_\bM \En(u),
		\end{align*}
		where we have used \eqref{eqn-tangent-gradient} and the fact that $u\in \bM$. Thus, \eqref{eqn-Lojasiewicz-Rupp-1} becomes
		\begin{align}
			\abs{\En(u)- \En(\a)}^{1-\theta} \leq 
			C \norm{\nabla_\bM \En (u)}_{L^2(\bO)},
		\end{align}
		which completes the proof.
	\end{proof}
	
	\begin{remark}
		Note that if we wish to work with Banach spaces instead of the Hilbert spaces $\Vn$ and $\Hn$, i.e., 
		if we consider $$\U = \Vn = W^{2,q}(\bO)\cap W_0^{1,q}(\bO),\ \Y=L^{q^\ast}(\bO)\ \text{ and }\ \Y^\ast = L^q(\bO),$$ with $\frac{1}{q}+\frac{1}{q^\ast} =1$, then, the embedding $\Vn\embed C(\overline{\bO})$ holds when $q>\frac{d}{2}$ which forces the dimension to rely on the parameter $\beta$, i.e., $d<4\beta$. Precisely, we calculate $$q \in \Big(\max\Big\{2,\frac{d}{2}\Big\}, \frac{2d}{d+4-4\beta}\Big)\ \text{ and }\ q^\ast \in \Big(\frac{2d}{d-4+4\beta}, \max\Big\{2,\frac{d}{d-2}\Big\}\Big).$$
		Then, by an application of \cite[Theorem 1.4]{FR-20} (see Proposition \ref{Prop-Submanifold}), $\bM$ is locally a manifold near $\mathrm{a}$ and a \L{}ojasiewicz-Simon gradient inequality on $\bM$ is satisfied, i.e., 
		there exist $C,\sigma>0$ and $\theta \in (0,\frac{1}{2}]$ such that for any $u\in\bM$ with $\|u - \a\|_{W^{2,q}(\bO)} \leq \sigma$, from \eqref{eqn-tangent-gradient}, we deduce
		\begin{equation}
			\abs{\En(u)- \En(\a)}^{1-\theta} \leq C \norm{\En^\prime (u) - \frac{\int_\bO \En^\prime(u) g^\prime(u) dx}{\int_\bO (g^\prime(u))^2 dx} g^\prime(u)}_{L^q(\bO)} = C \norm{\nabla_\bM \En(u)}_{L^q(\bO)}.\label{eqn-LSI-W^{2,q}}
		\end{equation}
		
		Since our approach is based on the $L^2(\bO)$-gradient flow (see \eqref{eqn-L2-grad} below), we do not need to rely on the estimate \eqref{eqn-LSI-W^{2,q}}.
	\end{remark}		
	
	\begin{remark}
		It is essential to observe that,  for any $q>2$, the operator $$A: W^{2,q}(\bO)\cap W_0^{1,q}(\bO) \to L^2(\bO)$$ is not an isomorphism and it implies that the operator $$\En^{\prime\prime}(u) = A + K: W^{2,q}(\bO)\cap W_0^{1,q}(\bO) \to L^2(\bO)$$ is not a Fredholm of index zero. Therefore, we cannot get a \L{}ojasiewicz-Simon inequality when $$\Vn=W^{2,q}(\bO)\cap W_0^{1,q}(\bO)\ \text{ and }\ \Hn = L^2(\bO).$$ Similarly, we cannot work with $$\U = \Vn = D(A^\beta)\text{ and }\ \Y=L^{2}(\bO) \cong \Y^\ast,$$ in that case also, the operator $A: D(A^\beta) \to L^2(\bO)$ is not an isomorphism.
	\end{remark}			
	
	We now focus on the convergence analysis results, where we establish the principal result concerning the long-time behavior of the unique global strong solution to problem \eqref{eqn-main-heat-recall}.

	\subsection{Convergence in time}\label{Subsec-Asymp}
	For any fixed $p$ as assumed in \eqref{eqn-H_0^1-in-L^p-asp}, we recall the energy functional:
	\begin{align}
		\En: D(A) \ni u \mapsto \En(u) \in \R,
	\end{align}
	defined as 
	\begin{equation}\label{Def-En-constrained}
		\En(u):= \int_{\bO} \bigg[\frac{1}{2}\abs{\nabla u}^2 + \frac{1}{p}|u|^p \bigg]dx.  
	\end{equation}
	
	\begin{lemma}\label{Lem-En-dissipative}
		Suppose $u\in D(A)\cap\bM$. Then, the energy functional $\En$, defined in \eqref{Def-En-constrained}, is dissipative in time and, for all $t\geq 0$, it satisfies the following energy equality:
		\begin{align}\label{eqn-bound-u_t-En_0}
			\En(u(t)) + \int_0^t \norm{\frac{\partial u(s)}{\partial s}}_{L^2(\bO)}^2ds = \En(u_0).
		\end{align}
	\end{lemma}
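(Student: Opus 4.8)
The plan is to exploit the gradient-flow structure recorded in Subsection \ref{Subsec-Gradient-flow} together with the higher regularity and the sphere-invariance of the strong solution furnished by Theorem \ref{Thm-Main-Exis}. Let $u$ be the unique global strong solution associated with the initial datum $u_0\in D(A)\cap\bM$. By Theorem \ref{Thm-Main-Exis} we have $u(t)\in\bM$ for every $t\ge 0$, hence $\norm{u(t)}_{L^2(\bO)}^2\equiv 1$, and consequently, for a.e. $t\ge 0$,
\[
\left(u(t),\frac{\partial u(t)}{\partial t}\right)=\frac12\frac{d}{dt}\norm{u(t)}_{L^2(\bO)}^2=0 .
\]
This identity is what makes the nonlocal term in the evolution equation harmless for the energy balance.

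First I would record the absolute continuity of the two constituents of $\En(u(\cdot))$. Since $u\in L^2(0,T;D(A^{3/2}))$ and $\frac{\partial u}{\partial t}\in L^2(0,T;H_0^1(\bO))$, the Lions--Magenes Lemma \cite{JLL+EM-II-72} applied to $A^{1/2}u$ shows that $t\mapsto\norm{\nabla u(t)}_{L^2(\bO)}^2$ is absolutely continuous with $\frac{d}{dt}\norm{\nabla u(t)}_{L^2(\bO)}^2=2\left(\frac{\partial u(t)}{\partial t},Au(t)\right)$ for a.e. $t$. Likewise, using $u\in L^\infty(0,T;D(A))\embed L^\infty(0,T;L^p(\bO))$ together with $D(A)\embed L^p(\bO)\embed(D(A))^\ast$ and $\frac{\partial u}{\partial t}\in L^2(0,T;L^2(\bO))$, exactly as in the proof of Proposition \ref{Prop-u-L2-D(A)}, the map $t\mapsto\norm{u(t)}_{L^p(\bO)}^p$ is absolutely continuous with $\frac1p\frac{d}{dt}\norm{u(t)}_{L^p(\bO)}^p=\left(\abs{u(t)}^{p-2}u(t),\frac{\partial u(t)}{\partial t}\right)$.

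Next I would take the $L^2(\bO)$-inner product of the evolution equation \eqref{eqn-main-heat-recall} with $\frac{\partial u}{\partial t}$. The nonlocal term $\big(\norm{\nabla u}_{L^2(\bO)}^2+\norm{u}_{L^p(\bO)}^p\big)\big(u,\frac{\partial u}{\partial t}\big)$ vanishes by the sphere constraint, while integration by parts gives $\big(\Delta u,\frac{\partial u}{\partial t}\big)=-\big(Au,\frac{\partial u}{\partial t}\big)=-\tfrac12\frac{d}{dt}\norm{\nabla u}_{L^2(\bO)}^2$ in view of the first derivative identity. Combining this with the two identities above yields, for a.e. $t\ge0$,
\[
\frac{d}{dt}\En(u(t))=\frac12\frac{d}{dt}\norm{\nabla u(t)}_{L^2(\bO)}^2+\frac1p\frac{d}{dt}\norm{u(t)}_{L^p(\bO)}^p=-\norm{\frac{\partial u(t)}{\partial t}}_{L^2(\bO)}^2\le 0 ,
\]
which already establishes the claimed dissipativity. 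Integrating over $[0,t]$ and invoking the absolute continuity to apply the fundamental theorem of calculus then produces \eqref{eqn-bound-u_t-En_0}.

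The computation is otherwise entirely routine; the only point demanding care is the justification of the chain rule for $t\mapsto\En(u(t))$, that is, the absolute continuity of its two components, which is precisely where the regularity $u\in L^2(0,T;D(A^{3/2}))$ and $\frac{\partial u}{\partial t}\in L^2(0,T;H_0^1(\bO))$ from Theorem \ref{Thm-Main-Exis} and the Lions--Magenes Lemma enter. I would finally note that \eqref{eqn-bound-u_t-En_0} coincides with the energy equality \eqref{eqn-dec-ener} already obtained in Theorem \ref{Thm-Main-Exis} (see also \eqref{eqn-energy-equality}), so that the statement may alternatively be quoted directly from there.
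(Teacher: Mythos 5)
Your proposal is correct and follows essentially the same route as the paper: the paper's proof of Lemma \ref{Lem-En-dissipative} simply invokes the gradient-flow computation of Subsection \ref{Subsec-Gradient-flow}, which is exactly your argument — the chain rule for $\En(u(\cdot))$, the orthogonality $\left(u,\frac{\partial u}{\partial t}\right)=0$ coming from the sphere invariance, and integration in time. Your additional care in justifying absolute continuity via the Lions--Magenes Lemma, and your closing remark that \eqref{eqn-bound-u_t-En_0} is just \eqref{eqn-dec-ener} from Theorem \ref{Thm-Main-Exis}, match what the paper has already established in Subsections \ref{Subsec-Gradient-flow} and \ref{Subsec-Thm-I}.
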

	
	\begin{proof}
		Let us choose and fix $u\in D(A)\cap\bM$. Then, the proof follows from Subsection \ref{Subsec-Gradient-flow}.
	\end{proof}

	\begin{lemma}\label{Lem-stat-iff-critical}
		A function $v \in D(A)$ is a stationary solution of the problem
		if and only if $v$ is a critical point of $\En|_\bM$, where $\En$ is defined in \eqref{Def-En-constrained}.
	\end{lemma}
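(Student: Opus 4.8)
The plan is to reduce both sides of the claimed equivalence to the single condition $\nabla_\bM\En(v)=0$, exploiting the closed-form expression for the tangential gradient already recorded in \eqref{eqn-tangent-gradient}. First I would observe that both notions presuppose $v\in\bM$: a critical point of $\En|_\bM$ lies on $\bM$ by definition, while pairing the stationary equation \eqref{eqn-stationary} with $v$ and integrating by parts yields $(\norm{\nabla v}_{L^2(\bO)}^2+\norm{v}_{L^p(\bO)}^p)(\norm{v}_{L^2(\bO)}^2-1)=0$, so every nontrivial stationary solution automatically satisfies $\norm{v}_{L^2(\bO)}=1$. Hence I would work throughout with $v\in D(A)\cap\bM$.

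Next I would recall from Lemma \ref{Lem-En-analytic} that $\nabla\En(v)=-\Delta v+v^{p-1}$ in $L^2(\bO)$, and since $p$ is even we have $v^{p-1}=|v|^{p-2}v$, so $\nabla\En(v)=-\Delta v+|v|^{p-2}v$. From Lemma \ref{Lem-Gn-analytic} the constraint functional satisfies $\Gn^\prime(v)=g^\prime(v)=v$, so the tangent space at $v$ is $T_v\bM=\ker\Gn^\prime(v)=\{h\in D(A):(v,h)=0\}$, whose $L^2$-closure has orthogonal complement $\mathrm{span}\{v\}$ (note $v\neq0$ since $\norm{v}_{L^2(\bO)}=1$). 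Because $\Gn^\prime(v)=v\not\equiv0$ is surjective onto $\R$, as already used in the proof of Theorem \ref{Thm-LSI-our-setting}, the Lagrange-multiplier characterization applies cleanly.

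I would then carry out the multiplier step: by definition $v$ is a critical point of $\En|_\bM$ if and only if $\En^\prime(v)h=(\nabla\En(v),h)=0$ for every $h\in T_v\bM$, i.e. $\nabla\En(v)\perp\{v\}^\perp$, which is equivalent to $\nabla\En(v)=\lambda v$ for some $\lambda\in\R$. Pairing with $v$ and using $\norm{v}_{L^2(\bO)}^2=1$ together with integration by parts gives $\lambda=(\nabla\En(v),v)=\norm{\nabla v}_{L^2(\bO)}^2+\norm{v}_{L^p(\bO)}^p$. Substituting back, the critical-point condition reads $-\Delta v+|v|^{p-2}v=(\norm{\nabla v}_{L^2(\bO)}^2+\norm{v}_{L^p(\bO)}^p)v$, i.e. $\nabla_\bM\En(v)=0$ by \eqref{eqn-tangent-gradient}, which is precisely $(-1)$ times the stationary equation \eqref{eqn-stationary}. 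Every implication in this chain is reversible, so $v$ is a critical point of $\En|_\bM$ exactly when $v$ solves \eqref{eqn-stationary}, proving the lemma.

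Since the computation is essentially direct, the only point demanding care is the passage from ``$\En^\prime(v)$ vanishes on $T_v\bM$'' to ``$\nabla\En(v)=\lambda v$'' in the infinite-dimensional setting; I would justify it via the identity $\overline{T_v\bM}^{L^2(\bO)}=\{v\}^\perp$ so that the orthogonal complement is the one-dimensional space $\mathrm{span}\{v\}$. Alternatively, one may bypass multipliers altogether by reading off from the closed-form projection $\pi_v(\nabla\En(v))=\nabla_\bM\En(v)$ in \eqref{eqn-tangent-gradient} that it vanishes if and only if \eqref{eqn-stationary} holds; I expect to present this shorter route as the main argument, with the multiplier computation serving to pin down the value of $\lambda$.
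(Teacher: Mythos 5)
Your proposal is correct, and its main line --- reading the equivalence off the closed-form tangential gradient \eqref{eqn-tangent-gradient}, so that $\nabla_\bM\En(v)=0$, tested against arbitrary $h\in L^2(\bO)$, is verbatim the stationary equation \eqref{eqn-stationary} --- is exactly the paper's proof. Your supplementary Lagrange-multiplier computation (via $\overline{T_v\bM}^{L^2(\bO)}=\{v\}^{\perp}$, yielding $\nabla\En(v)=\lambda v$ with $\lambda=\norm{\nabla v}_{L^2(\bO)}^2+\norm{v}_{L^p(\bO)}^p$) and your observation that nontrivial stationary solutions automatically satisfy $\norm{v}_{L^2(\bO)}=1$ are correct refinements the paper omits, but they do not alter the approach.
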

	
	\begin{proof}
		Let us suppose $v\in D(A)$ is a stationary solution of the problem \eqref{eqn-stationary}. If $h\in L^2(\bO)$ is fixed, then from \eqref{eqn-tangent-gradient}, we have
		\begin{align}
			(\nabla_\bM \En(v),h) 
			& =  \left(-\Delta v + \abs{v}^{p-2}v - ( \norm{\nabla v}_{L^2(\bO)}^2 +  \norm{v}_{L^{p}(\bO)}^{p} ) v, h\right)= 0.\label{eqn-gardEn}
		\end{align}
		Thus, $v$ is a critical point of $\En|_\bM$. On the other hand, if we assume $v\in D(A)
		$ is a critical point of $\En|_\bM$, then
		\begin{align*}
			(\nabla_\bM \En(v),h) &= \left(-\Delta v + \abs{v}^{p-2}v - ( \norm{\nabla v}_{L^2(\bO)}^2 + \norm{v}_{L^{p}(\bO)}^{p} ) v, h\right)\\
			&  = 0, \ \text{ for any }\ h \in L^2(\bO).
		\end{align*}
		Since $h$ is arbitrary, it follows that
		\begin{align*}
			\Delta v - \abs{v}^{p-2}v + ( \norm{\nabla v}_{L^2(\bO)}^2 + \norm{v}_{L^{p}(\bO)}^{p} ) v = 0\ \text{ in }\ L^2(\bO).
		\end{align*}
		Hence $v$ is a stationary solution to the problem \eqref{eqn-stationary}.
	\end{proof}
	
	\begin{lemma}\label{Lem-omega-limit-Stat}
		Let us choose and fix $w\in \omega(u)$. Then, $\En(w)$ is a constant and all points in $\omega(u)$ are critical points of $\En|_\bM$, where
		\begin{align}\label{eqn-omega-limit}
			\omega(u) :=\{w\in W^{2,q}(\bO)\cap W_0^{1,q}(\bO): \exists\ t_n \to \infty\  \text{ such that } \ u(t_n) \to w\}.
		\end{align}
	\end{lemma}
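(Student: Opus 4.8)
The statement has two parts: that $\En$ is constant on $\omega(u)$, and that every $w\in\omega(u)$ is a critical point of $\En|_\bM$ (equivalently, by Lemma \ref{Lem-stat-iff-critical}, a stationary solution of \eqref{eqn-stationary}). For the constancy I would start from the energy equality \eqref{eqn-bound-u_t-En_0} of Lemma \ref{Lem-En-dissipative}, which shows that $t\mapsto\En(u(t))$ is non-increasing; since $\En\ge 0$, it is bounded below and the limit $\En_\infty:=\lim_{t\to\infty}\En(u(t))$ exists. On the bounded domain $\bO$ with $1\le d\le 3$ and $p,q,\beta$ as in \eqref{eqn-H_0^1-in-L^p-asp} and \eqref{eqn-values-q}, the embeddings $W^{2,q}(\bO)\cap W^{1,q}_0(\bO)\embed H_0^1(\bO)$ and $W^{2,q}(\bO)\embed C(\overline{\bO})\embed L^p(\bO)$ make $\En$ continuous with respect to $W^{2,q}$-convergence. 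Hence, for any $w\in\omega(u)$ with $u(t_n)\to w$ in $W^{2,q}(\bO)\cap W^{1,q}_0(\bO)$ along some $t_n\to\infty$, I get $\En(w)=\lim_n\En(u(t_n))=\En_\infty$; as $w$ is arbitrary, $\En\equiv\En_\infty$ on $\omega(u)$.

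For the critical-point claim I would invoke a LaSalle-type invariance argument. First, every $w\in\omega(u)$ lies in $D(A)\cap\bM$: by Theorem \ref{Thm-D(A^beta)} the orbit $\{u(t):t\ge1\}$ is bounded in $D(A^\beta)\subset D(A)$ for $\beta\in(1,\tfrac{3}{2})$, and since $u(t_n)\to w$ in particular in $L^2(\bO)$ with $\norm{u(t_n)}_{L^2(\bO)}=1$, we obtain $\norm{w}_{L^2(\bO)}=1$. By the precompactness of $\{u(t):t\ge1\}$ in $W^{2,q}(\bO)\cap W^{1,q}_0(\bO)$ (Corollary \ref{cor-precompact}) together with the general theory \cite[Theorem 4.3.3]{DH-81}, the set $\omega(u)$ is invariant under the solution semiflow $\{S(t)\}_{t\ge0}$. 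Writing $v(t):=S(t)w$ for the unique global strong solution of \eqref{eqn-main-heat} with $v(0)=w$ guaranteed by Theorem \ref{Thm-Main-Exis}, invariance yields $v(t)\in\omega(u)$ for all $t\ge0$.

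I would then close the argument using the constancy from the first step. Since $v(t)\in\omega(u)$ and $\En$ is constant on $\omega(u)$, we have $\En(v(t))=\En_\infty=\En(w)$ for every $t\ge0$. Applying the energy equality \eqref{eqn-bound-u_t-En_0} to $v$ gives
\begin{align*}
\int_0^t\Big\|\frac{\partial v(s)}{\partial s}\Big\|_{L^2(\bO)}^2\,ds=\En(v(0))-\En(v(t))=0,\qquad t\ge0,
\end{align*}
so that $\frac{\partial v}{\partial t}\equiv0$. By the gradient-flow identity \eqref{eqn-gradient-flow}, $\nabla_\bM\En(w)=-\frac{\partial v}{\partial t}(0)=0$, whence $w$ is a critical point of $\En|_\bM$, equivalently a stationary solution of \eqref{eqn-stationary} by Lemma \ref{Lem-stat-iff-critical}.

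The main obstacle I anticipate is the rigorous justification of the invariance of $\omega(u)$ in the $W^{2,q}$-topology, which rests on the continuity of the semiflow $\{S(t)\}$ in that topology; this is underpinned by the uniform-in-time bound $\sup_{t\ge1}\norm{u(t)}_{D(A^\beta)}\le K_\beta$ of Theorem \ref{Thm-D(A^beta)} and the compact embedding $D(A^\beta)\embed W^{2,q}(\bO)$, and one must check that the solution issued from $w$ coincides with the abstract trajectory furnished by invariance. An alternative route that avoids invariance is a Barbalat-type argument: from \eqref{eqn-bound-u_t-En_0} one has $\frac{\partial u}{\partial t}\in L^2(0,\infty;L^2(\bO))$, and the uniform $D(A^\beta)$-bound allows one to show that $t\mapsto\norm{\frac{\partial u}{\partial t}(t)}_{L^2(\bO)}^2$ is uniformly continuous on $[1,\infty)$, forcing $\norm{\frac{\partial u}{\partial t}(t)}_{L^2(\bO)}\to0$ as $t\to\infty$; continuity of $\nabla_\bM\En$ along $u(t_n)\to w$ then yields $\nabla_\bM\En(w)=0$ directly.
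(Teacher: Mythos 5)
Your proof is correct but takes a genuinely different route from the paper's. For the critical-point claim the paper does not argue from scratch: it cites \cite[Proposition 5.12]{AB+ZB+MTM-25+} from the companion work to assert that every $w\in\omega(u)$ solves the stationary equation \eqref{eqn-stationary} in $L^{p/(p-1)}(\bO)+H^{-1}(\bO)$ and lies in $\bM$, and then invokes Lemma \ref{Lem-stat-iff-critical}; constancy of $\En$ on $\omega(u)$ is afterwards obtained by writing the energy identity \eqref{eqn-bound-u_t-En_0} along two sequences $t_n^1,t_n^2\to\infty$ and subtracting, both limits equalling $\En(u_0)-\int_0^\infty\|\partial_s u(s)\|_{L^2(\bO)}^2\,ds$ --- essentially your monotone-limit-plus-continuity observation in different dress. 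Note the paper proves stationarity \emph{first} and constancy second, the reverse of your order, which is forced on you because your LaSalle step consumes the constancy. Your replacement of the external citation by invariance of $\omega(u)$, constancy of $\En$ there, and the energy equality along $v(t)=S(t)w$ --- forcing $\partial_t v\equiv 0$ and hence $\nabla_\bM\En(w)=0$ --- is sound and self-contained: invariance is indeed part of \cite[Theorem 4.3.3]{DH-81}, the very result the paper invokes in Corollary \ref{Cor-omega-compact}, and your preliminary observations ($w\in D(A)\cap\bM$ via the uniform bound of Theorem \ref{Thm-D(A^beta)} and weak compactness) are exactly what make Theorem \ref{Thm-Main-Exis} and \eqref{eqn-bound-u_t-En_0} applicable to $v$. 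The one point you rightly flag, continuity of the semiflow in the $W^{2,q}$ topology, can be discharged economically: continuous dependence in $L^2(\bO)$ (available from the monotonicity estimates underlying uniqueness) combined with precompactness of the orbit in $W^{2,q}(\bO)\cap W^{1,q}_0(\bO)$ upgrades $L^2$-convergence of $u(t_n+t)$ to $W^{2,q}$-convergence by uniqueness of limits. The trade-off is clear: the paper's proof is two lines but leans on the companion paper, whereas yours is longer but stays entirely inside the present framework. Your Barbalat-type fallback would need more care than you suggest, since the second-derivative bounds of Proposition \ref{Lem-u_t-is-C} carry constants depending on $T$, so uniform continuity of $t\mapsto\|\partial_t u(t)\|_{L^2(\bO)}^2$ on $[1,\infty)$ would have to be extracted by re-running those estimates on unit time intervals using the uniform $D(A^\beta)$ bound.
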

	
	\begin{proof}
		First note from the \cite[Proposition 5.12]{AB+ZB+MTM-25+}  that, each $w\in \omega(u)$ solves the stationary equation \eqref{eqn-stationary} in $L^{\frac{p}{p-1}}(\bO)+H^{-1}(\bO)$ and $w\in \bM$. 
		From Lemma \ref{Lem-stat-iff-critical}, it follows that each $w$ is also a critical point of $\En|_\bM$.
		
		Our next aim is to show that $\En(w)$ is a fixed constant, for every $w\in \omega(u)$, i.e.,  $$\En(w_1) = \En(w_2),\ \text{ for every }\ w_i\in \omega(u),\ i=1,2.$$
		By the definition of $\omega(u)$, there exist $\{t_n^i\}_{n\in\N}$, $i=1,2$,
		such that
		\begin{align*}
			\En(u(t_n^i)) \to \En(w_i),\ \text{ as }\ n\to \infty,\ \text{ for }\ i=1,2.
		\end{align*}
		Using \eqref{eqn-bound-u_t-En_0}, for each $i=1,2$, we also have
		\begin{align*}
			\En(u(t_n^1)) = \En(u_0) - \int_{0}^{t_n^1} \norm{\frac{\partial u(t)}{\partial t}}_{L^2(\bO)}^2dt,
		\end{align*}
		and
		\begin{align*}
			\En(u(t_n^2)) = \En(u_0) - \int_{0}^{t_n^2} \norm{\frac{\partial u(t)}{\partial t}}_{L^2(\bO)}^2dt.
		\end{align*}
		Passing to the limit as $n\to \infty$ in the above two equations, and then subtracting, we find 
		\begin{align}
			\En(w_1) - \En(w_2)
			& =\lim_{n\to\infty} \En(u(t_n^1))-\lim_{n\to\infty}\En(u(t_n^2))\\
			& = \int_{0}^{\infty} \norm{\frac{\partial u(t)}{\partial t}}_{L^2(\bO)}^2dt - \int_{0}^{\infty} \norm{\frac{\partial u(t)}{\partial t}}_{L^2(\bO)}^2dt = 0,
		\end{align}
		where we have utilized the fact that $\frac{\partial u}{\partial t} \in L^2(0,\infty; L^2(\bO))$ (see \eqref{eqn-bound-u_t-En_0}).
		Hence $\En(w)$ is a fixed constant, for every $w\in \omega(u)$.
	\end{proof}

	Let us state the main result of this section, motivated from \cite[Theorem 3.2]{PR-06}.
	
	\begin{theorem}\label{Thm-u(t)-cgs-u^infty}
		Let  $u_0 \in D(A)\cap\bM$ and suppose that 
		\begin{align*}
			p\in [2,\infty),  \text{ if }\  1\le d \le 3,
			\ \text{ and }\ q\in \left[2, \frac{2d}{d+4-4\beta}\right), \ \beta \in \left(1,\frac{3}{2}\right),
		\end{align*}
		are fixed.
		Assume that $u$ is the unique global strong solution to problem \eqref{eqn-main-heat}, whose existence is guaranteed by Theorem \ref{Thm-Main-Exis}.  Then, there exists $u^\infty\in W^{2,q}(\bO)\cap W^{1,q}_0(\bO)\cap\bM$ which is a stationary solution to the problem \eqref{eqn-stationary} such that 
		\begin{align}\label{eqn-convergence in W^2q}
			\norm{u(t)- u^\infty}_{W^{2,q}(\bO)} \to 0 \ \text{ as }\ t \to \infty.
		\end{align}
		In particular, it holds that
		\begin{align}\label{eqn-convergence in D(A)-1}
			\norm{u(t)- u^\infty}_{D(A)} \to 0 \ \text{ as }\ t \to \infty.
		\end{align}
	\end{theorem}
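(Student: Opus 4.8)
The plan is to run the classical \L{}ojasiewicz--Simon convergence scheme, combining the energy equality, the gradient-flow identity, the precompactness of the orbit, and the refined inequality of Theorem~\ref{Thm-LSI-our-setting}. First I would record the consequences of the energy equality \eqref{eqn-bound-u_t-En_0}: the map $t\mapsto\En(u(t))$ is non-increasing and bounded below, hence converges to some $\En_\infty\ge0$, and $\int_0^\infty\|\partial_s u(s)\|_{L^2(\bO)}^2\,ds=\En(u_0)-\En_\infty<\infty$. By Lemma~\ref{Lem-omega-limit-Stat} the functional $\En$ is constant and equal to $\En_\infty$ on $\omega(u)$, and every element of $\omega(u)$ is a critical point of $\En|_\bM$ solving \eqref{eqn-stationary}. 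If $\En(u(t_0))=\En_\infty$ for some finite $t_0$, then monotonicity forces $\En(u(t))\equiv\En_\infty$ and thus $\partial_t u\equiv0$ on $[t_0,\infty)$, so $u(t)=u(t_0)$ is already stationary and the theorem holds trivially; hence I may assume $\En(u(t))>\En_\infty$ for all $t\ge0$.

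Next I would upgrade the pointwise inequality of Theorem~\ref{Thm-LSI-our-setting} to a uniform one on a tubular neighbourhood of $\omega(u)$. For each $\a\in\omega(u)$ (which satisfies $g^\prime(\a)=\a$ with $\|\a\|_{L^2(\bO)}=1\neq0$) there are constants $C_\a,\sigma_\a>0$ and $\theta_\a\in(0,\tfrac12]$ for which $|\En(v)-\En_\infty|^{1-\theta_\a}\le C_\a\|\nabla_\bM\En(v)\|_{L^2(\bO)}$ holds on the $D(A)$-ball $\{v\in\bM:\|v-\a\|_{D(A)}\le\sigma_\a\}$. Since $\omega(u)$ is compact in $W^{2,q}(\bO)\cap W^{1,q}_0(\bO)$ (Corollary~\ref{Cor-omega-compact}) and, for $q\ge2$ on the bounded domain $\bO$, the embedding $W^{2,q}(\bO)\embed H^2(\bO)=W^{2,2}(\bO)$ controls the $D(A)$-norm, a finite covering of $\omega(u)$ yields uniform constants $C,\sigma>0$, $\theta\in(0,\tfrac12]$ (take $\theta=\min_i\theta_{\a_i}$, using $|\En-\En_\infty|\le1$ near $\omega(u)$ to lower the exponent) such that
\begin{align*}
	|\En(v)-\En_\infty|^{1-\theta}\le C\,\|\nabla_\bM\En(v)\|_{L^2(\bO)}\quad\text{whenever } v\in\bM,\ \dist_{D(A)}(v,\omega(u))<\sigma.
\end{align*}
Because $\dist_{W^{2,q}}(u(t),\omega(u))\to0$ by \eqref{eqn-dist-cgs} and $W^{2,q}(\bO)\embed D(A)$, there is $t_1\ge1$ with $u(t)\in\bM$ and $\dist_{D(A)}(u(t),\omega(u))<\sigma$ for all $t\ge t_1$; the trajectory therefore remains in the tube from $t_1$ onward, which is exactly what makes the differential inequality applicable without a separate trapping argument.

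The heart of the argument is then the finite-length estimate. Using the gradient-flow identity \eqref{eqn-gradient-flow}, $\partial_t u=-\nabla_\bM\En(u)$, together with $\tfrac{d}{dt}\En(u(t))=-\|\nabla_\bM\En(u(t))\|_{L^2(\bO)}^2=-\|\partial_t u(t)\|_{L^2(\bO)}^2$ from \eqref{eqn-En'} and the uniform inequality, I would differentiate $(\En(u(t))-\En_\infty)^\theta$ to obtain, for a.e.\ $t\ge t_1$,
\begin{align*}
	-\frac{d}{dt}\big(\En(u(t))-\En_\infty\big)^\theta=\theta\,\big(\En(u(t))-\En_\infty\big)^{\theta-1}\|\partial_t u(t)\|_{L^2(\bO)}^2\ge\frac{\theta}{C}\,\|\partial_t u(t)\|_{L^2(\bO)}.
\end{align*}
Integrating over $[t_1,T]$ and letting $T\to\infty$ gives $\int_{t_1}^\infty\|\partial_t u(s)\|_{L^2(\bO)}\,ds\le\tfrac{C}{\theta}\big(\En(u(t_1))-\En_\infty\big)^\theta<\infty$, so $\|u(t)-u(\tau)\|_{L^2(\bO)}\le\int_\tau^t\|\partial_s u(s)\|_{L^2(\bO)}\,ds\to0$ as $\tau,t\to\infty$; hence $u(t)$ converges in $L^2(\bO)$ to some $u^\infty$.

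Finally I would bootstrap this $L^2$-convergence to the claimed topologies. By Corollary~\ref{cor-precompact} the orbit $\{u(t):t\ge1\}$ is precompact in $W^{2,q}(\bO)\cap W^{1,q}_0(\bO)$; any subsequential $W^{2,q}$-limit of $u(t_n)$ is also its $L^2$-limit and therefore equals $u^\infty$, so the precompact trajectory has the unique cluster point $u^\infty$, i.e.\ $\omega(u)=\{u^\infty\}$ and, by \eqref{eqn-dist-cgs}, $\|u(t)-u^\infty\|_{W^{2,q}(\bO)}\to0$. In particular $u^\infty\in\omega(u)$ is a stationary solution of \eqref{eqn-stationary} with $u^\infty\in\bM$, and the embedding $W^{2,q}(\bO)\embed D(A)$ yields $\|u(t)-u^\infty\|_{D(A)}\to0$, establishing \eqref{eqn-convergence in W^2q} and \eqref{eqn-convergence in D(A)-1}. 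The main obstacle I anticipate is the second step: assembling a \emph{single} \L{}ojasiewicz exponent and constant valid on a full $D(A)$-tube around $\omega(u)$, and reconciling the $W^{2,q}$-topology in which precompactness and $\dist\to0$ are known with the $D(A)$-topology in which Theorem~\ref{Thm-LSI-our-setting} is stated, since everything downstream depends on the trajectory remaining inside that tube.
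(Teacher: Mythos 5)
Your proposal is correct, and it implements the same classical \L{}ojasiewicz--Simon scheme as the paper (energy identity, gradient-flow structure \eqref{eqn-gradient-flow}--\eqref{eqn-En'}, the differential inequality for $(\En(u(t))-\En_\infty)^\theta$, finite length in $L^2(\bO)$, and the precompactness bootstrap of Corollaries \ref{cor-precompact}--\ref{Cor-omega-compact} to upgrade to $W^{2,q}$ and then $D(A)$), but the middle of your argument is genuinely different. The paper applies Theorem \ref{Thm-LSI-our-setting} near a \emph{single} limit point $w\in\omega(u)$ and must therefore keep the trajectory trapped in a $\sigma(w)/\wtilde{C}$-ball around that specific $w$; this is done by introducing the exit time $\wtilde{T}$ in \eqref{eqn-def-T}, assuming $\wtilde{T}<\infty$, and deriving a contradiction from the length estimate \eqref{eqn-u_t-bound-t_N-T} together with precompactness. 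You instead assemble a \emph{uniform} inequality on a full $D(A)$-tube around the compact set $\omega(u)$ via a finite cover, unifying the exponent by $\theta=\min_i\theta_{\a_i}$ (using $|\En-\En_\infty|\le1$ near $\omega(u)$, which is legitimate since $\En$ is continuous on $D(A)$ and $\equiv\En_\infty$ on $\omega(u)$) and the constant by $C=\max_i C_{\a_i}$; then $\dist_{W^{2,q}}(u(t),\omega(u))\to0$ from \eqref{eqn-dist-cgs}, combined with the continuous embedding $W^{2,q}(\bO)\cap W^{1,q}_0(\bO)\embed D(A)$ for $q\ge2$, keeps the trajectory in the tube for all $t\ge t_1$, so the differential inequality integrates directly over $[t_1,\infty)$ with no contradiction argument. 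What each buys: your route is cleaner and arguably tighter than the paper's (the paper minimizes $\theta$ over a finite subcover but then uses the inequality \eqref{eqn-LS} with the constant and radius attached to one $w$, a slight looseness your uniform construction removes), and your explicit reduction to the case $\En(u(t))>\En_\infty$ for all $t$ correctly disposes of the division by $(\En(u(t))-\En_0)^{1-\theta}$ that the paper leaves implicit; the paper's exit-time device, on the other hand, is the traditional Simon-style argument and avoids having to build the tubular-neighborhood inequality at all. Both proofs hinge on exactly the same topological reconciliation you flagged as the main obstacle --- the embedding constant $\wtilde{C}$ of $W^{2,q}(\bO)\cap W^{1,q}_0(\bO)\embed D(A)$ --- and your resolution of it matches the paper's.
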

	
	\begin{remark}
		It is important to note that Theorem \ref{Thm-u(t)-cgs-u^infty} establishes convergence only when the trajectory lies in a neighborhood of $u^\infty$ with respect to the $W^{2,q}-$norm. Therefore, this result characterizes a local type of stability.
	\end{remark}

	\begin{proof}[Proof of Theorem \ref{Thm-u(t)-cgs-u^infty}]
		Let us choose and fix $u_0\in D(A)\cap \bM$. Assume $u$ is the unique  global strong solution to the problem \eqref{eqn-main-heat}, proved in Theorem \ref{Thm-Main-Exis}. 
		
		\vspace{1mm}
		\noindent
		\textbf{Step I.}  First note that, 
		for every $w\in \omega(u)$, Lemma \ref{Lem-omega-limit-Stat} asserts
		\begin{align}\label{omega-has-constant-energy}
			\En(w) = \En_0 = \text{constant},
		\end{align}
		and all the points in $\omega(u)$ are critical points of $\En|_\bM$.
		Thus, by an application of the \L{}ojasiewicz-Simon gradient inequality, see Theorem \ref{Thm-LSI-our-setting}, for each $(\a=) w\in \omega(u) \subset D(A)$, there exist $C(w),\sigma(w)>0$ and $\theta(w) \in (0,\frac{1}{2}]$ such that
		$$\text{ for all }\ v\in\bM\ \text{ with }\ \norm{v-w}_{D(A)} < \sigma(w),$$
		we have
		\begin{equation}\label{eqn-Lojasiewicz-inequality-omega}	
			\abs{\En(v)- \En(w)}^{1-\theta} \leq 
			C \norm{\nabla_\bM \En (v)}_{L^2(\bO)}.
		\end{equation}
		On the other hand, for each fixed $w \in \omega(u)$, the definition of $\omega(u)$ (see \eqref{eqn-omega-limit}), continuity of $\En$ and \eqref{omega-has-constant-energy} assert that for a given $\eps>0$ ($\eps<<\sigma(w)$), there exists $N\in\N$ and some constant $C>0$ such that
		\begin{equation}
			\norm{u(t_n) - w}_{W^{2,q}(\bO)} < \frac{\eps}{2}\ \text{ and }\ \frac{C}{\theta}\abs{\En(u(t_n)) - \En_0}^\theta < \frac{\eps}{2},\ \text{ for every }\ n\geq N,\label{eqn-conv-est-u(t_n)-w}
		\end{equation}
		where $C>0$ is the same constant as appeared in \eqref{eqn-Lojasiewicz-inequality-omega}.
		We now choose $N\in\N$ large enough so that 
		\begin{align}\label{eqn-L^2-bound}
			\norm{u(t_n) - w}_{L^2(\bO)} < \frac{\eps}{2},\ \text{ for all }\ n\ge N,
		\end{align}
		which is possible due to the Sobolev embedding $W^{2,q}(\bO)\cap W^{1,q}_0(\bO)\embed L^2(\bO).$
		
		\vspace{1mm}
		\noindent
		\textbf{Step II.} Observe  that a collection of open balls, say $\{B(w,\sigma(w))\}_{w\in\omega(u)}$ in $W^{2,q}(\bO)\cap W^{1,q}_0(\bO)$ form an open cover of $\omega(u)$. 
		But from Corollary \ref{Cor-omega-compact}, we have obtained that $\omega(u)$ is a compact set, therefore, there exists a finite subcover, say $\{B(w_j,\sigma(w_j))\}_{j=1}^m$, such that
		\begin{align*}
			\omega(u) \subset \bigcup_{j=1}^m B(w_j,\sigma(w_j)) =: U.
		\end{align*}
		Let us set $\theta = \min\{\theta(w_j) : 1\leq j\leq m\}$. Then, we recall from Corollary \ref{Cor-omega-compact} that
		\begin{align*}
			\lim_{ t \to \infty}\dist (u(t), \omega(u)):= \lim_{ t \to \infty}\inf_{\{w\in \omega(u)\}}\norm{u(t) - w}_{W^{2,q}(\bO)} = 0,
		\end{align*}
		which implies there exist $t_0>0$ such that $u(t) \in U$, for all $t\ge t_0$ and still we can choose $N\in\N$ large enough so that $u(t) \in U$, for any $t\geq t_N$. We denote 
		\begin{equation}\label{eqn-def-T}
			\wtilde{T} := \sup\big\{t\geq t_N : \norm{u(s) - w}_{W^{2,q}(\bO)} <  \sigma(w)/\wtilde{C},\ \text{ for all } \ s\in [t_N, t] \big\},
		\end{equation}
		where $\wtilde{C}$ is the embedding constant of $W^{2,q}(\bO)\cap W^{1,q}_0(\bO) \embed D(A)$ and suppose that $\wtilde{T}<\infty$.  
		Since $\norm{u(s) - w}_{W^{2,q}(\bO)} <  \sigma(w)/\wtilde{C}$, it implies $\norm{u(s) - w}_{D(A)} <  \sigma(w)$, for all $s\in (t_N, \wtilde{T})$. Thus, it is straightforward from \eqref{eqn-Lojasiewicz-inequality-omega} that
		\begin{align}
			C\norm{\nabla_\bM \En(u(t))}_{L^2(\bO)} \geq \abs{\En(u(t)) - \En_0}^{1-\theta}. \label{eqn-LS}
		\end{align}
		Using the first equation of \eqref{eqn-gradient-flow} in \eqref{eqn-En'}, we get 
		\begin{align}
			\frac{d}{dt} \En(u(t)) 
			=  - \norm{\frac{\partial u(t)}{\partial t}}_{L^2(\bO)}\norm{\nabla_\bM \En(u(t))}_{L^2(\bO)}.\label{eqn-En_t-gradEn}
		\end{align}
		By utilizing \eqref{eqn-LS} and  \eqref{eqn-En_t-gradEn}, for $t\in (t_N,\wtilde{T})$, we obtain
		{\begin{align}
				-\frac{d}{dt} ( \En(u(t)) - \En_0)^\theta & = - \theta ( \En(u(t)) -  \En_0)^{\theta-1} \frac{d}{dt} \En(u(t))\\
				& = \theta \frac{\norm{\nabla_\bM \En(u(t))}_{L^2(\bO)}}{( \En(u(t)) - \En_0 )^{1-\theta}}\norm{\frac{\partial u(t)}{\partial t}}_{L^2(\bO)}\label{eqn-L2-grad}\\ 
				& \geq \frac{\theta}{C} \norm{\frac{\partial u(t)}{\partial t}}_{L^2(\bO)}.
		\end{align}}
		Integrating the above inequality in time over $(t_N,\wtilde{T})$ and then, using Lemma \ref{Lem-inequality-theta} yield
		{\begin{align}
				\int_{t_N}^{\wtilde{T}} \norm{\frac{\partial u(r)}{\partial r}}_{L^2(\bO)} dr 
				& \leq \frac{C}{\theta}\left(( \En(u(t_N) - \En_0))^\theta - ( \En(u(\wtilde{T})) - \En_0)^\theta\right)\\
				& \leq \frac{C}{\theta}\left(\En(u(t_N))- \En(u(\wtilde{T}))\right)^\theta\\
				& \leq \frac{C}{\theta} (\En(u(t_N))- \En_0)^\theta\\
				& < \frac{\eps}{2},\label{eqn-u_t-bound-t_N-T}
		\end{align}}
		where we have used the bound from \eqref{eqn-conv-est-u(t_n)-w} and the dissipativity of $\En$. In addition, by using \eqref{eqn-L^2-bound}, the Fundamental Theorem of Calculus for Lebesgue integrals and the inequality \eqref{eqn-u_t-bound-t_N-T}, we deduce
		\begin{align*}
			\|u(\wtilde{T})-w\|_{L^2(\bO)} - \frac{\eps}{2}
			& < \|u(\wtilde{T})-w\|_{L^2(\bO)} - \norm{u(t_N)-w}_{L^2(\bO)} \leq \|u(\wtilde{T})-u(t_N)\|_{L^2(\bO)}\\
			& \leq \bigg\|\int_{t_N}^{\wtilde{T}} \frac{\partial u(r)}{\partial r} dr\bigg\|_{L^2(\bO)} \leq \int_{t_N}^{\wtilde{T}} \norm{\frac{\partial u(r)}{\partial r}}_{L^2(\bO)} dr < \frac{\eps}{2}.
		\end{align*}
		It implies
		\begin{align}\label{eqn-L^2-cgs}
			\|u(\wtilde{T})-w\|_{L^2(\bO)} < \eps.
		\end{align}
		\textbf{Step III.} Due to the precompactness of the set $\{u(t) : t\geq 1\}$ in $W^{2,q}(\bO)\cap W^{1,q}_0(\bO)$, see Corollary \ref{Cor-omega-compact}, every sequence converging in $L^2(\bO)$ converges in $W^{2,q}(\bO)\cap W^{1,q}_0(\bO)$, it suffices to choose $\eps$ small enough to get
		\begin{align*}
			\|u(\wtilde{T})- w\|_{W^{2,q}(\bO)}  < \frac{\sigma(w)}{\wtilde{C}}.
		\end{align*}
		This immediately contradicts the definition of $\wtilde{T}$ (see \eqref{eqn-def-T}), so that   $\wtilde{T}= \infty$, and the inequality  \eqref{eqn-u_t-bound-t_N-T} becomes
		\begin{align*}
			\int_{t_N}^\infty \norm{\frac{\partial u(r)}{\partial r}}_{L^2(\bO)} dr < \frac{\eps}{2}.
		\end{align*}
		Thanks to Remark \ref{Rmk-C^1} for ensuring that $u \in C^1((0,\infty); L^2(\bO))$, which implies the existence of a limit
		\begin{align*}
			\lim_{t\to\infty}u(t)= u^\infty \ \text{ in }\ L^2(\bO).
		\end{align*}
		Given the uniqueness of the limit and the convergence to points in $\omega(u)$ in the $W^{2,q}(\bO) \cap W^{1,q}_0(\bO)-$norm (as established in \eqref{eqn-conv-est-u(t_n)-w}), we conclude that
		\begin{align*}
			\lim_{t\to\infty}u(t)= w = u^\infty\ \text{ in }\ W^{2,q}(\bO)\cap W^{1,q}_0(\bO),
		\end{align*}
		where $u^\infty$ also solves the stationary problem \eqref{eqn-stationary}. 
		Further, it implies that 
		\begin{align*}
			\norm{u(t)- u^\infty}_{D(A)} \to 0 \ \text{ as }\ t \to \infty.
		\end{align*}
		Hence it shows that the unique solution $u$ to the problem \eqref{eqn-main-heat-recall} converges to a stationary solution $u^\infty$ of \eqref{eqn-stationary}, as $t\to \infty$.
	\end{proof}
	
	\begin{remark}
		If $u^\infty$ is a stationary solution which solves \eqref{eqn-stationary}, i.e.,
		\begin{align*}
			\Delta u^\infty - \abs{u^\infty}^{p-2}u^\infty + \big(\norm{\nabla u^\infty}_{L^2(\bO)}^2 + \norm{u^\infty}_{L^p(\bO)}^p\big)u^\infty = 0,
		\end{align*} 
		then, it follows that
		\begin{align}
			\Delta (-u^\infty) - \abs{-u^\infty}^{p-2}(-u^\infty) + \big(\norm{\nabla (-u^\infty)}_{L^2(\bO)}^2 + \norm{(-u^\infty)}_{L^p(\bO)}^p\big)(-u^\infty) = 0,
		\end{align}
		i.e., $-u^\infty$ also solves \eqref{eqn-stationary}. 
		Hence, in the general case, stationary solutions are not unique, as at least two solutions always exist. However, if we restrict our attention to positive stationary states, then uniqueness is guaranteed by \cite[Proposition 5.12]{AB+ZB+MTM-25+}. Therefore, for integer values of $p$, the asymptotic behavior described in Theorem \ref{Thm-u(t)-cgs-u^infty} aligns with the result stated in \cite[Theorem 1.10]{AB+ZB+MTM-25+}.
	\end{remark}

	\appendix
	\renewcommand{\thesection}{\Alph{section}}
	\numberwithin{equation}{section}
	
	\section{}\label{Sec-Appendix}
	This section presents several key results, including a Representation Theorem, a particular case of the Lions–Magenes Lemma, the Spectral Theorem for self-adjoint operators, {alternative proofs of the two results demonstrated in Subsection \ref{Subsec-Yosida} by spectral measures,} and a basic auxiliary lemma essential to the proof of Theorem \ref{Thm-u(t)-cgs-u^infty}.

	We begin by recalling a well-known result, a Representation Theorem (see \cite[Theorem 2.23]{TK-95}), which was employed to deduce that $D(A^{\frac{1}{2}}) =  H_0^1(\bO)$.
	\begin{theorem}\label{Thm-Rep}
		Let $\mathfrak{f}$ be a densely defined, closed symmetric form, $\mathfrak{f} \ge 0$, let $\Fn= T_{\mathfrak{f}}$ be the associated self-adjoint operator. Then, we have
		\begin{align*}
			D(\Fn^{\frac{1}{2}}) = D(\mathfrak{f})\ \text{ and }\ \mathfrak{f}(u,v) = (\Fn^{\frac{1}{2}}u,\Fn^{\frac{1}{2}}v)\ \text{ for }\ u,v \in D(\mathfrak{f}).
		\end{align*}
	\end{theorem}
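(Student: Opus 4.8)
The plan is to prove the identity first on the operator domain $D(\Fn)$, where both sides are computed by hand, and then to propagate it to the whole form domain by a completion argument, using that $D(\Fn)$ is simultaneously a core for the form $\mathfrak{f}$ and for the square root $\Fn^{1/2}$. Denote by $\Hn$ the ambient Hilbert space, with inner product $(\cdot,\cdot)$ and norm $\norm{\cdot}_{\Hn}$.

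First I would invoke the First Representation Theorem (the companion of \cite[Theorem 2.23]{TK-95}), which yields the nonnegative self-adjoint operator $\Fn=T_{\mathfrak{f}}$ with $D(\Fn)\subseteq D(\mathfrak{f})$, the identity $\mathfrak{f}(u,v)=(\Fn u,v)$ for $u\in D(\Fn)$, $v\in D(\mathfrak{f})$, and the key structural fact that $D(\Fn)$ is a core for $\mathfrak{f}$, i.e. dense in $D(\mathfrak{f})$ in the form norm $\norm{u}_+^2:=\mathfrak{f}(u,u)+\norm{u}_{\Hn}^2$. Since $\Fn\ge 0$ is self-adjoint, the Spectral Theorem for self-adjoint operators (stated in this appendix) defines $\Fn^{1/2}$ through functional calculus; it is again nonnegative self-adjoint, satisfies $(\Fn^{1/2})^2=\Fn$, and obeys $D(\Fn)=D((\Fn^{1/2})^2)\subseteq D(\Fn^{1/2})$. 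For $u,v\in D(\Fn)$ the asserted identity is then immediate, using $\Fn=\Fn^{1/2}\Fn^{1/2}$, the self-adjointness of $\Fn^{1/2}$, and $\Fn^{1/2}u,\,v\in D(\Fn^{1/2})$:
\[
\mathfrak{f}(u,v)=(\Fn u,v)=(\Fn^{1/2}\Fn^{1/2}u,\,v)=(\Fn^{1/2}u,\,\Fn^{1/2}v).
\]
Taking $u=v$ shows that $\norm{\cdot}_+$ and the graph norm $\norm{u}_{1/2}^2:=\norm{\Fn^{1/2}u}_{\Hn}^2+\norm{u}_{\Hn}^2$ of $\Fn^{1/2}$ coincide on $D(\Fn)$.

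Next I would run the completion argument. Since the form is closed, $(D(\mathfrak{f}),\norm{\cdot}_+)$ is complete and $D(\Fn)$ is $\norm{\cdot}_+$-dense in it; since $\Fn^{1/2}$ is closed, $(D(\Fn^{1/2}),\norm{\cdot}_{1/2})$ is complete and $D(\Fn)$ is $\norm{\cdot}_{1/2}$-dense in it, \emph{provided} $D((\Fn^{1/2})^2)$ is a core for $\Fn^{1/2}$. As the two norms agree on $D(\Fn)$, a sequence drawn from $D(\Fn)$ is $\norm{\cdot}_+$-Cauchy if and only if it is $\norm{\cdot}_{1/2}$-Cauchy; comparing the $\Hn$-limits of such sequences identifies the two completions as the \emph{same} subspace of $\Hn$, giving $D(\mathfrak{f})=D(\Fn^{1/2})$. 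The bilinear identity then extends from $D(\Fn)$ to all $u,v\in D(\mathfrak{f})$ by continuity: approximate $u,v$ by $u_n,v_n\in D(\Fn)$ in form norm, use $|\mathfrak{f}(\cdot,\cdot)|\le\norm{\cdot}_+\norm{\cdot}_+$ on the left and $\Fn^{1/2}u_n\to\Fn^{1/2}u$, $\Fn^{1/2}v_n\to\Fn^{1/2}v$ in $\Hn$ on the right.

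The main obstacle is precisely the square-root core statement that feeds the completion step; the form-core property is delivered by the First Representation Theorem, so the substantive work is to show $D((\Fn^{1/2})^2)$ is a core for $\Fn^{1/2}$. I would establish this by spectral truncation: letting $E$ be the spectral measure of $\Fn^{1/2}$ and setting $u_n:=E([0,n])u$ for $u\in D(\Fn^{1/2})$, each $u_n$ lies in $D((\Fn^{1/2})^k)$ for all $k$ (hence in $D((\Fn^{1/2})^2)=D(\Fn)$), while dominated convergence against the measures $(E(\cdot)u,u)$ and $(E(\cdot)\Fn^{1/2}u,\Fn^{1/2}u)$ gives $u_n\to u$ and $\Fn^{1/2}u_n\to\Fn^{1/2}u$ in $\Hn$, i.e. $u_n\to u$ in graph norm. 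Finally I would specialize the abstract statement to the paper's setting $\mathfrak{f}=a(\cdot,\cdot)$ and $\Fn=A$, so that $D(A^{1/2})=D(a)=H_0^1(\bO)$ together with $a(u,v)=(A^{1/2}u,A^{1/2}v)$ recovers exactly the identification $D(A^{1/2})=H_0^1(\bO)$ used earlier.
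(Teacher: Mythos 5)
Your proposal is correct, but note that the paper contains no proof of this statement to compare against: Theorem \ref{Thm-Rep} is quoted verbatim from Kato \cite[Theorem 2.23]{TK-95} (the Second Representation Theorem) and is used as a black box to identify $D(A^{1/2})=H_0^1(\bO)$. What you have written is a complete, self-contained proof along one of the two standard textbook routes. Kato's own argument is slightly different in its final step: he defines the auxiliary form $\mathfrak{f}'(u,v):=(\Fn^{1/2}u,\Fn^{1/2}v)$ on $D(\Fn^{1/2})$, checks that it is densely defined, closed and nonnegative with associated operator equal to $\Fn$, and then concludes $\mathfrak{f}'=\mathfrak{f}$ from the injectivity of the form--operator correspondence in the First Representation Theorem; you instead identify the two completions of $D(\Fn)$ directly, using that the form norm $\norm{\cdot}_+$ and the graph norm of $\Fn^{1/2}$ coincide there. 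Both routes rest on the same two structural facts, namely that $D(\Fn)$ is a core for $\mathfrak{f}$ (supplied by the First Representation Theorem) and a core for $\Fn^{1/2}$, and you correctly isolate the latter as the only substantive gap and close it by spectral truncation: for $u\in D(\Fn^{1/2})$ the vectors $u_n=E([0,n])u$ lie in $D(\Fn)$ and satisfy
\begin{align*}
\norm{u_n-u}_{\Hn}^2=\int_{(n,\infty)}(E(d\lambda)u,u)\to 0,
\qquad
\norm{\Fn^{1/2}(u_n-u)}_{\Hn}^2=\int_{(n,\infty)}\lambda^2\,(E(d\lambda)u,u)\to 0,
\end{align*}
the second integral being finite precisely because $u\in D(\Fn^{1/2})$. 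Your completion step is also sound as stated: since both norms dominate $\norm{\cdot}_{\Hn}$ and both $(D(\mathfrak{f}),\norm{\cdot}_+)$ and $(D(\Fn^{1/2}),\norm{\cdot}_{1/2})$ are complete (closedness of the form, respectively of the self-adjoint operator $\Fn^{1/2}$), a $\norm{\cdot}_+$-approximating sequence from $D(\Fn)$ converges in graph norm to an element of $D(\Fn^{1/2})$ whose $\Hn$-limit identifies it with the original vector, and symmetrically; the bilinear identity then extends by continuity. Your concluding specialization $\mathfrak{f}=a(\cdot,\cdot)$, $\Fn=A$ matches exactly how the paper applies the theorem.
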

	
	Next, we establish a special version of the well-known Lions-Magenes Lemma, which was utilized in the regularity analysis presented in Section \ref{Subsec-Thm-II}. For the Hilbert spaces $D(A) = H^2(\bO)\cap H_0^1(\bO),H_0^1(\bO),$ and $L^2(\bO)$, we know that 
	$$D(A)\embed H_0^1(\bO) \embed L^2(\bO),$$ each space is dense in the following one and the injections are continuous.

	\begin{lemma}\label{AbsC}
		Let a measurable function $u \in L^{2}(0,T;D(A))$ be such that $$\frac{\partial u}{\partial t} \in L^{2}(0,T;L^2(\bO)).$$ Then, $u$ is a.e. equal to a  function continuous from $[0,T]$ into $H_0^1(\bO)$
		and furthermore it holds that
		\begin{align}\label{AbsC1}
			\frac{d}{dt}\fourIdx{}{}{2}{H_0^1(\bO)}{\|u(t)\|}= 2\,\left( A u(t), \frac{\partial u(t)}{\partial t} \right)
		\end{align}
		in the sense of distributions on $[0,T]$.
	\end{lemma}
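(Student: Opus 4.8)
The plan is to reduce the statement to the \emph{classical} Lions--Magenes lemma for the Gelfand triple $H_0^1(\bO)\embed L^2(\bO)\embed H^{-1}(\bO)$ by means of the substitution $v:=A^{\frac12}u$, exploiting the self-adjointness of $A$ together with the identification $D(A^{\frac12})=H_0^1(\bO)$ recorded through Theorem \ref{Thm-Rep}. That identification supplies the isometry $\|A^{\frac12}w\|_{L^2(\bO)}=\|w\|_{H_0^1(\bO)}$ for $w\in H_0^1(\bO)$, so that $A^{\frac12}$ is an isometric isomorphism of $H_0^1(\bO)$ onto $L^2(\bO)$ and, by the equivalence of $\|A\cdot\|_{L^2(\bO)}$ with the graph norm, an isomorphism $A^{\frac12}:D(A)\to H_0^1(\bO)$. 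By duality it extends to a bounded operator $A^{\frac12}:L^2(\bO)\to H^{-1}(\bO)$ characterized by $\langle A^{\frac12}w,\phi\rangle_{H^{-1}(\bO),H_0^1(\bO)}=(w,A^{\frac12}\phi)$ for $w\in L^2(\bO)$ and $\phi\in H_0^1(\bO)$.

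First I would set $v=A^{\frac12}u$ and verify the two hypotheses of the abstract lemma. Applying the isomorphism $A^{\frac12}:D(A)\to H_0^1(\bO)$ pointwise in time to $u\in L^2(0,T;D(A))$ yields $v\in L^2(0,T;H_0^1(\bO))$. For the time derivative I would use that $A^{\frac12}:L^2(\bO)\to H^{-1}(\bO)$ is a fixed bounded linear operator, hence commutes with the distributional time derivative, so that $v'=A^{\frac12}u'\in L^2(0,T;H^{-1}(\bO))$ because $u'\in L^2(0,T;L^2(\bO))$. Thus $v$ meets the hypotheses of the classical Lions--Magenes lemma \cite{JLL+EM-II-72} on the triple $H_0^1(\bO)\embed L^2(\bO)\embed H^{-1}(\bO)$, which delivers $v\in C([0,T];L^2(\bO))$ and
\begin{align*}
	\frac{d}{dt}\|v(t)\|_{L^2(\bO)}^2 = 2\,\langle v'(t),v(t)\rangle_{H^{-1}(\bO),H_0^1(\bO)}
\end{align*}
in the sense of distributions on $[0,T]$.

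It then remains to transcribe this back. Since $A^{\frac12}$ is an isometric isomorphism of $H_0^1(\bO)$ onto $L^2(\bO)$, the statement $v\in C([0,T];L^2(\bO))$ is equivalent to $u\in C([0,T];H_0^1(\bO))$, and $\|v(t)\|_{L^2(\bO)}^2=\|u(t)\|_{H_0^1(\bO)}^2$. To identify the pairing I would note that for a.e.\ $t$ one has $u(t)\in D(A)$, hence $A^{\frac12}u(t)\in D(A^{\frac12})=H_0^1(\bO)$, so the characterization of the extension with $w=u'(t)$ and $\phi=A^{\frac12}u(t)$ gives
\begin{align*}
	\langle v'(t),v(t)\rangle = \langle A^{\frac12}u'(t),A^{\frac12}u(t)\rangle = \bigl(u'(t),A^{\frac12}A^{\frac12}u(t)\bigr) = \bigl(Au(t),u'(t)\bigr),
\end{align*}
which yields \eqref{AbsC1}.

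The main obstacle, and the only step demanding genuine care, is justifying that the time derivative commutes with $A^{\frac12}$ across its two realizations used above---as the isomorphism $D(A)\to H_0^1(\bO)$ when acting on $v$ and as the bounded extension $L^2(\bO)\to H^{-1}(\bO)$ when acting on $v'$---so that $v'=A^{\frac12}u'$ is unambiguous in $L^2(0,T;H^{-1}(\bO))$. This is precisely where the internal consistency of the functional calculus of the self-adjoint operator $A$ enters (cf.\ the spectral-measure discussion in Appendix \ref{Sec-Appendix}); once it is secured, everything else is a routine transfer through the isometry, the density chain $D(A)\embed H_0^1(\bO)\embed L^2(\bO)$ guaranteeing that the abstract lemma applies verbatim.
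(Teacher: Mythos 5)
Your proposal is correct, but it takes a genuinely different route from the paper's own proof. The paper reproves the lemma from scratch by mollification: it extends $u$ by zero in time, constructs $u_m\in C_0^\infty((0,T);D(A))$ with $u_m\to u$ in $L^2_{\loc}((0,T);D(A))$ and $\partial_t u_m\to \partial_t u$ in $L^2_{\loc}((0,T);L^2(\bO))$, verifies \eqref{AbsC1} classically for each $u_m$, shows $\|u_m\|_{H_0^1(\bO)}^2\to\|u\|_{H_0^1(\bO)}^2$ and $(Au_m,\partial_t u_m)\to(Au,\partial_t u)$ in $L^1_{\loc}((0,T))$, passes to the distributional limit, and then obtains continuity in two stages: the identity gives $\nabla u\in L^\infty(0,T;L^2(\bO))$, which together with $C_w([0,T];H^{-1}(\bO))$ yields weak continuity of $\nabla u$ into $L^2(\bO)$ by Strauss's theorem \cite{WAS-66}, and the absolute continuity of $t\mapsto\|u(t)\|_{H_0^1(\bO)}^2$ then upgrades weak to strong continuity. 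You instead conjugate by $A^{\frac12}$ and invoke the classical Lions--Magenes lemma \cite{JLL+EM-II-72} on the triple $H_0^1(\bO)\embed L^2(\bO)\embed H^{-1}(\bO)$ as a black box; this reduction is sound. The identification $D(A^{\frac12})=H_0^1(\bO)$ (Theorem \ref{Thm-Rep}) makes $A^{\frac12}$ an isometry of $H_0^1(\bO)$ onto $L^2(\bO)$ --- note that surjectivity uses $0\in\rho(A)$, which is available precisely because $\bO$ is a Poincar\'e domain with $\lambda_1>0$ in \eqref{2.1}; the duality extension $A^{\frac12}:L^2(\bO)\to H^{-1}(\bO)$ is consistent with the $L^2$-realization by self-adjointness of $A^{\frac12}$; and the commutation you flagged as the delicate step is indeed fine, since a fixed bounded linear operator pulls through the Bochner integral against scalar test functions and hence commutes with the vector-valued distributional derivative. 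The a.e.\ substitution $\langle v'(t),v(t)\rangle=(Au(t),u'(t))$ is legitimate because $u(t)\in D(A)$ for a.e.\ $t$, so $A^{\frac12}u(t)\in D(A^{\frac12})$. As for what each approach buys: yours is considerably shorter and transfers both the continuity and the identity in one stroke through the isometry, at the price of requiring the spectral identification of $D(A^{\frac12})$ and the invertibility of $A$ (on a domain without the Poincar\'e inequality one would replace $A^{\frac12}$ by $(I+A)^{\frac12}$ and adjust the identity accordingly); the paper's argument is longer but self-contained and elementary, needs neither fractional powers nor $\lambda_1>0$, and its regularization template adapts to settings where square roots are unavailable or inconvenient.
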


	{\begin{proof}
			Suppose $u \in L^2(0,T; D(A))$ with $\frac{\partial u}{\partial t} \in L^2(0,T; L^2(\bO))$.
			\vskip 1mm
			\noindent
			\textbf{Step I.} Let $\wtilde{u}:\R\to D(A)$ be the null extension of $u$ defined as $\displaystyle \wtilde{u}(t)=\begin{cases}
				u(t),& \text{ on }[0,T],\\
				0,& \text{ otherwise.}
			\end{cases}$\\
			By regularizing the function $\wtilde{u}$, we can obtain a sequence of smooth functions $\left\{u_{m}\right\}_{m\in\N}\subset C_{0}^{\infty}((0,T);D(A))$ with the property that
			\begin{align}
				\lim_{m\to\infty} \|u_{m} - u\|^2_{L_{\loc}^2((0,T); D(A))} 
				& = 0,\label{AbsC2}\\
				\lim_{m\to\infty} \left\| \frac{\partial u_{m}}{\partial t} - \frac{\partial u}{\partial t} \right\|_{L_{\loc}^{2}((0,T);L^2(\bO))} 
				& = 0.\label{AbsC3}
			\end{align}
			Since the sequence $\left\{u_{m}\right\}_{m\in\N}$ is smooth in time, for all $m\in\N$, it follows through integration by parts formula that
			\begin{align}
				\frac{d}{dt}\fourIdx{}{}{2}{H_0^1(\bO)}{\|u_{m}(t)\|} & = \frac{d}{dt}\left( \nabla u_{m}(t), \nabla u_{m}(t)\right) = 2\left( \nabla u_{m}(t),\frac{\partial \nabla u_{m}(t)}{\partial t}\right)\\
				& = 2\, \left(Au_{m}(t), \frac{\partial u_{m}(t)}{\partial t} \right),\ \text{ for all } \ t\in(0,T). \label{AbsC4}
			\end{align}
			\vskip 1mm
			\noindent
			\textbf{Step II.} Now, let $[a,b]\subset\subset(0,T)$ be an arbitrary interval. Then, by utilizing the integration by parts formula and the self-adjointness of the operator $A$ yield
			\begin{align*}
				& \int_{a}^{b}\big|\fourIdx{}{}{2}{H_0^1(\bO)}{\|u_{m}(t)\|} - \fourIdx{}{}{2}{H_0^1(\bO)}{\|u(t)\|}\big|dt\\ &= \int_{a}^{b} \left| (Au_{m}(t) - Au(t), u_{m}(t)) +  (u(t), Au_{m}(t)) - (u(t), Au(t)) \right| dt,\\
				& = \int_{a}^{b} \left| (Au_{m}(t) - Au(t), u_{m}(t))  +  (u(t), Au_{m}(t) - Au(t)) \right|dt,\\
				& \leq \int_{a}^{b}\left( \left\| u_{m}(t)-u(t) \right\|_{D(A)} \left\|u_{m}(t)\right\|_{L^2(\bO)} + \left\|u(t)\right\|_{L^2(\bO)} \left\|u_{m}(t)-u(t)\right\|_{D(A)} \right) dt\\
				& \leq \left(\int_{a}^{b} \left\| u_{m}(t)-u(t) \right\|_{D(A)}^{2} dt\right)^{\frac{1}{2}} \bigg[\left(\int_{a}^{b} \left\|u_m(t) \right\|_{L^2(\bO)}^{2} dt\right)^{\frac{1}{2}} + \left(\int_{a}^{b} \left\|u(t) \right\|_{L^2(\bO)}^{2} dt\right)^{\frac{1}{2}}\bigg].
			\end{align*}
			Letting $m\to \infty$, it follows from \eqref{AbsC2} and the assumption $u\in L^2(0,T; D(A))$ that
			\begin{equation}\label{AbsC-Norm}
				\|u_{m}\|_{H_0^1(\bO)}^{2} \to \|u\|_{H_0^1(\bO)}^{2}\ \text{ in }\ L_{\loc}^{1}\left((0,T)\right).
			\end{equation}
			Similarly, for any $[a,b]\subset\subset(0,T),$ we calculate
			\begin{align*}
				& \int_{a}^{b} \left| \left( \frac{\partial u_{m}(t)}{\partial t}, Au_{m}(t)  \right) - \left(\frac{\partial u(t)}{\partial t}, Au(t)  \right) \right| dt,\\
				& = \int_{a}^{b} \bigg| \left(\frac{\partial u_{m}(t)}{\partial t}-\frac{\partial u(t)}{\partial t}, Au_{m}(t) - Au(t) \right) + \left(\frac{\partial u_{m}(t)}{\partial t} - \frac{\partial u(t)}{\partial t}, Au(t) \right)\\
				&\qquad\qquad +  \left(\frac{\partial u(t)}{\partial t}, Au_{m}(t) - Au(t) \right) \bigg| dt,\\
				& \leq \int_{a}^{b}  \left\|\frac{\partial u_{m}(t)}{\partial t}-\frac{\partial u(t)}{\partial t}\right\|_{L^2(\bO)} \big[ \left\|u_{m}(t)-u(t)\right\|_{D(A)} + \left\|u(t)\right\|_{D(A)}\big]dt\\
				&\quad + \int_{a}^{b}\left\|\frac{\partial u(t)}{\partial t}\right\|_{L^2(\bO)} \left\|u_{m}(t)-u(t)\right\|_{D(A)} dt\\
				& \leq \bigg(\int_{a}^{b} \left\|\frac{\partial u_{m}(t)}{\partial t}-\frac{\partial u(t)}{\partial t}\right\|_{L^2(\bO)}^2 dt \bigg)^{\frac{1}{2}} \bigg[\bigg( \int_{a}^{b}\left\|u_{m}(t)-u(t)\right\|_{D(A)}^2 dt\bigg)^{\frac{1}{2}} + \bigg(\int_{a}^{b}\left\|u(t)\right\|_{D(A)}^2 dt \bigg)^{\frac{1}{2}} \bigg]\\
				& \quad + \bigg(\int_{a}^{b}\left\|\frac{\partial u(t)}{\partial t}\right\|_{L^2(\bO)}^2 dt\bigg)^{\frac{1}{2}} \bigg(\int_{a}^{b}\left\|u_{m}(t)-u(t) \right\|_{D(A)}^2\bigg)^{\frac{1}{2}},
			\end{align*}
			where the last step follows from H\"older's inequality. Passing $m\to \infty$, by using the fact that $u\in L^2(0,T; D(A))$ and $\frac{\partial u}{\partial t}\in L^2(0,T;L^2(\bO))$, and the convergences \eqref{AbsC2} and \eqref{AbsC3}, we get
			\begin{align*}
				\int_{a}^{b} \left| \bigg(\frac{\partial u_{m}(t)}{\partial t}, Au_{m}(t)\bigg) - \bigg(\frac{\partial u(t)}{\partial t}, Au(t) \bigg) \right| dt \to 0.
			\end{align*}
			Since $[a,b]$ is arbitrary interval, it implies  that
			\begin{equation}\label{AbsC-A}
				\bigg(Au_{m},\frac{\partial u_{m}}{\partial t} \bigg) \to \bigg(Au,\frac{\partial u}{\partial t}\bigg) \text{ in } L_{\loc}^{1}\left((0,T)\right), \ \text{ as }\ m\to \infty.
			\end{equation}
			\vskip 1mm
			\noindent
			\textbf{Step III.} Since the above convergences also hold in the sense of distributions, it follows that we may pass to the limit in \eqref{AbsC4} in the distributional sense. Specifically, fix $\phi \in C_0^\infty((0,T))$; then, integrating \eqref{AbsC4} over $(0,T)$ and applying the integration by parts formula provide
			\begin{align*}
				-\int_{0}^{T}\fourIdx{}{}{2}{H_0^1(\bO)}{\|u_{m}(t)\|}\phi'(t)dt & = 2 \int_{0}^{T} \bigg(Au_{m}(t),\frac{\partial u_{m}(t)}{\partial t}\bigg) \phi(t) dt.
			\end{align*}
			By utilizing the norm convergence from \eqref{AbsC-Norm} and \eqref{AbsC-A} in the above identity, as $m \to \infty$, we deduce
			\begin{align*}
				-\int_{0}^{T}\fourIdx{}{}{2}{H_0^1(\bO)}{\|u(t)\|}\phi'(t)dt & = 2 \int_{0}^{T} \bigg(Au(t),\frac{\partial u(t)}{\partial t}\bigg) \phi(t) dt.
			\end{align*}
			Thus, the following equality holds in the distributional sense in $(0,T)$:
			\begin{align*}
				\frac{d}{dt}\fourIdx{}{}{2}{H_0^1(\bO)}{\|u(t)\|}  = 2\, \bigg(Au(t),\frac{\partial u(t)}{\partial t}\bigg).
			\end{align*}
			We are left to show that $u$ is a.e. equal to a continuous function from $[0,T]$ into $H_0^1(\bO)$.
			\vskip 1mm
			\noindent
			\textbf{Step IV.} Since $u\in L^{2}(0,T;D(A))$ and $\frac{\partial u}{\partial t} \in L^{2}(0,T;L^2(\bO))$, it implies by an application of H\"older's inequality in time and space that
			\begin{align*}
				\int_{0}^{T}\left| \bigg(Au(t), \frac{\partial u(t)}{\partial t} \bigg) \right| dt \leq \left\|u\right\|_{L^{2}(0,T;D(A))} \left\|\frac{\partial u}{\partial t}\right\|_{L^{2}(0,T;L^2(\bO))}< \infty.
			\end{align*}
			That is, $( Au,\frac{\partial u}{\partial t}) \in L^{1}((0,T)).$ 
			Further, by integrating \eqref{AbsC1} from $0$ to $t$, for any $t\in(0,T)$, we get
			\begin{align*}
				\fourIdx{}{}{2}{H_0^1(\bO)}{\|u(t)\|} & = 2\int_{0}^{t} \bigg(Au(s),\frac{\partial u(s)}{\partial s} \bigg) ds + \fourIdx{}{}{2}{H_0^1(\bO)}{\|u(0)\|}.
			\end{align*}
			Taking supremum over $t \in [0,T]$ and by using the fact $(Au,\frac{\partial u}{\partial t}) \in L^{1}((0,T))$, we obtain
			\begin{align*}
				\sup_{t\in[0,T]} \fourIdx{}{}{2}{H_0^1(\bO)}{\|u(t)\|} \leq  2\int_{0}^{T}\left| \bigg(Au(t),\frac{\partial u(t)}{\partial t} \bigg) \right| dt + \fourIdx{}{}{2}{H_0^1(\bO)}{\|u(0)\|} < \infty.
			\end{align*}
			This implies that the function $\nabla u \in L^{\infty}(0,T;L^2(\bO))$. On the hand, from the assumption $\nabla u\in W^{1,2}(0,T;H^{-1}(\bO))$, we also have $u\in C([0,T]; H^{-1}(\bO)) \embed C_w([0,T]; H^{-1}(\bO))$. Consequently, we obtain
			\begin{align*}
				\nabla	u \in C_w([0,T]; H^{-1}(\bO)) \cap L^{\infty}(0,T;L^2(\bO)).
			\end{align*}
			Therefore, by \cite[Theorem 2.1]{WAS-66}, $\nabla u$ is a weakly continuous function from $[0,T]$ to $L^2(\bO)$. That is, $\nabla u\in C_w([0,T]; L^2(\bO))$ and   for all $v\in L^2(\bO)$ the mapping
			\[ [0,T] \ni t \mapsto \left(\nabla u(t),v\right) \in \R \ \text{ is continuous.}\]
			\vskip 1mm
			\noindent
			\textbf{Step V.} Note that, for all $t_{0},t\in[0,T]$
			\begin{align}\label{AbsC5}
				\fourIdx{}{}{2}{H_0^1(\bO)}{\|u(t)-u(t_{0})\|} = \fourIdx{}{}{2}{H_0^1(\bO)}{\|u(t)\|}  +\fourIdx{}{}{2}{H_0^1(\bO)}{\|u(t_0)\|} - 2\left( \nabla u(t), \nabla u(t_{0})\right).
			\end{align}
			Integrating \eqref{AbsC1} from $t_{0}$ to $t$, for any $t_{0},t\in[0,T]$, and using the absolute continuity of the Lebesgue integrals, we deduce 
			\begin{equation}\label{AbsC6}
				\fourIdx{}{}{2}{H_0^1(\bO)}{\|u(t)\|} = 2\int_{t_{0}}^{t} \bigg(Au(s),\frac{\partial u(s)}{\partial t}\bigg) ds + \fourIdx{}{}{2}{H_0^1(\bO)}{\|u(t_0)\|} \to  \fourIdx{}{}{2}{H_0^1(\bO)}{\|u(t_0)\|} \text{ as }\  t\to t_{0}.
			\end{equation}
			Letting $t \to t_0$ in \eqref{AbsC5}, by using the continuity of the map $t\mapsto\left(\nabla u(t), \cdot\right)$ and the convergence \eqref{AbsC6}, we finally have
			\begin{align*}
				\lim_{t\to t_{0}} \fourIdx{}{}{2}{H_0^1(\bO)}{\|u(t)-u(t_{0})\|} &= \lim_{t\to t_{0}} \fourIdx{}{}{2}{H_0^1(\bO)}{\|u(t)\|}   + \fourIdx{}{}{2}{H_0^1(\bO)}{\|u(t_0)\|} - 2\lim_{t\to t_{0}} \left( \nabla u(t),\nabla u(t_{0})\right)= 0.
			\end{align*}
			Hence $u$ is a continuous function from $[0,T]$ to $H_0^1(\bO)$, which completes the proof.
		\end{proof}
	}
	
	Now, we recall one of the famous results from the Functional Calculus called Spectral Theorem for self-adjoint operators, motivated from \cite[p. 151]{WOA-09} {or cf. \cite[Theorem 5.7]{KS-12}}. 
	
	\begin{theorem}\label{Thm-spectral}
		Let $\bA$ be a self-adjoint operator on a Hilbert space $\Hn$. Then, there exists a unique spectral measure $E$, associated with the spectral family $\{E_\lambda\}_{\lambda\in\R}$, on Borel $\sigma-$algebra $\Bb(\R)$ such that
		\begin{align*}
			\bA = \int_\R \lambda E(d\lambda),
		\end{align*}
		where 
		\begin{align*}
			D(\bA) = \left\{h \in \Hn : \int_\R \lambda^2 m_h(d\lambda) = \int_\R \lambda^2 (h, E(d\lambda)h) < \infty \right\}.
		\end{align*}
	\end{theorem}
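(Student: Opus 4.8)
The plan is to reconstruct $\bA$ from its resolvent and to build the spectral measure out of the boundary behaviour of the resolvent on the real axis; this is the route that fits best with the functional-calculus language used elsewhere in this appendix (one then recovers the spectral family via $E_\lambda = E((-\infty,\lambda])$). Since $\bA$ is self-adjoint, for every $z$ with $\operatorname{Im} z \neq 0$ the resolvent $R(z) := (\bA - zI)^{-1}$ exists as a bounded operator on $\Hn$ with $\|R(z)\|_{\Ls(\Hn)} \le |\operatorname{Im} z|^{-1}$, and it satisfies the first resolvent identity $R(z) - R(w) = (z-w)R(z)R(w)$ together with $R(z)^\ast = R(\bar z)$. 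First I would fix $h \in \Hn$ and study the scalar function $F_h(z) := (R(z)h, h)$: from $h = (\bA - z)R(z)h$ one computes $\operatorname{Im}(R(z)h,h) = \operatorname{Im}(z)\,\|R(z)h\|_{\Hn}^2$, so $F_h$ maps the upper half-plane into itself and hence is a Herglotz (Nevanlinna) function, with the decay $|F_h(z)| \le \|h\|_{\Hn}^2/|\operatorname{Im} z|$.

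The second step is to invoke the Herglotz representation theorem, which in view of this decay produces for each $h$ a finite positive Borel measure $m_h$ on $\R$ with $m_h(\R) = \|h\|_{\Hn}^2$ and $(R(z)h,h) = \int_\R (\lambda - z)^{-1}\, m_h(d\lambda)$; polarisation then yields complex measures $m_{h,g}$ that are sesquilinear in $(h,g)$. For a Borel set $B$ I would define $E(B)$ through the sesquilinear form $(E(B)h, g) := m_{g,h}(B)$ and check boundedness, self-adjointness and $0 \le E(B) \le I$, so that each $E(B)$ is a bounded positive contraction.

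The crux, and the step I expect to be the main obstacle, is to verify that each $E(B)$ is in fact an orthogonal projection and that $B \mapsto E(B)$ is a genuine projection-valued measure, in particular the multiplicativity $E(B_1)E(B_2) = E(B_1 \cap B_2)$. This cannot be read off from the scalar representation alone; the mechanism is the resolvent identity, which forces the product structure of the boundary measures via Stone's formula, namely $E((a,b)) = \lim_{\delta \downarrow 0}\lim_{\eps \downarrow 0}\frac{1}{2\pi i}\int_{a+\delta}^{b-\delta}\big(R(\lambda + i\eps) - R(\lambda - i\eps)\big)\,d\lambda$, the limits being taken in the strong operator topology. Showing that this limit exists strongly, is idempotent, and multiplies correctly is where the real work lies; countable additivity (again strongly) and $E(\R) = I$ then follow from monotone convergence applied to the scalar measures $m_h$.

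Finally, with $E$ in hand I would define $\bA_E := \int_\R \lambda\, E(d\lambda)$ on the domain $\{h : \int_\R \lambda^2\, m_h(d\lambda) < \infty\}$, verify that $\bA_E$ is self-adjoint, and identify it with $\bA$ by comparing resolvents: a direct computation with $E$ shows $(\bA_E - zI)^{-1} = \int_\R (\lambda - z)^{-1} E(d\lambda)$ agrees with $R(z)$ at one non-real $z$, and a self-adjoint operator is determined by its resolvent at a single point. Uniqueness of $E$ follows the same way, since Stone's formula recovers $E$ from $R(\cdot)$, which is in turn determined by $\bA$. The domain description in the statement drops out of the Parseval-type identity $\|\bA h\|_{\Hn}^2 = \int_\R \lambda^2\, m_h(d\lambda)$, valid for $h$ in the natural domain. (An alternative I would keep in reserve is to pass through the Cayley transform $U = (\bA - iI)(\bA + iI)^{-1}$ and reduce everything to the spectral theorem for the bounded unitary $U$, but the resolvent route above is more self-contained in the present notation.)
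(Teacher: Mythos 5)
The paper offers no proof of this statement at all: Theorem \ref{Thm-spectral} is simply recalled from the literature (\cite[p. 151]{WOA-09}, \cite[Theorem 5.7]{KS-12}), so there is no in-paper argument to measure your proposal against. Your sketch is, however, a correct and standard proof strategy: establishing that $F_h(z)=(R(z)h,h)$ is Herglotz with the bound $|F_h(z)|\le \|h\|_{\Hn}^2/|\operatorname{Im}z|$, invoking the Herglotz representation to get the scalar measures $m_h$, polarising, and recovering the projection-valued measure via Stone's formula is the resolvent route found in Teschl-style treatments, whereas the cited sources proceed essentially via the Cayley or bounded transform (your ``reserve'' alternative). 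You also correctly identify where the real analytic work lies rather than glossing over it: the strong existence of the limits in Stone's formula and the multiplicativity $E(B_1)E(B_2)=E(B_1\cap B_2)$.

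Two refinements would be needed to turn the outline into a complete proof. First, multiplicativity is awkward to extract directly from Stone's formula; the standard mechanism is the intermediate identity $m_{R(z)h,g}(d\lambda)=(\lambda-z)^{-1}\,m_{h,g}(d\lambda)$, obtained from the first resolvent identity together with uniqueness in the Herglotz representation, after which multiplicativity and idempotency follow by a density (Stone--Weierstrass) argument on resolvent functions $\{(\lambda-z)^{-1}:\operatorname{Im}z\neq0\}$. Second, the normalisation $m_h(\R)=\|h\|_{\Hn}^2$ (equivalently $E(\R)=I$) is not automatic from the representation; it requires the asymptotics $-iy\,F_h(iy)\to\|h\|_{\Hn}^2$ as $y\to\infty$, which in turn follows from $-iyR(iy)h\to h$, valid precisely because $\bA$ is self-adjoint (densely defined with real spectrum). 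With these supplied, your identification of $\bA_E$ with $\bA$ by comparing resolvents at a single non-real point, the domain description via $\|\bA h\|_{\Hn}^2=\int_\R\lambda^2\,m_h(d\lambda)$, and uniqueness via Stone's formula are all sound. For the paper's actual use (the Dirichlet Laplacian $A\ge0$ in Appendix \ref{Sec-Appendix}), the measure is then supported on $[0,\infty)$, which your construction delivers immediately since $F_h$ is analytic off the spectrum.
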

	
	The following remark is needed for the completeness of the alternative proofs already proven Propositions in Section \ref{Subsec-Thm-II}.
	
	{\begin{remark}
			\noindent
			\begin{itemize}
				\item[$(i)$] By an application of the Spectral Theorem, see Theorem \ref{Thm-spectral}, for the Dirichlet-Laplacian $A$ on $L^2(\bO)$, there exists a unique spectral measure $E$ associated to the spectral family $\{E_\lambda\}_{\lambda \ge 0}$ such that 
				\begin{align}\label{eqn-A-spec-meas}
					A = \int_0^\infty \lambda E(d\lambda).
				\end{align}
				\item[$(ii)$] From \cite[Proposition 4.10 (ii)]{WOA-09}, for each $u\in L^2(\bO)$ and a continuous function $\varphi: [0,\infty) \to \R$, we know that
				\begin{align}\label{eqn-phi-E-iden}
					\int_0^\infty \abs{\varphi(\lambda)}^2 m_u (d\lambda) = \norm{\Big[\int_0^\infty \varphi(\lambda) E(d\lambda) \Big] u }_{L^2(\bO)}^2,
				\end{align}
				where $m_u: \Bb([0, \infty)) \to [0,1]$ is a Stieltjes measure, see \cite[Theorem 3.3]{KS-23} or \cite[p. 134]{WOA-09}, defined as 
				\begin{align*}
					m_u (\Vs) = \norm{E(\Vs) u}_{L^2(\bO)}^2 = (u, E(\Vs)u),\ \text{ for any }\ \Vs \in \Bb([0,\infty)).
				\end{align*}
			\end{itemize}
	\end{remark}}

	Next, we present an alternative and more general approach to establish a convergence result and a regularity result.
	
	{\begin{proof}[Alternative proof of Proposition \ref{Lem-mu->u-cgs}]
			Using the functional calculus (\cite[Subsection 5.3]{KS-12}) and \eqref{eqn-A-spec-meas}, we have 
			\begin{align}
				\sup_{t\in[0,T]} \norm{Au_\mu(t) - Au(t)}_{L^2(\bO)}^2 
				& = \sup_{t\in[0,T]} \norm{[\mu A(\mu I + A)^{-1}  - A]u(t)}_{L^2(\bO)}^2\\
				& = \sup_{t\in[0,T]} \norm{ \bigg[\int_0^\infty \Big[\frac{\mu\lambda}{\mu + \lambda} - \lambda \Big]E(d\lambda)\bigg]u(t)}_{L^2(\bO)}^2 \\
				& = \sup_{t\in[0,T]} \norm{ \bigg[\int_0^\infty \frac{-\lambda^2}{\mu + \lambda}E(d\lambda)\bigg]u(t)}_{L^2(\bO)}^2. \label{eqn-L^inf-D(A)-mu-1}
			\end{align}
			Choosing $\varphi(\lambda) = \frac{-\lambda^2}{\mu + \lambda}$ in \eqref{eqn-phi-E-iden} gives
			\begin{align*}
				\norm{\bigg[\int_0^\infty \frac{-\lambda^2}{\mu + \lambda} E(d\lambda) \bigg] u }_{L^2(\bO)}^2 &= \int_0^\infty \abs{\frac{-\lambda^2}{\mu + \lambda}}^2 m_u (d\lambda)\leq  \int_0^\infty \lambda^2 m_{u} (d\lambda) \nonumber\\&=\norm{\Big[\int_0^\infty \lambda^2 E(d\lambda) \Big] u }_{L^2(\bO)}^2,
			\end{align*}
			where we have used the fact that $\lambda \le \mu + \lambda$.
			Using the above identity in \eqref{eqn-L^inf-D(A)-mu-1} yields
			\begin{align}
				\sup_{t\in[0,T]} \norm{Au_\mu(t) - Au(t)}_{L^2(\bO)}^2&  \leq  \sup_{t\in[0,T]} \norm{\Big[\int_0^\infty \lambda^2 E(d\lambda) \Big] u(t) }_{L^2(\bO)}^2\\
				&= \sup_{t\in[0,T]}\norm{Au(t)}_{L^2(\bO)}^2 < \infty,\label{eqn-est-D(A)-mu-1}
			\end{align}
			since $u\in L^\infty(0,T;D(A))$. 
			Letting $\mu \to \infty$ in \eqref{eqn-L^inf-D(A)-mu-1}, we deduce
			\begin{align*}
				\lim_{\mu\to \infty} \sup_{t\in[0,T]} \norm{Au_\mu(t) - Au(t)}_{L^2(\bO)} = \lim_{\mu\to \infty}  \norm{u_\mu - u}_{L^\infty(0,T;D(A))} = 0.
			\end{align*}
			It is suffices to close the proof.
		\end{proof}
	}
	
	In a similar spirit of the above proof, we can prove Proposition \ref{Prop-A^{3/2}} as follows:
	
	{\begin{proof}[Alternative proof of Proposition \ref{Prop-A^{3/2}}]
			By using \eqref{eqn-phi-E-iden} for $\varphi(\lambda) = \frac{\lambda^{\frac{3}{2}} \mu}{\mu + \lambda}$ and functional calculus, we obtain
			\begin{align}
				\norm{A^{\frac{3}{2}} u_\mu (t)}_{L^2(\bO)}^2  
				& = \norm{A^{\frac{3}{2}} \mu (\mu I + A)^{-1}u (t)}_{L^2(\bO)}^2 =  \norm{ \bigg[\int_0^\infty \frac{\lambda^{\frac{3}{2}} \mu}{\mu + \lambda} E(d\lambda)\bigg]u(t) }_{L^2(\bO)}^2\\
				& =\int_0^\infty \frac{\lambda^3\mu^2}{(\mu + \lambda)^2} m_{u(t)}(d\lambda) \le \mu  \int_0^\infty \lambda^2 m_{u(t)} (d\lambda) \\
				& = \mu \norm{ \bigg[\int_0^\infty \lambda E(d\lambda)\bigg] u(t) }_{L^2(\bO)}^2 = \mu\norm{Au(t)}_{L^2(\bO)}^2 < \infty,\label{eqn-A3/2-1}
			\end{align}
			where we have utilized \eqref{eqn-A-spec-meas}  and the fact that $\mu, \lambda \le \mu + \lambda$.  For each $\mu>0$, the above estimate implies that $u_\mu \in L^2(0,T; D(A^{\frac{3}{2}}))$. In a similar way, for each $\mu>0$, using \eqref{eqn-A-spec-meas}, we also have
			\begin{align*}
				\int_0^T\norm{A^{\frac{1}{2}} \frac{\partial u_\mu (t)}{\partial t} }_{L^2(\bO)}^2 dt
				& = \int_0^T \norm{A^{\frac{1}{2}} \mu (\mu I + A)^{-1}\frac{\partial u(t)}{\partial t} }_{L^2(\bO)}^2 dt \\
				& = \int_0^T \norm{\bigg[\int_0^\infty \frac{\lambda^{\frac{1}{2}} \mu}{\mu + \lambda} E(d\lambda)\bigg] \frac{\partial u}{\partial t} (t)}_{L^2(\bO)}^2 dt \\
				& \le \int_0^T  \int_0^\infty \frac{\lambda \mu^2}{(\mu + \lambda)^2} m_{\frac{\partial u(t)}{\partial t} }(d\lambda) dt\\
				& \leq \int_0^T \int_0^\infty \mu\, m_{\frac{\partial u(t)}{\partial t} }(d\lambda)dt  =  \mu\int_0^T m_{\frac{\partial u(t)}{\partial t} }([0,\infty)) dt\\
				& = \mu\int_0^T\norm{\frac{\partial u(t)}{\partial t}}_{L^2(\bO)}^2 dt  \le \mu \norm{\frac{\partial u}{\partial t}}_{L^2(0,T;L^2(\bO))}^2  < \infty,
			\end{align*}
			where we have used \eqref{eqn-phi-E-iden} for $\varphi(\lambda) = \frac{\lambda^{\frac{1}{2}}}{\mu + \lambda}$, the fact that $\mu, \lambda \le \mu + \lambda$ and $ \frac{\partial u}{\partial t} \in L^2(0,T; L^2(\bO))$. Thus, for each $\mu>0$, we have 
			\begin{align}\label{eqn-u'_mu-H_0^1-1}
				\frac{\partial u_\mu}{\partial t} \in L^2(0,T; H_0^1(\bO)) \implies 	\frac{\partial A u_\mu}{\partial t} \in L^2(0,T; H^{-1}(\bO)). 
			\end{align}
			The rest of the proof follows identical to the proof of Proposition \ref{Prop-A^{3/2}}.
	\end{proof}}
	
	To conclude this section, we present an auxiliary result that is used to establish the asymptotic result in Section \ref{Sec-Asy-anal}.

	\begin{lemma}\label{Lem-inequality-theta}
		Let $a,b\in [0,\infty)$ be such that $a\leq b$. For $0<\theta<1$, we have the following inequality:
		\begin{align}\label{eqn-inequality-theta}
			(a+b)^\theta \leq a^\theta + b^\theta.
		\end{align}
		Moreover, it implies that
		\begin{align}\label{eqn-tri-in}
			b^\theta - a^\theta \leq (b-a)^\theta.
		\end{align}
	\end{lemma}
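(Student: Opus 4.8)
The plan is to prove the two elementary inequalities separately, deriving \eqref{eqn-tri-in} as a direct consequence of \eqref{eqn-inequality-theta}. First I would dispose of the trivial cases: if $a=0$ then \eqref{eqn-inequality-theta} reads $b^\theta \leq b^\theta$, and if $b=0$ then (since $0\le a\le b$) we have $a=b=0$ and both sides vanish. So I may assume $b>0$, and hence $a+b>0$. The natural approach for \eqref{eqn-inequality-theta} is to normalize by dividing through by $(a+b)^\theta$. Setting $s:=\tfrac{a}{a+b}$ and $t:=\tfrac{b}{a+b}$, we have $s,t\in[0,1]$ with $s+t=1$, and the claim \eqref{eqn-inequality-theta} becomes $1 \le s^\theta + t^\theta$.

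To establish $s^\theta + t^\theta \ge s+t = 1$, I would use the fact that for $x\in[0,1]$ and $0<\theta<1$ one has $x^\theta \ge x$. This is immediate: if $x=0$ it is clear, and for $x\in(0,1]$ it follows from $x^{\theta}=x\cdot x^{\theta-1}$ with $x^{\theta-1}\ge 1$ since $\theta-1<0$ and $x\le 1$. Applying this pointwise to $x=s$ and $x=t$ gives $s^\theta \ge s$ and $t^\theta \ge t$; summing yields $s^\theta + t^\theta \ge s+t=1$, which is exactly the normalized form of \eqref{eqn-inequality-theta}. Multiplying back by $(a+b)^\theta$ recovers the stated inequality.

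For \eqref{eqn-tri-in}, the idea is to apply \eqref{eqn-inequality-theta} with a substitution that turns the additive structure into the desired subtractive one. Since $0\le a\le b$, I would write $b = a + (b-a)$ with both $a\ge 0$ and $b-a\ge 0$, so that \eqref{eqn-inequality-theta} applied to the pair $(a,\,b-a)$ gives
\begin{align*}
	b^\theta = \big(a+(b-a)\big)^\theta \le a^\theta + (b-a)^\theta.
\end{align*}
Rearranging this inequality yields $b^\theta - a^\theta \le (b-a)^\theta$, which is precisely \eqref{eqn-tri-in}, completing the proof.

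There is no genuine obstacle here; the only point requiring a small amount of care is the concavity/subadditivity estimate $x^\theta\ge x$ on $[0,1]$, which is the crux of \eqref{eqn-inequality-theta}. An equivalent and perhaps cleaner route would be to invoke directly the subadditivity of the concave function $\phi(x)=x^\theta$ on $[0,\infty)$ with $\phi(0)=0$, from which $\phi(a+b)\le\phi(a)+\phi(b)$ follows for any concave $\phi$ vanishing at the origin; I would mention this as the conceptual reason but carry out the elementary normalization argument above to keep the proof self-contained.
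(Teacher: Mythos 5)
Your proposal is correct, and it takes a genuinely different route from the paper's proof of \eqref{eqn-inequality-theta}. The paper normalizes by $b^\theta$, sets $\gamma=a/b$, and runs a calculus argument: it defines $f(\gamma)=(\gamma+1)^\theta-\gamma^\theta-1$, checks $f(0)=0$, and shows $f'(\gamma)\le 0$, so $f\le 0$; this requires a case split ($a=0$ or $b=0$; $a=b$; $0<a<b$) and a derivative computation. You instead normalize by $(a+b)^\theta$, reduce to $s^\theta+t^\theta\ge 1$ with $s+t=1$, and conclude from the pointwise bound $x^\theta\ge x$ on $[0,1]$ --- a purely algebraic argument with no differentiation. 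Your route buys two things: it is visibly symmetric in the two arguments, so it makes plain that the hypothesis $a\le b$ plays no role in \eqref{eqn-inequality-theta} (a point worth being explicit about, since both you and the paper then apply subadditivity to the pair $(a,\,b-a)$, whose ordering is not controlled by $a\le b$; with the paper's formulation one should note the inequality holds for all nonnegative pairs), and it exposes the conceptual mechanism --- subadditivity of any concave function vanishing at the origin --- which you correctly flag as the general statement. The derivation of \eqref{eqn-tri-in} from \eqref{eqn-inequality-theta} via $b^\theta=(a+(b-a))^\theta\le a^\theta+(b-a)^\theta$ is identical in both proofs.
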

	
	\begin{proof}
		Let us fix $0<\theta<1$ and assume that $a,b\in [0,\infty)$ be such that $0\leq a\leq b$.
		\vskip 1mm
		\noindent
		\textbf{Case I.} If $a=0$ or $b=0$, then $(a+b)^\theta = b^\theta \leq  a^\theta + b^\theta = b^\theta$ or $(a+b)^\theta = a^\theta \leq  a^\theta + b^\theta = a^\theta$, respectively.
		\vskip 1mm
		\noindent
		\textbf{Case II.} If $a=b$, $2^\theta a^\theta \leq 2 a^\theta$, which is equivalent to have $2^\theta\leq 2$.
		\vskip 1mm
		\noindent
		\textit{Case III.} When $0<a<b$, we divide the inequality \eqref{eqn-inequality-theta} by $b^\theta$ to obtain
		\begin{align*}
			\left(\frac{a}{b} + 1\right)^\theta \leq \left(\frac{a}{b}\right)^\theta + 1.
		\end{align*}
		Suppose $\gamma = \frac{a}{b}$, then the above inequality will become
		\begin{align*}
			(\gamma + 1)^\theta \leq \gamma^\theta + 1,\ \text{ for }\ 0 < \gamma < 1,
		\end{align*}
		and we need to show that the above inequality is valid. 	Let us now define
		\begin{align*}
			f(\gamma) := (\gamma + 1)^\theta - \gamma^\theta - 1.
		\end{align*}
		Note that
		\begin{align*}
			f(0) = 0\ \text{ and }\ f(1)= 2^\theta - 2 \leq 0.
		\end{align*}
		Moreover, the function $f$ is decreasing, since
		\begin{align*}
			f'(\gamma) = \theta (\gamma+ 1)^{\theta-1} - \theta \gamma^{\theta-1} = \theta \gamma^{\theta-1}\left[\left(1 + \frac{1}{\gamma}\right)^{\theta-1} -1\right] \leq 0.
		\end{align*}
		Since $f$ is decreasing, for any $0 \leq \gamma$, we have
		\begin{align*}
			f(\gamma )& \leq f(0)\Rightarrow
			(\gamma + 1)^\theta - \gamma^\theta - 1  \leq 0\Rightarrow
			(\gamma + 1)^\theta   \leq \gamma^\theta + 1,
		\end{align*}
		which completes the proof of \eqref{eqn-inequality-theta}. In order to show \eqref{eqn-tri-in}, we use \eqref{eqn-inequality-theta} to find
		\begin{align}
			b^{\theta}=(b-a+a)^{\theta}\leq (b-a)^{\theta}+a^{\theta},
		\end{align}
		and it competes the proof.
	\end{proof}

	\appendix

	\medskip\noindent
	\textbf{Acknowledgments:} 
The first three authors would like to thank the Bernoulli Center for Fundamental Studies at EPFL (Lausanne) for its support and hospitality during the programme ``New Developments and Challenges in Stochastic Partial Differential Equations'', held from June to August 2024, where part of the research leading to this article was carried out.
	A. Bawalia gratefully acknowledges the \textit{National Board of Higher Mathematics} (NBHM) for the travel grant to support the above visit (Sanction No.: 0207/9(2)/2024-R$\&$D-II/10984), and the \textit{University Grants Commission} (UGC), Govt. of India, for financial assistance (File No.: 368/2022/211610061684). M.T. Support for M. T. Mohan's research received from the National Board of Higher Mathematics (NBHM), Department of Atomic Energy, Government of India (Project No. 02011/13/2025/NBHM(R.P)/R\&D II/1137).
	
	\medskip\noindent	\textbf{Declarations:} 
	
	\noindent 	\textbf{Ethical Approval:}   Not applicable 
	
	
	\noindent  \textbf{Conflict of interest: }On behalf of all authors, the corresponding author states that there is no conflict of interest.
	
	\noindent 	\textbf{Authors' contributions:} All authors have contributed equally. 
	
	
	\noindent 	\textbf{Availability of data and materials:} Not applicable. 
	
	\bibliographystyle{plain}
	\bibliography{BBMR_ref}
	
\end{document}